\documentclass[11pt]{article}
\usepackage{bbm}
\usepackage{geometry}
\usepackage{amsfonts}
\usepackage{latexsym, amssymb, amsmath, amscd, amsthm, amsxtra}
\usepackage{mathtools}
\usepackage{enumerate}
\usepackage[all]{xy}
\usepackage{mathrsfs}
\usepackage{fancyhdr}
\usepackage{listings}
\usepackage[hidelinks=true]{hyperref}
\usepackage{graphicx}
\usepackage{tikz}
\pagestyle{plain}

\def\P{\mathbb{P}}
\def\Pr{\mathrm{\mathbf{P}}}
\def\Ex{\mathrm{\mathbf{E}}}
\def\E{\mathbb{E}}

\def\Z{\mathbb{Z}}
\def\R{\mathbb{R}}
\def\N{\mathbb{N}}

\def\rad{\varrho_n}
\def\epn{{\varepsilon_n}}
\def\btn{\lambda_*}

\def\hbn{{\widehat{\mathcal B}_n}}
\def\qbn{{\hbn}}
\def\cec{\epsilon}

\def\mb{\mu_B}

\def \tp{\mathcal B}
\def \tcp{\mathcal B_\circ}
\def\11{\mathbbm{1}}

\def\tran{\mathsf{T}}

\def\lj{\mathcal J}

\def\lm{\ell}
\def\emm{\epsilon}

\def\ob{\mathcal{O}}

\def\plr{\mathcal V}%poly-log region
\def\tos{\mathcal T}

%%%%%%%

\def\epm{{\epsilon}}
\renewcommand{\epsilon}{\varepsilon}
\def\emp{\mathcal E}

\def\fregion{{\mathcal U}}
%%%def

\def\lbs{\lambda_*}
\def \consa {b}

\newcommand{\eve}[2]{\Phi_{#1}(#2)}
\newcommand{\eva}[2]{\lambda_{#1}(#2)}
\DeclareFontFamily{U}{mathc}{}
\DeclareFontShape{U}{mathc}{m}{it}{<->s*[1.3] mathc10}{}
\DeclareMathAlphabet{\mymathcal}{U}{mathc}{m}{it}
\newcommand{\ct}[1]{\mymathcal{x}_{#1}}

\def\bo{\rm o}
\def\be{\rm e}

\newcommand\dif{\mathop{}\!\mathrm{d}}
\newcommand\capa{\mathrm{cap}}

\newcommand{\trueop}{$\ell$-``truly''-open}

\newtheorem{thm}{Theorem}[section]

\newtheorem{proposition}[thm]{Proposition}

\newtheorem{lemma}[thm]{Lemma}
\newtheorem{cor}[thm]{Corollary}

\newtheorem{claim}[thm]{Claim}

%letter labeled theorems
\newtheorem{thmx}{Theorem}

\theoremstyle{definition}
\newtheorem{defn}[thm]{Definition}
\newtheorem{remark}[thm]{Remark}

\numberwithin{equation}{section}
\begin{document}
\definecolor{qing}{RGB}{0, 153, 153}
\newcommand{\qing}[1]{\textcolor{qing}{#1}}
\newcommand{\rf}{}
\newcommand{\sun}[1]{\textcolor{cyan}{(Sun: #1)}}

\title{Distribution of the random walk conditioned on survival among quenched Bernoulli obstacles}
\author{Jian Ding$^{\,1}$ \and Ryoki Fukushima$^{\,2}$ \and Rongfeng Sun$^{\,3}$ \and Changji Xu$^{\,4}$}

\date{\today}

\maketitle

\footnotetext[1]{Statistics Department, University of Pennsylvania. Email: dingjian@wharton.upenn.edu}

\footnotetext[2]{Research Institute for Mathematical Sciences, Kyoto University. Email: ryoki@kurims.kyoto-u.ac.jp}

\footnotetext[3]{Department of Mathematics, National University of Singapore. Email: matsr@nus.edu.sg}

\footnotetext[4]{Department of Statistics, University of Chicago. Email: changjixu@galton.uchicago.edu}

\begin{abstract}
Place an obstacle with probability $1-p$ independently at each vertex of $\Z^d$ and consider a simple symmetric random walk that is killed upon hitting one of the obstacles. For $d \geq 2$ and $p$ strictly above the critical threshold for site percolation, we condition on the environment such that the origin is contained in an infinite connected component free of obstacles. It has previously been shown that with high probability, the random walk conditioned on survival up to time $n$ will be localized in a ball of volume asymptotically $d\log_{1/p}n$. In this work, we prove that this ball is free of obstacles, and we derive the limiting one-time distributions of the random walk conditioned on survival. Our proof is based on obstacle modifications and estimates on how such modifications affect the probability of the obstacle configurations as well as their associated Dirichlet eigenvalues, which is of independent interest.
\end{abstract}

\vspace{.3cm}

\noindent
{\it MSC 2000.} Primary: 60K37; Secondary: 60K35.

\noindent
{\it Keywords.} Bernoulli obstacles, random walk range, quenched law.
\vspace{12pt}

\tableofcontents

\medskip

\noindent

\section{Introduction}
\subsection{Model and main results}
For $d \geq 2$, let $(S_t)_{t \geq 0}$ be a discrete time simple symmetric random walk on $\Z^d$ , with $\Pr^x$ and $\Ex^x$ denoting probability and expectation for the random walk with $S_0 = x \in \Z^d$ and the superscript omitted when $x$ is the origin. We place the random walk in a random environment where an obstacle is placed independently at each point $x \in \Z^d$ with probability $1 - p \in (0,1)$, with $\P$ and $\E$ denoting probability and expectation for the random environment. We will say $x$ is {\em closed}  if $x$ is occupied by an obstacle, and $x$ is {\em open}  otherwise. Denote by $\ob$ the set of sites occupied by the  obstacles. The random walk is killed at the moment it hits an obstacle, namely at the stopping time
\begin{equation}
	\label{eq:def-tau}
	\tau := \tau_\ob= \inf\{ t \geq 0: S_t \in \ob \}\,.
\end{equation}
More generally, we denote by $\tau_A$ the first hitting time of a set $A\subset\Z^d$.
We will assume $p > p_c(\Z^d)$, the critical threshold for site percolation, and let $\widehat \P$ be the conditional probability measure for $\ob$ given that the origin is in the infinite open cluster. Given an environment under $\widehat \P$, we are interested in the behavior of the random walk given that it survives for a long time.

Recently, it has been shown in \cite{DX17,DX18} that conditioned on survival up to time $n$, the random walk stays in an \emph{island} (determined by the environment)  of diameter at most poly-logarithmic in $n$ during time $[o(n),n]$. Furthermore, at any deterministic time $t \in [o(n),n]$, the random walk stays with high probability in a ball of radius asymptotically
\begin{equation}
\label{eq:def-r}
	\rad = \lfloor (\omega_d^{-1}d\log_{1/p} n)^{1/d} \rfloor\,,
\end{equation}
where $\omega_d$ is the volume of the unit ball in $\R^d$. Namely, the following was shown in \cite{DX17,DX18}.
\begin{thmx}
\label{ballthm}
There exist a constant $C = C(d,p)$, %and
$\mymathcal {x}_n=\mymathcal {x}_n(\ob)\in\Z^d$ within distance $C(\log n)^{-2/d}n$ from the origin, and $\epsilon_n>0$ tends to 0 as $n\to \infty$ such that %if $B_n$ is the discrete ball of radius $(1+\epsilon_n)\rad$ (where ) centered at $\mymathcal {x}_n $, then
\begin{equation}
\label{eq:ball - localization}
    \min_{C |\mymathcal {x}_n| \leq t \leq n}\Pr(S_t  \in B(\mymathcal {x}_n,(1+\epsilon_n)\rad) \mid \tau>n) \to 1 \mbox{ in } \widehat\P\mbox{-probability} \,,
\end{equation}
where $B(x,r)$ denotes the Euclidean ball with center $x$ and radius $r$.
\end{thmx}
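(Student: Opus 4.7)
The plan is to connect $\Pr(\tau>n)$ to the principal Dirichlet eigenvalue of the random walk generator on $\Z^{d}\setminus\ob$, and to use Faber--Krahn isoperimetry to argue that the walk concentrates in a near-optimal obstacle-free ball. First I would establish the sharp asymptotics
\begin{equation}
\Pr(\tau>n)=\exp\!\bigl(-(1+o(1))\lambda_d\, n/\rad^{2}\bigr)
\end{equation}
with $\widehat\P$-probability tending to $1$, where $\lambda_d$ is the principal Dirichlet eigenvalue of $-\frac{1}{2d}\Delta$ on the unit Euclidean ball of $\R^{d}$. The lower bound follows from a concrete strategy: travel through the infinite open cluster, in time $o(n)$, to the centre of a nearly optimal obstacle-free ball $B(y,(1-\delta_n)\rad)$ found inside the box of side $L_n=n(\log n)^{-2/d}$, and then remain there; the existence of such a ball holds with $\widehat\P$-probability tending to $1$ by a Poisson-type second-moment calculation, since a single ball of radius $r$ is obstacle-free with probability $p^{\omega_d r^{d}}$ and one can calibrate $\delta_n$ so that $L_n^{d}\cdot p^{\omega_d((1-\delta_n)\rad)^{d}}\to\infty$. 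The matching upper bound comes from coarse-graining $\Z^{d}$ into mesoscopic cells, declaring bad those cells whose local obstacle density departs from $1-p$, and reducing survival to a Dirichlet eigenvalue problem to which Faber--Krahn applies.

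With these inputs, I would define $\mymathcal{x}_n$ as (a measurable selection of) the closest point to the origin that centres an obstacle-free ball of radius $(1-\delta_n)\rad$; the counting argument above forces $|\mymathcal{x}_n|\le L_n$. The lower cutoff $C|\mymathcal{x}_n|$ in the time variable reflects the need to reach $\mymathcal{x}_n$: by chemical-distance estimates for supercritical site percolation, the graph distance from the origin to $\mymathcal{x}_n$ inside the infinite open cluster is at most $C|\mymathcal{x}_n|$ for some deterministic $C=C(d,p)$. The localization step is the heart of the argument. Conditionally on $\{\tau>n\}$, the one-time distribution of $S_t$ at intermediate times is controlled, via spectral decomposition, by the principal Dirichlet eigenfunction $\varphi$ of the killed random walk on the open component of the origin in a large box, concretely
\begin{equation}
\Pr(S_t=y\mid\tau>n)\ \approx\ \frac{\varphi(y)^{2}}{\sum_{z}\varphi(z)^{2}},
\end{equation}
so it suffices to show that $\varphi$ concentrates on $B(\mymathcal{x}_n,(1+\epsilon_n)\rad)$. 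Any competing concentration site would force the existence of another obstacle-free region with principal Dirichlet eigenvalue within $o(\rad^{-2})$ of $\lambda_d/\rad^{2}$; by a quantitative Faber--Krahn inequality, such a region must itself contain an obstacle-free Euclidean ball of radius at least $(1-o(1))\rad$, and a union bound at per-site rate $p^{\omega_d((1+\epsilon_n)\rad)^{d}}$ rules out any such ball at distance greater than $(1+\epsilon_n)\rad$ from $\mymathcal{x}_n$ inside an $n^{O(1)}$-neighborhood, provided $\epsilon_n$ tends to $0$ slowly enough.

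The main obstacle is pushing down to the sharp radius $(1+\epsilon_n)\rad$ rather than merely $C\rad$ for some large $C$. The gain in eigenvalue from enlarging the ball by a factor $1+\epsilon$ is only of order $\epsilon/\rad^{2}$, while the rarity of such a larger ball improves only polylogarithmically; reconciling these two scales requires a quantitative stability form of Faber--Krahn — converting near-optimality of an eigenvalue into geometric closeness to a true ball — together with sharp control on how small modifications of the obstacle configuration affect the Dirichlet eigenvalue. This second ingredient, quantitative eigenvalue response under obstacle modifications, is precisely the type of estimate flagged in the abstract as the technical backbone of the current work, and is where the bulk of the delicate constant tracking is concentrated.
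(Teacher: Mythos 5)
This statement is Theorem~A, which the paper cites as established prior work from \cite{DX17,DX18} rather than proving it; there is therefore no in-paper proof to compare against. Comparing your sketch with the approach those references take (as outlined in Section~2 of the present paper), there are two genuine gaps.

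First, and most seriously, you identify $\mymathcal{x}_n$ as the center of the nearest \emph{completely obstacle-free} ball of radius $(1-\delta_n)\rad$, and you justify the localization step by asserting that quantitative Faber--Krahn forces any competing near-optimal region to \emph{contain} an obstacle-free ball of radius $(1-o(1))\rad$. This inference is false: stability forms of Faber--Krahn convert near-optimality of the Dirichlet eigenvalue into closeness to a ball in the sense of Fraenkel asymmetry (small symmetric difference), not into the existence of a vacant ball inside the region. A region of radius $\approx\rad$ with a vanishing but positive density of interior obstacles can simultaneously satisfy the quantitative Faber--Krahn conclusion and beat your chosen obstacle-free ball of the slightly smaller radius $(1-\delta_n)\rad$ on eigenvalue, so the conditioned walk has no reason to prefer your $\mymathcal{x}_n$. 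Indeed, before the present paper it was only known that the localization region has obstacle density $o(1)$ (see the remark preceding Theorem~\ref{main}); proving that it actually contains a completely open ball of radius nearly $\rad$ is exactly Theorem~\ref{main} of this paper and cannot be assumed when proving Theorem~A. The correct definition in \cite{DX18} picks the region minimizing a trade-off between the survival cost (eigenvalue) and the crossing cost (distance from the origin), and then shows via a quantitative isoperimetric argument (Lemma~\ref{Onecity-Ball}) only that the low-obstacle-density subregion $\emp_n(\epsilon)$ is close to a ball in symmetric difference, which is exactly as far as Faber--Krahn-type stability gets you.

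Second, your union-bound step does not resolve the ``one city'' problem. Calibrating $\delta_n$ so that open balls of radius $(1-\delta_n)\rad$ exist in $B(0,L_n)$ means there are on the order of $(n/L_n)^d$ (more precisely, polylogarithmically many after the trade-off with distance, cf.~\eqref{eq:lambda*d}) competing near-optimal islands inside $B(0,n)$; the union bound ``at per-site rate $p^{\omega_d((1+\epsilon_n)\rad)^d}$'' cannot rule these out, because they really do exist. Showing that the conditioned endpoint mass concentrates on a \emph{single} one of them, rather than splitting among several with comparable eigenvalues and distances, is the hard content of \cite{DX18} and involves a careful comparison of the cost-benefit functionals across islands, not a union bound. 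Relatedly, the spectral-decomposition heuristic $\Pr(S_t=y\mid\tau>n)\approx \varphi(y)^2/\sum_z\varphi(z)^2$ only becomes usable once one knows the conditioned walk hits the chosen island quickly and then remains there; this path-localization (``pinning'') input is itself nontrivial and was established in \cite{DX17} before the one-time marginal could be analyzed.
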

\noindent This improves earlier results which we will briefly review in Section~\ref{sec:literature}.

The study of the random walk killed by random obstacles is partially motivated by its relation to the so-called \emph{Anderson localization}. Let us briefly explain this relation. The generator of the killed random walk can be formally written (modulo a minus sign) as the random Schr\"{o}dinger operator $-\frac{1}{2d}\Delta+\infty\cdot \11_{\ob}$, where $\Delta$ is the discrete Laplacian. For this type of operators, various localization phenomena have been predicted and some of them have been rigorously proved; see e.g., \cite{AW15,Wolfgang16}. One formulation is that the eigenfunctions corresponding to small eigenvalues are supported on well-separated small regions where the potential takes atypically small values. Our problem has closer connection to the parabolic initial-boundary value problem
\begin{equation}
\label{eq:PAM-killing}
\begin{cases}
u(n+1,x)-u(n,x)=\frac{1}{2d} \Delta u(n,x), &(n,x)\in \Z_+\times (\Z^d\setminus \ob), \\
u(n,x)=0, &(n,x)\in \Z_+\times \ob,\\
u(0,x)=\11_{\{0\}}(x),
\end{cases}
\end{equation}
as the probability $\Pr(S_n=x, \tau>n)$ represents its unique bounded solution. Since $\Pr(\tau>n)$ is the total mass of the solution, the conditional probability $\Pr(S_n=x \mid \tau>n)$ is the normalized mass distribution. Now by the eigenfunction expansion, the long time asymptotics is determined by the small eigenvalues of $-\frac{1}{2d}\Delta+\infty\cdot \11_{\ob}$. Further taking into account the above eigenfunction formulation of the localization, one expects that the profile consists of well-separated peaks and each of them looks like an eigenfunction. While there are non-rigorous steps in this argument, Theorem~\ref{ballthm} proves that the dominant proportion of mass in fact comes from a \emph{single} peak which is supported by a ball of radius $\rad$. It is an important problem to further identify the profile of the mass distribution inside the localization region. The first step to tackle this problem is to understand what the environment looks like in the localization region, which is an interesting problem itself. These two problems are listed as the main questions in~\cite[Section~1.3]{Wolfgang16}. %The main results in this paper address them.

The first main result in this paper is about the behavior of environment in the localization region. Intuitively, the ball where the random walk will be localized should contain very few obstacles, or even no obstacle. It is proved in \cite{DX18} that the volume proportion of obstacles inside the localizing ball is at most $o(1)$, but it remains open to show that it actually contains no obstacle at all. Our first main result resolves this question.

\begin{thm}\label{main}
There exists a constant $\kappa>0$ depending only on $(d,p)$
% and $\mymathcal {x}_n = \mymathcal {x}_n(\ob)\in B(0,C(\log n)^{-2/d}n)$, the ball of radius $C(\log n)^{-2/d}n$ centered at the origin,
such that with $\widehat\P$-probability tending to one as $n\to\infty$,
\begin{equation}
\label{eq:thm-ballopen}
\mathcal B_n : =B(\mymathcal {x}_n,\rad - \rad^{1- \kappa}) \text{ is open.}
\end{equation}
%where $B(x,r)$ denotes the Euclidean ball with center $x$ and radius $r$.
% and
\end{thm}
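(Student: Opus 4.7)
The plan is a modification/comparison argument: we show that any obstacle inside $\mathcal B_n$ is penalized by a much larger exponential factor in the survival probability $\Pr(\tau>n)$ than its cost in the environment measure $\P$, so that such configurations must carry vanishing $\widehat\P$-mass on a survival-typical environment. The starting point is an eigenvalue representation of survival: by Theorem~\ref{ballthm} and the analysis in~\cite{DX17,DX18}, up to sub-exponential corrections $\Pr(\tau>n)\asymp\exp(-n\lambda(\ob))$, where $\lambda(\ob)$ is the principal Dirichlet eigenvalue of $-\frac{1}{2d}\Delta$ on the component of $\mymathcal{x}_n$ inside $B(\mymathcal{x}_n,(1+\epsilon_n)\rad)\setminus\ob$, and $\widehat\P$-typically $\lambda(\ob)$ is comparable to the obstacle-free ball eigenvalue $\lambda^*:=\lambda(B(0,\rad))$.

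The core of the argument is a modification together with an associated eigenvalue gap. For $\omega$ with obstacles in $\mathcal B_n(\omega)$, let $\omega'=\mt(\omega)$ be the modified environment obtained by opening every site of $\mathcal B_n(\omega)$. Since $|\mathcal B_n|=O(\rad^d)=O(\log n)$, the $\P$-cost is only polynomial:
\[
\frac{\P(\omega)}{\P(\omega')}\le \max\!\bigl(1,\tfrac{1-p}{p}\bigr)^{|\mathcal B_n|}=n^{O(1)}.
\]
The eigenvalue gain, on the other hand, is exponentially large: removing an obstacle at $x\in\mathcal B_n$ should drop $\lambda$ by at least $c\rad^{-d-2\kappa}$. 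Heuristically this is first-order perturbation theory for a single Dirichlet point, the shift being of order $|\phi(x)|^2$, where $\phi$ is the $L^2$-normalized ground state of $-\frac{1}{2d}\Delta$ on $B(\mymathcal{x}_n,\rad)$. Since $\phi$ is close to the continuous radial Bessel ground state on the unit ball (which vanishes linearly at the boundary), one has
\[
|\phi(y)|^2\gtrsim \rad^{-d}\bigl(\mathrm{dist}(y,\partial B(\mymathcal{x}_n,\rad))/\rad\bigr)^2,
\]
and at $y=x\in\mathcal B_n$ the boundary distance is at least $\rad^{1-\kappa}$, giving $|\phi(x)|^2\gtrsim \rad^{-d-2\kappa}$. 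A Rayleigh quotient/perturbation computation turns this pointwise bound into the required eigenvalue drop.

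Combining, on the event $\{\mathcal B_n\text{ not open}\}$ the survival probability loses a factor of at least $\exp(cn\rad^{-d-2\kappa})=\exp(cn(\log n)^{-1-2\kappa/d})$, which, together with the $n^{O(1)}$ $\P$-cost and a sum over the (at most $n^{O(1)}$) possible values of $\mymathcal{x}_n$ allowed by Theorem~\ref{ballthm}, gives
\[
\widehat\P(\mathcal B_n\text{ not open},\,\tau>n)\le n^{O(1)}\exp\!\bigl(-cn(\log n)^{-1-2\kappa/d}\bigr)\widehat\P(\tau>n)=o(\widehat\P(\tau>n))
\]
for any fixed $\kappa>0$; together with Theorem~\ref{ballthm} (which links the unconditional environment event $\{\mathcal B_n\text{ not open}\}$ to the $\tau>n$-conditioned picture) this yields $\widehat\P(\mathcal B_n\text{ not open})\to 0$.

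The main obstacle is certainly the quantitative eigenvalue gap: one needs a uniform lower bound on $\phi$ inside $\mathcal B_n$ and a perturbation estimate for the eigenvalue, both robust to the random and complicated obstacle configuration \emph{outside} $\mathcal B_n$. First-order perturbation theory gives the right heuristic order, but making it rigorous and large enough to beat the $n^{O(1)}$ cost uniformly over typical environments is where the paper's systematic ``obstacle modifications and their effect on Dirichlet eigenvalues'' machinery presumably does the heavy lifting. A secondary technical complication is that $\mymathcal{x}_n$ itself depends on the environment, so the modification map $\mt$ must be set up carefully enough that $\mymathcal{x}_n(\omega')$ either coincides with $\mymathcal{x}_n(\omega)$ or can be controlled under a small perturbation of it.
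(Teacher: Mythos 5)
Your proposal reproduces the strategy used for the \emph{annealed} ball clearing (as in Remark~\ref{rem:annealed} and~\cite{DFSX18,Bolthausen94}), but it does not deliver the \emph{quenched} statement $\widehat\P(\mathcal B_n \text{ not open})\to 0$, and the conversion you sketch at the end does not go through. Concretely: your modification bound compares $\Pr(\tau>n)(\omega)$ to $\Pr(\tau>n)(\mt\omega)$ and $\P(\omega)$ to $\P(\mt\omega)$, which yields a statement about the annealed-conditioned environment law $\P(\,\cdot\,)\Pr(\tau>n)/\E[\Pr(\tau>n)]$, not about $\widehat\P$. These two laws are not mutually bounded within polynomial factors: the typical quenched survival probability is $\exp(-cn(\log n)^{-2/d})$, while the annealed survival probability is $\exp(-cn^{d/(d+2)})$, so the Radon--Nikodym ratio between the annealed-conditioned environment law and $\widehat\P$ is $\exp(cn(\log n)^{-2/d})$ on typical environments. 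Your eigenvalue gain for $\kappa>0$ gives at best $\exp(-cn(\log n)^{-1-2\kappa/d})$, and since $1+2\kappa/d > 2/d$ for every $d\ge 2$ and $\kappa>0$, this never beats the conversion factor. Invoking Theorem~\ref{ballthm} at that point cannot help either, since Theorem~\ref{ballthm} controls the conditional law of the walk, not the Radon--Nikodym derivative between environment measures.

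The paper avoids this trap by never touching $\Pr(\tau>n)$: it works entirely with the environment law of the principal Dirichlet eigenvalue. Proposition~\ref{po-clear} asserts $\P\bigl(B(\ct{\plr},\rad-\rad^{1-\kappa})\cap\ob\neq\varnothing \mid \lambda_\plr\geq\lambda_*\bigr)$ is tiny, which together with $\P(\lambda_\plr\geq\lambda_*)\asymp n^{-d}(\log n)^{O(1)}$ (Lemma~\ref{lambda*}), a union bound over translates, and the deterministic fact $\lambda_\fregion\geq\lambda_*$ gives the theorem. Proposition~\ref{po-clear} itself is proved by combining two \emph{environment-only} modification lemmas pointing in opposite directions: Lemma~\ref{eigenvalue tail - 1} (close \trueop\ boxes: $e^{-c\epsilon\rad^d(\log(1/\epsilon))^{-3}}$ probability gain for only $\epsilon\rad^{-2}$ eigenvalue loss) and Lemma~\ref{eigenvalue tail - 2 - not open} (open the obstacles in a carefully chosen annulus $B_{\delta,\lj-1}$: $n^{O(1)}$ probability loss for a quantified eigenvalue gain). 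The lower bound $\P(\lambda_\plr\geq\lambda_*)\geq n^{-d}(\log n)^{c}$ is what replaces the role of $\widehat\P(\tau>n)$ in your scheme and makes the quenched comparison close. Your heuristic for the pointwise eigenvalue shift $|\phi(x)|^2\gtrsim\rad^{-d-2\kappa}$ has the right flavor (it is close in spirit to Lemma~\ref{phi-dist-bd} and Lemma~\ref{RemoveB}), but by itself it sits on the wrong side of the annealed/quenched divide.
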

\begin{remark}
\label{rem:annealed}
Under the \emph{annealed} law $\P\otimes\Pr(\cdot\mid \tau>n)$, a similar ball clearing phenomenon was proved for $d = 2$ in \cite{Bolthausen94} and~\cite{S91}, while the latter work studied a continuum analogue called Brownian motion among Poissonian obstacles. The extension to $d \geq 3$, conjectured in~\cite{Bolthausen94}, was open for a long time but recently resolved in~\cite{DFSX18} and~\cite{BC18} independently.
\end{remark}
The proof of Theorem~\ref{main} differs substantially from those of the annealed result mentioned in Remark~\ref{rem:annealed} and requires new ideas. It is based on intricate bounds on the tail distribution of the principal Dirichlet eigenvalue of the random walk in the island of localization, and relies on environment switching arguments that add or remove obstacles. The heart of the paper consists in making judicious choices of which obstacles to add or remove, and estimating how such modifications change the associated principal Dirichlet eigenvalue, which is of independent interest. See Section~\ref{sec:Proof Outline} for a more detailed proof outline.

Our second main result gives the limiting law of $\rad^{-1}(S_t - \mymathcal {x}_n)$ for $t=n$ or $t$ in the bulk, conditioned on survival up to time $n$.
The result for $t=n$ corresponds to the limiting profile of the solution of~\eqref{eq:PAM-killing}. Thanks to Theorem~\ref{main}, we are able prove the convergence at the level of local limit theorem.
Let $\phi_1$ and $\phi_2$ be the $L^1$ and $L^2$-normalized first eigenfunction of the Dirichlet-Laplacian of the unit ball in $\R^d$, respectively, and let $|\cdot|_1$ and $|\cdot|$ denote $\ell^1$ and $\ell^2$ norms on $\Z^d$, respectively.
   \begin{thm}   	\label{thm:distribution}
There exists $C>0$ depending only on $(d,p)$ such that the following hold with $\widehat\P$-probability tending to one as $n\to\infty$:
\begin{gather}
\label{eq:thm-ballloc}
	\min_{C |\mymathcal {x}_n| \leq t \leq n}\Pr( S_t \in \mathcal B_n \mid \tau>n)  \to 1\,,\\
\label{eq:thm-endpt}
	\sup_{x\in\mathcal B_n : |x|_1 + n \text{ is even}}\left|\rad^{d} \Pr(S_{n} = x \mid \tau>n) - 2\phi_1(\tfrac{x - \mymathcal {x}_n}{\rad})\right| \to 0\,,\\
\label{eq:thm-bulk}
	\sup_{x\in\mathcal B_n : |x|_1 + t \text{ is even}}\left| \rad^{d} \Pr(S_t = x \mid \tau>n) - 2\phi_2^2(\tfrac{x - \mymathcal {x}_n}{\rad})\right| \to 0\,,
\end{gather}
where $\mymathcal {x}_n=\mymathcal {x}_n(\ob)\in\Z^d$ is as in Theorem \ref{ballthm}, and the convergence in \eqref{eq:thm-bulk} holds uniformly for all $t\in [C |\mymathcal {x}_n|,  n - C  \rad^2 \log \rad]$.
   \end{thm}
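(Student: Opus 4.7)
}
The strategy is to combine Theorem~\ref{main} with a spectral analysis of the killed random walk on the island containing $\mathcal B_n$, followed by a continuum limit. On the $\widehat{\P}$-high-probability event where Theorems~\ref{ballthm} and~\ref{main} hold, let $\Lambda_n$ be a bounded subset of the infinite open cluster containing both $0$ and a neighbourhood of $\mathcal B_n$, and let $(\psi_i, \lambda_i)_{i\ge 1}$ be the $\ell^2$-orthonormal eigenpairs of the Dirichlet Laplacian $-\frac{1}{2d}\Delta$ on $\Lambda_n$. Plugging a rescaled continuum principal eigenfunction supported on $\mathcal B_n$ into the Rayleigh quotient, Theorem~\ref{main} gives
\[
\lambda_1 \le \lambda_1^{(\mathrm{cont})}\,\rad^{-2}\bigl(1+O(\rad^{-\kappa})\bigr),
\]
while a matching lower bound -- from the fact that no obstacle-free ball of radius essentially exceeding $\rad$ can fit inside $\Lambda_n$ -- yields both $\lambda_1 = \lambda_1^{(\mathrm{cont})}\rad^{-2}(1+o(1))$ and a spectral gap $\lambda_2-\lambda_1 \gtrsim \rad^{-2}$. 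A quantitative Faber--Krahn stability argument together with Agmon-type exponential decay outside $\mathcal B_n$ then produces the uniform convergence $\rad^{d/2}\psi_1(\mymathcal{x}_n + \rad y) \to \phi_2(y)$ on $\overline{B(0,1)}$ as well as $\sum_{x \notin \mathcal B_n}\psi_1(x)^2 = o(1)$.

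To deduce the three statements, apply the Markov property at time $t$ to write
\[
\Pr(S_t=x,\tau>n) = \Pr(S_t=x,\tau>t)\,\Pr^x(\tau>n-t),
\]
and expand each factor in its eigenfunction representation $\Pr(S_s=y,\tau>s) \approx \sum_i e^{-\lambda_i s}\psi_i(0)\psi_i(y)$, absorbing the contribution of paths that exit $\Lambda_n$ by Theorem~\ref{ballthm}. For $t\in [C|\mymathcal{x}_n|,\, n - C\rad^2\log\rad]$ the spectral gap makes the $i=1$ term dominate; under the bipartite structure of $\Z^d$ the principal eigenpair is doubled by its $(-1)^{|x|_1}$-twisted partner, contributing the parity factor $2\cdot\mathbf{1}_{|x|_1+t\text{ even}}$. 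After normalizing by $\Pr(\tau>n)\approx e^{-\lambda_1 n}\psi_1(0)\langle\psi_1,\mathbf{1}\rangle$ (in which the factor $\psi_1(0)$, strictly positive since $0\in\mathcal{C}_\infty$ is connected to $\mathcal B_n$ inside $\Lambda_n$, cancels), one obtains $\Pr(S_t=x\mid\tau>n)\approx 2\psi_1(x)^2 \approx 2\rad^{-d}\phi_2^2((x-\mymathcal{x}_n)/\rad)$, giving~\eqref{eq:thm-bulk}. The analogous algebra at $t=n$ yields $\Pr(S_n=x\mid\tau>n)\approx 2\psi_1(x)/\langle\psi_1,\mathbf{1}\rangle$, whose rescaled limit is $2\phi_2/\int\phi_2 = 2\phi_1$, giving \eqref{eq:thm-endpt}. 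Finally, \eqref{eq:thm-ballloc} is immediate from $\sum_{x\notin\mathcal B_n}\psi_1(x)^2 = o(1)$ combined with the same calculation.

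The hardest step will be the quantitative eigenfunction convergence in the first paragraph. Theorem~\ref{main} only supplies the slightly smaller obstacle-free ball $\mathcal B_n$, of radius $\rad-\rad^{1-\kappa}$, so the thin outer annulus of width $\rad^{1-\kappa}$ may carry an irregular obstacle pattern that could a priori distort $\psi_1$ in the shell and spoil the $\ell^\infty$ convergence required by \eqref{eq:thm-endpt}--\eqref{eq:thm-bulk}. Ruling this out will require combining the sharp upper bound on $\lambda_1$ with a stability version of the discrete Faber--Krahn inequality and Agmon decay through the annulus. A secondary subtlety is that $0$ typically lies far from the island, so enlarging $\Lambda_n$ to include $0$ must not introduce competing near-extremal eigenvalues along the way; under $\widehat{\P}$, this is secured by the fact that no other obstacle-free ball of comparable size is likely to appear along the path from $0$ to $\mymathcal{x}_n$.
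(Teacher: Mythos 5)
Your proposal takes a genuinely different route from the paper, and unfortunately there is a substantive gap in it that you have identified yourself but misclassified as a ``secondary subtlety.''

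The fatal issue is the spectral gap on $\Lambda_n$. You propose to work with a domain $\Lambda_n$ containing both $0$ and $\mathcal B_n$. Since $|\mymathcal{x}_n|$ can be as large as $Cn(\log n)^{-2/d}$, this domain has diameter comparable to $n(\log n)^{-2/d}$, and by \eqref{eq:lambda*d} such a region typically contains \emph{many} (of order $(\log n)^c$) disjoint near-optimal islands with principal eigenvalue $\geq \lambda_*$, i.e.\ within $O(\rad^{-3})$ of the top. Each of these contributes a near-extremal eigenvalue of $\Lambda_n$, so the spectral gap $\lambda_2 - \lambda_1$ on $\Lambda_n$ is \emph{not} of order $\rad^{-2}$; it is a priori only of order $\rad^{-3}$ or worse, which is too small for either the bulk or the endpoint statement. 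Establishing uniqueness of the dominant island is precisely the ``one-city theorem'' of \cite{DX18}, whose proof is a substantial undertaking and cannot be recovered from a Faber--Krahn argument on $\Lambda_n$ alone. The paper takes the one-city theorem (Theorem~\ref{thm:Ball confinement}) as an input and thereby never has to do spectral analysis on a domain containing the origin. A related problem is your normalization factor $\psi_1(0)$: since $0$ is at distance $\sim n(\log n)^{-2/d}$ from $\mathcal B_n$, Agmon decay makes $\psi_1(0)$ exponentially tiny, while the lower eigenfunctions $\psi_i$ localized on competing islands nearer to $0$ would have $\psi_i(0)$ far less suppressed. So even granting a spectral gap, the claim that the $i=1$ term dominates the expansion of $\Pr(S_t=x,\tau>t)$ is not justified without additional work comparing the $\psi_i(0)$ factors.

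What the paper actually does is a two-scale argument. First, Theorem~\ref{thm:Ball confinement} and Lemma~\ref{hit ball} reduce everything to a walk conditioned to remain inside the \emph{local} island $\fregion$ (of diameter $(\log n)^{C_1}$) after a hitting time that is $O(|\mymathcal{x}_n|)$; the global travel from $0$ is handled probabilistically, not spectrally. Second, the spectral decomposition is applied only to $Q = P|_{\qbn}$, where $\qbn$ is a ball of radius $\approx\rad$ which, by Theorem~\ref{main}, is nearly obstacle-free. On this domain the spectral gap is $\gtrsim\rad^{-2}$ by the clean ball estimate (Lemma~\ref{eigenfunction} / Lemma~\ref{qbnproperty}), and the eigenfunction comparison with $\phi_1,\phi_2$ is elementary. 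The reduction from $\fregion$ to $\qbn$ is exactly what Corollary~\ref{loclemma} and the event $A_{t_1,t_2}$ accomplish. Your bipartite parity factor of $2$ and the Markov decomposition at time $t$ are both correct, and your intuition that Theorem~\ref{main} provides the clean ball needed for the eigenfunction limit is on target; but the spectral domain must be shrunk to the island before any of this works.
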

Let us explain the heuristics behind Theorem~\ref{thm:distribution}. The endpoint result~\eqref{eq:thm-endpt} is rather simple. As we explained after Theorem~\ref{ballthm}, the endpoint profile should correspond to an eigenfunction supported on $B(\mymathcal {x}_n,\rad)$. Since Theorem~\ref{main} ensures that there is no obstacle in that ball, the eigenfunction looks like $\phi_1$. The ``midpoint'' result~\eqref{eq:thm-bulk} is better explained in probabilistic language. From Theorem~\ref{ballthm}, we expect that the random walk stays around $B(\mymathcal {x}_n,\rad)$ after reaching it. If we knew that it stays in the ball, then the random walk after reaching $B(\mymathcal {x}_n,\rad)$ would scale to a Brownian motion conditioned to stay in a ball, for which the midpoint distribution profile looks like $\phi_2^2$ (roughly speaking, one factor $\phi_2(\tfrac{x - \mymathcal {x}_n}{\rad})$ represents the probability of going to $x$ without exiting the ball, and the other $\phi_2(\tfrac{x - \mymathcal {x}_n}{\rad})$ represents the probability of starting from $x$ and not exiting the ball).

However, there is a caveat to the last part. We do not know whether the random walk under $\Pr(\cdot \mid \tau>n)$ stays inside $B(\mymathcal {x}_n,\rad)$ after reaching it, even if we make the radius slightly larger. Rather we expect the random walk to make excursions of lengths up to $c\log n$ away from $B(\mymathcal {x}_n,\rad)$. This is in sharp contrast to the annealed results~\cite{Bolthausen94,Pov99,S91}. One of the key fact in our proof is that for any $t$ chosen as in Theorem~\ref{thm:distribution}, with high probability, the random walk stays in $B(\mymathcal {x}_n,(1+\epsilon)\rad)$ for long enough time before and after $t$ so that the above scaling argument remains valid. 

%\begin{remark}
%The limiting marginal distribution identified by Theorem \ref{thm:distribution} is consistent with the behaivour of a random walk conditioned to stay inside the ball $\mathcal B_n$ (see \eqref{eq:thm-ballopen}) up to time $n$ after reaching it, although in our case we expect the walk to make excursions of lengths up to $c\log n$ away from $\mathcal B_n$.
%\end{remark}
%The proof of Theorem \ref{thm:distribution} is based on eigenfunction decompositions and eigenvalue and eigenfunction estimates. \red{add some references and more heuristics?}
\subsection{Related works}
\label{sec:literature}
Let us first review some known results for the {\em random walk among Bernoulli obstacles} in the quenched setting. This model has been studied intensively by Sznitman in the context of a space-time continuum analogue called \textit{Brownian motion among Poissonian obstacles}. He developed a coarse graining scheme called the \textit{method of enlargement of obstacles} to analyze this model.
As an early application, the logarithmic asymptotics of the survival probability was first derived in \cite{Sznitman93b}, which supported the picture of localization to a ball of radius $\rad$.
Later in \cite{Sznitman96}, Sznitman proved the ``pinning effect'' for Brownian motion. It states that, conditioned on survival up to time $n$, the Brownian motion will go to one of $n^{o(1)}$ many islands, each of diameter $n^{o(1)}$ and then stay there afterwards. In~\cite[Theorem~4.6]{Szn97b}, it was further proved that when $d=2$, there is a vacant ball of radius almost $\rad$ whose Dirichlet principal eigenvalue is very close to that of the macrobox $(-n,n)^d$. However, it remained unclear whether the Brownian motion gets localized in that vacant ball or not.
%, which is the content of our Theorem~\ref{main}.
More results for this model can be found in the monograph \cite{Sznitman98}.

Recently, significant improvements on the localization picture have been made in~\cite{DX17,DX18} for the discrete space-time model. The first paper~\cite{DX17} improved the bound for the number of islands to $(\log n)^C$ for some constant $C$, and also proved that the random walk hits the union of these islands rather quickly and then stays there. This latter result is a kind of path localization that had been known only for $d=1$. Then in the second paper~\cite{DX18}, it is finally proved that the random walk gets localized in a single island, which is called a ``one city theorem'' in the parabolic Anderson model literature. Our Theorems \ref{main} and \ref{thm:distribution} give a more precise picture of the localization established in \cite{DX17,DX18}.

Our model fits in the general framework of the parabolic Anderson model, where one places random potential $\omega(x)$ at each $x \in \Z^d$, and consider the following quenched Gibbs measure for the random walk path $S$:
\begin{equation}
\label{eq:PAM}
	\frac{\Ex\big[\exp(\sum_{i=1}^n\omega(S_i));S \in \cdot\big]}{\Ex\big[\exp(\sum_{i=1}^n\omega(S_i))\big]}\,.
\end{equation}
Our model corresponds to the case where $\omega$ is independent and identically distributed random variables taking value $0$ or $-\infty$.
The model~\eqref{eq:PAM} is well-studied for $\omega$ unbounded from above. Among other things, it has recently been proved in~\cite[Theorem~2.9]{BKdS18} that when the distribution of $\omega$ has a double-exponential tail, the random walk localizes in an island of size $O(1)$ and the endpoint distribution converges to the principal eigenfunction of a certain Schr\"odinger operator. This corresponds to our Theorem~\ref{thm:distribution}. For distributions with heavier tails, it has been proved that the random walk localizes at a single point~\cite{KLMS09,LM12,ST14,FM14}. For the distributions with lighter tails, the localization picture is incomplete. More precisely, the following two cases remain open:
\begin{itemize}
 \item the distribution is unbounded from above with a tail lighter than the double exponential. This class is called \emph{almost bounded} in this context and studied in~\cite{HKM06}.
 \item the distribution bounded from above with a support different from $\{0, -\infty\}$. This class contains the Brownian motion among soft Poisson obstacles studied in~\cite{Sznitman98} and a more general situation studied in~\cite{BK01}.
\end{itemize}
For an up-to-date review of the parabolic Anderson model, we refer the reader to the recent monograph by K\"{o}nig \cite{Wolfgang16}.

\subsection{Organization of the paper and notation}
The rest of the paper is organized as follows. We will first collect some results from \cite{DX17,DX18} in Section \ref{sec:Preliminaries}, which we will need later in the proof. Then in Section \ref{sec:Proof Outline}, we introduce some intermediate results and use them to prove Theorems \ref{main}, \ref{thm:distribution}. Results introduced in Section \ref{sec:Proof Outline} will then be proved in Section \ref{sec:Ball Clearing} and \ref{sec:localization}.

Throughout the rest of the paper, $C, c$ will denote  positive constants depending only on $(d, p)$ whose numerical values may vary from line to line with $C$ typically a large constant and $c$ a small constant. For constants such as $C_1$, $c_2$ or $\kappa$ (which also depend only on $(d, p)$), their values will stay the same throughout the paper. Unless stated otherwise, all asymptotic statements concerning the law of $\ob$ would apply with $\widehat \P$ probability tending to one as $n\to\infty$.

\section{Preliminaries}
\label{sec:Preliminaries}
We recall here some basic results and tools developed in \cite{DX18} that we will need in our proof. The main result of \cite{DX18} was that conditioned on quenched survival up to time $n$, the random walk will be confined in an island of diameter $(\log n)^C$ during the time interval $[Cn(\log n)^{-2/d},n]$.
Furthermore, with high probability the random walk $S_t$ is in a ball whose radius is asymptotically equal to $\rad$. More precisely, the following was proved in \cite{DX18}, where
\begin{equation}
 S_{[k,l]}=\{S_i\colon k\le i \le l\}
\end{equation}
denotes the range of the random walk during the time interval $[k,l]$.
\begin{thmx}[{\cite[Lemma 6.11 and Remark 6.2.]{DX18}}]
\label{thm:Ball confinement}There exist constants $\rf{c_1,C_1>0}$, $v_* =v_*(\rf{n},\ob)\in B(0,Cn(\log n)^{-2/d})$ and $$\ct{\fregion} \in \fregion:=\text{the connected component in $B(v_*,(\log n)^{C_1}) \setminus \ob$ that contains $v_*$ }$$
such that the following holds:
Let
\begin{equation}
	\hbn := B(\ct{\fregion},(1 + \rad^{-c_1})\rad)\setminus \ob.
\end{equation}
Then with $\widehat\P$-probability tending to one as $n \to \infty$, we have $\hbn \subset \fregion$ and
\begin{equation}
\label{eq:hit-ball-old}
	\Pr(\tau_{\hbn}<C|\ct{\fregion}|, S_{[\tau_{\hbn},n]} \subset \fregion \mid \tau > n) \geq 1-  \exp(-\rad^{c})\,;
\end{equation}
furthermore, uniformly in $ t \in [C|\ct{\fregion}|,n]$,
\begin{equation}
\label{eq:singball-ball}
		\Pr(S_t  \in \hbn \mid \tau>n) \geq 1- \exp(-\rad^c)\,.
\end{equation}
\end{thmx}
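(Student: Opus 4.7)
The plan is to upgrade Theorem~\ref{ballthm} via spectral and environment-modification arguments, pinning down a canonical ball $\hbn$ inside a concrete island $\fregion$. Theorem~\ref{ballthm} already localizes $S_t$ in $B(\ct{\fregion},(1+\epsilon_n)\rad)$ at every deterministic time in $[C|\ct{\fregion}|,n]$; what remains is to (i) upgrade pointwise-in-time localization to path confinement during $[\tau_{\hbn},n]$, and (ii) replace the a priori ball from Theorem~\ref{ballthm} by a ball $\hbn$ whose center $\ct{\fregion}$ is read off from the geometry of the island itself.

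First, I would take $v_*$ to be (a lattice rounding of) the center given by Theorem~\ref{ballthm} and define $\fregion$ as the connected component of $B(v_*,(\log n)^{C_1})\setminus \ob$ containing $v_*$. Choosing $C_1$ large ensures $\fregion$ engulfs the ball from Theorem~\ref{ballthm}. To identify $\ct{\fregion}$, I would argue that the Dirichlet principal eigenvalue $\lambda(\fregion)$ of $-\tfrac{1}{2d}\Delta$ on $\fregion$ is close to $\lbs$, the eigenvalue of a Euclidean ball of radius $\rad$, since otherwise the quenched survival rate inside $\fregion$ would fall short of the rate $\exp(-n\lbs(1+o(1)))$ dictated by $\log \Pr(\tau>n)$ (cf.\ Sznitman's method of enlargement of obstacles). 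A quantitative Faber-Krahn type inequality then produces an essentially obstacle-free ball of radius at least $(1+\rad^{-c_1})\rad$ inside $\fregion$, whose center is taken to be $\ct{\fregion}$.

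Second, for the two probability bounds I would use an eigenfunction expansion on $\fregion$. Conditionally on survival up to time $n$, after a mixing time $O(|\ct{\fregion}|)$ the walk distribution on $\fregion$ is close (in total variation) to the principal Dirichlet eigenfunction $\phi_\fregion$, which is concentrated on $\hbn$ by the Anderson-localization heuristic (principal eigenfunctions live in the deepest well). This yields both the hitting statement \eqref{eq:hit-ball-old} and the uniform-in-$t$ bound \eqref{eq:singball-ball}. The ``staying-in-$\fregion$'' part of \eqref{eq:hit-ball-old} would come from bounding the cost of any excursion out of $\fregion$ by eigenvalue comparison: each such excursion must cross an obstacle barrier of diameter at least $(\log n)^{C_1}-(1+\rad^{-c_1})\rad$, paying a factor $\exp(-c\rad^c)$, and a union bound over at most $n$ excursions closes the estimate.

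The main obstacle is making the Faber-Krahn and eigenfunction-concentration estimates quantitative on an \emph{irregular} island $\fregion$ with random geometry. Concretely, one needs a spectral-gap estimate between the first and second Dirichlet eigenvalues on $\fregion$ of order at least $\rad^{-2-c_1}$, together with an environment-level statement ruling out a ``twin'' obstacle-free ball of comparable radius elsewhere in $B(v_*,(\log n)^{C_1})$. The latter is what forces the eigenfunction $\phi_\fregion$ to sit on a single ball, which in turn is what makes $\ct{\fregion}$ well-defined and yields the desired $\exp(-\rad^c)$ error rate.
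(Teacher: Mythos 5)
This theorem is quoted verbatim from [DX18, Lemma~6.11 and Remark~6.2]; the present paper offers no proof of it, so there is no internal argument to compare against and your sketch is really a reconstruction attempt of [DX18]. On its own merits, your outline captures some correct themes (eigenvalue comparison, an isoperimetric or Faber--Krahn step producing a nearly vacant ball, relaxation to the principal Dirichlet eigenfunction), but three points misstate the mechanics in ways that would derail a write-up.

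First, the geometry is off: the almost-vacant ball produced by the isoperimetric argument (Lemma~\ref{Onecity-Ball} here, proved in [DX18]) has radius $\rad$, not $(1+\rad^{-c_1})\rad$, and even that ball is only known at this stage to have \emph{low obstacle density}, not to be open. The inflated set $\hbn = B(\ct\fregion,(1+\rad^{-c_1})\rad)\setminus\ob$ is not an obstacle-free ball; it is the ball with obstacles deleted, which is used merely as a catchment region. Upgrading ``low density'' to ``actually open'', and only on the strictly smaller ball $B(\ct\fregion,\rad-\rad^{1-\kappa})$, is the content of Theorem~\ref{main}, which is new to this paper — a proof of Theorem~\ref{thm:Ball confinement} must not presuppose it. Second, the ``staying in $\fregion$'' half of \eqref{eq:hit-ball-old} cannot be controlled by bounding excursions across obstacle barriers: since $\fregion$ is by definition a connected component of $B(v_*,(\log n)^{C_1})\setminus\ob$, a surviving walk can only leave $\fregion$ by exiting the box $B(v_*,(\log n)^{C_1})$, never by passing through $\ob$ and coming back. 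What must be shown is that exiting through the box boundary and returning is too costly; indeed the introduction explicitly warns that the walk \emph{does} make excursions of length up to $c\log n$ outside $B(\ct\fregion,\rad)$ within $\fregion$, so ``concentration on $\hbn$ via the principal eigenfunction'' is too naïve for the path statement. Third, and most substantively, taking $v_*$ as a rounding of the center of Theorem~\ref{ballthm} is circular: Theorem~\ref{ballthm} and Theorem~\ref{thm:Ball confinement} are both outputs of [DX17,DX18], and in [DX18] the center $v_*$ is obtained by minimizing the sum of a crossing cost and a survival cost (measured via the local eigenvalue $\lambda_{\plr(x)}$, with the cutoff $\lambda_*$) over the polylogarithmically many candidate islands; the ensuing uniqueness-of-winner argument (``one city theorem'') is precisely the point. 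Your sketch acknowledges the need to rule out a ``twin'' ball but leaves that as an unresolved ``main obstacle,'' which is exactly the nontrivial part that [DX18] addresses.
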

\begin{figure}
\begin{tikzpicture}
\draw [thin] (0,0) circle (2cm);

\foreach \i in {1,...,500}
{ \fill ({random*360}:{3cm*sqrt(random}) circle [radius=0.7pt]; } ;
 \draw [loosely dotted,fill=white,fill opacity = 1] (0.35,0.65) circle (0.7cm);
\node[fill,opacity=1,fill=white,rounded corners=2ex] at (0,0) { $v_*$};
\node at (0.35,0.65) { {$\ct{\fregion}$}};
\node[fill,opacity=1,fill=white,rounded corners=2ex] at (-1.4,-0.6) { {$\fregion$}};

\end{tikzpicture}
\centering
\caption{The centers $v_*$ and $\ct{\fregion}$ in Theorem \ref{thm:Ball confinement}. Little dots are obstacles. Lemma~\ref{Onecity-Ball} asserts that $\ct{\fregion}$ is the center of an almost vacant ball.}
\label{figure0}

\end{figure}
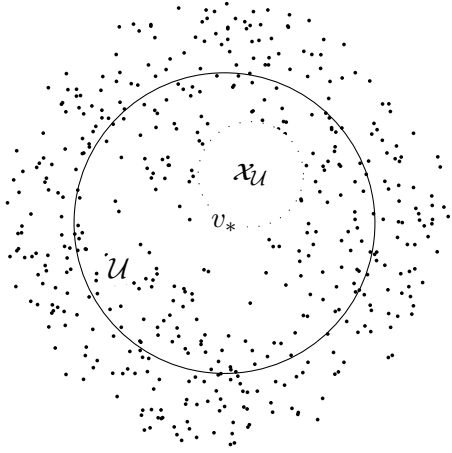

Theorem \ref{thm:Ball confinement} asserts that conditioned on survival up to time $n$, the walk reaches $\fregion$ with linear speed and then stays confined to $\fregion$ till time $n$ with high probability. Furthermore, at each $ t \in [C|\ct{\fregion}|,n]$, with high probability, the conditioned walk is localized in the ball $\hbn$ with center $\ct{\fregion}$, which is the same as $\mymathcal {x}_n$ in Theorem \ref{main}.

Let us briefly recall how $v_*$, and hence $\fregion$, is determined in the theorem above. Basically, there are small number of local regions like $\mathcal{U}$ which have an atypically large eigenvalue for the transition matrix (small survival cost) and are within a rather small distance from the origin (small crossing cost). The set $\mathcal{U}$ is then determined as the region which minimizes the sum of above two costs, see (3.5) in~\cite{DX18}.

We will need the following two properties for the set $\fregion$. The first one asserts that the eigenvalue for $\mathcal{U}$ is atypically large, which is expected from the above definition. The second asserts that there is a ball almost free of obstacles with radius $\rad$ in $\fregion$, which essentially follows from the first one and an quantitative isoperimetric inequality.
\medskip

\noindent \textbf{(a) The eigenvalue $\lambda_\fregion$.} For $A \subset \Z^d$, we let $\lambda_{A}$ denote the principal (largest) eigenvalue of $P|_{A}$, which is the transition matrix of the simple symmetric random walk  on $\Z^d$ killed upon exiting $A$. The following result gives a deterministic lower bound $\lambda_*$ on $\lambda_\fregion$ and shows that it is very close to $1$. In particular, there are at most $(\log n)^C$ many such islands in $B(0,n)$ with eigenvalues larger than $\lambda_*$.

\begin{lemma}
\label{lambda*}
There exists
 \begin{equation}
 \label{eq:lambda*lb}
 	\lbs  = \lambda_*(n,d,p)\geq 1 - \mb \rad^{-2} - C_* \rad^{-3}\,,
 \end{equation}
where $\mb$ is the first Dirichlet-eigenvalue of $-\tfrac{1}{2d}\Delta$ in the unit ball and $C_*$ is a constant depending only on $(d,p)$ such that \begin{equation}
\label{eq:lambdaU>*}
 	\lim_{n \to \infty}\widehat\P(\lambda_\fregion \geq \lambda_*) = 1\,.
 \end{equation}
Furthermore, if we denote
\begin{equation}
\label{eq:def-plr}
 	\plr = B(0,(\log n)^{C_1}) \setminus \ob\,,
 \end{equation}
 then for some constants $C,c>0$ and $n$ sufficiently large,
\begin{equation}
 \label{eq:lambda*d}
	n^{-d}(\log n)^{c} \leq \P(\lambda_\plr \geq \lambda_*) \leq n^{-d}(\log n)^{C}\,.
\end{equation}
\end{lemma}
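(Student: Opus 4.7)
The plan is to define $\lambda_*$ explicitly via the principal eigenvalue of the simple random walk on a specific obstacle-free ball of radius slightly below $\varrho_n$, and verify the three claims in turn. Fix $c_0 > 0$ (to be chosen later), let $\widetilde\varrho_n$ be the largest integer with $p^{|B(0,\widetilde\varrho_n)|} \geq n^{-d}(\log n)^{c_0}$, and set $\lambda_* := \lambda_{B(0,\widetilde\varrho_n)}$. From $|B(0,r)| = \omega_d r^d + O(r^{d-1})$ and the definition of $\varrho_n$ one checks $\varrho_n - \widetilde\varrho_n = O(1)$. The standard discrete-to-continuous eigenvalue comparison between $P|_{B(0,r)}$ and the rescaled continuous Dirichlet-Laplacian on the unit Euclidean ball gives $\lambda_{B(0,r)} = 1 - \mu_B r^{-2} + O(r^{-3})$ as $r \to \infty$; substituting $r = \widetilde\varrho_n$ and expanding in $\varrho_n^{-1}$ yields \eqref{eq:lambda*lb}.

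For the lower bound in \eqref{eq:lambda*d}, domain monotonicity gives: on $\{B(0,\widetilde\varrho_n) \cap \ob = \emptyset\}$ we have $B(0,\widetilde\varrho_n) \subseteq \plr$, hence $\lambda_\plr \geq \lambda_*$, and $\P(\lambda_\plr \geq \lambda_*) \geq p^{|B(0,\widetilde\varrho_n)|} \geq n^{-d}(\log n)^{c_0}$. The matching upper bound is the main technical obstacle. I would invoke a discrete quantitative Faber--Krahn inequality on $\Z^d$: any $A \subset \Z^d$ with $\lambda_A \geq \lambda_*$ must contain at least $|B(0,\widetilde\varrho_n)| - O(1)$ sites arranged close in Hausdorff distance to a translate of $B(0,\widetilde\varrho_n)$. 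This reduces $\{\lambda_\plr \geq \lambda_*\}$ to the existence of a ball of radius $\widetilde\varrho_n$ centered in $B(0,(\log n)^{C_1})$ that is obstacle-free except for $O(1)$ sites; a union bound over the $O((\log n)^{dC_1})$ possible centers and the polylogarithmically many configurations of the missing sites yields $\P(\lambda_\plr \geq \lambda_*) \leq n^{-d}(\log n)^C$.

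Finally, \eqref{eq:lambdaU>*} combines the lower bound just proved with the construction of $\fregion$ from \cite{DX18}. Tile $B(0, Cn(\log n)^{-2/d})$ by $\Theta(n^d(\log n)^{-dC_1-2})$ disjoint translates of $B(0,(\log n)^{C_1})$; by independence of the Bernoulli environment on disjoint tiles (and negligibility of the local $\widehat\P$ conditioning on most of them), the probability that some tile contributes a region with eigenvalue $\geq \lambda_*$ is $\geq 1 - \exp(-(\log n)^{c_0 - dC_1 - 2})$, which tends to $1$ provided $c_0$ is chosen larger than $dC_1 + 2$. Such a competitor has crossing cost at most $O(n \varrho_n^{-4})$, so the minimality of $\fregion$ in the survival-plus-crossing cost optimization of \cite{DX18} forces $\lambda_\fregion \geq \lambda_* - O(\varrho_n^{-4})$; redefining $\lambda_*$ by subtracting $C\varrho_n^{-4}$ (absorbed into $C_*$ in \eqref{eq:lambda*lb}) then gives the stated conclusion.
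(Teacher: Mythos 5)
Your proposal takes a fundamentally different route from the paper. The paper's proof is a citation argument: $\lambda_*$ is defined exactly as in \cite{DX18} (namely $\lambda_* := p_{\alpha_1}^{1/k_n}$, a quantile of the survival probability distribution over $k_n$ steps), and each of the three claims is read off from existing results in \cite{DX17,DX18}, with only a minor adaptation (changing the box size $R$ to $(\log n)^{C_1}$) for the upper bound in \eqref{eq:lambda*d}. You instead define $\lambda_* := \lambda_{B(0,\widetilde\varrho_n)}$ directly and attempt to re-prove everything from scratch. This is a clean idea, but it cannot simply replace the paper's $\lambda_*$: the construction of $\fregion$ and the lemmas you would need from \cite{DX18} (including \eqref{eq:lambdaU>*}) are all calibrated to the quantile-based $\lambda_*$, so a different $\lambda_*$ requires reproving that machinery, not citing it.

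The serious gap is in your upper bound for \eqref{eq:lambda*d}. You invoke a ``discrete quantitative Faber--Krahn inequality'' to reduce $\{\lambda_\plr \geq \lambda_*\}$ to the existence of a translate of $B(0,\widetilde\varrho_n)$ that is obstacle-free except for $O(1)$ sites, and then union-bound over polylogarithmically many centers and missing-site configurations. No such statement is available, and it is essentially false at this strength. Quantitative Faber--Krahn controls the Fraenkel asymmetry (symmetric difference measured as a \emph{fraction} of volume), not the number of exceptional sites up to $O(1)$. In the paper's own analysis (Lemma~\ref{Onecity-Ball} and \eqref{eq:obrad}), the best one extracts from $\lambda_\plr\geq\lambda_*$ plus a volume control is $|B(\ct{\plr},\varrho_n)\cap\ob|\leq C\epn^{1/4}\varrho_n^d$, i.e.\ up to a small polynomial-in-$\varrho_n$ fraction of the ball can still be occupied by obstacles. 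The number of such configurations is $\exp(\Theta(\epn^{1/4}\log(1/\epn)\varrho_n^d))$, which destroys your union bound. In fact, the assertion that the localizing ball is nearly obstacle-free is precisely the content of Theorem~\ref{main} and Proposition~\ref{po-clear}, one of the main contributions of the paper; assuming it here as a known isoperimetric fact is circular. Also, your argument for \eqref{eq:lambdaU>*} relies on a claim that minimality of $\fregion$ forces $\lambda_\fregion\geq\lambda_*-O(\varrho_n^{-4})$; the crossing-cost vs.\ survival-cost tradeoff in \cite{DX18} would at best give an error of order $\varrho_n^{-2}$ without a more careful choice-of-nearest-competitor argument, and the effect of the $\widehat\P$-conditioning on the tiling/independence step is waved away rather than handled as in \cite[Lemma 2.1]{DX18}.
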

\begin{proof}
This follows from results in \cite{DX17,DX18}. As in \cite{DX18}, we choose the cutoff $\lambda_* := p_{\alpha_1}^{1/k_n}$ where $k_n = (\log n)^{4-2/d}(\log\log n)^{2\11_{d=2}}$ and $p_{\alpha_1}$ (defined in \cite[(3.1)]{DX17}) is appropriately chosen according to some large quantile of the distribution of survival probability up to $k_n$ steps. Then \eqref{eq:lambdaU>*} can be found in \cite[Lemma 2.1]{DX18} and \eqref{eq:lambda*lb} can be found in \cite[Lemma 2.5]{DX18}. The lower bound in \eqref{eq:lambda*d} follows from \cite[(2.2)]{DX18}. The upper bound in \eqref{eq:lambda*d} can be proved using \cite[Lemma 3.3]{DX17} and adapting the proof of \cite[(3.4)]{DX17} by changing the value of $R$ there to $(\log n)^{C_1}$.
\end{proof}

\noindent \textbf{(b) An almost open ball in $\fregion$.} We want to find an open subset of $\fregion$ which is very close to a ball. This is accomplished by a coarse graining argument that first divides $\fregion$ into two types of mesoscopic boxes according to the local obstacles density. Intuitively, the random walk tends to stay in the low obstacle density region. The key ingredient in proving \eqref{eq:singball-ball} is that the low obstacle density region is very close to a ball. To elaborate on this, it is convenient to shift the center of localization to the origin and work with $\plr$ defined in \eqref{eq:def-plr}. Roughly speaking, $\fregion$ is the best among all possible translates of $\plr$ in $[-n,n]^d$.

Let $|\cdot|_\infty$ denote the $\ell^\infty$-norm on $\Z^d$ and denote the $\ell^\infty$ ball of radius $r$ (or box of side length $2r+1$) by
\begin{equation}
\label{eq:def-K-box}
	K(v,r) := \{x \in \Z^d : |x-v|_\infty \leq r\}\,.
\end{equation}
For $\epsilon \in (0,1)$, which may depend on $n$, we consider the following disjoint boxes that cover $\Z^d$: $$K(x,\lfloor \epsilon \rad \rfloor) \ for  \ x \in  (2\lfloor \epsilon \rad\rfloor+1)\Z^d\,.$$

\begin{defn}
\label{def:empty}
For $\epsilon \in (0,1)$, let $\emp_n(\epsilon)$ be the union of boxes $K(x,\lfloor \epsilon \rad\rfloor)$ that intersect $\plr$ (defined in \eqref{eq:def-plr}) such that  $|\ob \cap K(x,\lfloor \epsilon \rad\rfloor)| \leq \epsilon |K(x,\lfloor \epsilon \rad\rfloor)|$, where $|K(x,\lfloor \epsilon \rad\rfloor)|$ denotes the cardinality of the set $K(x,\lfloor \epsilon \rad\rfloor)$.
\end{defn}
By the same combinatorial calculation as in the proof of \cite[Lemma 5.2]{DX18}, one can show that typically the volume of the low obstacle density region $\emp_n(\epsilon)$ is at most $C \rad^d$. More precisely, we have the following lemma.

\begin{lemma}
\label{empty}
There exists a constant $c \in (0,1)$ such that for any $\epsilon \in( \rad^{-1/2},c)$,%$-\alpha \log \alpha\leq p/5$,
\begin{equation*}
%|\xmp(\epl,\alpha)| \leq \omega_d(2\epl)^{-d} +\epl^{-d}\alpha^{1/3} \AND
\P(|\emp_n(\epsilon)| \geq |B(0,\rad)|+ \epsilon^{1/2}\rad^{d}) \leq e^{-\rad}n^{-d}\,.
\end{equation*}
Also, there exists $\rf{C_2>0}$ such that
\begin{equation*}
%|\xmp(\epl,\alpha)| \leq \omega_d(2\epl)^{-d} +\epl^{-d}\alpha^{1/3} \AND
\P(|\emp_n(\epm)| \geq C_2 \rad^d) \leq n^{-100d}\,.
\end{equation*}
\end{lemma}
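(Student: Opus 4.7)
The plan is to union-bound over collections of low-density mesoscopic boxes, combined with a sharp single-box Chernoff estimate, following the scheme indicated by the authors' reference to \cite[Lemma~5.2]{DX18}. Let $m := (2\lfloor\epsilon\rad\rfloor+1)^d$ denote the volume of each box, and observe that only boxes meeting $B(0,(\log n)^{C_1})$ can contribute to $\emp_n(\epsilon)$, so there are at most $M = O((\log n)^{dC_1}/m)$ candidate boxes. For a single box $K$, the count $|K\cap\ob|$ is $\mathrm{Bin}(m,1-p)$-distributed, and the Chernoff bound gives
$$\P(|K\cap\ob|\le \epsilon m)\le \exp(-mI(\epsilon)) = p^m\exp(O(m\epsilon\log(1/\epsilon))),$$
using $I(\epsilon)=\epsilon\log\tfrac{\epsilon}{1-p}+(1-\epsilon)\log\tfrac{1-\epsilon}{p}=-\log p+O(\epsilon\log(1/\epsilon))$ as $\epsilon\to 0$. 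Since disjoint boxes see independent obstacles, the number $N := |\emp_n(\epsilon)|/m$ of low-density boxes satisfies, for every integer $k$,
$$\P(N\ge k)\le \binom{M}{k}\bigl(p^m\exp(O(m\epsilon\log(1/\epsilon)))\bigr)^k.$$

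For the first (sharp) inequality I would take $k$ to be the least integer with $mk\ge |B(0,\rad)|+\epsilon^{1/2}\rad^d$. From the definition of $\rad$ one has $|B(0,\rad)| = d\log_{1/p}n + O(\rad^{d-1})$, so $p^{|B(0,\rad)|}\le n^{-d}\exp(O(\rad^{d-1}))$, while the excess contributes $p^{\epsilon^{1/2}\rad^d} = \exp(-|\log p|\epsilon^{1/2}\rad^d)$. For $\epsilon\in(\rad^{-1/2},c)$, the gain $|\log p|\epsilon^{1/2}\rad^d \ge |\log p|\rad^{d-1/4}$ dominates (i) the boundary error $O(\rad^{d-1})$, (ii) the Chernoff correction $O(mk\,\epsilon\log(1/\epsilon)) = O(\rad^d\epsilon\log(1/\epsilon))$, which is handled by taking $c$ small since $\epsilon\log(1/\epsilon)=o(\epsilon^{1/2})$, and (iii) the union-bound factor $\log\binom{M}{k}\le k\log M = O(\epsilon^{-d}\log\log n) = O(\rad^{d/2}\log\log n)$. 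The residual gain exceeds $\rad$, yielding $e^{-\rad}n^{-d}$. For the second inequality I would instead take $k$ with $mk\ge C_2\rad^d$, so that $p^{mk}\le n^{-dC_2/\omega_d}$; choosing $C_2$ large enough absorbs all lower-order factors and yields $n^{-100d}$.

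The main obstacle will be pinning down the leading constant $|B(0,\rad)|$ sharply enough: the baseline $p^{|B(0,\rad)|}$ must be controlled up to multiplicative errors exponentially smaller than $\epsilon^{1/2}\rad^d$, so that the excess genuinely delivers the prefactor $e^{-\rad}$. The exponent $1/2$ on $\epsilon$ in the statement is dictated precisely by balancing, at the low end $\epsilon\sim\rad^{-1/2}$, the binomial-entropy cost $\rad^{d/2}\log\log n$ against the excess, and, at the high end $\epsilon\sim c$, the Chernoff correction $\rad^d\epsilon\log(1/\epsilon)$ against the excess. Once these two balances are respected, the remainder is routine bookkeeping that parallels \cite[Lemma~5.2]{DX18}.
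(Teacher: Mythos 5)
Your approach matches the paper's intended one: the paper simply defers to the combinatorial calculation of \cite[Lemma~5.2]{DX18}, and your Chernoff-plus-union-bound over mesoscopic box collections (single-box cost $\approx p^m\exp(O(m\epsilon\log(1/\epsilon)))$, then $\binom{M}{k}$ choices of $k$ low-density boxes among $M=O((\log n)^{dC_1}/m)$ candidates) is precisely that calculation. The bookkeeping you identify — the excess $|\log p|\,\epsilon^{1/2}\rad^d\gtrsim\rad^{d-1/4}$ dominating the lattice-boundary error $O(\rad^{d-1})$, the Chernoff correction $O(\rad^d\epsilon\log(1/\epsilon))$, and the union-bound entropy $O(\epsilon^{-d}\log\log n)$ uniformly for $\epsilon\in(\rad^{-1/2},c)$, with a larger $C_2$ absorbing everything for the second estimate — is exactly what is needed.
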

The following lemma is one of the key ingredients in proving Theorem \ref{thm:Ball confinement}. It says that if $\lambda_\plr$ is grater than or equal to $\lambda_*$ (which is close to $\lambda_{B(0,\rad)}$, see \eqref{eq:lambda*lb}), then typically $\emp_n(\epsilon)$ is very close to a ball of radius $\rad$.
\begin{lemma}[{\cite[Lemmas 5.2--5.9]{DX18}}]
\label{Onecity-Ball}
Let $\lambda_*$ be as in Lemma \ref{lambda*}. There exists $c_2>0$ sufficiently small such that if we denote\begin{equation}
\label{eq:def-epn}
	\epn := \rad^{-c_2} \,,
\end{equation} and assume that $ \lambda_{\plr} \geq \btn $ and
\begin{align}
|\emp_n(\epsilon)| \leq |B(0,\rad)| + \epsilon^{1/2} \rad^d, \text{ for } \epsilon = \varepsilon^{1/2}_n,\epn,\varepsilon^{2}_n\,,\label{eq:empty prob E}
\end{align}
then there exists $\ct{\plr} \in \plr$ such that
\begin{equation}
\label{eq: Ball and E}
	|B(\ct{\plr},\rad) \triangle \emp_n(\epsilon)| \leq \epn^{1/4} \rad^d\,,
\end{equation}
where $\triangle$ stands for the symmetric difference of two sets.
\end{lemma}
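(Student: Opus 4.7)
The plan is to reduce the claim to a quantitative Faber--Krahn stability statement, via a preliminary localization showing that the principal Dirichlet eigenvalue of $\plr$ is essentially carried by the low-density region $\emp_n(\epsilon)$. In any mesoscopic box $K(x,\lfloor \epsilon \rad\rfloor)$ whose obstacle density exceeds $\epsilon$, a capacity- or Cheeger-type estimate yields a local principal Dirichlet eigenvalue bounded above by $1 - c\epsilon^{2/d}$. Since $c_2$ is small, $\epsilon^{2/d}$ dominates the gap $1 - \lambda_\plr = O(\rad^{-2})$, so the principal eigenfunction $\phi$ of $\plr$ places only a small fraction of its $L^2$ mass in such boxes; a Rayleigh-quotient calculation with $\phi\,\mathbbm{1}_{\emp_n(\epsilon)}$ and a smooth cut-off near $\partial \emp_n(\epsilon)$ then yields a quantitative bound of the form $\lambda_{\emp_n(\epsilon)} \geq \lambda_\plr - O(\epsilon^{1/2}\rad^{-2})$.

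By the discrete Faber--Krahn inequality combined with $\lambda_{B(0,r)} = 1 - \mb r^{-2} + O(r^{-3})$, every $A \subset \Z^d$ satisfies $\lambda_A \leq 1 - \mb(|A|/\omega_d)^{-2/d} + O(|A|^{-3/d})$. Applying this to $A = \emp_n(\epsilon)$, and combining with the previous step and the hypothesis $\lambda_\plr \geq \lambda_* \geq 1 - \mb \rad^{-2} - C_*\rad^{-3}$, forces $|\emp_n(\epsilon)| \geq |B(0,\rad)| - C\epsilon^{1/2}\rad^d$. Together with the upper bound hypothesis $|\emp_n(\epsilon)| \leq |B(0,\rad)| + \epsilon^{1/2}\rad^d$, the volume of $\emp_n(\epsilon)$ is pinned to $|B(0,\rad)|$ up to error $O(\epsilon^{1/2}\rad^d)$. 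I would then invoke a discrete analogue of the Brasco--De Philippis--Velichkov stability version of Faber--Krahn: if $A \subset \Z^d$ satisfies $|A| = \omega_d r^d(1 + O(\epsilon^{1/2}))$ and $\lambda_A \geq \lambda_{B(0,r)} - O(r^{-2}\epsilon^{1/2})$, then some $v \in \Z^d$ has $|A \triangle B(v,r)| \leq C\epsilon^{\beta} r^d$ for an explicit $\beta > 0$. Choosing $\beta$ so that $C\epsilon^{\beta} \leq \epn^{1/4}$ for each $\epsilon \in \{\epn^{1/2}, \epn, \epn^{2}\}$ (possible when $c_2$ is taken small enough) and setting $\ct{\plr} := v$ gives \eqref{eq: Ball and E}. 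The three centers produced for the three choices of $\epsilon$ must agree, since each $\emp_n(\epsilon)$ contains a ball of radius $(1-o(1))\rad$.

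The hardest step is the discrete quantitative Faber--Krahn stability in the mesoscopic regime, with exponents sharp enough to produce the target $\epn^{1/4}\rad^d$. The continuous counterpart is well understood, but lifting it to lattice sets $A = \emp_n(\epsilon)$ that are specified only up to mesoscopic scale, while tracking all discretization and regularization errors, is the heart of the proof. A secondary challenge is that the local eigenvalue estimate used in the first step must be deterministic (holding for every configuration of density $\geq \epsilon$, not just the typical one), forcing a worst-case capacity argument rather than a probabilistic one.
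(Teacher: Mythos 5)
This lemma is quoted verbatim from \cite[Lemmas 5.2--5.9]{DX18}; the present paper does not reprove it, but the surrounding text (end of Section~2, paragraph labeled ``(b) An almost open ball in $\fregion$'') makes the route in \cite{DX18} explicit: coarse-grain into mesoscopic boxes, concentrate the principal eigenfunction on the low-density region $\emp_n(\epsilon)$, and then deduce ball-shape from a \emph{quantitative isoperimetric inequality} applied to $\emp_n(\epsilon)$. Your proposal is genuinely different at the decisive step: after the (correct, and essentially identical) preliminaries --- concentration of the eigenfunction on $\emp_n(\epsilon)$ via a Cheeger-type argument in high-density boxes, a Rayleigh-quotient comparison $\lambda_{\emp_n(\epsilon)} \geq \lambda_\plr - O(\epsilon^{1/2}\rad^{-2})$, and a Faber--Krahn volume lower bound pinning $|\emp_n(\epsilon)|$ to $|B(0,\rad)|$ up to $O(\epsilon^{1/2}\rad^d)$ --- you close the argument by invoking a discrete analogue of the sharp quantitative Faber--Krahn stability of Brasco--De Philippis--Velichkov, rather than the geometric isoperimetric stability used in \cite{DX18}.

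That final step is where the proposal has a genuine gap. A discrete quantitative Faber--Krahn stability theorem, at the precision you need (Fraenkel asymmetry $\lesssim \epsilon^{1/4}$ from an eigenvalue deficit $\lesssim \epsilon^{1/2}\rad^{-2}$, uniformly over all lattice sets of volume $\asymp \rad^d$), is not available in the literature and is not an ``invoke and cite'' result. The continuum theorem does have the right exponent, but transporting it to mesoscopic lattice sets introduces boundary-layer and discretization errors of order $\epsilon\rad^d$ per unit of boundary; controlling these requires exactly the kind of perimeter estimate on $\emp_n(\epsilon)$ that the isoperimetric route develops directly. In other words, a careful discretization of Faber--Krahn stability would effectively have to pass through the quantitative isoperimetric inequality plus a Cheeger/capacity perimeter bound anyway --- which is the route \cite{DX18} takes. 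There is also a secondary issue upstream: the bound $\lambda_A \leq 1 - \mb(|A|/\omega_d)^{-2/d} + O(|A|^{-3/d})$ for \emph{arbitrary} $A \subset \Z^d$ is itself a nontrivial asymptotic discrete Faber--Krahn statement; it is true but needs an argument (e.g.\ transplanting to the continuum via piecewise-linear interpolation and tracking the $O(|A|^{-3/d})$ error), not a one-line citation. Your preliminary steps are sound and parallel \cite[Lemma~5.5]{DX18}, but as written the proposal delegates the core difficulty to an unproven discrete stability theorem whose proof is comparable in length and delicacy to the entire argument of \cite[Lemmas 5.6--5.9]{DX18}.
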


It follows immediately that the volume proportion  of obstacles in $B(\ct{\plr},\rad)$ is very small:
\begin{equation}
\label{eq:obrad}
	|B(\ct{\plr},\rad) \cap \ob|  \leq |B(\ct{\plr},\rad) \triangle \emp_n(\epsilon)| + \epn |\emp_n(\epsilon)| \leq C\epn^{1/4} \rad^d\,,
\end{equation}
which is the starting point of our proof. In particular, since $\fregion \subset B(0,n)$, we can deduce from Lemma \ref{empty} that with high probability, $\fregion$ satisfies the same volume control as in \eqref{eq:empty prob E}, and hence the volume proportion of obstacles in $B(\ct{\fregion},\rad)$ is very small.

\section{Proof Outline}
\label{sec:Proof Outline}
In this section, we list the key intermediate results and prove Theorem~\ref{main} and Theorem~\ref{thm:distribution} assuming those results.

\subsection{Ball Clearing}
By \eqref{eq:lambda*d}, we know that there are at most $(\log n)^{C}$ many balls of radius $(\log n)^{C_1}$ in $B(0,n)$ with eigenvalues at least $\lambda_*$. The fact that $B(\ct{\fregion},\rad - \rad^{1-\kappa})$ is open with high probability will then follow from the following proposition.
\begin{proposition}
\label{po-clear}Let $\plr$ be as in \eqref{eq:def-plr} and let $\lambda_*$ be as in Lemma \ref{lambda*}. Then there exist $\kappa \in (0,1)$ and $C>0$ such that for all $n$ sufficiently large,
		\begin{equation}
		\label{eq:po-clear}
			 \P( B(\ct{\plr},\rad - \rad^{1-\kappa}) \cap \ob \not = \varnothing \mid \lambda_\plr \geq \lambda_*)
			 \leq Ce^{-\rad^{1/3}} \,.
	\end{equation}
\end{proposition}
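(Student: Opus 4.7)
The strategy is an environment-switching argument that exploits the quantitative sensitivity of the principal Dirichlet eigenvalue $\lambda_\plr$ to obstacles located near the center of the island. By Lemma~\ref{Onecity-Ball} combined with Lemma~\ref{empty}, on the event $\{\lambda_\plr \geq \lambda_*\}$ the center $\ct{\plr}$ is well defined and $B(\ct{\plr},\rad)$ contains at most $\epn^{1/4}\rad^d$ obstacles, with failure probability much smaller than $\P(\lambda_\plr \geq \lambda_*) \geq n^{-d}(\log n)^c$. I plan to union bound over the possible values $v$ of $\ct{\plr}$, which lie in $B(0,(\log n)^{C_1})$, and over the putative interior obstacle location $x \in B(v,\rad-\rad^{1-\kappa})$. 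This reduces matters to bounding $\P(x\in\ob,\,\lambda_\plr\geq\lambda_*)$ for each fixed pair $(v,x)$.

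For such a fixed $(v,x)$, I would introduce the swap map $T_{x,x'}$ that exchanges the open/closed status of $x$ with that of a site $x'$ in the thin annulus $A:=B(v,\rad+\log\rad)\setminus B(v,\rad)$. Since $T_{x,x'}$ preserves the total number of obstacles, $\P(\omega)=\P(T_{x,x'}(\omega))$. The key analytic inputs are: (i) the principal eigenfunction $\psi$ of $P|_\plr$ is pointwise close to the rescaled continuum eigenfunction $\rad^{-d/2}\phi_2(\cdot/\rad)$ on $B(v,\rad)$ (this should follow from Lemma~\ref{Onecity-Ball} together with standard Schr\"odinger operator stability); (ii) consequently $\psi(x)^2 \geq c\rad^{-d-2\kappa}$, since $\phi_2$ vanishes only linearly on $\partial B(0,1)$ and $\rad-|x-v| \geq \rad^{1-\kappa}$; and (iii) $\psi$ decays exponentially outside $B(v,\rad)$, so $\psi(x')^2 \leq \rad^{-d-3}$ on $A$. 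Via Rayleigh's principle (using a rank-one perturbation of $\psi$ as a test function), the swap raises $\lambda_\plr$ by at least $\delta := c\rad^{-d-2\kappa}$.

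A double-counting argument pairing each $\omega\in\{x\in\ob,\lambda_\plr\geq\lambda_*\}$ with its roughly $p|A|\sim\rad^{d-1}\log\rad$ valid swap targets then yields
\[
   \P(x\in\ob,\,\lambda_\plr\geq\lambda_*) \;\leq\; \tfrac{1-p}{p}\,\P(\lambda_\plr\geq\lambda_*+\delta).
\]
The final ingredient is a sensitivity refinement of the upper bound in \eqref{eq:lambda*d}: raising the threshold by $\delta$ forces the nearly-free ball inside $\plr$ to grow by roughly $\rad^{d+2}\delta$ extra sites (via Faber--Krahn stability applied to $\lambda_{B(0,R)} = 1 - \mb R^{-2} + O(R^{-3})$), each costing a probability factor $p$. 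This gives $\P(\lambda_\plr\geq\lambda_*+\delta) \leq e^{-c\rad^{d+2}\delta}n^{-d}(\log n)^C = e^{-c\rad^{2-2\kappa}}n^{-d}(\log n)^C$. For $\kappa$ sufficiently small, this factor comfortably absorbs the $\rad^d(\log n)^{dC_1}$ union bound and the $n^{-d}(\log n)^c$ denominator in $\P(\cdot\mid\lambda_\plr\geq\lambda_*)$, yielding the required $Ce^{-\rad^{1/3}}$ bound.

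The main obstacle is the sensitivity refinement of \eqref{eq:lambda*d} invoked in the last step: the existing proof only gives the right order for $\P(\lambda_\plr\geq\lambda_*)$ itself, whereas here one needs a quantitative Faber--Krahn comparison with the correct $\rad^{d+2}$ volume-to-eigenvalue exchange rate, uniform over the relevant range of $\delta$. A secondary challenge is converting the $L^1$-type control on $\emp_n$ from Lemma~\ref{Onecity-Ball} into the \emph{pointwise} lower bound $\psi(x)^2 \geq c\rad^{-d-2\kappa}$ throughout $B(v,\rad-\rad^{1-\kappa})$, which will probably require a discrete Harnack inequality or a local eigenfunction stability argument on a mesoscopic ball around $x$, together with the exponential tail estimate for $\psi$ outside $B(v,\rad)$ that underlies (iii).
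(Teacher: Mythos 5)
Your high-level strategy — compare $\P(\lambda_\plr\geq\lambda_*+\delta)$ with $\P(\lambda_\plr\geq\lambda_*)$ by modifying the obstacle configuration and balancing an eigenvalue gain against a probability cost — is exactly the paper's strategy, and you correctly identify the two required ingredients (a quantitative eigenvalue-perturbation bound and a tail-sensitivity bound for $\lambda_\plr$). But the specific implementation you propose has gaps that the paper's construction is precisely designed to circumvent.

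\textbf{The pointwise eigenfunction bound is circular.} Your step (ii) requires $\psi(x)^2\geq c\rad^{-d-2\kappa}$ for any $x$ at distance $\geq\rad^{1-\kappa}$ from $\partial B(\ct\plr,\rad)$, justified by Schr\"odinger-operator stability from Lemma~\ref{Onecity-Ball}. But Lemma~\ref{Onecity-Ball} only gives a \emph{volume} control, $|B(\ct\plr,\rad)\cap\ob|\leq C\epn^{1/4}\rad^d$. Those obstacles could all be concentrated in a thin shell of thickness $\epn^{1/4}\rad=\rad^{1-c_2/4}$ near the boundary; if $\kappa>c_2/4$ this shell encloses $x$, and $\psi$ could vanish at $x$ even though the volume control holds. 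So the pointwise bound at $x$ is essentially equivalent to knowing there is no wall of obstacles near $x$ — which is what the Proposition is trying to prove. The paper bypasses this with the radius index $\lj$ of \eqref{eq:def-J}: it finds the first shell where the obstacle count drops by a factor $c_5$, \emph{removes all obstacles in $B_{\delta,\lj-1}$ together} (so the domain after modification genuinely contains the clean ball $B_{\delta,\lj-1}$), and only then applies the eigenvalue-perturbation estimate (Lemma~\ref{RemoveOb} via Lemma~\ref{RemoveB}), which requires precisely that the removed set lie strictly inside a clean ball. Your one-at-a-time swap cannot use Lemma~\ref{RemoveB} because after removing only $x$ the domain need not contain any $B_{R_1}\ni x$.

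\textbf{The swap target $x'$ may hurt.} You place the compensating obstacle in $A=B(v,\rad+\log\rad)\setminus B(v,\rad)$ and assert $\psi(x')^2\leq\rad^{-d-3}$ by "exponential decay." But the eigenfunction of $P|_\plr$ has no reason to decay quickly just outside $B(v,\rad)$ — the paper stresses that the conditioned walk makes excursions of length up to $c\log n$ outside the ball, which is why $\hbn$ is defined with the slightly inflated radius $(1+\rad^{-c_1})\rad$. Adding an obstacle at $x'$ could therefore decrease $\lambda_\plr$ by an amount comparable to the gain from removing $x$, wiping out the net change. The paper avoids this entirely: it removes obstacles without replacement and simply pays the bounded factor $\bigl(\frac{1-p}{p}\bigr)^m\binom{|B(0,\rad)|}{m}$ (see the proof of Lemma~\ref{eigenvalue tail - 2 - not open}).

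\textbf{The tail-sensitivity bound is assumed, not proved.} Your last step invokes a "sensitivity refinement of \eqref{eq:lambda*d}" of the form $\P(\lambda_\plr\geq\lambda_*+\delta)\leq e^{-c\rad^{d+2}\delta}n^{-d}(\log n)^C$ via "Faber--Krahn stability." This is essentially the statement of Lemma~\ref{eigenvalue tail - 1}, which is one of the two main technical lemmas of the paper, and its proof (Section~\ref{sec:tail1}) is not a Faber--Krahn argument: it proceeds by locating \trueop\ boxes near $\partial\tos$, closing them, and quantifying both the probability gain (Lemma~\ref{toprob}) and the small eigenvalue loss (Lemma~\ref{iter}, again via an eigenfunction $L^2$-mass estimate at the boundary). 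The appendix comparison Lemma~\ref{eigenfunction} only applies to obstacle-free domains sandwiched between balls, not to $\plr$ itself, so it cannot directly yield the exchange rate $\rad^{d+2}$ for a porous domain. You acknowledge this as "the main obstacle," but it is not a refinement — it is the hard content of the argument, and the paper proves it through a different mechanism (adding obstacles) rather than the one you propose (removing them).

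In short: the overall framework is right and the scaling heuristics are sound, but the per-obstacle swap stalls on (a) circularity of the pointwise eigenfunction lower bound, (b) lack of control on $\psi$ outside $B(\ct\plr,\rad)$, and (c) reliance on an unproven tail-sensitivity statement that is itself the crux of the paper's Lemma~\ref{eigenvalue tail - 1}. The paper's $\lj$ construction, collective removal, and the truly-open-box argument for Lemma~\ref{eigenvalue tail - 1} are the concrete devices that close these gaps.
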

\begin{proof}[\bf Proof of Theorem \ref{main}]
In view of Theorem~\ref{thm:Ball confinement}, it suffices to prove \eqref{eq:thm-ballopen} with $\mymathcal x_n$ replaced by $\ct{\fregion}$.
Let us denote the translate of $\plr$ by
\begin{equation*}
  	\plr(x) = B(x,(\log n)^{C_1}) \setminus \ob\,.
\end{equation*}
By Proposition \ref{po-clear} and \eqref{eq:lambda*d},
$$ \P(\lambda_{\plr} \geq \lambda_*, B(\ct{\plr},\rad - \rad^{1-\kappa})\text{ is not open}) \leq Ce^{-\rad^{1/3}} (\log n)^C n^{-d} = o(n^{-d})\,.$$
This yields that with $\P$-probability tending to one as $n \to \infty$, for all $x \in B(0,n)$,
$$ \text{either } \lambda_{\plr(x)} < \lambda_* \text{ or }B(\ct{\plr(x)},\rad - \rad^{1-\kappa})\text{ is open}\,.$$
Recall that we proved in Lemma \ref{lambda*} that $\lambda_\fregion \geq \lambda_*$. Hence, $B(\ct{\fregion},\rad - \rad^{1-\kappa})$ is open with $\widehat\P$-probability tending to one as $n\to\infty$.
\end{proof}

The proof of Proposition \ref{po-clear} is based on the following heuristics.  Suppose $B(\ct{\plr},\rad - \rad^{1-\kappa})$ is not completely open, then we consider the operation that removes all obstacles inside $B(\ct{\plr},\rad - \rad^{1-\kappa})$. After performing such an operation, the eigenvalue $\lambda_\plr$ will increase, for example, from $\lambda_\plr$ to $\lambda_\plr +\delta$. Such an operation will yield the following inequality:
\begin{equation}
\label{eq:po-withob}
	\P(B(\ct{\plr},\rad - \rad^{1-\kappa}) \cap \ob \not = \varnothing,\lambda_\plr>\lambda_*) \leq C(n,\delta,d,p) \P(\lambda_\plr>\lambda_* +\delta)\,.
\end{equation}
Now if the tail of the probability distribution of $\lambda_\plr$ is not very heavy in the sense that
\begin{equation}
\label{eq:po-tail}
	\frac{\P(\lambda_\plr>\lambda_* +\delta)}{\P(\lambda_\plr>\lambda_* )} \ll 1\,,
\end{equation}
and the factor $C(n,\delta,d,p)$ in \eqref{eq:po-withob} is small compare to \eqref{eq:po-tail}, then we have
\begin{equation*}
	\P(B(\ct{\plr},\rad - \rad^{1-\kappa}) \cap \ob \not = \varnothing \mid \lambda_\plr>\lambda_*) \leq C(n,\delta,d,p) \frac{\P(\lambda_\plr>\lambda_* +\delta)}{\P(\lambda_\plr>\lambda_* )} \ll 1\,,
\end{equation*}
which yields Proposition \ref{po-clear}. Therefore it suffices to establish in a more precise manner the two ingredients \eqref{eq:po-withob} and \eqref{eq:po-tail}.
\begin{remark}
In \cite[Proposition 2.2]{DFSX18}, we have proved an analogue of Proposition \ref{po-clear} under the annealed polymer measure, where we used operations that modify the obstacle configurations and the random walk paths jointly. The difficulty in the quenched setting is that we need to identify a vacant ball in $\fregion$, for which we only know $\lambda_{\fregion} \ge \lambda_*$ from Lemma~\ref{lambda*}. This is why Proposition~\ref{po-clear} is formulated in terms of the eigenvalue and as a result, we can perform operations only on the obstacle configurations. Nevertheless, it is worth mentioning that operations that modify obstacle configurations or random walk paths play an important role both in this paper and in \cite{DFSX18}.
\end{remark}

The following result makes \eqref{eq:po-tail} precise and shows that the tail of the probability distribution of $\lambda_\plr$ is not too heavy.

\begin{lemma}
\label{eigenvalue tail - 1}
Let $\consa \geq 1$. There exists a constant $c_\consa>0$ depending only on $(\consa,d,p)$ such that for all $n$ sufficiently large, and for all $\beta \geq 1 - \consa \rad^{-2}$ and
	\begin{equation}
	\label{eq:e-range-1}
		\emm \in \big((\log\log n)^4\rad^{-d},c_\consa \big)\,,
	\end{equation}
	 we have
		\begin{equation}
		\P(\lambda_\plr \geq \beta) \leq e^{-\emm(\log(1/\epsilon))^{-3} \rad^d}\P(\lambda_\plr \geq \beta - \epsilon \rad^{-2}) + n^{-10d}\,.
	\end{equation}
\end{lemma}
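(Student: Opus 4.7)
The plan is to prove the tail comparison via a switching (entropy) argument on obstacle configurations. Given a typical $\omega$ with $\lambda_\plr(\omega)\ge \beta$, I would construct a large family of perturbations $\omega' = \omega\cup\mathcal S$, obtained by adding a set $\mathcal S$ of new obstacles chosen so that $\lambda_\plr(\omega')\ge\beta - \epsilon\rad^{-2}$. The likelihood ratio $\P(\omega')/\P(\omega) = ((1-p)/p)^{|\mathcal S|}$ together with a preimage count then compares $\P(\lambda_\plr\ge\beta)$ with $\P(\lambda_\plr\ge\beta-\epsilon\rad^{-2})$, and the prefactor $e^{-\epsilon(\log(1/\epsilon))^{-3}\rad^d}$ will come from the combinatorial entropy of the family $\{\mathcal S\}$.

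First I would absorb the additive $n^{-10d}$ term into the failure of the regularity hypothesis~\eqref{eq:empty prob E}, which by Lemma~\ref{empty} costs at most $e^{-\rad}n^{-d}$. On the complementary good event $\mathcal G$, Lemma~\ref{Onecity-Ball} supplies a center $\ct\plr$ such that $B := B(\ct\plr,\rad)$ is nearly vacant, and moreover the surviving obstacles in $B$ concentrate in the low-density boxes of $\emp_n(\epn)$, so that $|\ob\cap R| \le C\epn|R|$ for any region $R$ that sits inside these boxes.

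The decisive structural input is a boundary-layer estimate. I would confine the new obstacles to a thin shell $\mathcal R = B\setminus B(\ct\plr,\rad-\delta)$ with $\delta \asymp (\log(1/\epsilon))^{3/2}$. Because the first Dirichlet eigenfunction of a ball decays linearly at its boundary, one has $\phi_B(x)^2 \lesssim \delta^2/\rad^{d+2}$ throughout $\mathcal R$, and a Rayleigh-quotient comparison using the test function $\phi_\plr\cdot \11_{\plr\setminus\mathcal S}$ would yield
\[
\lambda_\plr - \lambda_{\plr\setminus \mathcal S} \;\le\; C\,|\mathcal S|\,\delta^2/\rad^{d+2}
\]
for \emph{any} $\mathcal S\subset \mathcal R$. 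This is at most $\epsilon\rad^{-2}$ exactly when $|\mathcal S| = k := \lceil c\epsilon(\log(1/\epsilon))^{-3}\rad^d\rceil$. Letting $\mathcal A(\omega)$ be the family of all $k$-subsets of open sites in $\mathcal R$, every image $\omega' = \omega\cup\mathcal S$ then lies in $\{\lambda_\plr\ge\beta-\epsilon\rad^{-2}\}$; summing $\sum_\omega \P(\omega)|\mathcal A(\omega)|$ and regrouping by $\omega'$ yields
\[
|\mathcal A|_{\min}\,\P(\{\lambda_\plr\ge\beta\}\cap\mathcal G) \;\le\; \left(\tfrac{p}{1-p}\right)^k\,|T^{-1}|_{\max}\,\P(\lambda_\plr\ge\beta-\epsilon\rad^{-2}),
\]
where $|T^{-1}|_{\max}\le\binom{|\ob(\omega')\cap\mathcal R|}{k}$ bounds the preimage multiplicity. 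The low-density estimate from the previous paragraph gives $|\mathcal A|_{\min}/|T^{-1}|_{\max} \ge (c/\epn)^k = e^{c_2 k\log\rad\,(1+o(1))}$; substituting $k$ delivers the claimed factor.

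The hard part will be the spectral input: one must show that the principal eigenfunction of the random domain $\plr$ genuinely inherits the boundary-layer decay of the ideal ball eigenfunction, a sharpening of Lemma~\ref{lambda*} that requires combining Lemma~\ref{Onecity-Ball} with delicate a priori bounds on $\phi_\plr$ near $\partial B$. The specific scaling $\delta\asymp(\log(1/\epsilon))^{3/2}$ encodes the trade-off between the eigenvalue cost per obstacle (proportional to $\delta^2/\rad^{d+2}$) and the volume budget in $\mathcal R$ together with the switching-ratio requirement, and this balance is what produces the power $(\log(1/\epsilon))^{-3}$ in the final exponent.
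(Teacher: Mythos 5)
Your high-level strategy (a switching argument that adds obstacles where the eigenfunction is small) is the right instinct and is shared by the paper, but the specific implementation has gaps that the paper's actual argument is designed to avoid, and I do not think they can be patched within your framework.

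First, the volume budget fails. You confine the $k=\lceil c\epsilon(\log(1/\epsilon))^{-3}\rad^d\rceil$ new obstacles to a shell $\mathcal R = B\setminus B(\ct\plr,\rad-\delta)$ with $\delta\asymp(\log(1/\epsilon))^{3/2}$, whose cardinality is only of order $\delta\rad^{d-1}$. Requiring $k\le|\mathcal R|$ forces $\epsilon\lesssim(\log(1/\epsilon))^{9/2}\rad^{-1}$, but the lemma must hold for all $\epsilon$ up to a constant, and even in the application to Proposition~\ref{po-clear} it is invoked with $\epsilon = q^{1-1/d}$, $q\in[\rad^{-d},C\epn^{1/4}]$, so $\epsilon$ can be as large as $\rad^{-c_2(d-1)/(4d)}$ with $c_2$ small --- well above $\rad^{-1}$. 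One cannot fix this by widening the shell: a wider shell means a larger $\Phi_\plr^2$ on $\mathcal R$, hence a smaller admissible $k$ for the eigenvalue constraint, and the required exponent $\epsilon(\log(1/\epsilon))^{-3}\rad^d$ is lost. Second, your low-density count $|\mathcal A|_{\min}/|T^{-1}|_{\max}\gtrsim(c/\epn)^k$ needs the shell to carry obstacle density $\lesssim\epn$, but Lemma~\ref{Onecity-Ball} only controls $|B\triangle\emp_n(\epsilon)|\le\epn^{1/4}\rad^d$, and that symmetric difference is naturally concentrated precisely in the annulus near $\partial B$, so the shell may not lie inside the low-density region at all. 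Third, as you acknowledge, the pointwise boundary-decay estimate for $\Phi_\plr$ (as opposed to $\phi_B$) is unproven and would be very delicate; the paper does not prove anything of this kind.

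The paper's proof sidesteps all three obstacles by changing what ``boundary'' means and what is being added. Instead of individual obstacles in a geometric shell, it closes $m$ \emph{\trueop} boxes of side $\ell\asymp(\log(1/\epsilon))^{1/d}$ (Definition~\ref{def:truly open}), so the probability gain per box is $e^{-c\ell^d}\asymp\epsilon^c$ by Lemma~\ref{toprob}, not a likelihood-ratio/low-density trade-off. The boxes are found near $\partial\tos$ (the boundary of the truly-open region), not $\partial B$: the bound $\sum_{u\in A}\Phi_\plr(u)\lesssim\consa_1\ell^2\rad^{-2}$ comes from the soft identity~\eqref{Eigenfunction Value Bound} and the fact that any walk entering $\tos^c$ dies within $\ell^2$ steps, so no geometric boundary-decay estimate for $\Phi_\plr$ is ever needed. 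Crucially, the construction in Lemma~\ref{Remove Truly open set} is \emph{iterative}: after closing each box, $\tos$ shrinks and its boundary moves inward, so the next box is chosen near the updated boundary. This lets the cumulative modified volume $m\ell^d\asymp\epsilon(\log(1/\epsilon))^{-2}\rad^d$ far exceed the $\rad^{d-1}$ carrying capacity of any fixed shell, which is exactly the volume constraint your proposal cannot satisfy.
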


One of the challenges in proving results of the type \eqref{eq:po-withob} is that how much $\lambda_\plr$ increases after removing the obstacles depends on the local configuration around the obstacles. For example, if the obstacles being removed are near the boundary of $B(\ct{\plr}, \rad)$ where there are a lot of unremoved obstacles, then the effect of the removal would be very small (more discussions about this issue can be found at the beginning of Section \ref{sec:tail2}). To quantify the effect of removing certain obstacles, we suppose $\lambda_\plr \geq \btn$ and \eqref{eq:empty prob E} holds. For $\delta \in(0,1/2)$, which may depend on n,  and nonnegative integer $k$, define
\begin{equation}
\label{eq:def-b-dk}
	B_{\delta,k} = B(\ct{\plr},(1 - \delta + 2^{-k}\delta)\rad)\,.
\end{equation}
Then for all $k\geq0$,
$B(\ct{\plr},(1 - \delta)\rad) \subset B_{\delta,k+1}\subset B_{\delta,k} \subset B(\ct{\plr},\rad)\,.$ For any $\delta>0$, if $B(\ct{\plr},(1 - \delta)\rad)$ is not completely open, then we define (for some constant $c_5 \in (0,1)$ to be chosen in Lemma \ref{RemoveOb})
\begin{equation}
\label{eq:def-J}
\lj = \lj_{\delta} :=  \min\{k \in \N:|\ob \cap B_{\delta,k}| \geq c_5|\ob \cap B_{\delta,k-1}|\} \,,
\end{equation}
which must be finite due to the assumption that $B(\ct{\plr},\rad - \rad^{1-\kappa})$ is not completely open (see Figure \ref{figure1}).

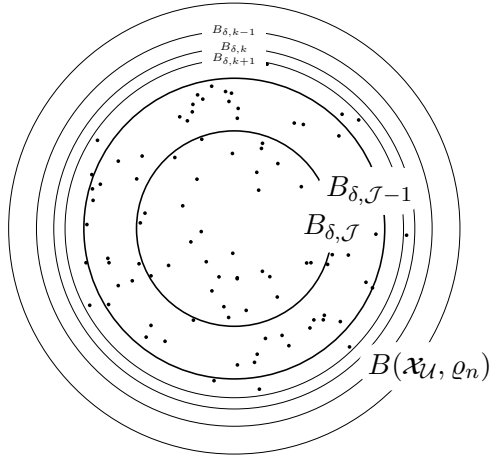
\begin{figure}
\begin{tikzpicture}
\draw [ultra thin] (0,0) circle (3cm);
\draw [ultra thin] (0,0) circle (2.25cm);
\draw [ultra thin] (0,0) circle (2.4cm);
\draw [ultra thin] (0,0) circle (2.63cm);
\draw [semithick] (0,0) circle (2cm);
\draw [semithick] (0,0) circle (1.3cm);
\foreach \i in {1,...,100}
{ \fill ({random*360}:{2.3cm*sqrt(random}) circle [radius=0.7pt]; } ;
     \node[fill,opacity=1,fill=white,rounded corners=2ex] at (1.3,0) {$B_{\delta, \mathcal J}$};
\node[fill,opacity=1,fill=white,rounded corners=2ex] at (1.8,0.5) {$B_{\delta, \mathcal J-1}$};
\node[fill,opacity=1,fill=white,rounded corners=2ex] at (2.6,-1.832) {$B(\ct{\fregion},\rad)$};
\node[fill,opacity=1,fill=white,rotate = 90] at (0,2.5) {......};
\node[fill,opacity=1,fill=white] at (0,2.63) {\tiny  \scalebox{.7} {$B_{\delta,k-1}$}};
\node[fill,opacity=1,fill=white] at (0,2.25) {\tiny  \scalebox{.7} {$B_{\delta,k+1}$}};
\node at (0,2.4) { \tiny \scalebox{.7} {$B_{\delta,k}$}};

\end{tikzpicture}
\centering
\caption{The balls $B_{\delta,k}$ and $B_{\delta,\lj}$ as defined in \eqref{eq:def-b-dk} and \eqref{eq:def-J}. Little dots are obstacles.}
\label{figure1}

\end{figure}

The following result makes \eqref{eq:po-withob} more precise and says that removing $m$ obstacles in $B_{\delta,\lj-1}$ will increase the eigenvalue $\lambda_\plr$ by $(m/\rad^d )^{1 - 1/d} \rad^{-2}$.

\begin{lemma}
\label{eigenvalue tail - 2 - not open}
Let $C_1$ be defined as in Theorem \ref{thm:Ball confinement} and \rf{$\delta = \rad^{-\kappa}$}. There exist constants $\kappa \in (0,1)$ \rf{and} $C >0$ such that \rf{the following holds} for all $n$ sufficiently large: \rf{For} any $1 \leq m \leq \rad^d$ \rf{and} $\beta \geq \btn$,
	\begin{equation}
	\label{eq:eigenvalue tail - 2 - not open}
		\P(\lambda_\plr \geq \beta , |\ob \cap B_{\delta,\lj-1}| = m,\eqref{eq:empty prob E}) \leq C \rad^{dC_1} \Big(\frac{C\rad^d}{m}\Big)^{m}\P(\lambda_\plr \geq \beta +  (m/\rad^d )^{1 - 1/d} \rad^{-2})	\,.
	\end{equation}
\end{lemma}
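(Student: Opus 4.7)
The plan is to use an obstacle-switching argument: for each environment $\omega$ satisfying the event on the left-hand side, I erase all $m$ obstacles lying inside $B_{\delta,\lj-1}$ to obtain a modified environment $\omega'$. Site-independence gives the exact probability ratio
\[
\P(\omega)=\bigl(\tfrac{1-p}{p}\bigr)^m\P(\omega'),
\]
and the key eigenvalue estimate (discussed below) places $\omega'$ in the larger-eigenvalue event $\{\lambda_\plr\geq\beta+(m/\rad^d)^{1-1/d}\rad^{-2}\}$. Summing over $\omega$ and using a multiplicity count for the inverse map then yields the stated bound.

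For the bookkeeping, given $\omega$ in the LHS event one extracts a pair $(y,A)=\bigl(\ct{\plr}(\omega),\,\ob(\omega)\cap B_{\delta,\lj(\omega)-1}(y)\bigr)$ with $|A|=m$, $A\subset B(y,\rad)$; and $\omega$ is reconstructed from $\omega'$ together with $(y,A)$. Hence
\[
\P(\text{LHS})\;\leq\;\sum_{y}\sum_{\substack{A\subset B(y,\rad)\\|A|=m}}\bigl(\tfrac{1-p}{p}\bigr)^m\,\P\bigl(\lambda_{\plr}\geq\beta+(m/\rad^d)^{1-1/d}\rad^{-2}\bigr).
\]
Since $\ct{\plr}\in B(0,(\log n)^{C_1})$, the number of candidates for $y$ is at most $\rad^{dC_1}$ (absorbing constants and logarithms into $C$); and $\binom{|B(y,\rad)|}{m}\leq (C\rad^d/m)^m$. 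I do \emph{not} need to separately enumerate $\lj$ because $(y,A)$ already determines the preimage of the map. Collecting yields the claimed prefactor $C\,\rad^{dC_1}(C\rad^d/m)^m$.

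The analytic heart of the proof is the lower bound
\[
\lambda_{\plr}(\omega')-\lambda_{\plr}(\omega)\;\geq\;c\,(m/\rad^d)^{1-1/d}\rad^{-2}.
\]
Under the assumed lower bound $\lambda_\plr(\omega)\geq\lbs$ and the volume control \eqref{eq:empty prob E}, Lemma~\ref{Onecity-Ball} says $\plr(\omega)$ differs from $B(\ct{\plr},\rad)$ by a set of volume at most $\epn^{1/4}\rad^d$, so the principal Dirichlet eigenfunction $\phi$ is close to the first eigenfunction of the ball, and in particular is of order $\rad^{-d/2}$ throughout the bulk. By the minimality condition \eqref{eq:def-J}, a uniform fraction of the $m$ obstacles in $B_{\delta,\lj-1}$ lies inside the strictly smaller ball $B_{\delta,\lj}$, which is well inside $B(\ct{\plr},\rad)$; all removed obstacles therefore sit in the bulk where $\phi$ is comparable to its maximum. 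A capacity/Faber--Krahn style bound on the gap produces the required shift.

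I expect this eigenvalue estimate to be the main obstacle. A naive first-order perturbation argument yields only
\[
\lambda_{\plr}(\omega')-\lambda_{\plr}(\omega)\;\gtrsim\;\sum_{x\in A}\phi(x)^2\;\sim\;m\,\rad^{-(d+2)},
\]
which is strictly weaker than the required $m^{1-1/d}\rad^{-(d+1)}$ by a factor $\rad/m^{1/d}\geq 1$. Recovering the correct exponent requires a genuinely collective estimate: the gap is controlled from below by (a weighted version of) the discrete Newtonian capacity of $A$, and any $m$-point set in a ball of radius $\rad$ has capacity at least $c\,m^{1-1/d}$ by a discrete condenser/isoperimetric inequality; multiplying by the squared bulk scale $\rad^{-d}$ of $\phi$ and the spectral gap scale $\rad^{-2}$ gives the desired answer. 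I would isolate this eigenvalue perturbation statement as a standalone lemma, to be proved in Section~\ref{sec:Ball Clearing}, and invoke it here.
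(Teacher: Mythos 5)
Your bookkeeping framework — erase the $m$ obstacles in $B_{\delta,\lj-1}$, use the exact probability ratio $(\tfrac{1-p}{p})^m$, and bound the multiplicity of the inverse map by $|B(0,2(\log n)^{C_1})|\binom{|B(0,\rad)|}{m}$ — is correct and is essentially what the paper does. (The paper phrases the inverse-map count via Claim~\ref{112341} and an explicit map $U\mapsto U\cup B_{\delta,\lj-1}$, but this is the same counting.) The substantive content of the lemma therefore sits entirely in the deferred eigenvalue perturbation estimate, which in the paper is Lemma~\ref{RemoveOb} (proved via the quantitative comparison Lemma~\ref{RemoveB} in the appendix), and your sketch of that estimate has gaps.

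First, the claim that ``all removed obstacles sit in the bulk where $\phi$ is comparable to its maximum'' is not true. The obstacles in $B_{\delta,\lj-1}$ can be as close as $\delta\rad 2^{-(\lj-1)}$ to the boundary of $B_{\delta,\lj-1}$, and $B_{\delta,\lj-1}$ itself reaches to within $\delta\rad=\rad^{1-\kappa}$ of $\partial B(\ct{\plr},\rad)$; worse, in the environment $\omega$ there may be many unremoved obstacles just outside the removed ones, in which case the principal eigenfunction is much smaller near $A$ than the linear-distance-to-boundary heuristic suggests. This is precisely the obstruction discussed at the start of Section~\ref{sec:tail2}, and it is the reason the paper does a two-step removal: first clear the entire annulus $B_{\delta,\lj-1}\setminus B_{\delta,\lj}$ (giving $\tcp=\plr\cup(B_{\delta,\lj-1}\setminus B_{\delta,\lj})$), and only then estimate the further gain from clearing $B_{\delta,\lj}$. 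In the intermediate environment $\tcp$ the removed points are separated from any obstacle by the cleared annulus, which is exactly what enables the Harnack chaining \eqref{eq:chaining} used to relate the eigenfunction near $A$ to its bulk value, at a polynomial cost in the annulus aspect ratio $(1-R_2/R_1)^{-C_d}\sim(4^{\lj}\delta)^{C_d}$. The definition of $\lj$ in \eqref{eq:def-J} then guarantees $|\ob\cap B_{\delta,\lj}|\ge c_5 m$, and in \eqref{eq:01815-1}--\eqref{eq:01815-2} the exponential-in-$\lj$ factors $(4^\lj\delta)^{-C_d}$ and $c_5^{-\lj/d}$ are traded off against each other, absorbed by taking $c_5$ and $\kappa$ small. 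Without the annulus clearing step, a Harnack argument is unavailable and no quantitative lower bound of this type is reachable.

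Second, the arithmetic in your capacity sketch does not produce the target. For $d\ge 3$ the discrete capacity of an $m$-point set is bounded below by $c\,m^{(d-2)/d}=c\,m^{1-2/d}$, not $m^{1-1/d}$. The correct count (and what Lemma~\ref{RemoveB} gives, up to the aspect-ratio and log factors) is $\lambda$-gap $\gtrsim \rad^{-d}\,|A|^{(d-2)/d}$, coming from the squared $\ell^2$-normalized bulk eigenfunction value $\sim\rad^{-d}$ times the capacity; this dominates $(m/\rad^d)^{1-1/d}\rad^{-2}$ exactly because $m\le\rad^d$, with no additional ``spectral gap scale $\rad^{-2}$'' factor to multiply by. As written, your product $m^{1-1/d}\cdot\rad^{-d}\cdot\rad^{-2}$ is short of the target by a factor $\rad$.
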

 Both Lemmas \ref{eigenvalue tail - 1} and \ref{eigenvalue tail - 2 - not open} are estimates on the tail distribution of $\lambda_\plr$, but in opposite directions. The common strategy in both proofs is obstacle modification. To prove Lemma \ref{eigenvalue tail - 1}, we judiciously add obstacles and show that we get a large gain in probability for the obstacle configuration but little decrease in $\lambda_\plr$. To prove Lemma \ref{eigenvalue tail - 2 - not open}, we judiciously remove obstacles and show that we get a large gain in $\lambda_\plr$ while the probability of the obstacle configuration changes little. We will prove Proposition \ref{po-clear}, Lemmas \ref{eigenvalue tail - 1} and \ref{eigenvalue tail - 2 - not open} in Section \ref{sec:Ball Clearing}.

\subsection{Random Walk Localization}

We know from Theorem \ref{thm:Ball confinement} that conditioned on survival up to time $n$, the random walk stays in $\fregion$ during the time interval $[C|\ct{\fregion}|,n]$ with high probability. To give a more detailed description for the random walk in this time window, it is convenient to consider the random walk conditioned to stay in $\fregion$ for a long time.

Recall from Theorem \ref{thm:Ball confinement} that $\hbn := B(\ct{\fregion},(1 + \rad^{-c_1})\rad) \setminus \ob$ is a ball of radius slightly larger than $\rad$. Knowing that $B(\ct{\fregion},\rad - \rad^{1-\kappa})$ is open with $\widehat \P$-probability tending to one as $n \to \infty$, it is straightforward to deduce the following result from \cite[(6.15)]{DX18}. The details will be given in Section \ref{sec:transition probability}.
\begin{lemma}
\label{pt-loc} There exist constants $C_3,c>0$ such that for any $\varepsilon>0$, the following holds with $\widehat\P$-probability tending to one as $n \to \infty$: For any $z$ and $t$ satisfying 
\begin{equation}
\label{eq:ass-pt-loc}
	\text{either } z \in B(\ct{\fregion},(1 - \varepsilon)\rad)\cap\fregion \text{ and }t \geq 0, \quad \text{   or   }\quad z \in \fregion\text{ and } t \geq \rad^{C_3}\,,
\end{equation}
and for all $m\ge t$, 
\begin{equation}
\label{eq:pt-loc}
	\Pr^z(S_{t} \not \in \hbn\mid \tau_{\fregion^c}>m) \leq \exp(-\rad^c).
\end{equation}
\end{lemma}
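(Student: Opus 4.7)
The plan is to deduce this lemma from the companion estimate \cite[(6.15)]{DX18}, which already provides \eqref{eq:pt-loc} for all $z\in\fregion$ and $t\geq\rad^{C_3}$ (the second alternative in \eqref{eq:ass-pt-loc}). The only new content is the first alternative, which removes the lower bound on $t$ at the cost of restricting $z$ to the deep interior $B(\ct{\fregion},(1-\varepsilon)\rad)\cap\fregion$; this is precisely where Theorem \ref{main} enters.

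Concretely, I would fix $z\in B(\ct{\fregion},(1-\varepsilon)\rad)\cap\fregion$ and $t\in[0,\rad^{C_3}]$. By Theorem \ref{main}, the ball $B(\ct{\fregion},\rad-\rad^{1-\kappa})$ is obstacle-free for $n$ large, so $z$ sits at distance $\gtrsim\varepsilon\rad$ from its boundary and lies well inside $\hbn$. Denoting by $\psi_1$ the principal $L^2$-normalized Dirichlet eigenfunction of $P|_\fregion$ with eigenvalue $\eta_1=\lambda_\fregion$, the basic identity
\[
\Pr^z(S_t\notin\hbn\mid\tau_{\fregion^c}>m) = \frac{\sum_{y\notin\hbn}p_t^\fregion(z,y)\,\Pr^y(\tau_{\fregion^c}>m-t)}{\Pr^z(\tau_{\fregion^c}>m)}
\]
can be analyzed via the $\psi_1$-Doob transform of the killed walk on $\fregion$. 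Spectral expansion of the survival probabilities shows that whenever $m-t\geq C\rad^2\log\rad$ the above ratio is at most $(1+o(1))\sum_{y\notin\hbn}q_t(z,y)$, where $q_t(z,y)=p_t^\fregion(z,y)\psi_1(y)/(\eta_1^t\psi_1(z))$ is the $t$-step Doob transition. This defines a reversible Markov chain with stationary measure $\psi_1^2$, for which the mass $\sum_{y\notin\hbn}\psi_1(y)^2$ is exponentially small by Theorem \ref{main} (see below). For $t$ past the Doob-chain mixing time $\sim\rad^2\log\rad$ this immediately yields the $e^{-\rad^c}$ bound; for $t$ below the mixing time, starting from $z$ deep in the clean ball, the Doob chain cannot reach $\hbn^c$ quickly because every nearest-neighbor path from $z$ to $\hbn^c$ has length $\gtrsim\varepsilon\rad$ and the $\psi_1$-bias pulls the walk toward the center, again giving $e^{-\rad^c}$ via a large-deviation estimate inside the clean ball. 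The complementary regime $m-t<C\rad^2\log\rad$ reduces to the above by conditioning on $S_{t'}$ at a suitable intermediate time $t'\in[t,m]$ with $m-t'\geq C\rad^2\log\rad$.

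The main obstacle in this plan is establishing the eigenfunction tail estimate $\sum_{y\notin\hbn}\psi_1(y)^2\leq e^{-\rad^c}$, i.e.\ the quantitative concentration of the principal Dirichlet eigenfunction in the localization ball. This would follow from Theorem \ref{main} by comparing $\psi_1$ with the principal eigenfunction on the clean ball $B(\ct{\fregion},\rad-\rad^{1-\kappa})$ (whose eigenvalue differs from $\lambda_\fregion$ by $O(\rad^{-3})$ per \eqref{eq:lambda*lb} and Lemma \ref{Onecity-Ball}), combined with an Agmon-type decay estimate propagating smallness of $\psi_1$ through the obstacle-rich annulus $\fregion\setminus B(\ct{\fregion},\rad-\rad^{1-\kappa})$; the volume bound \eqref{eq:obrad} ensures that this annulus provides enough ``potential'' to force exponential decay.
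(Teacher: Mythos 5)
Your plan takes a genuinely different and considerably more involved route than the paper's. The paper's proof is essentially a one-liner: it observes that \cite[(6.15)]{DX18} already establishes \eqref{eq:pt-loc} under \emph{both} alternatives of \eqref{eq:ass-pt-loc}, provided one additionally has the pointwise lower bound $\Phi_\fregion(z)\geq c\varepsilon\rad^{-d}$ for $z\in B(\ct{\fregion},(1-\varepsilon)\rad)$; this hypothesis is then supplied by Lemma~\ref{phi-dist-bd}, whose proof uses Theorem~\ref{main} and the local CLT inside the cleared ball. You instead treat \cite[(6.15)]{DX18} as covering only the second alternative and propose to rebuild the first alternative from scratch via a $\psi_1$-Doob transform, spectral expansion, and an Agmon-type decay estimate. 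This misses the leverage that the conditional form of \cite[(6.15)]{DX18} offers, which is what makes the paper's argument short.

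Beyond the extra work, there is a genuine gap in your sketch. The Agmon-decay step asks the annulus $\fregion\setminus B(\ct{\fregion},\rad-\rad^{1-\kappa})$ to carry enough obstacle potential to force exponential decay of $\psi_1$, and you cite \eqref{eq:obrad} as the justification. But \eqref{eq:obrad} states the opposite: it says the obstacle \emph{density inside} $B(\ct{\fregion},\rad)$ is $o(1)$, so the part of the annulus lying inside the ball provides essentially no barrier. The region where obstacles are at typical density is only $\fregion\setminus B(\ct{\fregion},\rad)$, and an exponential concentration bound $\sum_{y\notin\hbn}\psi_1(y)^2\leq e^{-\rad^c}$ from that alone is a nontrivial claim the paper nowhere proves (nor needs). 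Similarly, the spectral-gap estimate of order $\rad^{-2}$ that your expansion requires is established in the paper only for $\qbn$ (Lemma~\ref{qbnproperty}), not for $\fregion$. So as written the proposal does not close; the intended statement is instead most naturally obtained by checking the hypothesis of \cite[(6.15)]{DX18} via Lemma~\ref{phi-dist-bd}, which is exactly where Theorem~\ref{main} enters.
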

We will strengthen this result in two steps :
\begin{itemize}
	\item[(a).] We will first prove that conditioned on $\{\tau_{\fregion^c}>m\}$, the probability that the random walk is at some site $x$ at a fixed time $\rad^2 \leq t \leq m$ is $O(\rad^{-d})$ uniformly in $x$.
	% In a ball of radius slightly smaller than $\rad$, we also have a uniform lower bound of order $\rad^{-d}$.
	Then combining with \eqref{eq:pt-loc}, we deduce that conditioned on $\{\tau_{\fregion^c}>m\}$, at any fixed time, with high probability the random walk will be localized in a ball of radius slightly smaller than $\rad$.
	\item[(b).]  We will then derive the limiting marginal distribution of the random walk at the end point and at a deterministic time in the bulk, conditioned on $\{\tau_{\fregion^c}>m\}$.
\end{itemize}

First we show that conditioned on $\{\tau_{\fregion^c}>m\}$, the random walk will hit the the deep interior of the ball $B(\ct{\fregion},\rad)$ within $(\log n)^C$ steps. This allows us to focus on the random walk starting from the deep interior of the ball.
\begin{lemma}
\label{hit ball}
There exist $b_2 \in (0,1)$ and $\rf{C_4,c >0}$ ($C_4$ to be defined in Lemma \ref{hit tOmega-as}) such that with $\widehat\P$-probability tending to one as $n \to \infty$, for all $m \geq  \rad^{C_4}$ and $u \in \fregion$,
\begin{equation}
\label{eq:hitball}
	\Pr^u(\tau_{B(\ct{\fregion},b_2\rad)} \geq  \rad^{C_4}  \mid \tau_{\fregion^c} > m) \leq \exp(-\rad^c)\,.
\end{equation}
\end{lemma}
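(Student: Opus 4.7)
The approach is to exploit a spectral gap between $\lambda_\fregion$ and the principal Dirichlet eigenvalue $\lambda_W$ on the annular region $W := \fregion \setminus B(\ct{\fregion}, b_2 \rad)$. Heuristically, Theorem~\ref{main} and Lemma~\ref{Onecity-Ball} say the principal eigenfunction on $\fregion$ is concentrated inside $B(\ct{\fregion}, \rad)$, so excising a central sub-ball of radius $b_2 \rad$ should strictly reduce the top eigenvalue by $\Theta(\rad^{-2})$. To reduce to starting points inside $\hbn$, I would first invoke Lemma~\ref{pt-loc} at time $\rad^{C_3}$ and the Markov property: this costs only $\exp(-\rad^c)$ in probability and reduces matters to proving that, for every $v \in \hbn$ and every $m' \geq s := \rad^{C_4} - \rad^{C_3}$,
\begin{equation*}
  \Pr^v\bigl(\tau_{B(\ct{\fregion}, b_2 \rad)} > s \mid \tau_{\fregion^c} > m'\bigr) \leq \exp(-\rad^c).
\end{equation*}

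Second, I would establish the quantitative spectral gap $\lambda_W \leq \lbs - c_0 \rad^{-2}$ for a suitably chosen $b_2 \in (0,1)$ and some $c_0 > 0$. The input from Theorem~\ref{main} (that $B(\ct{\fregion}, \rad - \rad^{1-\kappa})$ is genuinely obstacle-free) together with Lemma~\ref{Onecity-Ball} reduces this to a discrete Faber--Krahn-type comparison: a region of volume $\lesssim |B(0,\rad)|$ can attain a top eigenvalue $1 - \mb \rad^{-2}(1 + o(1))$ only when it is essentially a ball of radius $\rad$, and removing a macroscopic central hole strictly decreases this eigenvalue by an order-$\rad^{-2}$ amount. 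The same comparison makes the principal eigenfunction $\phi_\fregion$ of $\fregion$ comparable to the continuum first Dirichlet eigenfunction on a ball, and thus bounded below by a positive constant on $\hbn$.

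Third, I would bound the displayed conditional probability as a ratio. By the Markov property at time $s$,
\begin{equation*}
  \Pr^v\bigl(\tau_{B(\ct{\fregion}, b_2 \rad)} > s,\, \tau_{\fregion^c} > m'\bigr) \leq \Pr^v(\tau_{W^c} > s) \cdot \max_{z \in \fregion}\Pr^z(\tau_{\fregion^c} > m'-s) \leq C \lambda_W^s \lambda_\fregion^{m'-s},
\end{equation*}
while the eigenfunction expansion together with the uniform positive lower bound on $\phi_\fregion$ over $\hbn$ yields $\Pr^v(\tau_{\fregion^c} > m') \geq c \lambda_\fregion^{m'}$. The ratio is therefore at most $C (\lambda_W/\lambda_\fregion)^s \leq \exp(-c_0 s \rad^{-2}/2)$, which is bounded by $\exp(-\rad^c)$ once $C_4$ is chosen large enough.

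The main obstacle will be the quantitative spectral gap $\lambda_W \leq \lbs - c_0 \rad^{-2}$ uniformly in the random environment, together with the uniform positive lower bound on $\phi_\fregion$ over $\hbn$. Translating the symmetric-difference ball-clearing of Lemma~\ref{Onecity-Ball} and Theorem~\ref{main} into a sharp quantitative eigenvalue/eigenfunction comparison will likely require a Rayleigh-quotient argument using test functions supported on a continuum ball, combined with discrete-to-continuum convergence of Dirichlet eigenvalues for regions approximating a Euclidean ball.
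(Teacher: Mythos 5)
Your proposal captures the paper's strategy at a high level: compare the decay rate of survival on the annular region $W := \fregion \setminus B(\ct{\fregion},b_2\rad)$ to the decay rate on $\fregion$, then perform a ratio argument via the Markov property. In the paper, the annulus decay rate is established in Lemma~\ref{rstepsnt} (which plays the role of your quantitative spectral gap, proved via a volume argument from~\cite[Lemma~6.1]{DX18} rather than a Faber--Krahn comparison invoking Theorem~\ref{main}); the comparison with the full survival probability is Lemma~\ref{hit tOmega-as}; and the ratio step in the proof of Lemma~\ref{hit ball} uses \eqref{eq:t-in-U-ub} and \eqref{eq:prblb-u} exactly as you propose.

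The one genuine imprecision you should address is your claim that $\Phi_\fregion$ is ``bounded below by a positive constant on $\hbn$.'' This is false: the $\ell^1$-normalized eigenfunction has order $\rad^{-d}$, and even $\rad^d\Phi_\fregion$ is not uniformly bounded below on $\hbn$ --- it degenerates near $\partial B(\ct{\fregion},\rad)$. Indeed, Lemma~\ref{phi-dist-bd} gives a lower bound proportional to $\mathrm{dist}(x,B(\ct{\fregion},\rad)^c)$ and only for $x\in B(\ct{\fregion},(1-2\rad^{-\kappa})\rad)$; the outer shell of $\hbn$ is not covered. The paper avoids this by not trying to bound $\Phi_\fregion$ pointwise at the starting point: \eqref{eq:prblb-u} is obtained via a stopping-time argument (Lemma~\ref{Cr2}) that waits for the walk to hit the deep interior ball $B(\ct{\fregion},b_2\rad)$ at constant conditional cost, and only then invokes the eigenfunction lower bound in the interior. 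Your argument can still be salvaged because the final exponent $e^{-c\rad^{C_4-2}}$ has enormous slack (any lower bound of the form $e^{-C(\log n)^{C_1}}\lambda_\fregion^{m'}$ suffices once $C_4$ is chosen large), but as written the claimed eigenfunction bound is not justified and should be replaced by either this crude bound or the paper's hitting-time argument. Your preliminary reduction to $v\in\hbn$ via Lemma~\ref{pt-loc} is correct but unnecessary --- the paper handles $u\in\fregion$ directly, using Lemma~\ref{pt-loc} only at the very end to absorb the event $\{S_m\notin\hbn\}$.
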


The first improvement upon \eqref{eq:pt-loc} in Lemma \ref{pt-loc} (see $(a)$ after Lemma \ref{pt-loc}) is the following local limit result.
\begin{lemma}\label{loclemma2}
	Let $b_2$ be as in Lemma \ref{hit ball} and $ \epsilon \in (0,1)$. Then with $\widehat\P$-probability tending to one as $n \to \infty$, the following holds\rf{: For all $u \in B(\ct{\fregion},b_2\rad)$, $m \geq t \geq \rad^2$,} $y \in \fregion$ \rf{and} $x \in B(\ct{\fregion},(1- \epsilon)\rad)$ such that $|x-u|_1 + t$ is even:
\begin{align}
\label{eq:loclemma2-2}
	\Pr^{u}(S_t = y \mid \tau_{\fregion^c} > m) &\leq C \rad^{-d}\,,\\
\label{eq:loclemma2-1}
	\Pr^{u}(S_t = x \mid \tau_{\fregion^c} > m) &\geq c \epsilon^2\rad^{-d}\,.
\end{align}
\end{lemma}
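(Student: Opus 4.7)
The matching upper and lower bounds of order $\rad^{-d}$ reflect the heuristic that, conditionally on survival until time $m$, the one-time law of $S_t$ should approach the squared principal eigenfunction on $\fregion$. By Theorem~\ref{main}, the ball $\mathcal B_n$ is obstacle-free, so this principal eigenfunction is close to the rescaled Dirichlet eigenfunction $\phi_2((\cdot-\ct{\fregion})/\rad)$, yielding a one-time law of approximate size $\phi_2^2((y-\ct{\fregion})/\rad)\rad^{-d}$, which is $\asymp\rad^{-d}$ on the bulk and $\gtrsim\epsilon^2\rad^{-d}$ at distance $\epsilon\rad$ inside the boundary of the unit ball.

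\textbf{Upper bound \eqref{eq:loclemma2-2}.} For $y\notin\hbn$ the bound is immediate from Lemma~\ref{pt-loc} since $\exp(-\rad^c)\leq C\rad^{-d}$, so assume $y\in\hbn$. I would apply the Markov property at $t_0=t-\lfloor\rad^2/2\rfloor\geq\rad^2/2$:
\[
\Pr^u(S_t=y,\tau_{\fregion^c}>m)\leq\sum_{z\in\fregion}\Pr^u(S_{t_0}=z,\tau_{\fregion^c}>t_0)\,\Pr^z(S_{t-t_0}=y)\,\Pr^y(\tau_{\fregion^c}>m-t),
\]
and bound the middle factor by $C\rad^{-d}$ via the local central limit theorem on $\rad^2/2$ steps. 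Dividing by $\Pr^u(\tau_{\fregion^c}>m)$, it remains to establish
\[
\Pr^u(\tau_{\fregion^c}>t_0)\,\Pr^y(\tau_{\fregion^c}>m-t)\leq C\,\Pr^u(\tau_{\fregion^c}>m),
\]
which I would derive by a reach-then-survive decomposition: by Lemma~\ref{hit ball} the conditional law of $S_{t_0}$ charges $B(\ct{\fregion},b_2\rad)$ with probability $\geq c$, and from any such deep-interior point the walk has probability $\geq c\rad^{-d}$ to be at a fixed neighbour of $y$ after $t-t_0$ additional steps while staying in the obstacle-free $\mathcal B_n$, compensating the $\rad^{-d}$ factor lost to the local CLT.

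\textbf{Lower bound \eqref{eq:loclemma2-1}.} Since $\mathcal B_n\subset\fregion$,
\[
\Pr^u(S_t=x,\tau_{\fregion^c}>m)\geq\Pr^u(S_t=x,\tau_{\mathcal B_n^c}>t)\,\Pr^x(\tau_{\mathcal B_n^c}>m-t).
\]
For the walk killed on exiting the fully open Euclidean ball $\mathcal B_n$, eigenfunction expansion plus a local limit theorem yield, for $t\geq\rad^2$ with the right parity,
\[
\Pr^u(S_t=x,\tau_{\mathcal B_n^c}>t)\geq c\rad^{-d}\lambda_{\mathcal B_n}^t\,\phi_2\bigl(\tfrac{u-\ct{\fregion}}{\rad}\bigr)\phi_2\bigl(\tfrac{x-\ct{\fregion}}{\rad}\bigr)\geq c\epsilon\rad^{-d}\lambda_{\mathcal B_n}^t,
\]
and $\Pr^x(\tau_{\mathcal B_n^c}>m-t)\geq c\epsilon\lambda_{\mathcal B_n}^{m-t}$, using $\phi_2\geq c$ on $B(0,b_2)$ and the linear boundary vanishing $\phi_2(r)\gtrsim 1-|r|\gtrsim\epsilon$. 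Multiplying gives numerator $\geq c\epsilon^2\rad^{-d}\lambda_{\mathcal B_n}^m$, so the target bound follows once one knows $\Pr^u(\tau_{\fregion^c}>m)\leq C\lambda_{\mathcal B_n}^m$.

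\textbf{Main obstacle.} The estimate $\Pr^u(\tau_{\fregion^c}>m)\leq C\lambda_{\mathcal B_n}^m$ is the delicate point: only $\lambda_\fregion\geq\lambda_{\mathcal B_n}$ is immediate, and over $m\sim n$ steps the ratio $(\lambda_\fregion/\lambda_{\mathcal B_n})^m$ is not a priori $O(1)$. The resolution is to combine Lemma~\ref{eigenvalue tail - 1} (upper tail for $\lambda_\plr$) with Lemma~\ref{lambda*} (lower bound $\lambda_\fregion\geq\lambda_*$) to pin $\lambda_\fregion$ in a window $|\lambda_\fregion-\lambda_*|=o(1/n)$ under $\widehat\P$; together with \eqref{eq:lambda*lb} this places $\lambda_\fregion$ within $o(1/n)$ of $\lambda_{\mathcal B_n}$. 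Turning this spectral control into the required survival-probability bound, and invoking Lemma~\ref{pt-loc} to discard trajectories exiting $\mathcal B_n$, is the technical heart of the argument.
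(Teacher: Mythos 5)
Your proposal for the lower bound \eqref{eq:loclemma2-1} has a fundamental flaw, and it is not fixable along the lines you suggest. You reduce the problem to $\Pr^u(\tau_{\fregion^c}>m)\leq C\lambda_{\mathcal B_n}^m$, and propose to obtain this by pinning $\lambda_\fregion$ within $o(1/n)$ of $\lambda_{\mathcal B_n}$. This cannot work for two reasons. First, the gap between $\lambda_\fregion$ and $\lambda_{\mathcal B_n}$ is \emph{deterministically} of order $\rad^{-2-\kappa}$: by \eqref{eq:eigen-ball-ref} one has $\lambda_{\mathcal B_n}=1-\mb(\rad-\rad^{1-\kappa})^{-2}+O(\rad^{-3})=1-\mb\rad^{-2}-2\mb\rad^{-2-\kappa}+O(\rad^{-3})$, while \eqref{eq:lambda*lb} gives $\lambda_\fregion\geq\lambda_*\geq 1-\mb\rad^{-2}-C_*\rad^{-3}$, so $\lambda_\fregion-\lambda_{\mathcal B_n}\geq c\rad^{-2-\kappa}$ for large $n$. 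Since $\Pr^u(\tau_{\fregion^c}>m)\geq c\lambda_\fregion^m$ by \eqref{eq:t-in-U-lb} and Lemma~\ref{phi-dist-bd}, the ratio $\Pr^u(\tau_{\fregion^c}>m)/\lambda_{\mathcal B_n}^m$ is at least $c\,e^{cm\rad^{-2-\kappa}}$, which diverges as $m\to n$. Second, even for comparing $\lambda_\fregion$ with $\lambda_*$, Lemma~\ref{eigenvalue tail - 1} only gives a window of size $\epsilon\rad^{-2}$ with $\epsilon\geq(\log\log n)^4\rad^{-d}$, hence a window no smaller than $(\log\log n)^4\rad^{-d-2}\gg \rad^2/n$, again incompatible with a bound uniform in $m\leq n$.

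The paper sidesteps the need for any comparison between $\lambda_\fregion$ and $\lambda_{\mathcal B_n}$ by proving a \emph{ratio} inequality that cancels $\lambda_\fregion$ entirely. Specifically, one shows \eqref{eq:416}: for $t\geq\rad^2$, $\min_x\Pr^u(\tau_{\fregion^c}>t, S_t=x)\geq c\epsilon\max_y\Pr^u(\tau_{\fregion^c}>t,S_t=y)$, via the strong Markov property at a hitting time of $B(\ct{\fregion},b_2\rad)$ near $t-\rad^2$ (Lemma~\ref{Cr2}) followed by the local CLT \cite[Prop.~6.9.4]{LL10} in the cleared ball over $\rad^2$ steps; and the analogous bound \eqref{eq:4.17} for the remaining $m-t$ steps, from \eqref{eq:t-in-U-lb} and Lemma~\ref{phi-dist-bd}. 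Multiplying and applying the Markov property at time $t$ gives $\min_x\Pr^u(S_t=x\mid\tau_{\fregion^c}>m)\geq c\epsilon^2\max_y\Pr^u(S_t=y\mid\tau_{\fregion^c}>m)-e^{-\rad^c}$. Summing over $x$ in $B(\ct{\fregion},\rad/2)$ then yields both \eqref{eq:loclemma2-2} and \eqref{eq:loclemma2-1} simultaneously; no absolute comparison of survival probabilities ever enters. Your upper-bound sketch is closer in spirit (both decompose at a time $\sim t-\rad^2$ and use Lemma~\ref{hit ball}), though your ``compensating the $\rad^{-d}$'' remark is off: the required bound $\Pr^u(\tau_{\fregion^c}>t_0)\Pr^y(\tau_{\fregion^c}>m-t)\leq C\Pr^u(\tau_{\fregion^c}>m)$ follows directly from \eqref{eq:t-in-U-lb}, \eqref{eq:t-in-U-ub}, Lemma~\ref{pt-loc}, and the fact that $\lambda_\fregion^{-\rad^2/2}$ is $O(1)$, with no $\rad^{-d}$ factor needing to be recovered.
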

\begin{remark}
The second assertion~\eqref{eq:loclemma2-1} will not be used in the proof of the main results. We include it to complement~\eqref{eq:loclemma2-2} and as a precursor to Theorem~\ref{thm:distribution}.
\end{remark}

Combined with  Lemma \ref{pt-loc}, the preceding lemma can then be used to show that the random walk at any fixed time $t$ will be localized in the ball centered at $\ct{\fregion}$ with radius $\rad\big(1 - o(1)\big)$. More precisely,
\begin{cor}
\label{loclemma}
Let $\kappa>0$ be defined as in Proposition \ref{po-clear}. There exists a constant $c>0$ such that with $\widehat\P$-probability tending to one as $n \to \infty$, for all $u \in B(\ct{\fregion},b_2\rad)$ and $m \geq t \geq 0$,
\begin{equation}
\label{eq:99999}
	\Pr^{u}(S_t \in  B(\ct{\fregion},(1 - 2\rad^{-\kappa})\rad) \mid \tau_{\fregion^c} > m)  \geq 1- \rad^{-c}\,.
\end{equation}
\end{cor}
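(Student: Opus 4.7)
The plan is to set $B := B(\ct{\fregion},(1-2\rad^{-\kappa})\rad)$ (with $\kappa$ possibly shrunk from Proposition~\ref{po-clear}, so that $B$ is open by Theorem~\ref{main} and hence contained in $\fregion$), and to analyze
\[
\Pr^u(S_t \notin B \mid \tau_{\fregion^c} > m) \le \Pr^u(S_t \notin \hbn \mid \tau_{\fregion^c} > m) + \Pr^u(S_t \in \hbn \setminus B \mid \tau_{\fregion^c} > m).
\]
The first term is bounded uniformly in $t \ge 0$ by Lemma~\ref{pt-loc} with $\varepsilon := 1 - b_2 \in (0,1)$, giving $\exp(-\rad^c)$. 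The crucial geometric input for the second term is that since $\hbn \subset B(\ct{\fregion},(1+\rad^{-c_1})\rad)$, the annulus $\hbn \setminus B$ has volume $\le C\rad^{d-\kappa}$ once $\kappa \le c_1$.

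For the regime $t \ge \rad^2$, the pointwise estimate \eqref{eq:loclemma2-2} of Lemma~\ref{loclemma2} directly gives $\Pr^u(S_t=y \mid \tau_{\fregion^c} > m) \le C\rad^{-d}$ for every $y \in \fregion$; summing over the annulus yields $\Pr^u(S_t \in \hbn \setminus B \mid \tau_{\fregion^c} > m) \le C\rad^{-\kappa}$, completing this regime.

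The main obstacle is the short-time regime $0 \le t < \rad^2$, where the pointwise bound $C\rad^{-d}$ is unavailable because the free random walk density can be as large as $t^{-d/2}$. For this range the plan is to apply the strong Markov property at time $t$: writing $\Pr^u(S_t = y, \tau_{\fregion^c} > m) = \Pr^u(S_t = y, \tau_{\fregion^c} > t)\,\Pr^y(\tau_{\fregion^c} > m-t)$ and using the Perron--Frobenius-type bounds $\Pr^y(\tau_{\fregion^c} > m-t) \le C\psi(y)\lambda_\fregion^{m-t}$ and $\Pr^u(\tau_{\fregion^c} > m) \ge c\psi(u)\lambda_\fregion^{m}$ for the principal eigenfunction $\psi$ of $P|_\fregion$, one obtains
\[
\Pr^u(S_t \in \hbn\setminus B \mid \tau_{\fregion^c} > m) \le \frac{C\,\lambda_\fregion^{-t}}{\psi(u)}\,\Ex^u\!\left[\11_{S_t \in \hbn \setminus B,\,\tau_{\fregion^c}>t}\,\psi(S_t)\right].
\]
Since Lemma~\ref{lambda*} forces $\lambda_\fregion^{-t} = O(1)$ for $t \le \rad^2$, the bound reduces to two eigenfunction comparisons: $\psi(u) \gtrsim \|\psi\|_\infty$ for $u$ in the bulk, and $\psi(y) \le C\rad^{-\kappa}\|\psi\|_\infty$ uniformly on $\hbn \setminus B$. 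Most of the remaining technical work is expected to be in verifying these comparisons: Theorem~\ref{main} makes the localizing ball vacant, so $P|_\fregion$ is essentially SRW killed on the boundary of a clean discrete ball, and standard Dirichlet eigenfunction estimates should show that $\psi$ is close (up to normalization) to $\phi_2(\cdot/\rad)$, which vanishes linearly at the boundary. The auxiliary regime where $m-t$ is too small for the Perron--Frobenius upper bound to apply can be handled separately by replacing it with the trivial bound $1$ and using a diffusive lower bound on $\Pr^u(\tau_{\fregion^c} > m)$, which is immediate because $u$ sits deep in the open set $B$.
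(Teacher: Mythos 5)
The long-time regime $t\ge\rad^2$ matches the paper: you sum the pointwise bound \eqref{eq:loclemma2-2} over the annulus $\hbn\setminus B$ (volume $O(\rad^{d-\kappa})$) and combine with Lemma~\ref{pt-loc}.

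For $t<\rad^2$, however, your strategy has a genuine gap, and it stems from a mistaken premise. You say ``the pointwise bound $C\rad^{-d}$ is unavailable because the free random walk density can be as large as $t^{-d/2}$''. But $u$ lies in $B(\ct{\fregion},b_2\rad)$ while every point of $\hbn\setminus B$ is at $\ell^2$-distance at least $(1-b_2)\rad/2$ from $u$; the local limit theorem therefore gives $\Pr^u(S_t=y)\le C\rad^{-d}$ \emph{uniformly over all} $t\ge 0$ for such $y$ (Gaussian tail for $t\ll\rad^2$, the $t^{-d/2}$ bound itself for $t\asymp\rad^2$). This is exactly what the paper exploits: it first bounds the \emph{unconditioned} probability $\Pr^u(S_t\in\hbn\setminus B)\le\rad^{-c}$, then applies the Markov property at time $t$ with \eqref{eq:t-in-U-ub} (which gives $\Pr^y(\tau_{\fregion^c}>m-t,\,S_{m-t}\in\hbn)\le C\lambda_\fregion^{m-t}$ with no pointwise eigenfunction factor), divides by $\Pr^u(\tau_{\fregion^c}>m)\ge c\lambda_\fregion^m$ from \eqref{eq:t-in-U-lb} and Lemma~\ref{phi-dist-bd}, and absorbs $\lambda_\fregion^{-t}=O(1)$ for $t\le\rad^2$.

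By contrast, your route requires two things the paper never proves and that are not straightforward for $\fregion$. First, a Perron--Frobenius upper bound of the form $\Pr^y(\tau_{\fregion^c}>s)\le C\psi(y)\lambda_\fregion^s$ with a universal constant $C$: this fails outright for small $s$ (the left side is order $1$, the right side can be tiny), and the regime splitting you sketch does not fully repair it. Second, and more fundamentally, the pointwise decay $\psi(y)\le C\rad^{-\kappa}\|\psi\|_\infty$ on $\hbn\setminus B$. The paper only controls $\Phi_\fregion$ via the global upper bound $|\Phi_\fregion|_\infty\le C\rad^{-d}$ (Lemma~\ref{Philinfbd}) and the interior lower bound in Lemma~\ref{phi-dist-bd}; there is no boundary-decay upper bound. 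Indeed $\fregion$ is not a clean ball: Theorem~\ref{main} only guarantees that $\fregion$ \emph{contains} a vacant ball of radius $\rad-\rad^{1-\kappa}$, but $\fregion$ is a full connected component inside $B(v_*,(\log n)^{C_1})$ and may extend well beyond $\hbn$, so the eigenfunction need not vanish linearly on the sphere of radius $\approx\rad$. ``Standard Dirichlet eigenfunction estimates'' would have to be proved here, and that is a nontrivial extra task that the paper's local-limit-theorem argument entirely sidesteps.
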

Lastly, Theorem \ref{thm:distribution} will be proved using Corollary \ref{loclemma} and the following lemma, which says that conditioned on the random walk staying in $\qbn$ for sufficiently long time, the distribution of the random walk at the end point (or at a deterministic time in the bulk) will converge in total variation distance to the normalized first eigenfunction (or normalized eigenfunction squared) on $\qbn$.
\begin{lemma}
\label{distribution}
There exist constants $c,C_5>0$ such that the following holds with $\widehat\P$-probability tending to one as $n \to \infty$\rf{: for all $v,y \in B(\ct{\fregion},(1 - 2\rad^{-\kappa}) \rad)$ and $m,t \geq C_5 \rad^2\log\log n $ with $|y-v|_1 + m + t $ even,}
\begin{align}
\label{eq:endpt-m-t}
	&\sup_{x\in\mathcal B_n : |x-v|_1 + m \text{ is even}}\left|\rad^d\Pr^v(S_{m} = x \mid \tau_{\qbn^c}>m) - 2\phi_1(\tfrac{x - \ct{\fregion}}{\rad})\right| \leq \rad^{-c}\,,\\
\label{eq:bulk-m-t}
	&\sup_{x\in\mathcal B_n : |x-v|_1 + m \text{ is even}}\left| \rad^d\Pr^v(S_{m} = x \mid S_{m+t} =y,  \tau_{\qbn^c}>m + t) - 2\phi_2^2(\tfrac{x - \ct{\fregion}}{\rad})\right| \leq \rad^{-c}\,,
\end{align}
where $\phi_1$ and $\phi_2$ are respectively the $L^1$ and $L^2$-normalized first eigenfunction of the Dirichlet-Laplacian of the unit ball in $\R^d$.
\end{lemma}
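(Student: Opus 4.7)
The proof strategy is a spectral decomposition of the sub-Markov kernel $P_{\qbn}$ of SRW killed on exiting $\qbn$. Let $\psi_1, \psi_2, \ldots$ denote its $L^2(\qbn)$-orthonormal eigenfunctions (with $\psi_1 \ge 0$) and $\lambda_1 \ge |\lambda_2| \ge \cdots$ the corresponding eigenvalues. Since $P_{\qbn}$ is bipartite as a matrix (SRW flips the parity of $|\cdot|_1$ at each step), $-\lambda_1$ is also an eigenvalue with eigenfunction $\tilde\psi_1(x) := (-1)^{|x|_1}\psi_1(x)$; pairing the top two contributions yields the bipartite-aware expansion
\begin{equation*}
P_{\qbn}^m(v,x) = 2\lambda_1^m\psi_1(v)\psi_1(x)\,\11_{|v-x|_1 + m\text{ even}} + R_m(v,x),
\end{equation*}
where the remainder $R_m(v,x)$ is controlled by $|\lambda_3|^m$ times a polynomial factor in $\rad$. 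The entire proof reduces to two quantitative ingredients for $P_{\qbn}$:
(a) a spectral gap $|\lambda_j| \le \lambda_1(1 - c\rad^{-2})$ for all $j \ge 3$, so that $(|\lambda_3|/\lambda_1)^m$ is negligible whenever $m \ge C_5\rad^2\log\log n$ with $C_5$ large enough; and
(b) a uniform approximation $\psi_1(x) = \rad^{-d/2}\phi_2((x-\ct{\fregion})/\rad)(1 + o(1))$ on the inner ball $B(\ct{\fregion},(1-\rad^{-\kappa})\rad)$, with $\phi_2$ as in the statement.

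Both (a) and (b) are first established on the \emph{clean} lattice ball $B(\ct{\fregion},(1+\rad^{-c_1})\rad)$. There, (a) is a standard gap of order $\rad^{-2}$ inherited from the continuum gap between the first two Dirichlet eigenvalues of $-\tfrac{1}{2d}\Delta$ on the unit ball, and (b) reduces to the discrete-to-continuum convergence of the first Dirichlet eigenfunction under diffusive rescaling, which gives $L^2$ closeness of the discrete $\psi_1^{\text{clean}}$ to the rescaled $\phi_2$. The $L^2$ closeness is upgraded to the required uniform control by discrete elliptic regularity --- one iterates $\psi_1^{\text{clean}} = \lambda_1^{-k}P^k\psi_1^{\text{clean}}$ for $k$ of order $\rad^{2\gamma}$ to mollify $\psi_1^{\text{clean}}$, and a pointwise bound from its $L^2$ norm then follows by Cauchy--Schwarz. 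The transport from the clean ball to $\qbn$ is a perturbation argument: by Theorem~\ref{main} and \eqref{eq:obrad}, $\qbn$ differs from the clean ball only in the annulus $B(\ct{\fregion},(1+\rad^{-c_1})\rad)\setminus B(\ct{\fregion},\rad-\rad^{1-\kappa})$, where obstacles have density at most $\epn$ by Lemma~\ref{Onecity-Ball} and the continuum $\phi_2$ is already $o(1)$ by its boundary decay; a resolvent or capacity comparison then shows that these annular obstacles shift the Rayleigh quotient by only $o(\rad^{-2})$ and the values of $\psi_1$ on the inner ball by $o(\rad^{-d/2})$.

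Given (a) and (b), both identities follow by direct substitution. For the endpoint \eqref{eq:endpt-m-t}, writing
\begin{equation*}
\Pr^v(S_m = x \mid \tau_{\qbn^c} > m) = \frac{P_{\qbn}^m(v,x)}{\sum_{y\in\qbn}P_{\qbn}^m(v,y)}
\end{equation*}
and inserting the leading expansion from (a), the factors $2\lambda_1^m\psi_1(v)$ cancel; the denominator becomes $\sum_{y:|v-y|_1+m\text{ even}}\psi_1(y) = \tfrac12\sum_{y\in\qbn}\psi_1(y)\cdot(1+o(1))$ since $\psi_1$ is slowly varying across parity classes, and by (b) combined with Riemann summation this equals $\tfrac{\rad^{d/2}}{2}\int\phi_2(u)\,du\cdot(1+o(1))$; using $\phi_1 = \phi_2/\int\phi_2$ then yields the claim. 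For the bulk \eqref{eq:bulk-m-t}, the Markov property at time $m$ gives
\begin{equation*}
\Pr^v(S_m = x\mid S_{m+t} = y, \tau_{\qbn^c} > m+t) = \frac{P_{\qbn}^m(v,x)P_{\qbn}^t(x,y)}{P_{\qbn}^{m+t}(v,y)},
\end{equation*}
and substituting the leading terms makes the parity factors of $2$ and all $\lambda_1$ powers telescope, leaving $2\psi_1(x)^2 = \rad^{-d}\cdot 2\phi_2^2((x-\ct{\fregion})/\rad)(1+o(1))$. The hypotheses $v,y \in B(\ct{\fregion},(1-2\rad^{-\kappa})\rad)$ and $m,t \ge C_5\rad^2\log\log n$ guarantee that $\psi_1(v),\psi_1(y) \gtrsim \rad^{-d/2}$ (so the division is safe) and that the spectral gap wipes out subleading eigenvalue contributions at the required polynomial rate.

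The main obstacle, I expect, is the perturbation step in (b) transporting bulk eigenfunction control from the clean ball to the obstacle-riddled $\qbn$. Even though Theorem~\ref{main} secures a fully open inner ball, obstacles in the outer annulus can cluster or carve out thin ``fjords'' that distort $\psi_1$ near the boundary. The cleanest way to control this seems to be comparing the resolvents of $P_{\qbn}$ and the clean-ball operator at spectral parameter near $\lambda_1^{-1}$, using capacity estimates to bound the cost of annular modifications, and leveraging the linear boundary decay $\phi_2(u) \asymp \mathrm{dist}(u,\partial B_1)$ to absorb the annular noise into the error term. All other steps --- the bipartite reduction, the spectral gap on the clean ball, and the final Riemann summation --- are fairly standard.
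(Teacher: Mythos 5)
Your proposal takes essentially the same route as the paper: both use the spectral decomposition of $P|_\qbn$, handle the periodicity of the simple random walk (you by pairing the $\pm\lambda_1$ eigenvalues; the paper by working with $Q^2$ restricted to the even and odd sublattices via Lemma~\ref{parity}), invoke a spectral gap of order $\rad^{-2}$ to isolate the leading eigenfunction contribution after $O(\rad^2\log\log n)$ steps, and finally compare the discrete principal eigenfunction to the rescaled continuum eigenfunctions $\phi_1$ and $\phi_2$. The main substantive difference lies in how the eigenfunction comparison is obtained. You propose to first establish the approximation on a ``clean'' ball and then transport it to $\qbn$ via a resolvent or capacity perturbation argument; the paper instead invokes Lemma~\ref{eigenfunction}, which (via Weinberger's quantitative min-max estimates) directly compares the discrete eigenvalues and eigenfunctions of \emph{any} domain sandwiched between concentric balls $B(0,(1-\epsilon)t)\subset\mathcal B\subset B(0,(1+\epsilon)t)$ to those of the continuum unit ball, with an $\ell^2$ error of order $\sqrt{\epsilon}+t^{-1/2}$. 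Since Theorem~\ref{main} guarantees $B(\ct{\fregion},\rad-\rad^{1-\kappa})\subset\qbn\subset B(\ct{\fregion},\rad+\rad^{1-c_1})$, this applies with $\epsilon\asymp\rad^{-\min(\kappa,c_1)}$ and bypasses the explicit perturbation step entirely. Your proposal is thus somewhat more labor-intensive at that stage, but at the same time it is more explicit about a subtlety the paper glosses over: Lemma~\ref{eigenfunction} gives only $\ell^1$/$\ell^2$ closeness between $\eve{1}{Q}$ and $\phi_{1,\rad}$, whereas the conclusion of Lemma~\ref{distribution} is a sup bound over $x\in\mathcal B_n$. Your ``mollify by iterating $\lambda_1^{-k}P^k$ and then apply Cauchy--Schwarz'' step is exactly the upgrade to $\ell^\infty$ that is needed here and is left implicit in the paper. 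In that sense your write-up is a useful expansion of a compressed step rather than a genuinely different argument. The rest (the bipartite reduction, the spectral gap inherited from the continuum via \eqref{eq:A2-1}/\eqref{eq:qbn-2}, the pointwise lower bound $\eta(v)\gtrsim\rad^{-d-\kappa}$ for $v\in B(\ct{\fregion},(1-2\rad^{-\kappa})\rad)$, and the telescoping of $\lambda_1$-powers and parity factors in the bulk formula) matches the paper's argument step for step.
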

Let us prove Theorem \ref{thm:distribution} assuming the above lemmas.

\begin{proof}[\bf Proof of Theorem \ref{thm:distribution}]
To prove \eqref{eq:thm-ballloc}, we first show that combining \eqref{eq:hit-ball-old} with Lemma \ref{hit ball} yields
\begin{equation}
\label{eq:hit-b2-n}
	\Pr(\tau_{B(\ct{\fregion},b_2\rad)}<C|\ct{\fregion}|, S_{[\tau_{\hbn},n]} \subset \fregion \mid \tau > n) \geq 1- \exp(-\rad^{c})\,.
\end{equation}
Indeed, by the strong Markov property at time $\tau_{\hbn}$,
	\begin{align*}
		&\Pr(\tau_{B(\ct{\fregion}, b_2 \rad)}> \tau_{\hbn} + \lceil \rad^{C_4} \rceil, \tau_{\hbn}<C|\ct{\fregion}|, S_{[\tau_{\hbn},n]} \subset \fregion, \tau> n)\\
		& = \Ex\big[\11_{\tau > \tau_{\hbn },\tau_{\hbn}<C|\ct{\fregion}|}\Pr^{S_{\tau_\hbn}}(\tau_{B(\ct{\fregion}, b_2 \rad)}>\lceil \rad^{C_4} \rceil, \tau_{\fregion^c} > n - \tau_\hbn)\big]\,.
	\end{align*}
Since $|\ct{\fregion}| \leq C n (\log n)^{-2/d}$ implies $n - \tau_\hbn \geq n/2$, it follows from Lemma \ref{hit ball} that the above quantity can be bounded from above by
\begin{align*}
	& \exp(-\rad^c)\Ex\big[\11_{\tau > \tau_{\hbn },\tau_{\hbn}<C|\ct{\fregion}|}\Pr^{S_{\tau_\hbn}}( \tau_{\fregion^c} > n - \tau_\hbn)\big]\\
	 &\quad= \exp(-\rad^c)\Pr(\tau > \tau_{\hbn },\tau_{\hbn}<C|\ct{\fregion}|,S_{[\tau_{\hbn},n]} \subset \fregion)\,.
\end{align*}
Combined with \eqref{eq:hit-ball-old}, this proves \eqref{eq:hit-b2-n}.

Now, let $T$ denote the hitting time of the ball $B(\ct{\fregion},b_2 \rad)$ to lighten the notation. We consider deterministic time $t$ with $C|\ct{\fregion}| \leq t \leq n$ and $x \in B(v,(1 - \epsilon)\rad)$ with $|x|_1 +t$ even. By the strong Markov property,
\begin{align*}
&	\Pr(S_{[T,n]} \subset \fregion, S_t \in B(\ct{\fregion}, \rad),t > T, \tau > n)\\
&\quad = \Ex \big[ \11_{\tau\wedge t > T} \Pr^{S_T}(S_{t-T} \in B(\ct{\fregion}, \rad), \tau_{\fregion^c} > n - T)\big]\,.
\end{align*}
By Corollary \ref{loclemma}, this equals $\Pr(S_{[T,n]} \subset \fregion, \tau > n,t > T)(1 + o(1))$. Combining this with \eqref{eq:hit-b2-n} gives \eqref{eq:thm-ballloc}.

Next, we turn to the proof of~\eqref{eq:thm-endpt} and~\eqref{eq:thm-bulk}. The basic idea is to restrict the walk to a time interval $[t_1, t_2]$ such that the walk does not exit $\hbn$ during this time interval, which then allows us to apply Lemma \ref{distribution}. To this end, we denote for $0<t_1<t_2$,
$$A_{t_1,t_2}:= \{ S_{t_1},S_{t_2} \in B(\ct{\fregion} ,(1 - 2\rad^{-\kappa})\rad),S_{[t_1,t_2]} \subseteq \qbn, S_{[C|\ct{\fregion}|, n]} \subseteq \fregion,\tau > n\}\,.$$
We first notice that for any $[t_1,t_2] \subseteq [C|\ct{\fregion}|, n]$ with $t_2 - t_1 \leq e^{\rad^c}$, combining \eqref{eq:hit-b2-n}, \eqref{eq:99999}, and a union bound for the event in \eqref{eq:pt-loc} over all $t \in [t_2, t_1]$ yields that
\begin{equation}
\label{eq:120301}
	\Pr(A_{t_1,t_2}\mid  \tau > n) \geq 1 - \rad^{-c}\,.
\end{equation}

To prove \eqref{eq:thm-bulk}, we choose $t_1 = t - \rad^3 ,t_2 = t + \rad^3 $, and denote
\begin{align*}
	 I_{v,w} &= \Pr(S_{t_1}=v,S_{[C|\ct{\fregion}|, t_1]} \subseteq \fregion,\tau > t_1) \cdot \Pr^v( S_{t_2 -t_1 }= w, \tau_{\qbn^c} > t_2-t_1)\cdot \Pr^w(\tau_{\fregion^c} > n-t_2)\,.
\end{align*}We have
\begin{align*}
	\Pr&(S_t = x, A_{t_1,t_2})
	 = \sum_{v,w \in B(\ct{\fregion} ,(1 - 2\rad^{-\kappa})\rad)}I_{v,w} \cdot \Pr^v( S_{t-t_1} = x\mid S_{t_2 -t_1 }= w, \tau_{\qbn^c} > t_2-t_1)\,.	
\end{align*}
Therefore
\begin{align*}
	&\sum_{x : |x|_1 + t \text{ is even}}\left|\Pr(S_t = x,  A_{t_1,t_2})  - 2\rad^{-d}\phi_2^2(\tfrac{x - \ct{\fregion}}{\rad})\Pr(  A_{t_1,t_2})\right| \leq  \sum_{v,w \in B(\ct{\fregion} ,(1 - 2\rad^{-\kappa})\rad)} I_{v,w} \\
	& \quad   \quad \quad \quad \times \sum_{x : |x|_1 + t \text{ is even}}\left|\Pr^v( S_{t-t_1} = x\mid S_{t_2 -t_1 }= w, \tau_{\qbn^c} > t_2-t_1)- 2\rad^{-d}\phi_2^2(\tfrac{x - \ct{\fregion}}{\rad})\right| \\
	& \quad \quad \quad \quad \quad \quad \leq \rad^{-c} \Pr(A_{t_1,t_2})\,.	
\end{align*}
where in the last step, we used \eqref{eq:bulk-m-t}. Combining this with \eqref{eq:120301} yields \eqref{eq:thm-bulk}.

Finally, choose $t_1 = n - \rad^3 ,t_2 = n$ and combining \eqref{eq:120301} with \eqref{eq:endpt-m-t} yields \eqref{eq:thm-endpt}.
\end{proof}

Lemmas \ref{pt-loc}, \ref{hit ball}, and \ref{loclemma2}, Corollary \ref{loclemma} will be proved in Section \ref{sec:transition probability}, and Lemma \ref{distribution} will be proved in Section \ref{sec:distribution}.

\section{Ball Clearing}
\label{sec:Ball Clearing}
In this section, we will first prove Lemmas \ref{eigenvalue tail - 1} and \ref{eigenvalue tail - 2 - not open} in Sections \ref{sec:tail1} and \ref{sec:tail2}, respectively, and then conclude the proof of Proposition \ref{po-clear} in Section \ref{sec:4.1}.
\subsection{Proof of Lemma \ref{eigenvalue tail - 1}}
\label{sec:tail1}

\subsubsection{Proof outline}In this section, we outline the proof of Lemma \ref{eigenvalue tail - 1}, which shows that the tail of the distribution of $\lambda_\plr$ is not too heavy. The basic strategy is obstacle modification.

Let $\ell \in \N$ and partition $\Z^d$ into disjoint boxes $ K(x,\lm)$ of side length $2\lm+1$ (see \eqref{eq:def-K-box}) for $ x \in (2\lm+1)\Z^d$.
Let $\plr$ be as defined in \eqref{eq:def-plr}.
\begin{defn}
\label{def:truly open}
A box $K(x,\lm)$ is said to be {\em \trueop\ }if
\begin{equation}
\label{eq:truly open}
	\max_{u \in K(x,\lm)}\Pr^u\big(S_{[0,\lm^2]} \subset( K(x,4\ell) \backslash \ob)\big) \geq 1/10\,.
\end{equation}
Let $C_1$ be as in Theorem \ref{thm:Ball confinement}. We fix $\lm$ and let
$\tos=\tos(\lm, n)$ denote the union of all \trueop\ boxes that intersect with $B(0,(\log n)^{C_1})$. 
\end{defn}

We first note that \trueop\ boxes are very rare:
\begin{lemma}
\label{toprob}There exists $c>0$ such that for all $\ell$ sufficiently large,
	\begin{equation}
	\label{eq:toprob}
	 	\P\big(K(x,\lm) \text{ is \trueop} \big) \leq \exp(-c\lm^d)\,.
	 \end{equation}
\end{lemma}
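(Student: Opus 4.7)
The plan is to convert the \trueop{} condition into a lower bound on the Dirichlet principal eigenvalue $\lambda_A$ of the killed random walk on $A := K(x,4\ell)\setminus\ob$, and then to use a quantitative Faber--Krahn-type inequality to exhibit a ball of radius proportional to $\ell$ in which the obstacle density is atypically low---an event of probability at most $\exp(-c\ell^d)$.

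First, since $P_A$ is symmetric substochastic on $L^2(A)$ and $|A|\le C\ell^d$, the standard spectral bound $\max_{u\in A}\Pr^u(S_{[0,\ell^2]}\subset A)\le \sqrt{|A|}\,\lambda_A^{\ell^2}$, combined with the $1/10$ lower bound in \eqref{eq:truly open}, yields $\lambda_A\ge 1-C\ell^{-2}$, possibly up to a logarithmic factor. The latter can be absorbed by observing that the maximizer $u^*\in K(x,\ell)$ sits at $\ell^{\infty}$-distance at least $3\ell$ from $\partial K(x,4\ell)$ and hence lies close to the bulk of the principal eigenfunction $\phi_1$; this gives the sharper estimate $\Pr^{u^*}(\tau_A>\ell^2)\le C\,\lambda_A^{\ell^2}\phi_1(u^*)\|\phi_1\|_1$ that avoids the $\sqrt{|A|}$ factor.

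Second, a discrete quantitative Faber--Krahn-type inequality adapted to scale $\ell$---in the spirit of Lemma~\ref{Onecity-Ball} at the mesoscopic level, and following the strategy of \cite[Lemma~3.3]{DX17}---asserts that $\lambda_A\ge 1-C\ell^{-2}$ forces the existence of some $v\in K(x,4\ell)$ with $|B(v,c\ell)\cap\ob|\le \eta\,|B(v,c\ell)|$, where $\eta\in(0,1-p)$ can be chosen arbitrarily small. Intuitively, a large principal eigenvalue forces $A$ to contain an essentially round obstacle-light region of volume of order $\ell^d$. For such a fixed $v$ and radius $c\ell$, a Chernoff bound on the i.i.d.\ Bernoulli obstacles gives $\P(|B(v,c\ell)\cap\ob|\le \eta|B(v,c\ell)|)\le e^{-c'(p,\eta)\ell^d}$, since the expected closed density $1-p$ strictly exceeds $\eta$. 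A union bound over the $O(\ell^d)$ possible centres then yields the desired $\exp(-c\ell^d)$ estimate.

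The main obstacle is the quantitative Faber--Krahn step: producing an almost-empty ball whose radius is truly of order $\ell$ (rather than $\ell/\sqrt{\log\ell}$) requires a careful treatment of the discrete quantitative isoperimetric inequality at the mesoscopic scale, together with the spectral sharpening noted above, so that the logarithmic corrections coming from Cauchy--Schwarz do not propagate into a weaker volume estimate and erode the full exponential rate. Auxiliary ingredients---such as the typical density upper bounds on $\ob\cap K(x,4\ell)$ analogous to Lemma~\ref{empty}, and the observation that failure of these holds only on an event of even smaller probability---can be absorbed into the same $\exp(-c\ell^d)$ budget.
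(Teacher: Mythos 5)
The paper's proof of this lemma is a two-line reduction to a union bound over $u\in K(x,\ell)$ followed by a citation to \cite[Definition 2.4, Lemmas 2.8 and 2.9]{DX17}, which bound $\P\bigl(\Pr^u(S_{[0,\ell^2]}\cap\ob=\varnothing)\geq 1/10\bigr)$ directly via a coarse-graining of the environment. Your route---passing through the principal Dirichlet eigenvalue $\lambda_A$ of $A=K(x,4\ell)\setminus\ob$ and then invoking a quantitative Faber--Krahn inequality---is genuinely different, but it contains two gaps that prevent it from yielding $\exp(-c\ell^d)$.

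First, the spectral lower bound: Cauchy--Schwarz on $\Pr^u(\tau_{A^c}>\ell^2)=\langle\delta_u,P_A^{\ell^2}\mathbf 1\rangle$ gives $\lambda_A\geq 1-C\ell^{-2}\log\ell$, and your proposed sharpening $\Pr^{u^*}(\tau_A>\ell^2)\leq C\lambda_A^{\ell^2}\phi_1(u^*)\|\phi_1\|_1$ is not justified. Dropping the non-principal spectral terms requires them to have decayed after time $\ell^2$, but the spectral gap is only of order $\ell^{-2}$, so $\lambda_2^{\ell^2}/\lambda_1^{\ell^2}$ need not be small; and the near-maximal negative eigenvalue $\approx -\lambda_1$ (from bipartiteness) contributes a term of the same magnitude as the principal one. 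The observation that $u^*$ is deep inside $K(x,4\ell)$ says nothing about where $\phi_1$ concentrates, since $\phi_1$ lives on $A=K(x,4\ell)\setminus\ob$ and could sit far from $u^*$. Second, the ``quantitative Faber--Krahn'' step. What you actually need is a coarse-graining statement of the form: if every ball of radius $r$ inside $K(x,4\ell)$ has obstacle density $\geq\eta$, then $\lambda_A\leq 1-c(\eta)r^{-2}$. This is not a stability estimate for Faber--Krahn (indeed $|A|\approx p(8\ell)^d$ is far from any extremal ball, and $\lambda_A$ is only within $O(\ell^{-2})$ of the trivial upper bound, which is the scale at which Faber--Krahn stability gives nothing); it is instead the kind of result Lemma~\ref{Onecity-Ball} and the DX17/DX18 coarse-graining machinery provide, and those results carry additional hypotheses (e.g., volume controls like \eqref{eq:empty prob E}) which you have not verified at the scale $\ell$. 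Even granting such a lemma, with the log-degraded eigenvalue bound the admissible radius is $r\sim\ell/\sqrt{\log\ell}$, so the Chernoff step yields only $\exp\bigl(-c\ell^d/(\log\ell)^{d/2}\bigr)$, strictly weaker than the claimed $\exp(-c\ell^d)$. You correctly flag this as ``the main obstacle,'' but the fix you propose does not close it.
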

\begin{proof}
To prove \eqref{eq:toprob}, it suffices to show that for all $u \in K(x,\lm)$,
\begin{equation}
	\P\big(\Pr^u(S_{[0,\lm^2]} \cap \ob = \varnothing) \geq 1/10\big) \leq \exp(-c\lm^d)\,,
\end{equation}
which can be found in \cite[Definition 2.4, Lemmas 2.8 and 2.9]{DX17}.
\end{proof}
In light of Lemma \ref{toprob}, by {\em closing} a \trueop\ box, namely, changing the obstacle configuration in a \trueop\ box to typical configurations, we could gain much probability for the obstacle configurations.

On the other hand, we have the following result, which says that in a typical environment, we can find a \trueop\ box such that $\lambda_\plr$ will only decrease slightly after closing this \trueop\ box.
\begin{lemma}
\label{iter}
Fix $\ell \geq 1$. Let $\rf{C_6>1}$ be a constant to be chosen in Lemma \ref{typical}, and let $\consa_1>0,\consa_2 \in (0,1)$ be two arbitrary constants. Let $\epn$, $\emp_n(\epn)$, and $C_2>1$ be as defined in \eqref{eq:def-epn}, Definition \ref{def:empty}, and Lemma \ref{empty}, respectively. We assume
	\begin{equation}
	\label{eq:typical}
		\min_{x \in B(0,(\log n)^{C_1})}|B(x,C_6 \rad) \setminus \tos| \geq\rad^{d}, \quad |\emp_n(\epn)| \leq C_2 \rad^d\,,
	\end{equation}
and $\lambda_\plr \geq 1 - \consa_1 \rad^{-2}$.
Then for each $z \in  B(0,(\log n)^{C_1})$ such that $|\tos \cap B(z,C_6 \rad)| \geq (\consa_2\rad)^d$, there exists a \trueop\ box $K(x,\lm)$ with $x \in B(z,20\rad)$ such that
\begin{equation}
\label{eq:itereq}
	\lambda_{\plr \setminus K(x,10\lm)} \geq \lambda_\plr - C \consa_1^2\consa_2^{-2(d-1)}\lm^{2(d+2)}\rad^{-d-2}\,.
\end{equation}
where \rf{$C>0$ is a constant depending only on $(d,p)$.}
\end{lemma}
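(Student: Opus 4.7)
The plan is a two-step argument: a discrete IMS-type cutoff estimate bounding the drop $\lambda_\plr-\lambda_{\plr\setminus K(x,10\lm)}$ by the $\ell^2$-mass of the principal eigenfunction on a thickening of $K(x,10\lm)$, followed by a pigeonhole step that locates a truly-open box with small local mass.

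\textbf{Step 1 (Quadratic form bound).} Let $\psi\ge 0$ be the $\ell^2$-normalized principal eigenfunction of $P|_\plr$, so $P\psi=\lambda_\plr\psi$ on $\plr$. For any $[0,1]$-valued cutoff $\chi$ vanishing on $K(x,10\lm)$, a direct summation by parts yields
\[\lambda_\plr\|\chi\psi\|_2^2-\langle\chi\psi,P(\chi\psi)\rangle=\frac{1}{4d}\sum_{u\sim v}\psi(u)\psi(v)\bigl(\chi(u)-\chi(v)\bigr)^2.\]
Since $\chi\psi$ vanishes on $K(x,10\lm)$, it is a valid Rayleigh test vector for $P|_{\plr\setminus K(x,10\lm)}$, giving
\[\lambda_\plr-\lambda_{\plr\setminus K(x,10\lm)}\leq \frac{1}{4d\|\chi\psi\|_2^2}\sum_{u\sim v}\psi(u)\psi(v)\bigl(\chi(u)-\chi(v)\bigr)^2.\]
Choosing $\chi$ piecewise-linear in the $\ell^\infty$-distance to $K(x,10\lm)$ and equal to $1$ outside $K(x,20\lm)$, so that $|\chi(u)-\chi(v)|\leq C/\lm$ on the annulus, AM--GM gives the right-hand side is at most $C\lm^{-2}\|\chi\psi\|_2^{-2}\sum_{K(x,20\lm)}\psi^2$.

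\textbf{Step 2 (Selecting a cheap truly-open box).} It remains to exhibit a truly-open $K(x,\lm)$ with $x\in B(z,20\rad)$ on which $\sum_{K(x,20\lm)}\psi^2$ is suitably small. The density assumption $|\tos\cap B(z,C_6\rad)|\geq(\consa_2\rad)^d$ produces at least $c(\consa_2\rad/\lm)^d$ disjoint truly-open boxes inside $B(z,C_6\rad)$; covering $B(z,C_6\rad)$ by balls of radius $20\rad$ and pigeonholing concentrates a positive fraction of them in some such sub-ball. The hypothesis $\lambda_\plr\geq 1-\consa_1\rad^{-2}$ together with Lemma~\ref{Onecity-Ball} and the constraint $|\emp_n(\epn)|\leq C_2\rad^d$ identifies, up to a symmetric difference of size $o(\rad^d)$, an effective ball $B(\ct{\plr},\rad)$ carrying essentially all of the $\ell^2$-mass of $\psi$. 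Comparing $\psi$ with the rescaled continuum Dirichlet eigenfunction $\rad^{-d/2}\phi_1((\,\cdot\,-\ct{\plr})/\rad)$ yields boxwise integral bounds on $\psi$; crucially, the continuum eigenfunction vanishes linearly at $\partial B(\ct{\plr},\rad)$, and this extra decay provides the $\rad^{-2}$ improvement over a naive uniform $\|\psi\|_\infty\leq C\rad^{-d/2}$ bound. Averaging over the truly-open boxes in the sub-ball, weighted by distance to the effective boundary, selects the required box.

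\textbf{Main obstacle.} The delicate part is transferring the \emph{global} defect $1-\lambda_\plr\leq\consa_1\rad^{-2}$ into quantitative \emph{boxwise} $\ell^2$ control on $\psi$, sharp enough to yield the extra factor $\rad^{-2}$ appearing in the stated bound. A uniform $\ell^\infty$ bound alone gives a drop of order $\lm^{d-2}\rad^{-d}$, which is weaker than the target for small $\lm$; the $\consa_1^2$ factor tracks the $\ell^2$-error when comparing $\psi$ with the continuum ball eigenfunction via the spectral gap, while the $\consa_2^{-2(d-1)}$ factor is the isoperimetric price paid when the density of truly-open boxes is small, since fewer candidates force a worse pigeonhole bound. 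Making this comparison rigorous in the discrete, obstacle-laden setting---and in particular handling the truly-open boxes that lie near the boundary of the effective ball rather than in its interior---is the technical heart of the argument.
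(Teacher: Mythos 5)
Your Step 1 (IMS-type cutoff/Rayleigh quotient bound) matches the paper's Lemma~\ref{eigenvalue-difference}, so that piece is fine. But Step 2 has a genuine gap: you propose to compare the discrete principal eigenfunction with the rescaled continuum ball eigenfunction $\rad^{-d/2}\phi_1((\cdot-\ct{\plr})/\rad)$ and exploit its linear vanishing at $\partial B(\ct{\plr},\rad)$. That comparison cannot be made under the hypotheses of Lemma~\ref{iter}. You invoke Lemma~\ref{Onecity-Ball}, but that lemma requires $\lambda_\plr\ge\lambda_*$ and the much stronger volume condition~\eqref{eq:empty prob E} (that $|\emp_n(\epsilon)|$ exceeds $|B(0,\rad)|$ by only $\epsilon^{1/2}\rad^d$, for three mesoscales); Lemma~\ref{iter} only assumes $\lambda_\plr\ge 1-\consa_1\rad^{-2}$ and the weak bound $|\emp_n(\epn)|\le C_2\rad^d$. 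These do not identify any effective ball. This weakness is by design: Lemma~\ref{iter} is iterated in Lemma~\ref{Remove Truly open set}, and after filling in several boxes with obstacles one loses $\lambda_\plr\ge\lambda_*$ and any ball geometry, so the inductive step must survive on the weak hypotheses alone. The quantitative eigenfunction comparison in the appendix (Lemma~\ref{eigenfunction}) requires the domain itself to be sandwiched between two nearby balls --- essentially what the whole ball-clearing proof is trying to establish --- so using it here would be circular.

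The paper's route to the missing ingredient is quite different and avoids continuum comparison entirely. Having reduced to bounding $\sum_{u\in K(x,11\lm)}\Phi_\plr^2(u)$ relative to $|\Phi_\plr|_2^2$, it locates the cheap \trueop\ box near the boundary of the \emph{truly-open set} $\tos$, not near any putative ball boundary. The mechanism is probabilistic: from any site within $O(\lm)$ of $\tos^c$ the walk reaches $\tos^c$ in $O(\lm^2)$ steps, and by~\eqref{eq:not truly open - killed} then dies with probability $\ge 1/2$; combining this with the identity $\sum_v\Phi_\plr(v)\Pr^v(\tau_{\plr^c}\le t)=1-\lambda_\plr^t$ (your~\eqref{Eigenfunction Value Bound}) and the assumption $\lambda_\plr\ge 1-\consa_1\rad^{-2}$ yields $\sum_{u\in A}\Phi_\plr(u)\le C\consa_1\lm^2\rad^{-2}$ on the $100\lm$-neighbourhood $A$ of $\tos^c$. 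The isoperimetric inequality (Lemma~\ref{isolemma}), applied using both halves of~\eqref{eq:typical} and the density hypothesis on $\tos\cap B(z,C_6\rad)$, shows $A$ contains $\ge c\lm^{-d}(\consa_2\rad)^{d-1}$ disjoint \trueop\ boxes; pigeonholing and then squaring the $\ell^1$ bound (valid since $\Phi_\plr\ge 0$) gives the $\consa_1^2\consa_2^{-2(d-1)}$ factor directly. In particular the $\consa_1^2$ does not come from a spectral-gap comparison error as you suggest, but from squaring the eigenfunction-mass estimate on $A$. You should replace the continuum-comparison step with this boundary-of-$\tos$ argument, or at least establish a boxwise $\ell^2$ estimate that uses only the weak hypotheses actually available.
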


Lemma \ref{iter} is the key ingredient in the proof of Lemma \ref{eigenvalue tail - 1}. We will use Lemma \ref{iter} repeatedly (see Lemma \ref{Remove Truly open set} below) to show that we can find a number of \trueop\ boxes such that $\lambda_\plr$ will not be decreased much after closing them. The operation of changing these \trueop\ boxes to typical configurations will map the event $\{\lambda_\plr \geq \beta\}$ to $\{\lambda_\plr \geq \beta -\delta\}$, where $\delta$ depends on $\ell$ and the number of \trueop\ boxes being closed. Combining with Lemma \ref{toprob} will then give an upper bound for $\P(\lambda_\plr \geq \beta)/\P(\lambda_\plr \geq \beta -\delta)$ (see Lemma \ref{mllemma}.) The proof of Lemmas \ref{iter}, \ref{Remove Truly open set} and \ref{mllemma} will be provided in Section \ref{sec:proof of iter}.
In Section \ref{sec:tail-1}, we fix appropriate choices of $\lm$ and the number of \trueop\ box being closed, and prove Lemma \ref{eigenvalue tail - 1}.

\subsubsection{Some useful facts}
\label{sec:Some useful facts}
Before embarking on the proof of Lemma \ref{iter}, we will show in this section that with high probability, assumption \eqref{eq:typical} holds and the choice of $z$ in Lemma \ref{iter} exists.
\begin{defn}
\label{def:Phi}
For any \rf{finite} $U \subset \Z^d$, let $\Phi_U$ be the $\ell^1$-normalized principal eigenfunction of $P|_{U}$, the transition matrix of the random walk restricted to $U$.	
\end{defn}
 The following lemma will be used repeatedly, for instance,  to bound $\Phi_U(v)$ at sites $v$ close to the boundary of $U$, or to find sites $v$ from where the walk cannot exit $U$ too quickly.
\begin{lemma}
For \rf{finite $U \subset \Z^d$ and} $t \in \N$,	
\begin{equation}
\label{Eigenfunction Value Bound}
	\sum_{v \in U} \Phi_U(v) \cdot \Pr^v( \tau_{U^c} \leq t) = 1 - \lambda_U^t\,.
\end{equation}	
\end{lemma}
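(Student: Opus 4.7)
The plan is to prove the identity by a direct eigenvalue computation, exploiting symmetry of the transition matrix. First, I would record the key structural fact: because the simple symmetric random walk on $\Z^d$ is reversible with respect to counting measure, the submatrix $P|_U$ (with entries $P(x,y)\mathbbm{1}_{x,y\in U}$) is a symmetric matrix. Therefore the $\ell^1$-normalized right principal eigenfunction $\Phi_U$ is simultaneously a left eigenfunction, i.e., $\Phi_U P|_U = \lambda_U \Phi_U$ viewed as a row vector.

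Next, I would rewrite the survival probability using the transition kernel:
\begin{equation*}
\Pr^v(\tau_{U^c} > t) \;=\; \sum_{w \in U} (P|_U)^t(v,w).
\end{equation*}
Summing against $\Phi_U$ and interchanging the two finite sums,
\begin{equation*}
\sum_{v\in U} \Phi_U(v)\,\Pr^v(\tau_{U^c}>t) \;=\; \sum_{w\in U} \bigl(\Phi_U\, (P|_U)^t\bigr)(w) \;=\; \lambda_U^t \sum_{w\in U}\Phi_U(w) \;=\; \lambda_U^t,
\end{equation*}
using the left-eigenfunction property iterated $t$ times and the $\ell^1$-normalization $\sum_w \Phi_U(w)=1$. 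Subtracting from the trivial identity $\sum_v \Phi_U(v)\cdot 1 = 1$ gives the desired formula.

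There is no real obstacle here; the only thing one must be careful about is the justification that $\Phi_U$ acts as a left eigenfunction. If one prefers to avoid invoking symmetry directly, one can equivalently write the argument via reversibility: the measure $\Phi_U$ is reversible for $P|_U$ because $\Phi_U(x)P|_U(x,y) = \Phi_U(y)P|_U(y,x)$ (both equal $\tfrac{1}{2d}\Phi_U(x)\mathbbm{1}_{\{x\sim y,\,x,y\in U\}}$ up to the identical constant, and the equality $\Phi_U(x)=\Phi_U(y)$ is not needed—one uses that $P|_U$ is symmetric so $\Phi_U$ times $P|_U$ on either side yields $\lambda_U\Phi_U$). Either way the proof is a two-line calculation once the left-eigenfunction remark is in place.
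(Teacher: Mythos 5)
Your proof is correct and takes essentially the same route as the paper: the paper also writes $\Pr^v(\tau_{U^c}>t)=\bigl((P|_U)^t\mathbf{1}\bigr)(v)$, pairs with $\Phi_U$ to get $\langle \Phi_U,(P|_U)^t\mathbf{1}\rangle=\lambda_U^t$, and this inner-product step implicitly uses symmetry of $P|_U$ just as your left-eigenfunction remark does. One small caution on your closing aside: the detailed-balance identity $\Phi_U(x)P|_U(x,y)=\Phi_U(y)P|_U(y,x)$ is \emph{false} in general (the two sides are $\tfrac{1}{2d}\Phi_U(x)$ and $\tfrac{1}{2d}\Phi_U(y)$, which differ), so $\Phi_U$ is not a reversible measure for $P|_U$; fortunately you correctly note that only the symmetry of $P|_U$ is actually used, so the proof stands.
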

\begin{proof}Let $\mathbf{1} = (1,1,\dots,1) \in \R^U$, then
\begin{equation*}
		\sum_{v \in U} \Phi_U(v) \cdot \Pr^v( \tau_{U^c} \leq t)  = 1 -  \langle\Phi_U, (P|_U)^t\mathbf{1} \rangle= 1- \lambda_U^t\,. \qedhere
	\end{equation*}	
\end{proof}

Recall the definition of $\emp_n(\epn)$ in Definition \ref{def:empty} and \eqref{eq:def-epn}. The following result says that neither $\tos$ nor $\emp_n(\epn)$ can be too large in a typical obstacle configuration, namely, \eqref{eq:typical} holds.

\begin{lemma}
\label{typical}
Let $\lm \in (C, \rad^{1/2})$ for some large constant $C$. There exists a constant $C_6>1$ depending only on $(d,p)$ such that \eqref{eq:typical} holds with $\P$-probability at least $1- n^{-10d}$.
\end{lemma}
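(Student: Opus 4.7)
The second assertion $|\emp_n(\epn)| \leq C_2 \rad^d$ in \eqref{eq:typical} is an immediate consequence of Lemma \ref{empty} applied with $\epsilon = \epn$ (which lies in the admissible range once $n$ is large), with failure probability at most $n^{-100d}$. The plan therefore focuses on the first assertion: controlling $|\tos \cap B(x,C_6\rad)|$ uniformly in $x \in B(0,(\log n)^{C_1})$.

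The strategy is a first-moment plus independence argument on the number of $\ell$-truly-open boxes meeting $B(x,C_6\rad)$. The starting point is Lemma \ref{toprob}, which gives $\P(K(y,\ell)\text{ is }\ell\text{-truly-open}) \leq \exp(-c\ell^d)$. Since this event depends only on obstacles inside $K(y,4\ell)$, the $\ell$-boxes can be partitioned into a bounded number $K_0 = K_0(d)$ of colour classes (e.g.\ by the residue of the centre modulo $5(2\ell+1)$) within which the indicators are mutually independent. Fix $x$ and denote by $\#$ the number of $\ell$-truly-open boxes intersecting $B(x,C_6\rad)$, by $\#_c$ its colour-wise components, and set $K := \lfloor(|B(x,C_6\rad)| - \rad^d)/(2\ell+1)^d\rfloor$. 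Since each truly-open box contributes at most $(2\ell+1)^d$ to $\tos$, the failure event is contained in $\{\# > K\}$, which by pigeonhole lies in $\bigcup_c \{\#_c > K/K_0\}$.

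For each colour, I would use the binomial tail
\begin{equation*}
    \P(\#_c > K/K_0) \leq \binom{M_c}{\lceil K/K_0\rceil}\exp\bigl(-c\ell^d \lceil K/K_0\rceil\bigr) \leq \Bigl(\tfrac{e M_c}{\lceil K/K_0 \rceil}\exp(-c\ell^d)\Bigr)^{\lceil K/K_0\rceil},
\end{equation*}
where $M_c$ is the number of colour-$c$ boxes in $B(x,C_6\rad)$. Once $\omega_d C_6^d \geq 2$ the ratio $M_c/\lceil K/K_0\rceil$ is bounded by an absolute constant depending on $K_0$, and for $\ell$ larger than a constant $C = C(d,p)$ the base is at most $\exp(-c\ell^d/2)$. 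This yields
\begin{equation*}
    \P(\#_c > K/K_0) \leq \exp\bigl(-c\ell^d K/(2K_0)\bigr) \leq \exp\bigl(-c' C_6^d \rad^d\bigr),
\end{equation*}
where the last step uses $\ell^d K \asymp C_6^d \rad^d$, so the final exponent is $\ell$-free. Since $\rad^d = \Theta(\log n)$, choosing $C_6 = C_6(d,p)$ large enough makes this at most $n^{-11d}$; a union bound over the $O((\log n)^{C_1 d})$ relevant $x$ then delivers the required $n^{-10d}$ bound.

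The main obstacle I anticipate is securing uniformity over the full range $\ell \in (C,\rad^{1/2})$ with a single constant $C_6$. This is handled by the key cancellation $\ell^d \cdot K \asymp C_6^d \rad^d$, which makes the final exponent independent of $\ell$. This is also why a binomial tail bound, rather than Markov (too weak for moderate $\ell$) or Bernstein (which degrades when $\ell$ approaches $\rad^{1/2}$), is the right tool here.
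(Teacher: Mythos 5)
Your proposal is correct and takes essentially the same approach as the paper: both handle the second assertion via Lemma~\ref{empty}, and for the first assertion both decompose the $\ell$-boxes into a bounded number of independence classes (your ``colours'' are the paper's ``groups''), then pigeonhole and apply a binomial/large-deviation tail bound for the number of \trueop\ boxes in a class, followed by a union bound over $x$. The paper phrases the pigeonhole step in terms of having too few non-\trueop\ boxes rather than too many \trueop\ ones, and leaves the tail bound as ``large deviation estimates for sums of i.i.d.\ Bernoulli,'' but the logic and quantitative conclusion are the same as what you wrote out explicitly.
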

\begin{proof}
Since Lemma \ref{empty} gives the second inequality in \eqref{eq:typical}, it suffices to show that the first inequality in \eqref{eq:typical} holds with high probability.
Consider boxes of the form $ K(v,\lm), v \in (2\lm+1)\Z^d$. We can partition these boxes into $10^d$ groups $\{K(v,\ell):v \in (2\ell + 1)(10\Z^d + i)\}$, for $i \in \{0,1,\dots,9\}^d$ so that the distance between any two boxes in the same group is at least $10\ell$. Recalling the definition of \trueop\ box in \eqref{eq:truly open}, we have that within each group the events that each box is \trueop\ are mutually independent, and have probability less than $e^{-c\ell^d}$ by Lemma \ref{toprob}. Note that on the event $\{|B(x,C_6 \rad) \setminus \tos |<\rad^{d}\}$, there exists a group where there are at most $\rad^d/[10^d\cdot(2\ell +1 )^d]$ many non-\trueop\ box that intersect $B(x,C_6 \rad)$. Also, the number of boxes that intersect $B(x,C_6 \rad)$ in each group is at least
$ |B(x,C_6 \rad)|/[10^d\cdot(2\ell +1 )^d]\,.$ It follows from large deviation estimates for sums of i.i.d. Bernoulli random variables (in each group) and a union bound over those groups that
$$ \P\big(|\tos \cap B(x,C_6 \rad)| \geq |B(x,C_6 \rad)|- \rad^d\big) \leq n^{-20d}\,,$$
for $C_6$ and $\ell$ sufficiently large.
Then a union bound over $ x \in B(0,(\log n)^{C_1})$ yields
$$ \P\Big(\min_{x \in B(0,(\log n)^{C_1})}|B(x,C_6 \rad) \setminus \tos| \geq\rad^{d}\Big) \geq 1- n^{-15d}\,.$$
This completes the proof of Lemma \ref{typical}.
\end{proof}

The following result says that under the assumptions in Lemma \ref{iter}, there exists $z$ such that $B(z, C_6\rad)$ contains enough \trueop\ boxes (for some $b_2$ depending on $b_1$ as determined by \eqref{eq:number of truly open site}).
\begin{lemma}\label{tl0332123}
Let $\lm \in (C, \rad^{1/2})$ for some large constant $C$. There exists a constant $c_3 = c_3(d) \in (0,1)$ such that for any $\consa \geq 1$, if $\lambda_\plr \geq 1 - \consa \rad^{-2}$ and $n$ is sufficiently large, then there exists $x \in  B(0,(\log n)^{C_1})$ such that 
\begin{equation}
\label{eq:number of truly open site}
	|\tos \cap B(x,C_6 \rad)| \geq c_3\consa^{-d/2}\rad^d\,.
\end{equation}
\end{lemma}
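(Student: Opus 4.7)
The plan is to use the $L^1$-normalized principal eigenfunction $\Phi_\plr$ as a test measure together with the identity \eqref{Eigenfunction Value Bound}, show that most of its mass sits on vertices whose enclosing box is \trueop, and then localize this mass to a single ball of radius $C_6\rad$ via an IMS-type argument.

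I would first define the ``good'' set $G := \{v \in \plr : \Pr^v(\tau_{\plr^c} \leq \ell^2) < \delta\}$ for a small constant $\delta = \delta(d)$. A standard reflection estimate shows that for any $v \in K(x,\ell)$ the simple random walk exits $K(x, 4\ell)$ within time $\ell^2$ with probability at most some constant $c_0 = c_0(d) < 9/10$; choosing $\delta \leq 9/10 - c_0$ then gives $\Pr^v(S_{[0,\ell^2]} \subseteq K(x,4\ell) \setminus \ob) \geq 1/10$ for every $v \in G$, so $G \subseteq \tos$. Applying \eqref{Eigenfunction Value Bound} with $t = \ell^2$ and $\lambda_\plr \geq 1 - \consa \rad^{-2}$ yields
\begin{equation*}
\sum_v \Phi_\plr(v) \Pr^v(\tau_{\plr^c} \leq \ell^2) = 1 - \lambda_\plr^{\ell^2} \leq \consa \ell^2/\rad^2,
\end{equation*}
so Markov's inequality gives $\Phi_\plr(G) \geq 1 - C\consa/(\delta\rad) \geq 1/2$ for $n$ large. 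Separately, the heat-kernel bound $\Phi_\plr(v) = \lambda_\plr^{-t}(P|_\plr)^t \Phi_\plr(v) \leq \lambda_\plr^{-t}\max_w P^t(v,w) \leq C \lambda_\plr^{-t} t^{-d/2}$ with $t \asymp \rad^2/\consa$ (so $\lambda_\plr^{-t} = O(1)$) gives $\|\Phi_\plr\|_\infty \leq C\consa^{d/2}/\rad^d$.

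To localize to a ball of radius $C_6\rad$, I would apply the IMS identity to the $L^2$-normalized eigenfunction $\psi := \Phi_\plr/\|\Phi_\plr\|_2$, using a smooth partition of unity $\{\chi_i\}$ subordinate to balls $B(x_i, 2C_6\rad)$ with $|\chi_i(v)-\chi_i(w)| \leq C/\rad$ for $v \sim w$. Combining the identity with the discrete Faber-Krahn bound $1 - \lambda_{A_i} \geq c|A_i|^{-2/d}$ for $A_i := \plr \cap B(x_i, 2C_6\rad)$ yields
\begin{equation*}
\sum_i \frac{c\,\|\chi_i \psi\|_2^2}{|A_i|^{2/d}} \leq (1 - \lambda_\plr) + \sum_i \langle \psi^2, |\nabla \chi_i|^2\rangle \leq \frac{\consa + C}{\rad^2}.
\end{equation*}
An index with $|A_i| < c_3\rad^d/\consa^{d/2}$ contributes at least $c\,\consa\,c_3^{-2/d}\rad^{-2}\|\chi_i\psi\|_2^2$, so for $c_3$ small the combined $\psi^2$-mass on such indices is $O(c_3^{2/d})$, and a positive fraction of the unit mass $\sum_i \|\chi_i\psi\|_2^2 = 1$ must sit on indices with $|A_i| \geq c_3\rad^d/\consa^{d/2}$. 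Picking such an $i^*$ with $\|\chi_{i^*}\psi\|_2^2$ bounded below by a positive constant, writing $(1 - \lambda_{A_{i^*}})\|\chi_{i^*}\psi\|_2^2$ as a Rayleigh quotient and bounding it via the IMS identity yields $\lambda_{A_{i^*}} \geq 1 - C\consa/\rad^2$. Re-running the Markov argument on the subdomain $A_{i^*}$ (the new good set embeds into $G$ because $\tau_{A_{i^*}^c} \leq \tau_{\plr^c}$) and the same heat-kernel bound applied to the principal eigenfunction of $P|_{A_{i^*}}$ then produce $|G \cap B(x_{i^*}, 2C_6\rad)| \geq c_3 \rad^d/\consa^{d/2}$. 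Since $G \subseteq \tos$, absorbing the factor $2$ into $C_6$ completes the proof.

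The hardest step is the pigeonhole: extracting a single $i^*$ with \emph{both} $|A_{i^*}| \geq c_3 \rad^d/\consa^{d/2}$ and $\|\chi_{i^*}\psi\|_2^2$ bounded below by a positive constant. The IMS identity combined with Faber-Krahn only rules out mass on small-$|A_i|$ balls and does not by itself prevent $\psi$ from spreading its $L^2$-mass thinly over many distant good balls. Ruling this out will require an additional concentration input -- either a Harnack-type smoothness of $\psi$ on scale $\rad/\sqrt{\consa}$ extracted from the heat-kernel representation, or an adaptation of the ``one-city'' argument behind Lemma \ref{Onecity-Ball} applied to $\psi$ instead of $\emp_n(\epsilon)$ -- to exclude multiple comparable maxima.
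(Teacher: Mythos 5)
Your preliminary steps are sound: the identity \eqref{Eigenfunction Value Bound} with $\lambda_\plr \geq 1-\consa\rad^{-2}$ and $\ell^2 \leq \rad$ does give that most $\Phi_\plr$-mass sits on the good set $G$, the comparison with \eqref{eq:not truly open - killed} does give $G\subseteq\tos$, and the heat-kernel bound $|\Phi_\plr|_\infty \leq C\consa^{d/2}\rad^{-d}$ (essentially Lemma~\ref{Philinfbd}) is correct. From these you can immediately deduce $|G| \geq c\,\consa^{-d/2}\rad^d$ \emph{globally}. However, the lemma asks for a lower bound on $|\tos\cap B(x,C_6\rad)|$ for a \emph{single} ball, and bridging this is where your argument has a genuine gap which you yourself flag: the IMS/Faber--Krahn step rules out $\psi$-mass on indices with small $|A_i|$, but nothing prevents the $L^2$-mass from being spread thinly over many disjoint large-$|A_i|$ balls, so you cannot extract an $i^*$ with $\|\chi_{i^*}\psi\|_2^2$ bounded below without an additional concentration input (and, as a secondary point, $|A_i|\geq c_3\rad^d/\consa^{d/2}$ concerns $\plr$, not $\tos$, so even a successful pigeonhole only gets you started on the re-run you describe). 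Without that localization the Rayleigh-quotient step giving $\lambda_{A_{i^*}}\geq 1-C\consa\rad^{-2}$ is vacuous.

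The paper avoids this localization problem entirely and its route is worth contrasting. It uses \eqref{Eigenfunction Value Bound} only to extract a single vertex $u\in\plr$ with $\Pr^u(\tau_{\plr^c}\leq \lceil 10^{-3}\rad^2/\consa\rceil)\leq 1/100$; there is no attempt to analyze where $\Phi_\plr$ concentrates. It then sets $t'=\lceil 10^{-4}\rad^2/\consa\rceil$, observes that if $S_{t'}\in\tos^c$ then the walk dies within the next $\ell^2$ steps with probability at least $1/2$ (so $\Pr^u(S_{t'}\in\tos^c)\leq 1/50$), and that $\Pr^u(S_{t'}\notin B(u,C_6\rad))\leq 1/50$ by a diffusive estimate since $t'\leq 10^{-4}\rad^2$. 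Thus $\Pr^u(S_{t'}\in\tos\cap B(u,C_6\rad))\geq 24/25$, and the local limit theorem $\Pr^u(S_{t'}=y)\leq C(t')^{-d/2}\leq C\consa^{d/2}\rad^{-d}$ then converts this into the counting bound $|\tos\cap B(u,C_6\rad)|\geq c\,\consa^{-d/2}\rad^d$, taking $x=u$. The ball is \emph{centered at} the good vertex $u$ rather than selected by a pigeonhole, which is what makes the argument close with no need to control the spatial distribution of the eigenfunction.
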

The following lemma is needed to prove Lemma \ref{tl0332123}.
\begin{lemma}
If $K(x,\lm)$ is not a \trueop\ box, then for any starting point $ u \in K(x,\lm)$, the survival probability up to $\ell^2$ steps is less than $1/2$, namely,
\begin{equation}
\label{eq:not truly open - killed}
	\Pr^u(\tau_{\plr^c} > \ell^2) \leq 1/2\,.
\end{equation}
\end{lemma}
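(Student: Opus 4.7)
My plan is to fix an arbitrary $u \in K(x, \ell)$ and decompose the survival event according to whether the walk stays inside the larger concentric box $K(x, 4\ell)$ during the time interval $[0, \ell^2]$. Since $\plr^c \supseteq \ob$, it suffices to bound $\Pr^u(\tau_\ob > \ell^2)$, because $\{\tau_{\plr^c} > \ell^2\} \subseteq \{\tau_\ob > \ell^2\}$. Then I would write the inclusion
\begin{equation*}
\{\tau_\ob > \ell^2\} \subseteq \{S_{[0,\ell^2]} \subseteq K(x,4\ell) \setminus \ob\} \cup \{\tau_{K(x,4\ell)^c} \leq \ell^2\},
\end{equation*}
so the survival probability is split into the ``stays inside and survives'' part and the ``escapes the larger box'' part.

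Next, I would dispose of the first event using the definition of not-\trueop: since $u \in K(x, \ell)$, the probability of the first event is at most $\max_{v \in K(x,\ell)} \Pr^v(S_{[0,\ell^2]} \subseteq K(x,4\ell) \setminus \ob) < 1/10$ by assumption. For the second event, I would observe that since $u \in K(x, \ell)$, exiting $K(x, 4\ell)$ requires an $\ell^\infty$-displacement of at least $3\ell$ from $u$. Each coordinate $t \mapsto S_t^{(i)} - u^{(i)}$ is a martingale whose increments have variance $1/d$ (since a simple symmetric random walk on $\Z^d$ updates coordinate $i$ by $\pm 1$ each with probability $1/(2d)$), so $\Ex^u[(S_{\ell^2}^{(i)} - u^{(i)})^2] = \ell^2/d$. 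Applying Doob's maximal inequality to the submartingale $(S_t^{(i)} - u^{(i)})^2$ yields
\begin{equation*}
\Pr^u\Bigl(\max_{t \leq \ell^2} |S_t^{(i)} - u^{(i)}| \geq 3\ell\Bigr) \leq \frac{\ell^2/d}{9\ell^2} = \frac{1}{9d},
\end{equation*}
and a union bound over the $d$ coordinates gives $\Pr^u(\tau_{K(x,4\ell)^c} \leq \ell^2) \leq 1/9$.

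Combining the two bounds gives $\Pr^u(\tau_\ob > \ell^2) < 1/10 + 1/9 = 19/90 < 1/2$, which establishes the lemma. There is no real obstacle here beyond getting the constants to line up: the definition of not-\trueop\ was chosen precisely so that the diffusive estimate on coordinate displacements of simple random walk absorbs the remaining mass. The only minor care needed is to verify that the coordinatewise martingale argument is clean (using $|{\,\cdot\,}|_\infty$ rather than $|{\,\cdot\,}|_2$ so that a union bound over coordinates is available), and to note that $3\ell$ is the correct escape distance guaranteed by $u \in K(x,\ell)$ together with the choice $K(x, 4\ell)$ of the outer box.
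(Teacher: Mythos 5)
Your proof is correct and follows essentially the same route as the paper: decompose the survival event into "stays in $K(x,4\ell)\setminus\ob$" versus "exits $K(x,4\ell)$", bound the first by $1/10$ from the definition of not-\trueop, and bound the second by a coordinate-wise maximal inequality. The only cosmetic difference is that you use Doob's $L^2$-maximal inequality (yielding the slightly sharper $1/9$) where the paper uses the reflection principle followed by Chebyshev (yielding $2/9$); either way the total is comfortably below $1/2$.
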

\begin{proof}
We first note that for any $ u \in K(x,\lm)$,
\begin{equation*}
	\Pr^u(\tau_{\plr^c} > \ell^2) \leq \Pr^u(\max_{t \in [0,\ell^2]}|S_t - u|_\infty \geq 3\ell) +  \Pr^u(S_{[0,\lm^2]} \subset( K(x,4\ell)\setminus \ob))\,.
\end{equation*}
If $K(x,\lm)$ is not a \trueop\ box, then the definition of the \trueop\ boxes (Definition \ref{def:truly open}) implies
$$\Pr^u(S_{[0,\lm^2]} \subset( K(x,4\ell)\setminus \ob)) \leq 1/10\,.$$
In addition, the reflection principle yields
$$ \Pr^u(\max_{t \in [0,\ell^2]}|S_t - u|_\infty \geq 3\ell)  \leq d\cdot 2\Pr(|S_{\ell^2} \cdot \mathbf{e}_1| \geq 3 \ell) \leq 2d \cdot \frac{\ell^2/d}{9\ell^2}=2/9\,.$$
Combining the previous three inequalities gives \eqref{eq:not truly open - killed}.
\end{proof}

\begin{proof}[\bf Proof of Lemma \ref{tl0332123}]
Since by assumption $\lambda_\plr \geq 1 - b\rad^{-2}$,
\eqref{Eigenfunction Value Bound} implies that
\begin{equation*}
		\sum_{v \in \plr} \Phi_\plr(v) \cdot \Pr^v( \tau_{\plr^c} \leq 10^{-3}\rad^2/\consa) \leq 1- \lambda_\plr^{10^{-3}\rad^2/\consa} \leq  1 - e^{-\lceil 10^{-3}\rad^2/\consa \rceil \consa \rad^{-2}} \leq 1/100\,.
\end{equation*}	
Hence, there exists $u \in \plr$ such that
$$ \Pr^u(\tau_{\plr^c} \leq  \lceil 10^{-3}\rad^2/\consa \rceil) \leq  1/100\,.$$
On the other hand, by \eqref{eq:not truly open - killed} if the random walk hit $\tos^c$, then it will get killed with probability at least $1/2$ in next $\lm^2$ steps. Since $\ell^2 \leq \rad \leq 10^{-4}\rad^2/\consa$ for sufficiently large $n$, we get
$$ \Pr^u(\tau_{\plr^c} \leq \lceil 10^{-3}\rad^2/\consa \rceil)   \geq \Pr^u(S_{ \lceil 10^{-4}	\rad^2/\consa \rceil} \in \tos^c) /2\,.$$
Combining the previous two  inequalities gives
$$ \Pr^u(S_{ \lceil 10^{-4}	\rad^2/\consa \rceil} \in \tos^c)  \leq 1/50\,.$$
Since we assumed $\consa \geq 1$ and $C_6 \geq 1$ (chosen in Lemma \ref{typical}), we have
\begin{equation*}
	\Pr^u(S_{ \lceil 10^{-4}	\rad^2/\consa \rceil} \not \in B(u,C_6 \rad)) \leq 1/50\,.
\end{equation*}
Combining the previous two inequalities with the local limit theorem for the random walk $S$ gives
$$24/25\leq \Pr^u(S_{ \lceil 10^{-4}	\rad^2/\consa \rceil} \in \tos \cap B(u,C_6 \rad)) \leq |\tos \cap B(u,C_6 \rad)|  \cdot C(\rad b^{1/2})^{-d}\,.$$
This yields \eqref{eq:number of truly open site}.
\end{proof}

\subsubsection{Proof of Lemma \ref{iter} and its corollaries}
\label{sec:proof of iter}
\begin{proof}[\bf Proof of Lemma \ref{iter}]
We need to show that for each for each $z \in  B(0,(\log n)^{C_1})$ such that $|\tos \cap B(z,C_6 \rad)| \geq (\consa_2\rad)^d$, we can find a truly open box $K(x,\ell)$ such that filling $K(x, 10\ell)$ with obstacles will not decrease $\lambda_\plr$ too much, namely, \eqref{eq:itereq} holds. We will find such an $x$ near the boundary of $\tos$. The change in $\lambda_\plr$ can then be shown to be small because $\Phi_\plr$ is small near $x$. (Recall that $\Phi_\plr$ is the $\ell^1$-normalized principal eigenfunction of $P|_{\plr}$, the transition matrix of the random walk restricted to $\plr$.)

 Denote by $|\Phi_{\plr}|_2$ the $\ell^2$-norm of $\Phi_\plr$, namely $|\Phi_{\plr}|_2^2 = \sum_{x \in \Z^d}\Phi_\plr(x)^2$. By Lemma \ref{eigenvalue-difference} in the appendix, for any $x \in \plr$ with $\sum_{u \in K(x,11\lm)} \Phi_{\plr}^2(u) \leq |\Phi_{\plr}|_2^2/2$, we have
\begin{equation}
\label{eq:ds23-2}
 	\lambda_\plr - \lambda_{\plr \setminus K(x,10\lm)} \leq  4\sum_{u \in K(x,11\lm)} \Phi_{\plr}^2(u)/|\Phi_{\plr}|_2^2\,.
 \end{equation}

To bound the right hand side of \eqref{eq:ds23-2}, we first show that the assumption \eqref{eq:typical} implies
\begin{equation}
\label{eq:ds23-1}
	|\Phi_\plr|_2^2 \geq c\rad^{-d}\,.
\end{equation}
To this end, let $\Omega_\epn := \{v \in \Z^d: \Phi_\plr(v) \geq \epn \rad^{-d}\}$. It was shown in \cite[Lemma 5.5]{DX18} that $\sum_{v \not \in \emp_n(\epn)} \Phi_\plr(v) \leq C \epn$ for some constant $C$, which implies $|\Omega_\epn \setminus \emp_n(\epn)| \leq C\rad^d$. Since $|\emp_n(\epn)| \leq C_2 \rad^{d}$,  we get $|\Omega_\epn| \leq C \rad^d$ and $$\sum_{v \in \Omega_\epn} \Phi_\plr(v) \geq \sum_{v \in \Omega_\epn \cup \emp_n(\epn)} \Phi_\plr(v)  -|\emp_n(\epn)|\cdot \epn \rad^{-d} \geq 1- C \epn \geq 1/2\,.$$ Then \eqref{eq:ds23-1} follows from
	$|\Omega_\epn| \cdot \sum_{v \in \Omega_\epn} \Phi^2_\plr(v) \geq \Big(\sum_{v \in \Omega_\epn} \Phi_\plr(v) \Big)^2$.

Now, suppose $z \in  B(0,(\log n)^{C_1})$ satisfies $|\tos \cap B(z,C_6 \rad)| \geq (\consa_2\rad)^d$. By \eqref{eq:ds23-2} and \eqref{eq:ds23-1}, to prove \eqref{eq:itereq}, it suffices to find a \trueop\ $K(x,\lm)$ with $x \in B(z,20\rad)$ such that for some constant $C>0$,
\begin{equation}
\label{eq:150709}
	\sum_{u \in K(x,11\ell)} \Phi_\plr^2(u)  \leq C \consa_1^2\consa_2^{-2(d-1)}\rad^{-2(d+1)}\lm^{2(d+2)}\,.
\end{equation}
 Heuristically, $\Phi_\plr$ is large on $\tos$ and small on $\tos^c$. So we expect that such a \trueop\ box can be found near the boundary of $\tos$.

We define the outer boundary for $D \subseteq \Z^d$ by \begin{equation}
	\begin{split}
		\partial D &:= \{x\in D^c: |x-y|_1 = 1 \mbox{ for some } y\in D\}\,,
	\end{split}
\end{equation}
and denote by $A$ the points in $\Z^d$ which are close to $\tos^c$: $$A = \{u \in \Z^d : \exists \ v\in \tos^c \ s.t. \ |u-v|_\infty \leq 100 \lm\}\,.$$
For any starting point in $u \in A$, since $\tos^c$ is a union of boxes of side length $\ell$, the probability that the random walk hits $\tos^c$ within $\ell^2$ steps is uniformly bounded away from $0$. Recalling \eqref{eq:not truly open - killed}, which says that starting from any point in $\tos^c$, with probability at least $1/2$, the random walk will be killed in $\ell^2$ steps, we get for some constant $c' = c'(d)$, $$\Pr^u(\tau_{\plr^c} \leq 2\lm^2) \geq c'\,.$$
Then \eqref{Eigenfunction Value Bound} and the assumption $\lambda_\plr \geq 1 - \consa_1 \rad^{-2}$ implies
\begin{equation}
\label{eq:sumA}
\sum_{u \in A}\Phi_\plr(u)c'\leq  1- \lambda_\plr^{2\lm^2} \leq C\consa_1\lm^2\rad^{-2}\,.
\end{equation}
We claim that $A$ contains many truly open boxes. Indeed, the cardinality of $A\cap B(z, C_6\rad)$ can be bounded from below in terms of the cardinality of $\partial \tos \cap B(z, C_6\rad)$.  Since $|\tos \cap B(z,C_6 \rad)| \geq (\consa_2\rad)^d$, $|B(z,C_6 \rad) \setminus \tos| \geq  \rad^{d}$, and $\consa_2 \in (0,1)$, Lemma \ref{isolemma} implies that
$$|\partial \tos \cap B(z,C_6 \rad)| \geq c (\consa_2\rad)^{d-1}\,.$$
Then we can choose $c \lm^{-d}(\consa_2\rad)^{d-1}$ many \trueop\ boxes such that that the boxes of side length $22\lm +1$ centered at these boxes are disjoint and also in $A$. Combined with \eqref{eq:sumA}, it implies that there exists a \trueop\ box $K(x,\lm)$ with $x \in B(z,C_6 \rad)$ and
$$\sum_{u \in K(x,11\ell)} \Phi_\plr(u) \leq \frac{C\consa_1\lm^2\rad^{-2}}{c \lm^{-d}(\consa_2\rad)^{d-1}} \leq  C \consa_1\consa_2^{-(d-1)}\rad^{-(d+1)}\ell^{d+2}\,.$$ Hence
$$\sum_{u \in K(x,11\ell)} \Phi_\plr^2(u)  \leq \Big(\sum_{u \in K(x,11\ell)} \Phi_\plr(u) \Big)^2 \leq C \consa_1^2\consa_2^{-2(d-1)}\rad^{-2(d+1)}\ell^{2(d+2)}\,.$$
Thus \eqref{eq:150709} follows and this completes the proof of Lemma \ref{iter}.
\end{proof}

In the following lemma, we apply Lemma \ref{iter} repeatedly to remove a number of \trueop\ boxes, while $\lambda_\fregion$ only decreases slightly.

\begin{lemma}
\label{Remove Truly open set}

Assume \eqref{eq:typical} holds and $\lambda_\plr \geq 1 - \consa \rad^{-2}$ for some $\consa \geq 1$. There \rf{exist constants $C, C_7 >2$} such that the following holds\rf{: For all $\lm \in (C, \rad^{1/2})$}, there exist $z \in B(0,(\log n)^{C_1})$ and $\{x_m\}_{m=1}^{M_{\ell,b}} \subset B(z,C_6\rad)$ with
\begin{equation}
\label{eq:mMAx}
M_{\ell,b} := C_7^{-1}\consa^{-d-2}\lm^{-2d-2} \rad^d
\end{equation}
such that $\{K(x_m,\lm)\}_{m=1}^{M_{\ell,b}}$ are \trueop, $\{K(x_m,5\lm)\}_{m=1}^{M_{\ell,b}}$ are disjoint, and for any $1 \leq m \leq M_{\ell,b}$,
	\begin{equation}
	\label{eq:removet-lambda}
		\lambda_\plr - \lambda_{\plr \setminus \bigcup_{j=1}^{m}} K(x_j,10\lm) \leq \delta(m,\lm,b)\,,
	\end{equation}
where $\delta(m,\lm,b): = C_7m \consa^{3-d}\lm^{2(d+1)}\rad^{-d-2}$.
\end{lemma}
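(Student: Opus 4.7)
The plan is to iteratively apply Lemma~\ref{iter} with a single centre $z$, inserting truly-open boxes one at a time. First, since $\lambda_\plr \geq 1 - b\rad^{-2}$ and $b \geq 1$, Lemma~\ref{tl0332123} supplies $z \in B(0, (\log n)^{C_1})$ with $|\tos \cap B(z, C_6\rad)| \geq c_3 b^{-d/2}\rad^d$. Fix this $z$ and set $b_2 := (c_3/2)^{1/d} b^{-1/2} \in (0,1)$, so that the density hypothesis required by Lemma~\ref{iter} holds with a safety factor of $2$.

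Next, I would select the $x_j$'s by induction on $m$. Write $\plr_m := \plr \setminus \bigcup_{j<m} K(x_j, 10\ell)$ and suppose $\{x_j\}_{j<m}$ have already been chosen so that each $K(x_j, \ell)$ is truly-open, $\{K(x_j, 5\ell)\}_{j<m}$ are pairwise disjoint, and $\lambda_\plr - \lambda_{\plr_m} \leq \delta(m-1, \ell, b)$. A quick arithmetic check gives $\delta(M_{\ell,b}, \ell, b) = b^{1-2d}\rad^{-2} \leq b\rad^{-2}$, so $\lambda_{\plr_m} \geq 1 - 2b\rad^{-2}$ holds throughout the induction. I then apply (the proof of) Lemma~\ref{iter} to $\plr_m$ with $b_1 := 2b$, the fixed $b_2$, and centre $z$. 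Its hypotheses remain valid: the density $|\tos \cap B(z, C_6\rad)| \geq (b_2\rad)^d$ depends only on $\ob$ and is hence unchanged, and the total removed volume $|\bigcup_{j<m} K(x_j, 10\ell)| \leq M_{\ell,b}(20\ell+1)^d$ is negligible compared with $(b_2\rad)^d$ once $C_7$ is chosen large enough. In the pigeonhole step of Lemma~\ref{iter} I additionally forbid any candidate box within $\ell^\infty$-distance $10\ell$ of some earlier $x_j$; since only $O(m\ell^d)$ sites are thereby excluded, plenty of truly-open boundary candidates remain. This produces $x_m \in B(z, 20\rad)$ with $K(x_m, \ell)$ truly-open and $\{K(x_j, 5\ell)\}_{j \leq m}$ disjoint.

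The delicate point, and the main obstacle, is to upgrade the per-step eigenvalue bound from Lemma~\ref{iter} to $\lambda_{\plr_m} - \lambda_{\plr_{m+1}} \leq C_7 b^{3-d}\ell^{2(d+1)}\rad^{-(d+2)}$, so that telescoping over $j=1,\ldots,m$ yields the cumulative bound $\delta(m,\ell,b)$. A direct substitution of $b_1 = 2b$, $b_2 \sim b^{-1/2}$ into Lemma~\ref{iter}'s output gives only $C b^{d+1}\ell^{2(d+2)}\rad^{-(d+2)}$, too large by a factor $b^{2d-2}\ell^2$. The factor $b^{2d-2}$ should be recovered by replacing the naive bound $\sum_{u \in K(x_m, 11\ell)} \Phi_{\plr_m}^2(u) \leq (\sum \Phi_{\plr_m})^2$ with the sharper $\sum \Phi_{\plr_m}^2 \leq (\max_u \Phi_{\plr_m}) \cdot \sum \Phi_{\plr_m} \leq C\rad^{-d} \sum \Phi_{\plr_m}$, using the pointwise eigenfunction bound $\max \Phi_{\plr_m} \leq C\rad^{-d}$ that follows from the quantitative ball-likeness of $\plr_m$ underlying~\eqref{eq:ds23-1}. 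The factor $\ell^2$ should be recovered by exploiting disjointness of $\{K(x_j, 5\ell)\}$ to aggregate the boundary-mass bound~\eqref{eq:sumA} across the $m$ chosen boxes without double-counting, rather than applying it afresh at each step. Verifying this sharpened estimate and summing over $j$ then closes the induction.
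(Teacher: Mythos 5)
Your proof skeleton is essentially identical to the paper's: pick $z$ via Lemma~\ref{tl0332123}, then iteratively apply Lemma~\ref{iter} to $\plr \setminus \bigcup_{j<m} K(x_j,10\lm)$ with $(\consa_1,\consa_2)\sim(2\consa,\consa^{-1/2})$, checking at each step that the hypothesis $\lambda \geq 1-\consa_1\rad^{-2}$ is preserved, that the truly-open density bound survives the removal of $O(m\lm^d)$ sites, and that the new box is disjoint from the earlier ones. All of that matches the paper's argument.

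Where you go beyond the paper is in flagging the exponent mismatch: substituting $\consa_1 = 2\consa$, $\consa_2 \sim \consa^{-1/2}$ into~\eqref{eq:itereq} gives a per-step eigenvalue drop of order $\consa^{d+1}\lm^{2(d+2)}\rad^{-d-2}$, whereas~\eqref{eq:iter-lambda} requires $\consa^{3-d}\lm^{2(d+1)}\rad^{-d-2}$; the ratio is $\consa^{2d-2}\lm^2$, which is unbounded since $\consa$ and $\lm$ are free parameters. The paper simply asserts that~\eqref{eq:iter-lambda} ``holds for $i=1$ and sufficiently large $C_7$'' as a direct consequence of Lemma~\ref{iter}, which does not hold as stated (a fixed $C_7$ cannot absorb $\consa^{2d-2}\lm^2$). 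You are right that this is the delicate point, and it is to your credit that you noticed it.

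However, your proposed repair does not close the gap. Replacing the Cauchy--Schwarz step $\sum_{K}\Phi^2 \le (\sum_{K}\Phi)^2$ by $\sum_{K}\Phi^2 \le |\Phi|_\infty \sum_{K}\Phi \le C\rad^{-d}\sum_{K}\Phi$ in the proof of Lemma~\ref{iter} trades one power of $\sum_K\Phi \sim \consa_1\consa_2^{-(d-1)}\rad^{-(d+1)}\lm^{d+2}$ for a factor $\rad^{-d}$; after dividing by $|\Phi_\plr|_2^2 \gtrsim \rad^{-d}$ this gives a per-step bound of order $\consa_1\consa_2^{-(d-1)}\lm^{d+2}\rad^{-d-1}$, i.e.\ it loses a factor of $\rad$ relative to the paper's $\rad^{-d-2}$. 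So for the parameter choices in Lemma~\ref{eigenvalue tail - 1} (where $\lm$ is only poly-logarithmic) your bound is strictly worse, not better, and the downstream requirement $\delta(m,\lm,\consa)\le\epsilon\rad^{-2}$ fails. The second idea---``aggregating the boundary-mass bound across the $m$ boxes without double-counting''---is stated too loosely to evaluate: the sum $\sum_{u\in A}\Phi_{\plr_m}$ must be re-estimated in each modified domain $\plr_m$, and it is not clear how disjointness of the $K(x_j,5\lm)$ buys the claimed extra $\lm^2$. So while you correctly diagnosed that a naive substitution does not yield~\eqref{eq:iter-lambda}, neither of your two remedies actually produces the exponents in $\delta(m,\lm,\consa)$.
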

\begin{proof}
First note that since $\lm$ is sufficiently large and $\lm\leq \rad^{1/2}$, by Lemma \ref{tl0332123}, the assumption $\lambda_\plr \geq 1 - \consa \rad^{-2}$ implies that we can choose $z_\plr \in B(0,(\log n)^{C_1})$ such that
\begin{equation}
\label{eq:zplr}
	|\tos \cap B(z_\plr,C_6 \rad)| \geq c_3\consa^{-d/2}\rad^d\,.
\end{equation}
We will choose the $x_i$'s inductively by repeatedly applying Lemma \ref{iter} such that $x_1 \dots x_{M_{\ell,b}} \in B(z_\plr,C_6\rad)$, $K(x_1,\lm),\dots,K(x_{M_{\ell,b}},\lm)$ are \trueop, $K(x_1,5\lm),\dots,K(x_{M_{\ell,b}},5\lm)$ are disjoint and for $1 \leq i \leq {M_{\ell,b}}$,
\begin{equation}
\label{eq:iter-lambda}
	\lambda_{\plr \setminus \bigcup_{j=1}^{i} K(x_j,10\lm)} \geq \lambda_{\plr \setminus \bigcup_{j=1}^{i-1} K(x_j,10\lm)} - C_7 \consa^{3-d}\lm^{2(d+1)}\rad^{-d-2}\,.
\end{equation}
 Set
\begin{equation}
\label{eq:parameters}
	(\consa_1,\consa_2,z) = (2\consa,(c_3/2)^{1/d}\consa^{-1/2},z_\plr)\,,
\end{equation}
For $i=1$, we apply Lemma \ref{iter} with parameters in \eqref{eq:parameters}. We now verify the conditions in Lemma \ref{iter}. Firstly, \eqref{eq:typical} is satisfied by assumption. Secondly, we assumed $\lambda_\plr \geq 1 - \consa \rad^{-2}$ and thus $\lambda_\plr > 1 - \consa_1 \rad^{-2}$. Lastly, \eqref{eq:zplr} gives $|\tos \cap B(z,C_6\rad)| \geq c_3b^{-d/2}\rad^d > (b_2\rad)^d$. Hence Lemma \ref{iter} implies that there exists $x_1 \in B(z,C_6\rad)$ such that $K(x_1,\ell)$ is \trueop\ and \eqref{eq:iter-lambda} holds for $i=1$ and sufficiently large $C_7$.

Suppose that we have chosen $x_1,\dots,x_i$ for $1 \leq i \leq M_{\ell,b}-1$ with the aforementioned properties. We apply Lemma \ref{iter} with the same parameters as in \eqref{eq:parameters} to $\plr \setminus \bigcup_{j=1}^{i}K(x_j,10\lm)$ in place of $\plr$. We now verify the conditions in Lemma \ref{iter}.

Firstly, since $\tos$ and $\emp_n$ are non-increasing as we close $\bigcup_{j=1}^{i}K(x_j,10\lm)$ in $\plr$, \eqref{eq:typical} still holds.

Secondly, combining the hypothesis \eqref{eq:iter-lambda} and the assumption $\lambda_\plr \geq 1 - \consa \rad^{-2}$ yields
$$ 	\lambda_{\plr \setminus \bigcup_{j=1}^{i} K(x_j,10\lm)} \geq \lambda_{\plr } - i \cdot C_7 \consa^{3-d}\lm^{2(d+1)}\rad^{-d-2} \geq 1 - 2\consa\rad^{-2} = 1- \consa_1 \rad^{-2}\,,$$
where in the last inequality we used $i \leq M_{\ell,b}$ and $b \geq 1$.

Lastly, closing $\bigcup_{j=1}^{i}K(x_j,10\lm)$ will at most affect whether sites in $\bigcup_{j=1}^{i}K(x_j,20\lm)$ are \trueop\ or not. Hence, the reduction in the volume of \trueop\ box volume is at most $i(40\lm+1)^d$ and hence (with $\plr$ replaced by $\plr \setminus \bigcup_{j=1}^{i}K(x_j,10\lm)$) for sufficiently large $C_7$,
$$ |\tos \cap B(z,C_6\rad)| \geq c_3\consa^{-d/2}\rad^d - {M_{\ell,b}} (40\lm +1)^d \geq c_3\consa^{-d/2}\rad^d/2 = (\consa_2 \rad)^d\,.$$

Therefore Lemma \ref{iter} implies that there exists $x_{i+1} \in B(z,C_6\rad)$ such that $K(x_{i+1},\ell)$ is \trueop\ in $ \plr \setminus \bigcup_{j=1}^{i}K(x_j,10\lm)$ and \eqref{eq:iter-lambda} holds for $i+1$. Also, $K(x_{i+1},\ell)$ is \trueop\ in $ \plr  \setminus \bigcup_{j=1}^{i}K(x_j,10\lm)$ implies that $K(x_{i+1},\ell)$ is \trueop\ in $ \plr$, and it is disjoint from $ \bigcup_{j=1}^{i}K(x_j,10\lm)$. Hence $K(x_1,5\lm),\dots,K(x_{i+1},5\lm)$ are disjoint.
This completes the  proof of Lemma \ref{Remove Truly open set}.
\end{proof}

Next we estimate the probability gain achieved by closing the \trueop\ boxes $\{K(x_i,\lm)\}_{i=1}^{M_{\ell,b}}$ identified in Lemma \ref{Remove Truly open set}.

\begin{lemma}
\label{mllemma}
Let $M_{\ell,b}$ and $\delta(m,\lm,b)$ be as in Lemma \ref{Remove Truly open set}.
There exists a constant $\rf{C_8>0}$ such that the following holds: Suppose $\beta \geq 1 - \consa \rad^{-2}$ for some constant $\consa \geq 1$, \rf{$\ell < \rad^{1/2}$}, $m \leq M_{\ell,b}$ and
\begin{equation}
\label{eq:mld-con}
 \lm^d \geq C_8\log (\rad^d/m)\,.
\end{equation}
Then
		\begin{equation}
	\label{eq:lem3.2-lm}
		\P(\lambda_\plr \geq \beta) \leq e^{-cm \lm^d}\P(\lambda_\plr \geq \beta -\delta(m,\lm,b)) + n^{-10d}\,.
	\end{equation}
\end{lemma}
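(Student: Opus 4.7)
The plan is to apply Lemma~\ref{Remove Truly open set} to identify \trueop\ boxes whose closure changes $\lambda_\plr$ by at most $\delta(m,\ell,b)$, to gain probability $\exp(-cm\ell^d)$ from the rarity of \trueop\ boxes (Lemma~\ref{toprob}), and to ensure the entropy cost of summing over possible box placements is controlled by the hypothesis $\ell^d \geq C_8\log(\rad^d/m)$.

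First, by Lemma~\ref{typical} the typicality condition \eqref{eq:typical} fails on an event of $\P$-probability at most $n^{-10d}$, which accounts for the additive term in \eqref{eq:lem3.2-lm}. On the complementary event, whenever $\lambda_\plr \geq \beta \geq 1 - b\rad^{-2}$, Lemma~\ref{Remove Truly open set} applies and produces, by a fixed canonical rule measurable in $\omega$, a center $z \in B(0,(\log n)^{C_1})$ together with \trueop\ boxes $\{K(x_i,\ell)\}_{i=1}^m$ having $x_i \in B(z,C_6\rad)$ and $\{K(x_i,5\ell)\}_{i=1}^m$ pairwise disjoint. Set $\tilde U := \bigcup_{i=1}^m K(x_i,4\ell)$ and $U := \bigcup_{i=1}^m K(x_i,10\ell)$, and let $\omega'$ be the modified configuration obtained from $\omega$ by placing obstacles at every site of $U$. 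Then $\plr(\omega') = \plr(\omega) \setminus U$, and \eqref{eq:removet-lambda} yields $\lambda_{\plr(\omega')} \geq \beta - \delta(m,\ell,b)$.

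Next, decomposing according to the canonical rule output gives
\[
\P\big(\lambda_\plr \geq \beta,\ \eqref{eq:typical}\big) \leq \sum_{(z,\{x_i\})} \P\big(K(x_i,\ell)\ \text{is \trueop\ for all}\ i,\ \lambda_{\plr(\omega')} \geq \beta - \delta(m,\ell,b)\big).
\]
For each fixed $(z,\{x_i\})$, the first event depends only on $\omega|_{\tilde U}$ and, by Lemma~\ref{toprob} combined with independence from the disjointness of the $K(x_i,4\ell)$'s, has probability at most $\prod_i \exp(-c\ell^d) = \exp(-cm\ell^d)$; the second event depends only on $\omega|_{U^c}$ (since $\omega'$ is a deterministic function of $\omega|_{U^c}$ and the fixed set $U$), and by monotonicity of principal Dirichlet eigenvalues in the domain, $\lambda_{\plr(\omega)\setminus U} \leq \lambda_{\plr(\omega)}$, so this event is contained in $\{\lambda_\plr \geq \beta - \delta\}$. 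Since $\tilde U \subset U$, the two events are independent under the product measure $\P$, so each summand is at most $\exp(-cm\ell^d)\,\P(\lambda_\plr \geq \beta - \delta)$.

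Finally, the number of pairs $(z,\{x_i\})$ is at most $(\log n)^{dC_1}\binom{(C\rad/\ell)^d}{m} \leq (\log n)^{dC_1}\bigl(eC\rad^d/(m\ell^d)\bigr)^m$. Taking logarithms and using the hypothesis $\ell^d \geq C_8\log(\rad^d/m)$, one checks that for $C_8$ sufficiently large (depending only on $d,p$ and the constant $c$ from Lemma~\ref{toprob}), this count is at most $\exp(cm\ell^d/2)$, after which renaming $c/2 \mapsto c$ yields \eqref{eq:lem3.2-lm}. The delicate step is this entropy bound: one must keep the binomial coefficient (rather than the naive $(C\rad/\ell)^{dm}$) to handle the regime where $m$ is comparable to $M_{\ell,b}$, since there the quantity $\log(\rad^d/m)$ degenerates and the savings have to come from efficient counting of subsets; both the boundary case $m = 1$ (where $\ell^d \gtrsim \log\rad \asymp \log\log n$ must absorb the $(\log n)^{dC_1}$ factor) and the boundary case $m \asymp M_{\ell,b}$ should be checked separately to pin down the constant $C_8$.
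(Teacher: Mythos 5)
Your proof is correct and follows the same overall strategy as the paper's: invoke Lemma~\ref{Remove Truly open set} to locate \trueop\ boxes whose closure barely perturbs $\lambda_\plr$, price the rarity of those boxes via Lemma~\ref{toprob}, absorb the typicality failure using Lemma~\ref{typical}, and pay an entropy cost controlled by $\ell^d\geq C_8\log(\rad^d/m)$. Where you differ is purely in bookkeeping: the paper partitions the event $\{\plr_+\in\mathscr U(\beta)\cap\mathscr G\}$ into equivalence classes determined by the pair $(\Xi_m(U),\,U\setminus\Xi_m(U))$, compares each class to its ``freed'' counterpart $\varphi([U])$, and plugs into the combinatorial inequality of Claim~\ref{112341}; you instead take a direct union bound over the canonical output $(z,\{x_i\})$ and factor the summand via honest independence of $\omega|_{\tilde U}$ and $\omega|_{U^c}$ (using $\tilde U\subset U$ disjoint from $U^c$). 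Your version avoids the mass-transport machinery of Claim~\ref{112341} entirely, which is arguably a small simplification, and your entropy count $(\log n)^{dC_1}\binom{(C\rad/\ell)^d}{m}$ is slightly tighter than the paper's $(\log n)^{dC_1}\binom{|B(0,C_6\rad)|}{m}$ because you count box centers on the $(2\ell+1)$-lattice rather than arbitrary points. The conclusion of the entropy estimate is the same: $Cm\log(\rad^d/m)+C\log\log n\leq cm\ell^d/2$ once $C_8$ is large, where the $\log\log n$ term is absorbed because $m\log(\rad^d/m)\gtrsim\log\rad\asymp\log\log n$ throughout the allowed range $1\leq m\leq M_{\ell,b}$.

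One detail worth making explicit (the paper handles it by working with $\plr_+=B(0,2(\log n)^{C_1})\setminus\ob$ rather than $\plr$): both the \trueop\ status of the boxes and the set $\emp_n$ depend on $\ob$ slightly outside $B(0,(\log n)^{C_1})$, so one should verify, as is the case, that $U=\bigcup_i K(x_i,10\ell)\subset B(0,2(\log n)^{C_1})$ and that the independence argument is run on that enlarged box. This is a cosmetic point, not a gap.
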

\begin{proof}
We will consider an operation that changes some \trueop\ boxes to typical obstacle configurations, which maps the event $\{\lambda_\plr \geq \beta\}$ to $\{\lambda_\plr \geq \beta -\delta(m,\lm,b)\}$, allowing us to bound the probability ratio of these two events.

To this end, we define $\tos_{U^c}$ and $\emp_{U^c}$ for general $U\subset B(0,2(\log n)^{C_1})$ by regarding $U^c=\ob$, and for any $\beta> 1 - \consa \rad^{-2}$, consider two %sets of obstacle configurations in
classes of subsets of $B(0,2(\log n)^{C_1})$:
\begin{align*}
\mathscr U(\beta) &= \{U \subset B(0,2(\log n)^{C_1}): \lambda_{U \cap B(0,(\log n)^{C_1})} \geq \beta\}\,,\\
	\mathscr G &= \{U \subset B(0,2(\log n)^{C_1}):\min_{x \in B(0,(\log n)^{C_1})}|B(x,C_6 \rad) \setminus \tos_{U^c}| \geq \rad^{d},|\emp_{U^c}(\epn)| \leq C_2 \rad^d\}\,.
\end{align*}
Then, denoting $\plr_+:=B(0, 2(\log n)^{C_1})\setminus \ob$, we can rewrite the event $\{\lambda_\plr \geq \beta\}\cap \{\eqref{eq:typical}\text{ holds}\}$ as $\{\plr_+\in \mathscr U(\beta) \cap \mathscr G\}$.

\rf{Since our condition implies $\ell \in (C_8^{1/d}, \rad^{1/2})$} and $m \leq M_{\ell,b}$, we \rf{can} define a map $\Xi_m$ for all $U \in \mathscr U(\beta) \cap \mathscr G$ by
\begin{equation}
\label{eq:def-Xi}
	\Xi_m ( U ):=\bigcup_{j=1}^{m} K(x_j,5\lm)\,,
\end{equation}
where $x_1,\dots,x_m$ are chosen as in Lemma \ref{Remove Truly open set} depending on $U$ (make arbitrary choice when $x_i$'s are not unique.)
The idea of the proof is the following. For each $U \in \mathscr U(\beta) \cap \mathscr G$, we change the obstacle configuration in $\Xi_m(U)$ to typical configurations. The image of $\mathscr U(\beta) \cap \mathscr G$ has much higher probability under the law of $\plr_+$ than that of $\mathscr U(\beta) \cap \mathscr G$ itself, because $\Xi_m(U)$ contains $m$ \trueop\ boxes at a large probability cost. Combined with \eqref{eq:removet-lambda}, which yields that the image of $\mathscr U(\beta) \cap \mathscr G$ is a subset of $\mathscr U(\beta - \delta(m,\lm,b))$, this gives the desired result.

We now rigorously implement this idea. We first define an equivalence relation on $\mathscr U(\beta) \cap \mathscr G$ by
$$ U \sim U' \iff \Xi_m(U) =  \Xi_m(U'),U \setminus \Xi_m(U) = U' \setminus\Xi_m(U')\,,  $$
and denote the equivalence class for $U$ in $\mathscr U(\beta) \cap \mathscr G /{\sim}$ by $[U]$, namely
$$ [U] := \{U' \subset B(0,2(\log n)^{C_1}):  U' \sim U\}\,.$$
Consider the map
$$\varphi ([U]) = \{V: V \setminus \Xi_m(U) =  U \setminus \Xi_m(U)\}\,,$$
which contains modifications of $U$ by allowing arbitrary configurations on $\Xi_m(U)$ as long as the set $\Xi_m(U)$ does not change.
Applying Claim \ref{112341} below to two families of events $(\{\plr_+ \in [U]\})_{[U] \in \mathscr U(\beta) \cap \mathscr G /{\sim}}$ and $\{\plr_+ \in \varphi([U])\}_{[U] \in \mathscr U(\beta) \cap \mathscr G /{\sim}}$, we obtain that

\begin{equation}
\label{eq:4.21}
	\frac{\P\big(\plr_+ \in \bigcup_{[U]}\varphi([U]) \big)}{\P(\plr_+ \in \mathscr U(\beta) \cap \mathscr G)} \geq \inf_{[U]} \frac{\P(\plr_+ \in \varphi([U]))}{\P(\plr_+ \in [U])}  \Big/\sup_{V \subset B(0,2(\log n)^{C_1})} \sum_{[U]} \11_{V \in \varphi([U])} \,.
\end{equation}
\begin{claim}
\label{112341}
	Let $(E_i)_{1\leq i\leq k}$ and $(F_i)_{1\leq i\leq k}$ be two families of events. Then we have

$$\frac{\P(\bigcup_i F_i)}{\P(\bigcup_i E_i)} \geq \frac{\inf_{i} \P(F_i)/\P(E_i)}{\sup_\omega \sum_i \11_{\omega \in F_i}}\,. $$
\end{claim}
\begin{proof}This follows from
	$$ \inf_{i} \frac{\P(F_i)}{\P(E_i)} \cdot \sum_i \P(E_i) \leq \sum_i \P(F_i) = \E \Big[ \sum_i \11_{\omega \in F_i}\Big] \leq \sup_\omega \sum_i \11_{\omega \in F_i}\P\Big(\bigcup_i F_i\Big)\,. \qedhere$$
\end{proof}
Since Lemma \ref{Remove Truly open set} gives
\begin{equation*}
\label{eq:5177}
	\bigcup_{[U]}\varphi([U]) \subset \mathscr U(\beta - \delta(m,\lm,b))
\end{equation*}
and Lemma \ref{typical} yields $\P(\plr_+\in\mathscr G) \geq 1- n^{-10d}$, we can bound the left hand side of \eqref{eq:4.21} by
$$ \frac{\P\big(\plr_+ \in \bigcup_{[U]}\varphi([U]) \big)}{\P(\plr_+ \in \mathscr U(\beta) \cap \mathscr G)} \leq \frac{\P(\lambda_\plr \geq \beta - \delta(m,\ell,b))}{\P(\lambda_\plr \geq \beta)- n^{-10d}}\,.$$
Therefore, to prove \eqref{eq:lem3.2-lm}, it suffices to show that for some constant $c = c(d,p)$,
\begin{equation}
	\inf_{[U]} \frac{\P(\plr_+ \in \varphi([U]))}{\P(\plr_+ \in [U])}  \Big/\sup_{V \subset B(0,2(\log n)^{C_1})} \sum_{[U]} \11_{V \in \varphi([U])} \geq e^{cm\ell^d}\,.
\end{equation}

We first prove that there exists a constant $c' = c'(d,p)$ such that
\begin{equation}
\label{eq:231241-1}
	\inf_{[U]} \frac{\P(\plr_+ \in \varphi([U]))}{\P(\plr_+ \in [U])} \geq e^{c'm\ell^d}\,.
\end{equation}
For any fixed $U \in (\mathscr U(\beta) \cap \mathscr G)$, $\Xi_m(U) $ is a union of $m$ boxes (defined in \eqref{eq:def-Xi}), which we denote by $ K(x_i,5\lm)$ for $1\leq i \leq m$. Then
\begin{equation*}
\begin{split}
\P(\plr_+ \in [U]) &\leq \P(\plr_+ \setminus \Xi_m(U) = U \setminus \Xi_m(U),K(x_i,\lm) \text{ for all $i \leq m$ are \trueop})\,.
\end{split}
\end{equation*}
Note that the events $\{\plr_+ \setminus \Xi_m(U) = U \setminus \Xi_m(U)\}$, $\{K(x_i,\lm) \text{ is \trueop})\}$ for $i \leq m$ are mutually independent as they depend on obstacle configurations on disjoint regions. Then by Lemma \ref{toprob},
\begin{equation*}
\begin{split}
\P(\plr_+ \in [U]) &\leq \P(\plr_+ \setminus \Xi_m(U) = U \setminus \Xi_m(U)) \cdot \P(K(0,\lm) \text{ is \trueop})^m \\
&\leq e^{-cm\lm^d} \P(\plr_+ \setminus \Xi_m(U) = U \setminus \Xi_m(U))\\
& = e^{-cm\lm^d} \P\big(\plr_+ \in \varphi([U]) \big)\,.
\end{split}
\end{equation*}
This gives \eqref{eq:231241-1}.

Now, it only remains to prove that for the constant $c'>0$ as in \eqref{eq:231241-1},
\begin{equation}
\label{eq:231241-2}
	\sup_{V \subset B(0,2(\log n)^{C_1})} \sum_{[U]} \11_{V \in \varphi([U])} \leq e^{c'm\ell^d/2}\,.
\end{equation}
To this end, note that
$$ \sum_{[U]} \11_{V \in \varphi([U])} = |\{[U]: V \setminus \Xi_m(U) =  U \setminus \Xi_m(U)\}|\,,$$
where the cardinality of the set of such $[U]$ is bounded by the number of possible choices of $\Xi_m(U) = \bigcup_{i=1}^m K(x_i, 5\lm)$ with $x_1,\dots,x_m$ in $B(z,C_6 \rad)$ for some $z \in B(0,2(\log n)^{C_1})$. Therefore, we have
$$ \sum_{[U]} \11_{V \in \varphi([U])}  \leq |B(0,2(\log n)^{C_1}) | \cdot \binom{|B(0,C_6 \rad)|}{m} \leq 2^d(\log n)^{C_1d} (e(2C_6 \rad)^d/m)^m\,. $$
Since $m \leq M_{\ell,b}$ (defined as in \eqref{eq:mMAx}) ensures $\rad^d/m \geq 2$, we can further bound the right hand side above by
$$\exp(C \log \log n+ Cm\log(\rad^d/m))$$ for some large constant $C>0$. Also, by \eqref{eq:def-r} and $\rad^d/m \geq 2$, we have
$\log \log n \leq Cm\log(\rad^d/m) $
for some large constant $C>0$. Therefore, the assumption \eqref{eq:mld-con} implies there exists a constant $C'$ such that
$$ C \log \log n+ Cm\log(\rad^d/m) \leq C_8^{-1}C'm\ell^d\,.$$
For $C_8$ sufficiently large, this implies \eqref{eq:231241-2}, which completes the proof of Lemma \ref{mllemma}.
\end{proof}
\subsubsection{Proof of Lemma \ref{eigenvalue tail - 1} }
\label{sec:tail-1}
We only need to apply Lemma \ref{mllemma} with appropriate choices of $\lm$ and $m$. Recall from \eqref{eq:e-range-1} that $\epsilon \in ((\log\log n)^4\rad^{-d},c_\consa)$  is an arbitrary number for some small constant $c_\consa$ to be determined. We let
\begin{equation}
\label{eq:mandl}
	\begin{split}
	  \lm &= \lfloor \Theta[\log (1/\epsilon)]^{1/d}\rfloor\,,\\
		m &
		= \lfloor \theta \cdot \epsilon  (\log (1/\epsilon))^{-3} \cdot \rad^d  \rfloor\,,
	\end{split}
\end{equation}
where $\Theta$ and $\theta$ are constants depending only on $(d,p,b)$ to be determined.

First, we verify that all the conditions in Lemma \ref{mllemma} hold. Since $\epsilon \geq(\log\log n)^4\rad^{-d}$, \eqref{eq:mandl} implies $\ell <\rad^{1/2}$ and $m \geq 1$. On the other hand, for all $\epsilon < c_b$ with $c_b = c_b(d,p,b,\theta,\Theta)$ sufficiently small, we have$$\epsilon^2\rad^d \leq m  \leq \epsilon \rad^d \cdot 2\Theta^{3d}\theta\cdot\ell^{-3d}\,.$$
Hence $m \leq M_{\ell,b}$ (defined in Lemma \ref{Remove Truly open set}), and $\lm^d \geq \Theta^d \log (\rad^d/m)/2$.
Then \eqref{eq:mld-con} follows by choosing $\Theta$ to be sufficiently large.
We thus know that all the conditions in Lemma \ref{mllemma} hold and hence this lemma yields
	\begin{equation*}
		\P(\lambda_\plr \geq \beta) \leq e^{-cm \lm^d}\P(\lambda_\plr \geq \beta -\delta(m,\lm,b)) + n^{-10d}\,.
	\end{equation*}
Recall $\delta(m,\lm,b)$ from Lemma \ref{Remove Truly open set}. Then \eqref{eq:mandl} yields that for $\theta$ and $c_b$ sufficiently small,
\begin{align*}
	\delta(m,\lm,b)\leq \epsilon \rad^{-2} \quad \text{and} \quad m\ell^d &\geq \epsilon  (\log (1/\epsilon))^{-3} \cdot \rad^d\,.
\end{align*}
We thus complete the proof of Lemma \ref{eigenvalue tail - 1}. \qed

\subsection{Proof of Lemma \ref{eigenvalue tail - 2 - not open}}
\label{sec:tail2}
As we have discussed in Section \ref{sec:Proof Outline}, we want to understand how much the eigenvalue will increase when we remove obstacles inside the ball $B(\ct{\plr},\rad)$. The difficulty is that, for $k \geq 0$, removing obstacles in $B_{\delta,k}$ (defined in \eqref{eq:def-b-dk}) may hardly increase $\lambda_\plr$, especially when there are many obstacles outside $B_{\delta,k}$  near its boundary and most obstacles in $B_{\delta,k}$ are also near the boundary. However, if we first remove all obstacles in the annulus $B_{\delta,k-1} \setminus B_{\delta,k}$ and then remove all obstacles in $B_{\delta,k}$ , then in the second step, the increase in the eigenvalue $\lambda_\plr$ can be bounded from below in terms of $|B_{\delta,k} \cap \ob|$ since all removed obstacles are in $B_{\delta,k-1}$ with distance at least $\delta 2^{-k}$ to the boundary of $B(\ct{\plr},\rad)$. The $\lj$ defined in \eqref{eq:def-J} ensures that a significant proportion of obstacles are in the bulk of the ball $B_{\delta, \lj-1}$:

\begin{lemma}
\label{obstaclelj}
	Let $\lj,c_5$ be as in \eqref{eq:def-J} and let $\lambda_*$ be as in \eqref{lambda*}. For any $\delta>0$, we assume $\lambda_\plr \geq \lambda_*$, \eqref{eq:empty prob E} holds, and $B(\ct{\plr},(1 - \delta)\rad) \cap \ob \not = \varnothing$. Then there exists a constant $C>0$ such that
	 \begin{equation}
  	\frac{|\ob \cap B_{\delta,\lj-1}|}{\rad^d}\leq C\epn^{1/4}c_5^{\lj-1}, \quad \frac{|\ob \cap B_{\delta,\lj}|}{|\ob \cap B_{\delta,\lj-1}|} \geq c_5\,.
  \end{equation}
\end{lemma}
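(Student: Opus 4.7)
The plan is to unfold the definition of $\lj$ iteratively and then plug in the volume bound~\eqref{eq:obrad} provided by Lemma~\ref{Onecity-Ball}. Both conclusions will follow rather directly once we argue that $\lj$ is finite under the hypothesis $B(\ct{\plr},(1-\delta)\rad)\cap\ob\neq\varnothing$; this finiteness is the only nontrivial point, and it will come from the fact that the nested balls $B_{\delta,k}$ decrease to $B(\ct{\plr},(1-\delta)\rad)$ as $k\to\infty$.

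First I would check that $\lj<\infty$. Since $B_{\delta,k}=B(\ct{\plr},(1-\delta+2^{-k}\delta)\rad)$ decreases monotonically to $B(\ct{\plr},(1-\delta)\rad)$ as $k\to\infty$, the integer sequence $|\ob\cap B_{\delta,k}|$ is non-increasing and converges to $|\ob\cap B(\ct{\plr},(1-\delta)\rad)|\ge 1$, which is strictly positive by assumption. Hence for $k$ large enough the sequence is constant, so the ratio $|\ob\cap B_{\delta,k}|/|\ob\cap B_{\delta,k-1}|$ equals $1\ge c_5$ and $\lj$ is well defined.

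The second inequality of the lemma is then just the defining property of $\lj$: by minimality, $k=\lj$ is the first index at which the ratio fails to be less than $c_5$, so $|\ob\cap B_{\delta,\lj}|\ge c_5|\ob\cap B_{\delta,\lj-1}|$. For the first inequality, the same minimality yields $|\ob\cap B_{\delta,k}|<c_5|\ob\cap B_{\delta,k-1}|$ for every $1\le k\le \lj-1$. Iterating this geometric decay from $k=1$ up to $k=\lj-1$ gives
\begin{equation*}
|\ob\cap B_{\delta,\lj-1}|\le c_5^{\lj-1}\,|\ob\cap B_{\delta,0}|=c_5^{\lj-1}\,|\ob\cap B(\ct{\plr},\rad)|.
\end{equation*}
Finally I would invoke~\eqref{eq:obrad}, which under the assumptions $\lambda_\plr\ge\lambda_*$ and~\eqref{eq:empty prob E} gives $|B(\ct{\plr},\rad)\cap\ob|\le C\epn^{1/4}\rad^d$. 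Substituting yields $|\ob\cap B_{\delta,\lj-1}|\le C\epn^{1/4}c_5^{\lj-1}\rad^d$, which is the first claim.

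No serious obstacle is expected; the only point that warrants care is the finiteness argument for $\lj$, since without the hypothesis that $B(\ct{\plr},(1-\delta)\rad)$ meets $\ob$ the obstacle count could in principle drop to zero and the minimum in~\eqref{eq:def-J} would be vacuous. The hypothesis ensures the non-increasing integer sequence has a positive limit and is therefore eventually constant, which forces $\lj$ to be finite.
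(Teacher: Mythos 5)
Your proof is correct and follows essentially the same route as the paper's, which simply says the result ``follows directly from the definition \eqref{eq:def-J} and \eqref{eq:obrad}''; you have usefully spelled out the iteration of the defining inequality for $\lj$ and the finiteness of $\lj$ (the latter is noted in the paper only in passing in the surrounding text).
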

\begin{proof}
	This result follows directly from the definition \eqref{eq:def-J} and \eqref{eq:obrad}.
\end{proof}

The following lemma gives a lower bound on how much the eigenvalue will increase if we remove all obstacles in $B_{\delta, \lj-1}$.
\begin{lemma}
\label{RemoveOb}
Suppose $\lambda_\plr \geq \btn$ and \eqref{eq:empty prob E} holds. (Recall the definition of $\lj$ in \eqref{eq:def-J} depending on $c_5$.) \rf{Then} there exist constants $c_5,\kappa \in (0,1)$ depending only on $(d,p)$ such that for $\delta = \rad^{-\kappa}$,
\begin{equation}
\label{eq:RemoveOB}
	\lambda_{\plr \cup B_{\delta,\lj-1}} - \lambda_{\plr \cup (B_{\delta,\lj-1} \setminus B_{\delta,\lj})}  \geq  \Big(\frac{|\ob \cap B_{\delta,\lj-1}|}{\rad^d}\Big)^{1 - 1/d} \rad^{-2}\,.
\end{equation}
\end{lemma}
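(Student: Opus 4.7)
Translating via $\mu = 1-\lambda$, the claim becomes $\mu_A - \mu_{A'}\geq (m/\rad^d)^{1-1/d}\rad^{-2}$, where $A := \plr\cup(B_{\delta,\lj-1}\setminus B_{\delta,\lj})$ and $A' := \plr\cup B_{\delta,\lj-1}$. Here $A'\setminus A = O_J := \ob\cap B_{\delta,\lj}$, and by the definition of $\lj$ in \eqref{eq:def-J} we have $m' := |O_J|\geq c_5\,m$. My plan is to upper bound $\mu_{A'}$ by a test-function Rayleigh quotient. Let $\psi := \Phi_A$, the $\ell^1$-normalized principal eigenfunction on $A$ (which is nonnegative by Perron--Frobenius), and extend it to $\tilde\psi$ on $A'$ via the discrete harmonic extension of $\psi|_A$ to $O_J$. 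Setting $u := \tilde\psi - \psi$ (supported on $O_J$) and $\tilde u(y) := (2d)^{-1}\sum_{x\sim y}\psi(x)$, a direct calculation using the harmonicity of $\tilde\psi$ on $O_J$ and the eigenvalue equation on $A$ yields
\[
\mathcal E(\tilde\psi,\tilde\psi) = \mu_A\|\psi\|_2^2 - \sum_{y\in O_J}u(y)\tilde u(y),\qquad \|\tilde\psi\|_2^2 = \|\psi\|_2^2+\|u\|_2^2.
\]
Since $u\geq \tilde u\geq 0$ by the maximum principle for harmonic extensions of nonnegative data (and $\tilde u\equiv 0$ at sites of $O_J$ with no neighbor in $A$), it follows that
\[
\mu_A - \mu_{A'}\;\gtrsim\; \Big(\sum_{y\in \partial^{\mathrm{in}}\!O_J}\!\tilde u(y)^2\Big)\Big/\|\psi\|_2^2,\quad\text{where }\partial^{\mathrm{in}}\!O_J := \{y\in O_J:\exists\, x\sim y,\,x\in A\}.
\]

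The critical ingredient is a pointwise lower bound on $\psi$ near $O_J$. Under $\lambda_\plr\geq \lbs$ and \eqref{eq:empty prob E}, Lemma~\ref{Onecity-Ball} ensures that $A'$ is close in volume and shape to the ball $B(\ct{\plr},\rad)$, and a stability analysis of the principal eigenfunction---essentially present in~\cite{DX18}---shows that $\psi$ is pointwise comparable to the $\ell^1$-normalized first Dirichlet eigenfunction of that ball, which has the linear-decay profile $\psi(z)\gtrsim \rad^{-d}\cdot\mathrm{dist}(z,(A')^c)/\rad$. Crucially, every $y\in O_J\subset B_{\delta,\lj}$ is at distance at least $2^{-\lj}\delta\rad$ from $(A')^c$ (because the annulus $B_{\delta,\lj-1}\setminus B_{\delta,\lj}$ is entirely contained in $A'$), and the same holds, up to $\pm 1$, for $y$'s $A$-neighbors. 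Hence $\tilde u(y)\gtrsim \rad^{-d}\cdot 2^{-\lj}\delta$ for every $y\in\partial^{\mathrm{in}}\!O_J$. Combining this with the discrete isoperimetric inequality $|\partial^{\mathrm{in}}\!O_J|\gtrsim (m')^{(d-1)/d}$ and $\|\psi\|_2^2\asymp \rad^{-d}$, one arrives at
\[
\mu_A - \mu_{A'}\;\gtrsim\;(m')^{(d-1)/d}\,\rad^{-d}\,(2^{-\lj}\delta)^2.
\]

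To close the argument, one needs $(m')^{(d-1)/d}\rad^{-d}(2^{-\lj}\delta)^2\geq (m/\rad^d)^{(d-1)/d}\rad^{-2}$. Setting $\delta = \rad^{-\kappa}$ and using $m'\geq c_5 m$, this reduces to $c_5^{(d-1)/d}\rad^{1-2\kappa}\geq 2^{2\lj}$. Since $m\geq 1$ (by the running hypothesis), Lemma~\ref{obstaclelj} and the bound $m\leq C\epn^{1/4}c_5^{\lj-1}\rad^d$ force $\lj\leq d\log_2\rad/\log_2(1/c_5) + O(1)$. Thus for any small $\kappa\in(0,1/2)$ depending only on $(d,p)$, choosing $c_5$ sufficiently small---say $c_5\leq 2^{-2d/(1-2\kappa)}$---guarantees $2^{2\lj}\leq c_5^{(d-1)/d}\rad^{1-2\kappa}$ for all large $n$, delivering the claimed bound.

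The main technical obstacle is justifying the pointwise linear-decay bound for $\psi = \Phi_A$. $L^2$- (and $\ell^1$-)proximity between $\Phi_A$ and the scaled ball eigenfunction is already controlled in~\cite{DX18} as part of the proof of Theorem~\ref{thm:Ball confinement} via a quantitative isoperimetric argument. Upgrading this to a pointwise lower bound uniform in the obstacle configuration $O_J$ will require care, especially in $d=2$, where point obstacles carry only logarithmic capacity and can in principle depress $\psi$ on a larger scale than in $d\geq 3$. I expect this to follow by combining the near-maximality $\lambda_\plr\geq 1 - \mb\rad^{-2} - C_*\rad^{-3}$ from Lemma~\ref{lambda*} with the low-density control $m\leq C\epn^{1/4}\rad^d\ll \rad^d$ and standard capacity-based estimates for discrete harmonic functions with sparse obstacle perturbations.
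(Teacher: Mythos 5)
Your approach---testing the Rayleigh quotient for $A'=\tp$ with the discrete harmonic extension $\tilde\psi$ of $\psi=\Phi_A$---is genuinely different from the paper's, which instead starts from the algebraic pairing identity $(\lambda_\tp-\lambda_\tcp)\langle\Phi_\tp,\Phi_\tcp\rangle=\frac{1}{2d}\sum_{x\sim y,\,x\in\tp\setminus\tcp}\Phi_\tp(x)\Phi_\tcp(y)$ and bounds the two eigenfunctions separately (Lemma~\ref{RemoveB}). But the gap you flag as ``the main technical obstacle'' is not just a matter of care: the pointwise bound $\Phi_A(z)\gtrsim\rad^{-d-1}\mathrm{dist}(z,(A')^c)$ is actually false for configurations the hypotheses allow. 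If $O_J=\ob\cap B_{\delta,\lj}$ is a dense cluster of radius $R\asymp(m')^{1/d}$ (with $m'=|O_J|$; by \eqref{eq:obrad} one may have $R$ up to order $\rad^{1-c_2/(4d)}$), then $\Phi_A$ vanishes at the cluster at the rate governed by the cluster's own scale, so for $x$ adjacent to $O_J$ one has $\Phi_A(x)\asymp\rad^{-d}/R$---a factor $1/R$ below your asserted level $\rad^{-d}2^{-\lj}\delta$, and $R$ can far exceed $2^{\lj}\rad^{\kappa}$. The structural reason is that $\tilde\psi$ is a poor approximant of $\Phi_{A'}$: the harmonic extension fills $O_J$ with the small boundary values $\asymp\rad^{-d}/R$, whereas the true $\Phi_{A'}$ rebounds to the bulk level $\asymp\rad^{-d}$ inside $O_J$. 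Tracking your Rayleigh computation honestly in this scenario yields only $\mu_A-\mu_{A'}\gtrsim R^{d-3}\rad^{-d}$, a full factor $R$ short of the true order $R^{d-2}\rad^{-d}$, and the isoperimetric lower bound $|\partial^{\mathrm{in}}O_J|\gtrsim(m')^{(d-1)/d}$ cannot make up the difference since it is exactly saturated by the dense-cluster configuration.

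The paper avoids both issues by making the \emph{large-domain} eigenfunction $\Phi_\tp$ the ``clean'' object: since the full ball $B_{\delta,\lj-1}$ is contained in $\tp$, a direct random-walk estimate \eqref{eq:Flu95+1} gives $\min_{B_{\delta,\lj}}\Phi_\tp\gtrsim\rad^{-d}(2^{-\lj}\delta)^2$ with no sensitivity to the obstacle geometry of $O_J$, while the companion factor $\sum_{y\in\partial(\tp\setminus\tcp)}\Phi_\tcp(y)$ is controlled by a Green's function/escape-probability argument that yields the capacity $\capa(\tp\setminus\tcp)\gtrsim|\tp\setminus\tcp|^{(d-2)/d}$ (with a logarithm in $d=2$). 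The intermediate exponent $(d-2)/d$ is weaker than your $(d-1)/d$, but the deficit is recovered for free from Lemma~\ref{obstaclelj} via $(m/\rad^d)^{-1/d}\geq(C\epn^{1/4}c_5^{\lj-1})^{-1/d}$, as in \eqref{eq:01815-1}--\eqref{eq:01815-2}; your closing bookkeeping (bounding $\lj$ using $m\geq 1$ and choosing $c_5$ small relative to $\kappa$) is similar in spirit, but the intermediate pointwise estimate it rests on is unsound.
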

Lemma \ref{RemoveOb} is proved by applying  Lemma \ref{RemoveB} in the appendix, which requires the following estimates.
\begin{lemma}
\label{eigenfunction value in the center}
Let $\lambda_*$ and $\Phi$ be defined as in Lemma \ref{lambda*} and Definition \ref{def:Phi}, respectively. Suppose $\lambda_\plr \geq \btn$ and \eqref{eq:empty prob E} holds. Then there exists a constant $b_1=b_1(d,p) \in (0,1)$ such that for all $U$ with $\plr \subset U \subset \plr \cup B(\ct{\plr},\rad)$, we have
	\begin{equation}
	\label{eq:eigenfunction value in the center}
		\sum_{u \in B(\ct{\plr},b_1\rad)} \Phi_U(u) \geq 1/2\,.
	\end{equation}
\end{lemma}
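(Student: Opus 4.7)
My plan is to combine the survival identity of Lemma~\ref{Eigenfunction Value Bound} with a uniform lower bound on the quick-killing probability from any site far from $\ct{\plr}$. By monotonicity of the principal eigenvalue in the domain and the hypothesis, $\lambda_U \geq \lambda_\plr \geq \btn \geq 1 - \mb\rad^{-2} - O(\rad^{-3})$. Applying Lemma~\ref{Eigenfunction Value Bound} with $t = T\rad^2$ for a small constant $T>0$ to be chosen gives
\begin{equation*}
\sum_{v\in U}\Phi_U(v)\,\Pr^v(\tau_{U^c}\leq T\rad^2) \,=\, 1 - \lambda_U^{T\rad^2} \,\leq\, \mb T + O(T^2) + O(\rad^{-1}).
\end{equation*}
So it will suffice to exhibit constants $b_1\in(0,1)$, $T>0$, and $c_0>0$ depending only on $(d,p)$ such that $\Pr^v(\tau_{U^c}\leq T\rad^2)\geq c_0$ for every $v\in U$ with $|v-\ct{\plr}|\geq b_1\rad$. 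Substituting this into the display and taking $T<c_0/(4\mb)$ then yields $\sum_{|v-\ct{\plr}|\geq b_1\rad}\Phi_U(v)<1/2$ for $n$ large, which is the desired conclusion.

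To produce the uniform killing bound, write $B := B(\ct{\plr},\rad)$ and split into $v\in U\cap B\setminus B(\ct{\plr},b_1\rad)$ (annulus case) and $v\in U\setminus B = \plr\cap B^c$ (exterior case). In the annulus case, the distance from $v$ to $\partial B$ is at most $(1-b_1)\rad$, so the invariance principle for the simple random walk provides a constant probability $c_A(d)>0$ that the walker reaches $B^c$ within time $C(1-b_1)^2\rad^2$; choosing $b_1 = 1-(T/(2C))^{1/2}$ makes this at most $T\rad^2/2$. At the first exit point $u\in\plr\cap B^c$, I would use that the hypothesis $\lambda_\plr\geq\btn$ combined with Lemma~\ref{Onecity-Ball} forces $|\emp_n(\epn)\cap B^c|\leq \epn^{1/4}\rad^d$, so that by a Faber--Krahn-type argument the principal eigenvalue of any connected component of $\plr\cap B^c$ touching $\partial B$ is at most $1-c\rad^{-2+c_2/(2d)}$ (with $c_2$ as in \eqref{eq:def-epn}); the walker from $u$ then exits $\plr\cap B^c$ within time $\rad^{2-c_2/(2d)}\ll T\rad^2$ with probability $1-o(1)$. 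A short excursion-counting argument shows that the walker makes only a bounded number of crossings of $\partial B$ within the budget $T\rad^2$, and a positive constant fraction of those excursions kills the walker in $U$ (rather than returning it to $B$), since whenever it first hits $\plr^c$ at distance at most $3\ell$ from an exit point on the outer side of $\partial B$ (which by the reflection-type bound happens with positive probability) the obstacle struck lies in $B^c\subset U^c$. For $v$ in the exterior, the same Faber--Krahn estimate applied to the component containing $v$ yields the required killing directly.

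The main obstacle will be the annulus case, since when $U=\plr\cup B$ the walker experiences no killing at all while inside $B$ and must be killed purely on its excursions into $B^c$. The key quantitative input is the small-volume bound $|\emp_n(\epn)\triangle B|\leq \epn^{1/4}\rad^d$ from Lemma~\ref{Onecity-Ball}, which suppresses the eigenvalue of $\plr\cap B^c$ polynomially below $1-\mb\rad^{-2}$ and thereby ensures each excursion lasts only $o(\rad^2)$ steps while having a constant probability of being killed before the walker can return to $B$; combining this with the strong Markov property over the $O(1)$-many excursions that fit into the time budget $T\rad^2$ yields the sought lower bound $c_0$.
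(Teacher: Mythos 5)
Your overall framework---applying the survival identity \eqref{Eigenfunction Value Bound} at a time scale $T\rad^2$ and then seeking a uniform killing probability from sites far from the center---is not the paper's route, and it has a genuine gap that the paper's decomposition is designed to sidestep. The paper does not attempt any uniform killing bound. Instead it splits $B(\ct{\plr},b_1\rad)^c$ into three pieces: the annulus $A_1 = B(\ct{\plr},(1+\sqrt d\,\epn)\rad)\setminus B(\ct{\plr},b_1\rad)$, the exterior low-density region $A_2 = \emp_n(\epn)\setminus B(\ct{\plr},(1+\sqrt d\,\epn)\rad)$, and the exterior high-density region $A_3$. For $A_3$ it applies \eqref{Eigenfunction Value Bound} at the \emph{short} time $(\epn\rad)^2$ with the \emph{weak} killing bound $\Pr^u(\tau_{U^c}\le(\epn\rad)^2)\ge c\epn$, which yields $\sum_{A_3}\Phi_U\le C\epn$. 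Crucially, for $A_1\cup A_2$ it does not estimate killing at all: it uses the $\ell^\infty$ bound $|\Phi_U|_\infty\le C\rad^{-d}$ (Lemma \ref{Philinfbd}, a direct consequence of $\lambda_U\ge\btn$) together with the volume bounds $|A_1|\le C(1-b_1+\epn)\rad^d$ and $|A_2|\le\epn^{1/4}\rad^d$ from Lemma \ref{Onecity-Ball}. This combination of ``small probability mass per site'' and ``small volume'' is what makes the annulus manageable. Your proposal never invokes $|\Phi_U|_\infty\le C\rad^{-d}$, and without it you are forced to prove a pointwise killing estimate on the annulus that simply does not hold.

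Concretely, the step where your argument fails is the claim that from every $v$ in the annulus and every exit point $u\in\partial B(\ct{\plr},\rad)$, the excursion in $\plr\cap B^c$ is killed before re-entering the ball with probability bounded below by a constant. Lemma \ref{Onecity-Ball} only controls the \emph{total volume} $|\emp_n(\epn)\setminus B(\ct{\plr},\rad)|\le\epn^{1/4}\rad^d$; it does not prevent a low-density patch of $\emp_n(\epn)$ from hugging a portion of $\partial B(\ct{\plr},\rad)$. A site $v$ in the annulus adjacent to such a patch would exit into open territory where the obstacle density is small, and the walk could re-enter $B(\ct{\plr},\rad)$ without ever touching $\plr^c$. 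Your ``reflection-type'' claim that the walk hits $\plr^c$ within $3\ell$ of the exit point with positive probability presupposes the exit point is in a high-density box, which is exactly what can fail; and the assertion that the walk ``makes only a bounded number of crossings of $\partial B$ within $T\rad^2$ steps'' is false---a random walk can cross a sphere $\Theta(\rad^2)$ times in $\rad^2$ steps---although this particular claim is not strictly needed. The Faber--Krahn step for the exterior is also underspecified: the connected component of $\plr\cap B^c$ containing $v$ can have volume as large as $(\log n)^{C_1 d}$ (it is a subset of $\plr$), so a pure volume-to-eigenvalue bound gives nothing useful; one would need the enlargement-of-obstacles-style estimate that the paper implements for $A_3$, and even then it yields only a small ($\sim\epn$) killing probability on the $(\epn\rad)^2$ time scale, not a constant one. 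In short, the missing ingredient is the $\ell^\infty$ bound on $\Phi_U$, which lets the paper absorb the low-density pockets by volume rather than by killing.
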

\begin{proof}
Recall the definition of $\emp_n(\epn)$ from Definition \ref{def:empty} and \eqref{eq:def-epn}. Note that
\begin{equation}
	B(\ct{\plr},b_1\rad)^c \subset A_1 \cup A_2 \cup A_3
\end{equation}
where
\begin{align*}
	&A_1 := B(\ct{\plr},(1 + \sqrt{d} \epn)\rad) \setminus B(\ct{\plr},b_1\rad)\,,\\
	&A_2 := \emp_n(\epn) \setminus B(\ct{\plr},(1 + \sqrt{d} \epn)\rad)\,,\\
	&A_3 := \big(\emp_n(\epn) \cup B(\ct{\plr},(1 + \sqrt{d}\epn)\rad) \big)^c\,.
\end{align*}
Hence
\begin{equation}
\label{eq:Uhalf-0}
	\sum_{u \in B(\ct{\plr},b_1\rad)} \Phi_U(u) \geq  1- \sum_{u \in A_3}\Phi_U(u) - |\Phi_U|_\infty (|A_1|+|A_2|)\,.
\end{equation}
We first prove that
\begin{equation}
\label{eq:Uhalf-1}
	\sum_{u \in A_3} \Phi_U(u) \leq C \epn \,.
\end{equation}
To this end, we notice that since $U \setminus B(\ct{\plr},\rad) = \plr \setminus B(\ct{\plr},\rad)$, it follows from the definition of $\emp_n(\epn)$ (which depends on $\plr$) that that $\emp_n(\epn) \setminus B(\ct{\plr},(1 + \sqrt{d} \epn)\rad)$  does not change if we change $\plr$ to $U$. Therefore, for every $u \in A_3$, there exists $u'$ such that $u \in K(u',\lfloor \epn \rad\rfloor)$ and $|U^c \cap K(u',\lfloor \epn \rad\rfloor)| \geq \epn |K(u',\lfloor \epn \rad\rfloor)|$. Hence by the local limit theorem
$$\Pr^u(\tau_{U^c} \leq (\epn \rad)^2)\geq c\epn\,.$$ Then \eqref{Eigenfunction Value Bound} gives
$$ \sum_{u \in A_3} \Phi_U(u) \leq C \epn^{-1}(1 - \lambda_U^{(\epn \rad)^2})\,.$$
Now, since $\plr \subset U$, we have $\lambda_U \geq \lambda_\plr \geq \btn$ and thus \eqref{eq:Uhalf-1} follows.

Next, by Lemma \ref{Philinfbd}, $\lambda_U \geq \btn$ implies $|\Phi_U|_\infty \leq C \rad^{-d}$.
Combined with \eqref{eq:Uhalf-0} and \eqref{eq:Uhalf-1}, this implies
\begin{equation}
\label{eq:934134-1}
	\sum_{u \in B(\ct{\plr},b_1\rad)} \Phi_U(u) \geq  \frac{2}{3} - C \rad^{-d}(|A_1|+|A_2|)\,.
\end{equation}
By \eqref{eq: Ball and E}, we have
\begin{equation}
\label{eq:934134-2}
	|A_1| + |A_2| \leq C(1 - b_1 + \sqrt{d}\epn + \epn^{1/4})\rad^d\,.
\end{equation}
Combining \eqref{eq:934134-1} and \eqref{eq:934134-2}, we see that \eqref{eq:eigenfunction value in the center} follows by letting $b_1$ be a constant sufficiently close to $1$.
\end{proof}

\begin{proof}[\bf Proof of Lemma \ref{RemoveOb}]
We apply Lemma \ref{RemoveB} with
$$ \tp = \plr \cup B_{\delta,\lj-1}, \qquad \tcp = \plr \cup (B_{\delta,\lj-1} \setminus B_{\delta,\lj})\,,$$
 $B_{R_1} = B_{\delta,\lj-1}, B_{R_2} =  B_{\delta,\lj}$ (defined in \eqref{eq:def-J}), and $B_{R_3} =B(\ct{\plr},b_1\rad)$ where $b_1$ is chosen as in Lemma \ref{eigenfunction value in the center}. It follows from $\lambda_\plr \geq \btn$ and Lemma \ref{eigenfunction value in the center} that the conditions \eqref{eq:eigenball} and \eqref{eq:eigenbd} in Lemma \ref{RemoveB} holds. Hence, by \eqref{eq:removeB} and the definition of $B_{\delta, \lj-1}$ in \eqref{eq:def-J}, we have
\begin{equation}
\label{eq:01815-1}
\begin{split}
		\lambda_{\plr \cup B_{\delta,\lj-1}} - \lambda_{\plr \cup (B_{\delta,\lj-1} \setminus B_{\delta,\lj})} &\geq \frac{c}{R_1^d(\log R_1)^{\11_{d=2}}}\Big(1 - \frac{R_2}{R_1} \Big)^{C_d} |\tp \setminus \tcp|^{(d-2)/d}\\
		& \geq \frac{c}{\rad^{d}(\log \rad)^{\11_{d=2}}} (4^{\lj} \delta)^{-C_d} |\ob \cap B_{\delta,\lj}|^{(d-2)/d}\,.
\end{split}
\end{equation}
By Lemma \ref{obstaclelj},
\begin{equation}
\label{eq:01815-2}
\begin{split}
		\rad^{-d}|\ob \cap B_{\delta,\lj}|^{(d-2)/d} &\geq c_5^{(d-2)/d}\rad^{-d}|\ob \cap B_{\delta,\lj-1}|^{(d-2)/d}\\
	& = c_5^{(d-2)/d}\rad^{-2}(|\ob \cap B_{\delta,\lj-1}|/\rad^d)^{(d-1)/d - 1/d}\\
	& \geq cc_5\epn^{-1/4d} \cdot c_5^{-\lj/d} \cdot \rad^{-2}(|\ob \cap B_{\delta,\lj-1}|/\rad^d)^{(d-1)/d}\,,
\end{split}
\end{equation}
where we have used \eqref{eq:obrad}.
Recall that we have set $\delta = \rad^{-\kappa}$ and that $\epn = \rad^{-c_2}$ as defined in \eqref{eq:def-epn}. Combining \eqref{eq:01815-1} and \eqref{eq:01815-2}, we complete the proof of \eqref{eq:RemoveOB} by choosing $c_5$ and $\kappa$ sufficiently small.
\end{proof}

\begin{proof}[\bf Proof of Lemma \ref{eigenvalue tail - 2 - not open}]
In light of Lemma \ref{RemoveOb}, we will bound the probability ratio in \eqref{eq:eigenvalue tail - 2 - not open} by considering the operation of removing all obstacles in $B_{\delta, \lj-1}$.
First note that the condition \eqref{eq:empty prob E}, $B_{\delta,k}$ and $\lj$ (defined in \eqref{eq:def-b-dk} and \eqref{eq:def-J}, respectively) only depend on $\ob \cap B(0,2(\log n)^{C_1})$. Therefore, for $\beta \geq \btn$ and $m \geq 1$, we define $$\mathscr U_{\beta,m} := \Big\{U \subset B(0,2(\log n)^{C_1}):\lambda_{U \cap B(0,(\log n)^{C_1})} \geq \beta,\eqref{eq:empty prob E} \text{ holds},	|B_{\delta,\lj-1} \setminus U| = m\Big\} $$
where \eqref{eq:empty prob E}, $B_{\delta,k}$, $\lj$ should be understood as if $\ob = U^c$.

Now we consider the map $\phi$ for $U \in \mathscr U_{\beta,m}$ that removes all obstacles in $B_{\delta,\lj-1}$, namely, $$\phi( U):= U \cup B_{\delta,\lj-1}\,.$$
Then by Lemma \ref{RemoveOb},
\begin{equation}
\label{eq:remove2}
	\bigcup_{U \in \mathscr U_{\beta,m}}\phi(U) \subset \Big\{U \subset B(0,2(\log n)^{C_1}): \lambda_{U \cap B(0,(\log n)^{C_1})} \geq \beta + \Big(\frac{m}{\rad^d}\Big)^{1 - 1/d} \rad^{-2}\Big\}\,.
\end{equation}
Recall that $\plr^+ := B(0,2(\log n)^{C_1}) \setminus \ob$. For every $U \in \mathscr U_{\beta,m}$, there are exactly $m$ closed sites in $B_{\delta,\lj-1}$, thus
$$\P(\plr^+ = U) = \Big(\frac{1-p}{p}\Big)^m \cdot \P(\plr^+ = \phi(U))\,.$$
Then by Claim \ref{112341}, we have that
\begin{align*}
	\P(\plr^+ \in \mathscr U_{\beta,m})
	&\leq \Big(\frac{1-p}{p}\Big)^m \cdot \max_{U \in \mathscr U_{\beta,m}}|\phi^{-1}(U)| \cdot \P(\plr^+ \in \bigcup_{U \in \mathscr U_{\beta,m}}\phi(U) )\\
	&\leq \Big(\frac{1-p}{p}\Big)^m \cdot \max_{U \in \mathscr U_{\beta,m}}|\phi^{-1}(U)| \cdot \P(\lambda_\plr \geq \beta + \big(\frac{m}{\rad^d}\big)^{1 - 1/d} \rad^{-2})\,,
\end{align*}
where in the last step we used \eqref{eq:remove2}. The multiplicity $\max_{U \in \mathscr U_{\beta,m}}|\phi^{-1}(U)|$ is bounded above uniformly over $U$ by the number of sets of $m$ points contained in a ball of radius $\rad$ centered at some point in $B(0,(\log n)^{C_1})$, namely,
$$\max_{U \in \mathscr U_{\beta,m}}|\phi^{-1}(U)| \leq |B(0,2(\log n)^{C_1})| \cdot \binom{|B(0,\rad)|}{m} \leq C \rad^{dC_1}\big(\frac{e(2\rad)^d}{m}\big)^m\,.$$
We complete the proof of \eqref{eq:eigenvalue tail - 2 - not open} by combining the preceding two inequalities.
\end{proof}

\subsection{Proof of Proposition \ref{po-clear}}
\label{sec:4.1}
We first note that Proposition \ref{po-clear} follows from the following result.\begin{claim}
\label{open or not open}
Let $\kappa>0$ be defined as in Lemma \ref{eigenvalue tail - 2 - not open}, and $\lambda_*$ as in \eqref{eq:lambda*lb}. Then for all $\beta \geq \btn$ and $n$ sufficiently large, 
			\begin{equation}
			\label{eq:open or not open}
			 \P(\lambda_\plr \geq \beta, B(\ct{\plr},\rad - \rad^{1-\kappa})\cap \ob \not = \varnothing)
			 \leq e^{-\rad^{1/3}} \cdot \P(\lambda_\plr \geq \beta )  + 4e^{-\rad}n^{-d }\,.
	\end{equation}
\end{claim}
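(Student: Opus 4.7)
\emph{Proof proposal.} My plan is to combine Lemma \ref{eigenvalue tail - 2 - not open} and Lemma \ref{eigenvalue tail - 1} along the lines of the heuristic \eqref{eq:po-withob}--\eqref{eq:po-tail}. On the event that $B(\ct\plr, \rad - \rad^{1-\kappa})$ contains obstacles, Lemma \ref{eigenvalue tail - 2 - not open} says that removing them raises $\lambda_\plr$ by at least $(m/\rad^d)^{(d-1)/d}\rad^{-2}$, and Lemma \ref{eigenvalue tail - 1} says such a boost carries a multiplicative probability cost of $\exp(-c\, m^{(d-1)/d}\rad/(\log(\rad^d/m))^3)$. The claim will reduce to showing that this decay beats the combinatorial factor $(C\rad^d/m)^m$ from specifying which obstacles to remove, summed over the admissible range of $m$.

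Setting $\delta = \rad^{-\kappa}$, I would first use Lemma \ref{empty} with $\epsilon = \epn^{1/2}, \epn, \epn^2$ to dispense with the failure of \eqref{eq:empty prob E} at the cost of $3e^{-\rad}n^{-d}$, which is absorbed into the error term. On \eqref{eq:empty prob E} and the event in question, $\lj$ from \eqref{eq:def-J} is well-defined and Lemma \ref{obstaclelj} confines $m := |\ob \cap B_{\delta,\lj-1}|$ to $[1,\, C\epn^{1/4}\rad^d] = [1,\, C\rad^{d - c_2/4}]$. Decomposing according to the value of $m$, applying Lemma \ref{eigenvalue tail - 2 - not open}, and then applying Lemma \ref{eigenvalue tail - 1} with $\epsilon_m := (m/\rad^d)^{(d-1)/d}$ (which lies in the admissible range $((\log\log n)^4\rad^{-d},\, c_b)$ thanks to the bound on $m$), I would obtain
\[
  \sum_{m=1}^{C\rad^{d-c_2/4}} C\rad^{dC_1}\Bigl(\tfrac{C\rad^d}{m}\Bigr)^m \Bigl[e^{-c\, m^{(d-1)/d}\rad/(\log(\rad^d/m))^3} \, \P(\lambda_\plr \geq \beta) + n^{-10d}\Bigr]
\]
as an upper bound for $\P(\lambda_\plr \geq \beta,\, B(\ct{\plr}, \rad - \rad^{1-\kappa}) \cap \ob \neq \varnothing,\, \eqref{eq:empty prob E})$.

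The main obstacle is estimating this sum. Writing the exponent as $\Psi(m) := m\log(C\rad^d/m) - c m^{(d-1)/d}\rad/(\log(\rad^d/m))^3$, the challenge is that the positive piece is linear in $m$ while the decay piece is only of order $m^{(d-1)/d}$, so a naive comparison is inconclusive. The bound $m \leq C\rad^{d-c_2/4}$ from Lemma \ref{obstaclelj} is crucial: it yields $\rad/m^{1/d} \geq c\rad^{c_2/(4d)}$, so that $m^{(d-1)/d}\rad \geq c m \rad^{c_2/(4d)}$, and since $\log(\rad^d/m) \in [c\log\rad,\, d\log\rad]$ in the admissible range, the decay exponent dominates the combinatorial exponent by a factor that tends to infinity with $\rad$. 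This gives $\Psi(m) \leq -\tfrac12 c\, m^{(d-1)/d}\rad/(\log\rad)^3$, and the resulting sum is dominated by its $m=1$ term, yielding at most $C\exp(-c\rad/(\log\rad)^3) \leq e^{-\rad^{1/3}}\rad^{-dC_1}$ for $\rad$ large. For the $n^{-10d}$ contribution, the bound $(C\rad^d/m)^m \leq \exp(O(\rad^{d-c_2/4}\log\rad)) = n^{o(1)}$ (using $\rad^d = O(\log n)$) gives $\rad^{d+dC_1}\, n^{o(1)}\, n^{-10d} \leq n^{-9d}$, which together with the $3e^{-\rad}n^{-d}$ from the first step fits comfortably inside the claimed $4e^{-\rad}n^{-d}$ error budget.
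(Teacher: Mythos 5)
Your proposal is correct and follows essentially the same route as the paper: condition away the failure of \eqref{eq:empty prob E} via Lemma \ref{empty} (cost $3e^{-\rad}n^{-d}$), decompose on $m=|\ob\cap B_{\delta,\lj-1}|\in[1,C\epn^{1/4}\rad^d]$, chain Lemma \ref{eigenvalue tail - 2 - not open} with Lemma \ref{eigenvalue tail - 1} at $\emm=(m/\rad^d)^{(d-1)/d}$, and verify that the gain $\exp(-cm^{(d-1)/d}\rad/(\log\rad)^3)$ beats the entropy $(C\rad^d/m)^m$ thanks to the constraint $m\le C\rad^{d-c_2/4}$; the paper arrives at the equivalent per-term bound $\exp(-m^{1-1/d}\rad^{1/2})$ by the same comparison $m^{1-1/d}\rad=m\cdot\rad/m^{1/d}\gg m\log\rad$.
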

Indeed, applying Claim \ref{open or not open} with $\beta = \lambda_*$ yields
\begin{equation}
\label{eq:0905}
	 \P(\lambda_\plr \geq \lambda_*, B(\ct{\plr},\rad - \rad^{1-\kappa})\cap \ob \not = \varnothing) \leq e^{-\rad^{1/3}} \P(\lambda_\plr \geq \lambda_*)+ 4e^{-\rad}n^{-d }\,.
\end{equation}
Combined with \eqref{eq:lambda*d}, this yields
$$ \P(\lambda_\plr \geq \lambda_*, B(\ct{\plr},\rad - \rad^{1-\kappa})\cap \ob \not = \varnothing) \leq (e^{-\rad^{1/3}} + 4e^{-\rad} (\log n)^{-c} )\P(\lambda_\plr \geq \lambda_*)\,,$$
which implies \eqref{eq:po-clear}.

Next, we prove Claim \ref{open or not open}.
We assume $\lambda_\plr \geq \beta$, and by Lemma \ref{empty}, we may also assume that \eqref{eq:empty prob E} holds. Then by Lemma \ref{Onecity-Ball}, this implies \eqref{eq:obrad}. Let $\kappa>0$ be defined as in Lemma \ref{eigenvalue tail - 2 - not open} and let $\delta = \rad^{-\kappa}$. Recall $B_{\delta,\lj-1}$ as in \eqref{eq:def-b-dk}, \eqref{eq:def-J}. Then \eqref{eq:obrad} gives $$|\ob \cap B_{\delta,\lj-1}| \leq |\ob \cap B(\ct{\plr},\rad)| \leq C\epn^{1/4} \rad^d\,.$$
Now, for each $m=|\ob \cap B_{\delta,\lj-1}|  \in [1, C\epn^{1/4} \rad^d]$, we denote $q = m/\rad^d$. Then Lemma \ref{eigenvalue tail - 2 - not open} yields
\begin{equation}
\label{eq:4.39}
		\P(\lambda_\plr \geq \beta , |\ob \cap B_{\delta,\lj-1}| = m,\eqref{eq:empty prob E}) \leq C \rad^{dC_1} (C /q)^{q \rad^d}\P(\lambda_\plr \geq \beta +  q^{1 - 1/d} \rad^{-2})\,,
\end{equation}
while applying Lemma \ref{eigenvalue tail - 1} with $\emm = q^{1 - 1/d}$ gives
\begin{align*}
	\P(\lambda_\plr \geq \beta +  q^{1 - 1/d} \rad^{-2}) &\leq \exp\big\{-(1 - 1/d)^{-3}q^{1-1/d}(\log(1/q))^{-3} \rad^d \big\}\P(\lambda_\plr \geq \beta) + n^{-10d}\,.
\end{align*}
Since $(1 - 1/d)^{-3} \geq 1$ and for sufficiently large $n$ we have
\begin{align*}
	C &\rad^{dC_1} (C /q)^{q \rad^d}	\cdot \exp\{-q^{1-1/d}(\log(1/q))^{-3} \rad^d\}\\
	  &\leq \exp\{C \log \rad + Cm \log (\tfrac{\rad^d}{m}) - m (\tfrac{\rad^d}{m})^{1/d}\log^{-3} (\tfrac{\rad^d}{m})\} \\
	&\leq \exp(-m^{1- 1/d} \rad^{1/2})\,,
\end{align*}
and $C \rad^{dC_1} (C /q)^{q \rad^d}\leq \exp(C\log \rad + C \rad^d q\log (\tfrac{1}{q})) \leq n$, we obtain from \eqref{eq:4.39} that
\begin{align*}
	\P(\lambda_\plr \geq \beta , |\ob \cap B_{\delta,\lj-1}| = m,\eqref{eq:empty prob E}) \leq e^{-\rad^{1/2}}\P(\lambda_\plr \geq \beta) + n^{1-10d}\,.
\end{align*}
Summing it over $1 \leq m \leq C\epn^{1/4} \rad^d$ yields
$$ \P(\lambda_\plr \geq \beta , \eqref{eq:empty prob E}) \leq e^{-\rad^{1/2}/2}\P(\lambda_\plr \geq \beta) + n^{-8d}\,.$$
We complete the proof of \eqref{eq:open or not open} by noticing that Lemma \ref{empty} yields that \eqref{eq:empty prob E} holds with probability at least $1- 3 n^{-d} e^{-\rad}$.
\section{Random Walk Localization}
\label{sec:localization}
In this section, we first collect a few survival probability estimates from \cite{DX18} in Section \ref{sec:sp-estimate}. Then we prove Lemmas \ref{pt-loc}, \ref{hit ball}, and \ref{loclemma2}, Corollary \ref{loclemma} in Section \ref{sec:transition probability}, and prove Lemma \ref{distribution} in Section \ref{sec:distribution}.
\subsection{Survival probability estimates}
\label{sec:sp-estimate}
The following lemma gives upper and lower bounds on the probability that the random walk stays in $\fregion$ for $t$ steps, which can be found in \cite[Lemma 6.3, Lemma 6.9]{DX18}.
\begin{lemma}
\label{stayinU}
Let $\hbn$ and $\fregion$ be as in Theorem \ref{thm:Ball confinement}, and let $\Phi$ be as in Definition \ref{def:Phi}. There exist constants $C,c>0$ such that the following holds with $\widehat\P$-probability tending to one as $n \to \infty$: For any $u \in \fregion$, $t \geq 0$,
	\begin{align}
		\label{eq:t-in-U-lb}&\Pr^u(\tau_{\fregion^c} > t) \geq c \Phi_\fregion(u)\rad^d \lambda_{\fregion}^t\,,\\
		\label{eq:t-in-U-ub}&\Pr^u(\tau_{\fregion^c} > t, S_t \in \hbn) \leq C \lambda_{\fregion}^t\,.
	\end{align}
\end{lemma}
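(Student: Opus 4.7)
Since the sentence preceding the statement says that the bounds are quoted from \cite[Lemmas~6.3 and 6.9]{DX18}, the paper likely offers only a citation. Below I sketch the spectral argument I would use to reprove them. The two key inputs are estimates on the $\ell^1$-normalised principal eigenfunction $\Phi_\fregion$ of $P|_\fregion$: the $L^\infty$-bound $|\Phi_\fregion|_\infty \leq C\rad^{-d}$ (Lemma~\ref{Philinfbd}, applicable because $\lambda_\fregion \geq \lambda_*$ by Lemma~\ref{lambda*}), and the $L^2$-bound $|\Phi_\fregion|_2^2 \geq c\rad^{-d}$ (obtained along the lines of~\eqref{eq:ds23-1}).

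For~\eqref{eq:t-in-U-lb} my plan is a one-line monotonicity argument. The $L^\infty$-bound yields the pointwise inequality $\mathbf{1}_\fregion \geq c\rad^d\,\Phi_\fregion$ on $\Z^d$, and applying the positive operator $(P|_\fregion)^t$ while using $(P|_\fregion)^t\Phi_\fregion=\lambda_\fregion^t\Phi_\fregion$ gives
\[
\Pr^u(\tau_{\fregion^c}>t)=\big((P|_\fregion)^t\mathbf{1}_\fregion\big)(u)\geq c\rad^d\lambda_\fregion^t\,\Phi_\fregion(u).
\]

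For~\eqref{eq:t-in-U-ub} my plan is to extract the principal eigenmode via a mixing-time argument. Set $\tilde\Phi_\fregion:=\Phi_\fregion/|\Phi_\fregion|_2$ and $t_0:=C\rad^2\log\rad$. For $t\geq t_0$ I would split $t=t_0+(t-t_0)$ and use a spectral gap $\lambda_\fregion-\lambda_2\gtrsim \rad^{-2}$ (which follows from $\hbn$ being approximately a Euclidean ball, by Lemma~\ref{Onecity-Ball}) to show that the non-principal contributions to $\sum_{v\in\hbn}(P|_\fregion)^{t_0}(u,v)$ decay by an extra $\rad^{-cC}$ factor relative to the principal mode. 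The principal mode itself contributes
\[
\lambda_\fregion^{t}\tilde\Phi_\fregion(u)\langle\tilde\Phi_\fregion,\mathbf{1}_\hbn\rangle=\lambda_\fregion^{t}\,\frac{\Phi_\fregion(u)\sum_{v\in\hbn}\Phi_\fregion(v)}{|\Phi_\fregion|_2^{2}},
\]
which the two eigenfunction bounds combined with $\sum_{v\in\hbn}\Phi_\fregion(v)\leq 1$ control by $O(\lambda_\fregion^{t})$. For $0\leq t\leq t_0$ the trivial bound $\Pr^u(\cdots)\leq1$ is enough because $\lambda_\fregion^{t_0}\geq\rad^{-C}$, so enlarging $C$ closes the argument.

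The main obstacle is bridging the gap between the naive Cauchy-Schwarz estimate $\sqrt{|\fregion|}\,\lambda_\fregion^t=(\log n)^{O(1)}\lambda_\fregion^t$ and the sharp $O(1)$ prefactor required in~\eqref{eq:t-in-U-ub}. The point is to exploit the concentration of $\Phi_\fregion$ on the small ball $\hbn$ of volume $O(\rad^d)\ll|\fregion|$ so that the $|\fregion|$-factor gets replaced by $|\Phi_\fregion|_2^{-2}\big(\sum_{v\in\hbn}\Phi_\fregion(v)\big)^{2}\asymp \rad^d$, which in turn cancels against $|\Phi_\fregion|_\infty\leq C\rad^{-d}$.
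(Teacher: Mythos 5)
You are right that the paper offers no proof here and simply cites \cite[Lemmas 6.3, 6.9]{DX18}, so I evaluate your reconstruction on its own merits. Your argument for~\eqref{eq:t-in-U-lb} is correct and clean: $|\Phi_\fregion|_\infty\leq C\rad^{-d}$ (via Lemma~\ref{Philinfbd} once $\lambda_\fregion\geq\lambda_*$ holds) gives the pointwise domination $\mathbf{1}_\fregion\geq c\rad^d\Phi_\fregion$, and applying the positivity-preserving operator $(P|_\fregion)^t$ finishes it.

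Your argument for~\eqref{eq:t-in-U-ub}, however, has two genuine gaps. First, the spectral gap $\lambda_\fregion-\eva{2}{P|_\fregion}\gtrsim\rad^{-2}$ does not follow from Lemma~\ref{Onecity-Ball}: that lemma only says the low-density region, and hence the mass of the top eigenfunction, concentrates near a ball. It says nothing about the second eigenfunction of $P|_\fregion$, and $\fregion$ is a region of diameter $(\log n)^{C_1}$ which could a priori contain a second near-optimal pocket producing a near-degenerate second eigenvalue. (This may well be ruled out with high $\widehat\P$-probability, but that is a separate quantitative statement needing its own first-moment argument; it is not in the toolbox quoted here, and the lemma you cite does not give it.) You also never address the bipartite structure of $P|_\fregion$, which forces a negative eigenvalue with modulus equal to $\lambda_\fregion$, so a one-sided gap $\lambda_\fregion-\eva{2}{\cdot}$ would not control $|\lambda_i|^t$ without the parity decomposition of Lemma~\ref{parity}. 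Second, your treatment of $0\leq t\leq t_0=C\rad^2\log\rad$ is not ``enlarging $C$''—it produces a prefactor $\lambda_\fregion^{-t_0}\asymp\rad^{C'}$, which is not an $n$-independent constant, so the claimed bound~\eqref{eq:t-in-U-ub} is not obtained in that range. A route that avoids both issues entirely is the diagonal bound: since $(P|_\fregion)^{2s}$ is positive semidefinite, $(P|_\fregion)^{2s}(u,v)\leq\sqrt{(P|_\fregion)^{2s}(u,u)\,(P|_\fregion)^{2s}(v,v)}$, and for $2s\geq\lceil\rad^2\rceil$ one has $(P|_\fregion)^{2s}(v,v)\leq\lambda_\fregion^{2s-\lceil\rad^2\rceil}(P|_\fregion)^{\lceil\rad^2\rceil}(v,v)\leq C\lambda_\fregion^{2s}\rad^{-d}$ by comparing each spectral mode $|\lambda_i|^{2s}\leq\lambda_\fregion^{2s-\lceil\rad^2\rceil}|\lambda_i|^{\lceil\rad^2\rceil}$ and then applying the free local CLT. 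Summing over $v\in\hbn$ (with $|\hbn|\leq C\rad^d$), and handling one extra step for odd $t$ and the range $t<\rad^2$ trivially via $\lambda_\fregion^{-\rad^2}\leq C$, gives~\eqref{eq:t-in-U-ub} with an absolute constant and requires no spectral gap at all.
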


The next lemma gives upper bounds on the probability cost for the random walk to stay in a bad region.
\begin{lemma}
\label{rstepsnt}
Let $\mb$ be as in \eqref{eq:lambda*lb}. There exist constants $C>0, b_2 \in (0,1)$ such that with $\widehat\P$-probability tending to one as $n \to \infty$, for all $x\in\Z^d$ and $t \geq \rad^2/2$,
	\begin{align}
		\Pr^x(\tau_{\fregion^c \cup \ob \cup B(\ct{\fregion},b_2\rad)} > t) &\leq Ce^{-100\mb \rad^{-2}t}\,.\label{eq:badprob-1}
	\end{align}
\end{lemma}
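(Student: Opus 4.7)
Denote $W := \fregion \setminus B(\ct{\fregion}, b_2 \rad)$. Since $\fregion$ is disjoint from $\ob$ by construction, the event in question is exactly $\{S_{[0,t]} \subset W\}$, and the plan is to control this via a principal Dirichlet eigenvalue bound on $P|_W$.

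The analytic core is the bound
\begin{equation*}
\lambda_W \leq 1 - c_\star \mb \rad^{-2},
\end{equation*}
with some absolute constant $c_\star > 100$, which I would establish by choosing $b_2 \in (0,1)$ close enough to $1$. By Lemma \ref{Onecity-Ball} the unique region of $\fregion$ carrying a principal eigenvalue close to $1 - \mb \rad^{-2}$ is the ``good ball'' $B(\ct{\fregion}, \rad)$, so removing its core $B(\ct{\fregion}, b_2 \rad)$ should significantly degrade the Rayleigh quotient. Concretely, for any test function $\phi \in \ell^2(W)$ I would split the Rayleigh quotient into an annular contribution from $B(\ct{\fregion}, \rad) \setminus B(\ct{\fregion}, b_2 \rad)$ and an outer contribution from $\fregion \setminus B(\ct{\fregion}, \rad)$. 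For the annular piece, a discrete Faber--Krahn comparison with the continuum spherical shell yields the eigenvalue bound $1 - \gamma(b_2) \rad^{-2}$ with $\gamma(b_2) \to \infty$ as $b_2 \nearrow 1$; for the outer piece, the obstacle density lower bound from Lemma \ref{Onecity-Ball} together with the mesoscopic-box analysis in the spirit of Section \ref{sec:tail1} gives a much stronger eigenvalue decay. Interface cross-terms are controlled by Cauchy--Schwarz.

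To convert the eigenvalue bound into the survival probability bound with an $n$-independent prefactor, I would use the spectral expansion $P|_W^t(y,z) = \sum_i \lambda_i^t \phi_i(y) \phi_i(z)$. The uniform bound $\|\phi_i\|_\infty \leq C \rad^{-d/2}$ (an analog of Lemma \ref{Philinfbd}) together with the fact that the principal eigenfunction $\phi_W$ is effectively concentrated on a region of volume $O(\rad^d)$ (namely the annulus, by the same variational decomposition and the wide gap between annular and outer eigenvalue contributions) yields $\sum_z P|_W^t(y,z) \leq C \lambda_W^t$ uniformly in $y \in W$. Taking $c_\star \geq 101$ then gives
\begin{equation*}
\Pr^x(\tau_{W^c} > t) \leq C \lambda_W^t \leq C e^{-101 \mb \rad^{-2} t} \leq C' e^{-100 \mb \rad^{-2} t},
\end{equation*}
where for $t \geq \rad^2/2$ the last step absorbs a constant factor $e^{-\mb/2}$ into $C'$.

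The main obstacle is the $\ell^1$-concentration estimate on $\phi_W$ needed to obtain an absolute prefactor, since the naive bound $\|\phi_W\|_1 \leq |W|^{1/2}$ would introduce a spurious $(\log n)^{Cd/2}$ factor. A practical alternative, consistent with the style of this preliminary section, is to invoke the corresponding estimate from \cite{DX18} directly (just as the neighboring Lemma \ref{stayinU} is extracted from there), verifying its hypotheses through $\lambda_\fregion \geq \lambda_*$ from Lemma \ref{lambda*} and the one-city structure of Lemma \ref{Onecity-Ball}.
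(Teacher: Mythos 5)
The paper's proof is a one-liner: adapt \cite[Lemma 6.1]{DX18} (survival off the good ball) using the fact that $|B(\ct{\fregion},\rad)\setminus B(\ct{\fregion},b_2\rad)| \leq C(1-b_2)\rad^d$, so that choosing $b_2$ close to $1$ makes the residual low-density volume small enough for the decay rate to exceed $100\mb\rad^{-2}$. Your proposal reaches that citation only as a fallback, and your primary spectral route has a genuine gap which you flag but do not close.

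The gap is in converting $\lambda_W \leq 1-c_\star\mb\rad^{-2}$ (with $W := \fregion\setminus B(\ct{\fregion},b_2\rad)$) into $\sup_x\Pr^x(\tau_{W^c}>t)\leq C\lambda_W^t$ with $C$ independent of $n$. The $\ell^\infty$ bound from Lemma \ref{Philinfbd} controls only the principal eigenfunction, not the sub-principal modes $\phi_i$, of which there are on the order of $|W|=(\log n)^{C_1 d}$, and knowing that the principal eigenfunction concentrates on a set of volume $O(\rad^d)$ does not tame the non-principal contributions in the expansion. The clean way to obtain an $n$-independent prefactor here is a time-windowing argument rather than a spectral expansion: establish $\sup_x\Pr^x(\tau_{W^c}>\rad^2)\leq e^{-100\mb}$ and then iterate via the Markov property over $\lfloor t/\rad^2\rfloor$ windows, which gives the claimed bound for $t\geq \rad^2/2$ with an absolute constant. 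There is also a subsidiary issue in your eigenvalue step: the region $\fregion\setminus B(\ct{\fregion},\rad)$ is obstacle-dense only in a coarse-grained, mesoscopic-box sense via Lemma \ref{Onecity-Ball}, so the ``outer piece has a strong eigenvalue gap'' claim requires DX18's coarse-graining machinery and cannot be handled by a plain Rayleigh-quotient split with Cauchy--Schwarz on interface terms. Both issues explain why the paper simply adapts \cite[Lemma 6.1]{DX18} with the annulus volume observation --- the route you offer only as an alternative is in fact the right primary approach.
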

\begin{proof}
\eqref{eq:badprob-1} can be proved by a straightforward adaptation of the proof of \cite[Lemma 6.1]{DX18}, using the fact that $|B(\ct{\fregion},\rad) \setminus B(\ct{\fregion},b_2\rad)|\leq C(1-b_2)\rad^d$ with $b_2\in (0,1)$ chosen sufficiently close to $1$.
\end{proof}

\subsection{Upper and lower bounds on transition probabilities}
\label{sec:transition probability}
We will prove Lemma \ref{pt-loc}, Lemma \ref{loclemma2}, and Corollary \ref{loclemma} in this section. We have proved in \eqref{eq:thm-ballopen} that $B(\ct{\fregion},(1 - \rad^{-\kappa})\rad)$ is open with $\widehat \P$-probability tending to one as $n\to\infty$. This immediately leads to the following lower bound on the eigenfunction $\Phi_\fregion$ in the interior of the ball $B(\ct{\fregion},\rad)$.
\begin{lemma}
\label{phi-dist-bd}
 There exists a constant $c>0$ such that the following holds with $\widehat\P$-probability tending to one as $n \to \infty$: For all $x \in B(\ct{\fregion},(1 - 2 \rad^{-\kappa})\rad)$,
\begin{equation}
\label{eq:phi-dist-bd}
	\Phi_\fregion(x) \geq c \rad^{-d-1} \cdot \mathrm{dist}(x,B(\ct{\fregion},\rad)^c)\,.
\end{equation}
\end{lemma}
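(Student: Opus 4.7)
The plan is to leverage Theorem~\ref{main} (applied to the center $\ct{\fregion}$), which guarantees that the ball $A := B(\ct{\fregion}, \rad - \rad^{1-\kappa}) \subset \fregion$ is obstacle-free with $\widehat\P$-probability tending to one. For $x \in B(\ct{\fregion}, (1 - 2\rad^{-\kappa})\rad)$, set $r := \mathrm{dist}(x, B(\ct{\fregion}, \rad)^c)$, so that $r \geq 2\rad^{1-\kappa}$ and $\mathrm{dist}(x, \partial A) \geq r/2$; the goal is therefore to show $\Phi_\fregion(x) \gtrsim \rad^{-d-1} r$. Starting from the eigenfunction identity $(P|_\fregion)^t \Phi_\fregion = \lambda_\fregion^t \Phi_\fregion$ and using $\lambda_\fregion \leq 1$ together with $A \subset \fregion$, one gets
\begin{equation*}
  \Phi_\fregion(x) \geq \Ex^x[\Phi_\fregion(S_t); \tau_{A^c} > t] \geq \Big(\min_{y\in B(\ct{\fregion},b_1\rad)}\Phi_\fregion(y)\Big)\Pr^x\big(S_t \in B(\ct{\fregion},b_1\rad),\, \tau_{A^c} > t\big)
\end{equation*}
for $t = C\rad^2$ with $C$ a large constant and $b_1$ from Lemma~\ref{eigenfunction value in the center}. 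The proof reduces to lower-bounding each of the two factors on the right.

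For the first factor, Lemma~\ref{eigenfunction value in the center} together with $|B(\ct{\fregion}, b_1\rad)| \leq C\rad^d$ produces some $y_0$ with $\Phi_\fregion(y_0) \geq c\rad^{-d}$. I would then chain a discrete Harnack inequality for the $\lambda_\fregion$-harmonic function $\Phi_\fregion$ across a sequence of $O(1)$ balls of radius $(1-b_1)\rad/4$ sitting inside $A$ (permissible since $\lambda_\fregion \geq 1 - O(\rad^{-2})$, so the Harnack constants are purely dimensional for balls of radius $\lesssim \rad$), obtaining $\min_{B(\ct{\fregion},b_1\rad)}\Phi_\fregion \gtrsim \rad^{-d}$. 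For the second factor, I would expand the killed transition kernel on $A$ in the $\ell^2$-orthonormal eigenbasis $(\psi_A^{(i)}, \mu_A^{(i)})$ of $P|_A$:
\begin{equation*}
\Pr^x\big(S_t \in B(\ct{\fregion},b_1\rad),\, \tau_{A^c} > t\big) = \sum_{i\geq 1}(\mu_A^{(i)})^t \psi_A^{(i)}(x)\sum_{y\in B(\ct{\fregion},b_1\rad)}\psi_A^{(i)}(y),
\end{equation*}
and show that the leading term is $\gtrsim r/\rad$, using (i) $\mu_A^{(1)} = 1 - \mb\rad^{-2}(1 + o(1))$, so $(\mu_A^{(1)})^{C\rad^2} \gtrsim 1$; (ii) the linear-decay lower bound $\psi_A^{(1)}(x) \gtrsim \rad^{-d/2-1}\mathrm{dist}(x, \partial A)$ obtained by scaling to the continuous principal Dirichlet eigenfunction of the unit ball; and (iii) $\sum_{y\in B(\ct{\fregion},b_1\rad)}\psi_A^{(1)}(y) \gtrsim \rad^{d/2}$ (since $\psi_A^{(1)} \gtrsim \rad^{-d/2}$ on the bulk). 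The higher modes contribute $O((\mu_A^{(2)}/\mu_A^{(1)})^t)$, and the spectral gap $\mu_A^{(1)} - \mu_A^{(2)} \gtrsim \rad^{-2}$ makes these negligible once $C$ is large. Multiplying the two bounds yields $\Phi_\fregion(x) \gtrsim \rad^{-d}\cdot r/\rad = c\, r\rad^{-d-1}$, as desired.

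The hardest step is item (ii), the linear-decay lower bound $\psi_A^{(1)}(x) \gtrsim \rad^{-d/2-1}\mathrm{dist}(x,\partial A)$ for the discrete ball of radius $\rad - \rad^{1-\kappa}$. While this is a standard consequence of scaling to the continuous principal Dirichlet eigenfunction (whose linear decay near $\partial B(0,1)$ is classical), the discrete-to-continuous comparison becomes delicate right at the boundary of $A$. I expect this to require either citing a known discrete estimate or reproving it via barrier / reflection arguments matched to the coarse-graining scale.
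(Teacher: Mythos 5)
Your starting point --- the iterated eigenfunction identity together with the obstacle-free ball $A := B(\ct{\fregion}, \rad - \rad^{1-\kappa})$ supplied by Theorem~\ref{main} --- matches the paper's proof, but from there you take a substantially longer route that leaves a real gap at its crux.

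The paper's argument keeps the sum $\sum_{y\in B(\ct{\fregion},b_1\rad)}\Phi_\fregion(y)$, which Lemma~\ref{eigenfunction value in the center} bounds below by $1/2$ with no further work, and then plugs in a uniform lower bound on the killed transition kernel,
$\sum_{i=\rad^2,\rad^2+1}\Pr^y(S_i = x, \tau_{\fregion^c} > i) \geq c\,\mathrm{dist}\big(x,\partial B(\ct{\fregion},(1-2\rad^{-\kappa})\rad)\big)\rad^{-d-1}$
for $y\in B(\ct{\fregion},b_1\rad)$, which follows directly from the heat kernel lower bound in a ball, \cite[Proposition 6.9.4]{LL10}. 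That is the entire proof. You instead try to extract a pointwise bound $\min_{B(\ct{\fregion},b_1\rad)}\Phi_\fregion\gtrsim\rad^{-d}$ by Harnack chaining; this is correct in principle (with $1-\lambda_\fregion\lesssim\rad^{-2}$ the Harnack constants on scale-$\rad$ balls inside $A$ are dimensional) but is avoidable, since the $\ell^1$ lower bound already suffices. The larger issue is your step (ii): you reduce the probability factor to the linear-decay lower bound $\psi_A^{(1)}(x)\gtrsim\rad^{-d/2-1}\mathrm{dist}(x,\partial A)$ for the discrete ball's principal Dirichlet eigenfunction, and you leave that unproved. This is a genuine gap. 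The appendix estimates \eqref{eq:A2-2}--\eqref{eq:A2-3} give only $\ell^1/\ell^2$ closeness to the continuum eigenfunction, not a pointwise lower bound near $\partial A$, and the paper's own version of this eigenfunction estimate, \eqref{eq:qbn-3}, is established \emph{by} the proof of Lemma~\ref{phi-dist-bd}, so invoking it here would be circular. The tool that actually fills the gap is the same \cite[Proposition 6.9.4]{LL10} transition-kernel bound --- and once you invoke it, the spectral expansion of $A$ with its spectral-gap bookkeeping becomes an unnecessary detour, and the argument collapses to the paper's. You identified the correct structure and the correct hard spot, but the eigenfunction expansion defers rather than resolves the difficulty; the result should be sourced directly from the random-walk heat kernel bound.
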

\begin{proof}
Note that since
$\langle  \Phi_\fregion, (P|_\fregion)^t \11_{x}\rangle= \lambda_\fregion^t\Phi_\fregion(x)$ for all $x$
 and $\lambda_\fregion \leq 1$, we have
\begin{equation}
\label{eq:3143}
\Phi_\fregion(x) \,\geq\, \frac{1}{2}\sum_{i =  \rad^2 , \rad^2 + 1} \sum_{y \in \fregion} \Phi_\fregion(y)\Pr^{y}(S_i = x, \tau_{\fregion^c} >i) \,,
\end{equation}
where we sum over two values of $i$ because the walk has period $2$.
Since $B(\ct{\fregion},\rad - \rad^{1-\kappa})$ is open with $\widehat \P$-probability tending to one as $n\to\infty$ by \eqref{eq:thm-ballopen}, we know that $B(\ct{\fregion},\rad - \rad^{1-\kappa}) \subset \fregion$. Hence for all $y \in B(\ct{\fregion},b_1\rad)$ ($b_1 = b_1(d,p)$ is chosen in Lemma \ref{eigenfunction value in the center}), \cite[Proposition 6.9.4]{LL10} yields for any $x \in B(\ct{\fregion},(1 - 2 \rad^{-\kappa})\rad)$,
\begin{equation}
\label{eq:3144}
	\sum_{i =  \rad^2 , \rad^2 + 1}\Pr^{y}(S_{i} = x, \tau_{\fregion^c} > i) \geq c\,\cdot\,\mathrm{dist}(x,\partial B(\ct{\fregion},(1 - 2 \rad^{-\kappa})\rad))\rad^{-d-1}\,,
\end{equation}
where $c$ is a constant depending only on $(d,p)$. Substituting \eqref{eq:3144} into \eqref{eq:3143} for $y \in B(\ct{\fregion},b_1\rad)$ and then
using Lemma \ref{eigenfunction value in the center} gives \eqref{eq:phi-dist-bd}.
\end{proof}

\begin{proof}[\bf Proof of Lemma \ref{pt-loc}]
It is proved in \cite[(6.15)]{DX18} that \eqref{eq:pt-loc} holds if $\Phi_\fregion(z) \geq c\epsilon \rad^{-d}$ for $z\in B(\ct{\fregion},(1-\epsilon)\rad)$ in addition to the assumption~\eqref{eq:ass-pt-loc} in Lemma~\ref{pt-loc}. This additional assumption is verified by Lemma~\ref{phi-dist-bd}.
\end{proof}

That the ball $B(\ct{\fregion},(1 - \rad^{-\kappa})\rad)$ is open implies that if the random walk starts from the interior ball $B(\ct{\fregion},b_2\rad)$ ($b_2$ defined in Lemma \ref{rstepsnt}), then in the next $\rad^2$ steps, all points in $B(\ct{\fregion},(1-\epsilon)\rad)$ can be reached with comparable probability. Lemma \ref{loclemma2} will follow from the following lemma, which says that conditioned to stay in $\fregion$, the random walk has a positive probability of visiting the interior of $B(\ct{\fregion},b_2\rad)$ in any given time interval of length $C\rad^2$.

\begin{lemma}
\label{Cr2}
There exist constants $C_9,c>0$ such that with $\widehat\P$-probability tending to one as $n \to \infty$, the following holds: For any $u$ and $t$ satisfying
\begin{equation}
\label{eq:ass-Cr2}
	\text{either } u \in B(\ct{\fregion},b_2\rad) \text{{ and }}t \geq 0 \quad \text{   or   }\quad u \in \fregion \text{ and } t \geq \rad^{C_4}\,,
\end{equation}
we have that for all $m \geq t$, 
	\begin{align}
	\label{eq:prblb-u}
	& \Pr^u(\tau_{\fregion^c} \geq m) \geq c \lambda_\fregion^{m-t}\Pr^u(\tau_{\fregion^c} \geq t)\,,\\
	\label{eq:u-x-t-step3}
		&\Pr^u(S_{[t - C_9 \rad^2,t]} \cap B(\ct{\fregion},b_2\rad) \not = \varnothing \mid \tau_{\fregion^c} > m) \geq c \,.
	\end{align}
\end{lemma}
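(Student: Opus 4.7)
I would establish \eqref{eq:prblb-u} and \eqref{eq:u-x-t-step3} by first handling the easy setting where $u$ sits deep inside the interior ball, and then bootstrapping to the general case. The key ingredients are Lemma~\ref{stayinU} for two-sided survival estimates, Lemma~\ref{phi-dist-bd} for the eigenfunction lower bound, Lemma~\ref{pt-loc} for localization, and Lemma~\ref{rstepsnt} for the escape rate from the complement of $B(\ct{\fregion},b_2\rad)$.

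First I would show that for $u\in B(\ct{\fregion},b_2\rad)$ and any $t\ge 0$ one has the matching two-sided bound
\[
c\lambda_\fregion^t \;\le\; \Pr^u(\tau_{\fregion^c}>t) \;\le\; C\lambda_\fregion^t\,.
\]
The lower bound is immediate from \eqref{eq:t-in-U-lb} and Lemma~\ref{phi-dist-bd}, which gives $\Phi_\fregion(u)\ge c\rad^{-d}$ (taking $b_2<1-2\rad^{-\kappa}$, valid for large $n$). For the upper bound, apply Lemma~\ref{pt-loc} with $m=t$ (the interior hypothesis $u\in B(\ct{\fregion},(1-\varepsilon)\rad)\cap\fregion$ is met for small $\varepsilon$) to get $\Pr^u(S_t\notin\hbn\mid \tau_{\fregion^c}>t)\le e^{-\rad^c}$, so that by \eqref{eq:t-in-U-ub}, $\Pr^u(\tau_{\fregion^c}>t)\le 2\Pr^u(\tau_{\fregion^c}>t,S_t\in\hbn)\le C\lambda_\fregion^t$. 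Assertion~\eqref{eq:prblb-u} in this case follows at once.

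For \eqref{eq:u-x-t-step3}, the subcase $t\le C_9\rad^2$ with $u\in B(\ct{\fregion},b_2\rad)$ is trivial since $S_0=u\in B(\ct{\fregion},b_2\rad)$ lies in the interval. When $t>C_9\rad^2$, let $A:=\{S_{[t-C_9\rad^2,t]}\cap B(\ct{\fregion},b_2\rad)\neq\varnothing\}$ and apply the Markov property successively at times $t-C_9\rad^2$ and $t$, bounding the middle factor by Lemma~\ref{rstepsnt} to obtain
\[
\Pr^u(A^c,\tau_{\fregion^c}>m)\le Ce^{-100\mb C_9}\,\Pr^u(\tau_{\fregion^c}>t-C_9\rad^2)\,\max_{z\in\fregion}\Pr^z(\tau_{\fregion^c}>m-t)\,.
\]
Comparing with a lower bound on $\Pr^u(\tau_{\fregion^c}>m)$ obtained by forcing the walk to be in $B(\ct{\fregion},b_2\rad)$ at time $t-C_9\rad^2$ (and invoking the first-part estimate for this interior restart), one concludes \eqref{eq:u-x-t-step3} by taking $C_9$ large enough that $e^{-100\mb C_9}$ dominates the ambient constants.

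Finally, to extend \eqref{eq:prblb-u} to general $u\in\fregion$ with $t\ge\rad^{C_4}$ (for $C_4>C_3$ suitably large), insert the intermediate time $t_1:=\lceil\rad^{C_3}\rceil$. By Lemma~\ref{pt-loc} applied with $m\ge t\ge t_1$, on $\{\tau_{\fregion^c}>t\}$ the walk lies in $\hbn$ at time $t_1$ up to a $e^{-\rad^c}$ fraction; a further application of Lemma~\ref{rstepsnt}, in the spirit of the argument for \eqref{eq:u-x-t-step3}, upgrades this to the statement that the walk enters $B(\ct{\fregion},b_2\rad)$ by some time $t_2=t_1+O(\rad^2)\ll t$ with positive conditional probability. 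Decomposing over this first interior visit reduces the comparison of $\Pr^u(\tau_{\fregion^c}>m)$ and $\Pr^u(\tau_{\fregion^c}>t)$ to the case already handled. I expect the main technical hurdle to be exactly this last reduction: for $u$ near $\partial\fregion$ the direct bound $\Pr^u(\tau_{\fregion^c}>t)\le C\lambda_\fregion^t$ is too crude as it lacks the $\Phi_\fregion(u)$-factor, and the burn-in via $t_1$ is what recovers the correct $\Phi_\fregion(u)$-dependence implicitly through the mass that accumulates in the interior ball.
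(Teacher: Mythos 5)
Your plan for the case $u\in B(\ct{\fregion},b_2\rad)$ is on the right track: the two-sided bound $c\lambda_\fregion^t\le\Pr^u(\tau_{\fregion^c}>t)\le C\lambda_\fregion^t$ is exactly what the paper extracts from \eqref{eq:t-in-U-lb}, \eqref{eq:t-in-U-ub}, Lemma~\ref{phi-dist-bd} and Lemma~\ref{pt-loc}, and \eqref{eq:prblb-u} for such $u$ does follow. However, there is a real gap in your treatment of \eqref{eq:u-x-t-step3}. The upper bound you write down contains the factor $\max_{z\in\fregion}\Pr^z(\tau_{\fregion^c}>m-t)$. To close the comparison you would need this to be $\le C\lambda_\fregion^{m-t}$ up to an absolute constant, but none of the tools you invoke give this uniformly: \eqref{eq:t-in-U-ub} only controls $\Pr^z(\tau_{\fregion^c}>s,\,S_s\in\hbn)$, and Lemma~\ref{pt-loc} only removes the $S_s\in\hbn$ restriction when $z$ is interior or $s\ge\rad^{C_3}$. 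For general $z\in\fregion$ near $\partial\fregion$ and $m-t$ in the intermediate range $(\rad^2,\rad^{C_3})$, the bound $\max_z\Pr^z(\tau_{\fregion^c}>m-t)\le C\lambda_\fregion^{m-t}$ is not available, so the constant $C_9$ alone cannot save the argument. The paper sidesteps this entirely: rather than trying to directly bound $\Pr^u(A^c\mid\tau_{\fregion^c}>m)$, it compares the \emph{restricted} quantity $\Pr^u(\tau_{\fregion^c}>m,S_m\in\hbn)$ (which \emph{is} bounded by $C\lambda_\fregion^{m-t}\Pr^u(\tau_{\fregion^c}>t)$ via a single application of \eqref{eq:t-in-U-ub} at time $m$) against the lower bound $\Pr^u(A,\tau_{\fregion^c}>m)\ge c\lambda_\fregion^{m-t}\Pr^u(\tau_{\fregion^c}>t)$, and only at the very end uses Lemma~\ref{pt-loc} at the final time $m$ to remove the restriction $S_m\in\hbn$. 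The troublesome $\max_z$ term never appears.

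A second, related gap concerns both the lower bound in your comparison and the extension to general $u\in\fregion$. Your proposed lower bound on $\Pr^u(\tau_{\fregion^c}>m)$ by ``forcing the walk to be in $B(\ct{\fregion},b_2\rad)$ at time $t-C_9\rad^2$'' requires showing that the walk is in $B(\ct{\fregion},b_2\rad)$ (not merely in $\hbn$) at that time with conditional probability bounded below — this is exactly the case $m=t$ of the estimate you are trying to prove, so without further argument it is circular; Lemma~\ref{pt-loc} only places the walk in $\hbn$. The paper instead establishes this case first as \eqref{eq:endpoint-step2} via a last-exit-time decomposition over windows $[t-k\rad^2+1,t]$ of \emph{variable} depth $k$: for each $k$ it stops at the last return time $T_k$, bounds the tail piece by Lemma~\ref{rstepsnt} (giving $Ce^{-99\mu_B k}$), and replaces it with the much larger $\Pr^{S_{T_k}}(\tau_{\fregion^c}>t-T_k)\ge ce^{-2\mu_B k}$ before re-integrating. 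This geometric-over-$k$ bound requires no a priori upper bound on $\Pr^u(\tau_{\fregion^c}>t-C_9\rad^2)$ for general $u$, which your single-window Markov decomposition does need but cannot supply outside the interior case. Finally, for general $u\in\fregion$ and $t\ge\rad^{C_4}$, the ``burn-in'' you gesture at is indeed the right idea, but the precise ingredient is Lemma~\ref{hit tOmega-as}, whose proof requires not just Lemma~\ref{rstepsnt} but a separate lower bound on $\Pr^u(\tau_{\fregion^c}>t)$ imported from \cite[(5.4)]{DX18}; deriving it ``in the spirit of the argument for \eqref{eq:u-x-t-step3}'' is again circular as written.
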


The second case in~\eqref{eq:ass-Cr2} is harder to deal with since the random walk may start far away from $B(\ct{\fregion},b_2\rad)$. However, it can be reduced to the first case by using the following lemma, which guarantees that conditioned on staying inside $\fregion$, the random walk starting in $\fregion$ reaches $B(\ct{\fregion},b_2\rad)$ before time $\rad^{C_4}$.
\begin{lemma}
\label{hit tOmega-as}
Let $C_3>0$ be as in Lemma \ref{pt-loc}. There exist $c>0$ and $C_4$ with $C_4 > C_3>0$ such that the following holds with $\widehat\P$-probability tending to one as $n \to \infty$: For all $u \in \fregion$ and $t \geq \rad^{C_4}$,
\begin{equation}
  \Pr^u( S_{[0,t]} \cap B(\ct{\fregion},b_2\rad) = \varnothing \mid  \tau_{\fregion^c}>t) \leq e^{-c \rad^{-2}t}\,.
\end{equation}
\end{lemma}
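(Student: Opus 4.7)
The plan is to bound the numerator and denominator of
\begin{equation*}
\Pr^u(S_{[0,t]}\cap B(\ct{\fregion},b_2\rad) = \varnothing \mid \tau_{\fregion^c}>t) = \frac{\Pr^u(\tau_{\fregion^c \cup B(\ct{\fregion},b_2\rad)} > t)}{\Pr^u(\tau_{\fregion^c} > t)}
\end{equation*}
separately. Because $\fregion$ is a component of $B(v_*,(\log n)^{C_1})\setminus\ob$, we have $\ob \subseteq \fregion^c$, hence the numerator equals $\Pr^u(\tau_{\fregion^c \cup \ob \cup B(\ct{\fregion},b_2\rad)} > t)$, which by Lemma~\ref{rstepsnt} is at most $Ce^{-100\mb\rad^{-2}t}$ (for $t \geq \rad^{C_4} \geq \rad^2/2$). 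For the denominator, Lemma~\ref{stayinU} together with $\lambda_\fregion \geq \lbs \geq 1 - \mb\rad^{-2} - C_*\rad^{-3}$ gives
\begin{equation*}
\Pr^u(\tau_{\fregion^c}>t) \geq c\, \Phi_\fregion(u)\, \rad^d\, e^{-(\mb + o(1))\rad^{-2}t}.
\end{equation*}
Taking the ratio, the conditional probability is bounded by $C\,\Phi_\fregion(u)^{-1}\rad^{-d}\, e^{-(99\mb + o(1))\rad^{-2}t}$.

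The key remaining step is a uniform lower bound on $\Phi_\fregion(u)$ for $u \in \fregion$. Since $\fregion$ is connected with $|\fregion| \leq L_* := (2(\log n)^{C_1}+1)^d$, any two points in $\fregion$ are joined by a nearest-neighbor path in $\fregion$ of length at most $L_*$. By Lemma~\ref{phi-dist-bd} applied at $y_0 := \ct{\fregion}$ (whose distance to $B(\ct{\fregion},\rad)^c$ equals $\rad$), we have $\Phi_\fregion(y_0) \geq c\rad^{-d}$. Picking such a path $u = z_0, z_1, \dots, z_L = y_0$ in $\fregion$ with $L \leq L_*$, iterating the eigenvalue identity $\lambda_\fregion^L \Phi_\fregion(u) = \sum_y \Pr^u(S_L = y, \tau_{\fregion^c}>L)\Phi_\fregion(y)$, and combining $\lambda_\fregion \leq 1$ with the crude path bound $\Pr^u(S_L = y_0, \tau_{\fregion^c}>L) \geq (2d)^{-L}$, we obtain
\begin{equation*}
\Phi_\fregion(u) \,\geq\, c\,\rad^{-d}(2d)^{-L_*} \,\geq\, \exp(-C(\log n)^{dC_1}).
\end{equation*}

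Substituting back, the conditional probability is at most $\exp(-(99\mb + o(1))\rad^{-2}t + C(\log n)^{dC_1})$. Since $\rad \asymp (\log n)^{1/d}$, for $t \geq \rad^{C_4}$ one has $\rad^{-2}t \geq \rad^{C_4-2} \asymp (\log n)^{(C_4-2)/d}$, which dominates $(\log n)^{dC_1}$ by a polynomial-in-$\log n$ factor provided $C_4 > d^2 C_1 + 2$. Choosing $C_4 := \max(C_3+1,\, d^2 C_1 + 3)$ then gives the desired $e^{-c\rad^{-2}t}$ bound for any $c \in (0, 99\mb)$. The main obstacle is precisely this pointwise lower bound on $\Phi_\fregion(u)$: a priori $u$ may lie in a narrow tendril of $\fregion$ far from the central ball $B(\ct{\fregion},\rad)$, where one expects $\Phi_\fregion(u)$ to be exponentially small in the tendril's ``depth''. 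The crude path-in-$\fregion$ estimate avoids any fine analysis of $\fregion$'s geometry at the cost of a subexponential-in-$n$ factor $e^{C(\log n)^{dC_1}}$, and this cost is exactly what forces $C_4$ to be taken larger than $d^2 C_1$ (rather than the more modest $C_3$ inherited from Lemma~\ref{pt-loc}).
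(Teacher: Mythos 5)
Your proof is correct and follows essentially the same strategy as the paper's: bound the numerator $\Pr^u(S_{[0,t]}\subset\fregion\setminus B(\ct{\fregion},b_2\rad))$ by Lemma~\ref{rstepsnt}, bound the denominator $\Pr^u(\tau_{\fregion^c}>t)$ from below by a quantity of the form $\exp(-O(\rad^{-2}t)-\mathrm{poly}(\log n))$, and choose $C_4$ large enough that $\rad^{-2}t$ dominates the poly-log error. The only deviation is that the paper obtains the denominator bound by citing \cite[(5.4)]{DX18} (giving $\exp(-2\mu_B\rad^{-2}t-(\log n)^C)$), whereas you re-derive a comparable bound internally via Lemma~\ref{stayinU} combined with the crude path-in-$\fregion$ estimate $\Phi_\fregion(u)\geq c\rad^{-d}(2d)^{-|\fregion|}$; both yield the same $(\log n)^{O(1)}$ cost and hence the same mechanism for fixing $C_4$.
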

\begin{proof}
Lemma \ref{rstepsnt} implies
$ \Pr^u( S_{[0,t]} \subseteq \fregion \setminus B(\ct{\fregion},b_2\rad)) \leq C\exp({-100\mb t \rad^{-2}})$. Comparing it with \cite[(5.4)]{DX18} (which implies that $\Pr^u(\tau_{\fregion^c} > t)$ is bounded from below by $\exp(-2\mu_B\rad^{-2}t -(\log n)^C)$), and choosing a sufficiently large $C_4$ yields the desired result.
\end{proof}

\begin{proof}[\bf{Proof of Lemma \ref{Cr2}}]
We first prove the lemma when $t=m$; more precisely, there exists a constant $C_9 = C_9(d,p)$ such that for $t$ and $u$ satisfying either condition in \eqref{eq:ass-Cr2},
\begin{equation}
\label{eq:endpoint-step2}
    \Pr^u(S_{[t - C_9 \rad^2,t]} \cap B(\ct{\fregion},b_2\rad) = \varnothing \mid \tau_{\fregion^c}>t) \leq 1/100\,.
  \end{equation}
We will prove this by considering the last visit to $B(\ct{\fregion},b_2\rad)$, and for the rest of the time comparing the survival probability for the random walk in $\fregion \setminus B(\ct{\fregion},b_2\rad)$ to the survival probability in the whole region $\fregion$ with starting point in $B(\ct{\fregion},b_2\rad)$.

To reduce the entropy resulting from the many possible last visit times, we chop time into small windows of length $ \rad^2$ and let $$N_{\text{exit}} := \sup\{ k \in \N : S_{[t - k  \rad^2 +1,t]} \cap  B(\ct{\fregion},b_2\rad) = \varnothing\}\,.$$
Since $N_{\text{exit}}=\infty$ is equivalent to no visit to $B(\ct{\fregion},b_2\rad)$, we always have $N_{\text{exit}} < \infty$ in the first case in \eqref{eq:ass-Cr2}. In the second case, we see from Lemma \ref{hit tOmega-as} that
\begin{equation}
	\Pr^u(N_{\text{exit}} =\infty \mid \tau_{\fregion^c}>t) \leq e^{-c \rad^{-2}t}\,.
\end{equation}
Now, we claim that for large $C_9$,
  \begin{equation}
  \label{eq:Nexit = k}
    \Pr^u( N_{\text{exit}} \geq C_9 \mid \tau_{\fregion^c}>t) \leq 1/100\,,
  \end{equation}
  which then implies \eqref{eq:endpoint-step2}. It remains to verify \eqref{eq:Nexit = k}. To this end, we define stopping times $T_k = \inf \{ j \geq t - (k+1)  \rad^2+1: S_j \in B(\ct{\fregion},b_2\rad)\}$ for $k \geq 0$. Since on the event $\{N_{\text{exit}} = k\}$, we have $ k  \rad^2\leq t-T_k \leq (k+1)  \rad^2\,$, by the strong Markov property, $\Pr^u( N_{\text{exit}} = k, \tau_{\fregion^c}>t)$ equals
\begin{align}\label{eq-Nexit}
 \Ex^u[\11_{\tau_{\fregion^c}>T_k, T_k <t - k\rad^2} \Pr^{S_{T_k}}(\tau_{\fregion^c}>t - T_k,S_{[t-T_k - k \rad^2 +1,t-T_k]}\cap  B(\ct{\fregion},b_2\rad) = \varnothing)].
\end{align}
Now we consider all $x\in B(\ct{\fregion},b_2\rad)$ and $t- (k+1)\rad^2 \leq m \leq t - k\rad^2$ (which include all $(x, m)$ such that $(S_{T_k}, T_k) = (x, m)$ occurs with non-zero probability). On one hand, Lemma \ref{rstepsnt} implies
\begin{equation}
\label{eq:eptloc-bad}
  \Pr^{x}(\tau_{\fregion^c}>t - m,S_{[t-m - k \rad^2 +1,t-m]}\cap  B(\ct{\fregion},b_2\rad) = \varnothing) \leq C\exp( - 99\mb k)\,.
\end{equation}
On the other hand, by \eqref{eq:t-in-U-lb}, Lemmas \ref{lambda*}, \ref{phi-dist-bd}, and $\lambda_\fregion > \lambda_*$,
\begin{equation}
\label{eq:eptloc-gd}
  \Pr^x(\tau_{\fregion^c}>t-m) \geq c  \lambda_{\fregion}^{t-m} \geq c  \exp\{-2\mb  k\}\,.
\end{equation}
Combining the preceding two inequalities and \eqref{eq-Nexit} yields that for sufficiently large $k$,
 \begin{align*}
  \Pr^u( N_{\text{exit}} = k, \tau_{\fregion^c}>t)
  \leq &e^{-50\mb k} \Ex^u[\11_{\tau_{\fregion^c}>T_k} \Pr^{S_{T_k}}(\tau_{\fregion^c}>t - T_k)]= e^{-50\mb k} \Pr^u( \tau_{\fregion^c}>t)\,.
\end{align*}
We complete the proof of \eqref{eq:Nexit = k} by summing over all $k\geq C_9$ chosen sufficiently large.

Next, we prove \eqref{eq:prblb-u}. If we define stopping time $T_\star := \inf \{j \geq t - C_9 \rad^2: S_j \in B(\ct{\fregion},b_2\rad) \}$, then by {the strong Markov property at $T_\star$,} \eqref{eq:t-in-U-lb} and Lemma \ref{phi-dist-bd},
\begin{equation}
\label{eq:atspppp=1}
\begin{split}
  	\Pr^u(\tau_{\fregion^c} \geq m) &\geq \Pr^u(S_{[t - C_9 \rad^2,t]} \cap B(\ct{\fregion},b_2\rad) \not = \varnothing, \tau_{\fregion^c} > m)\\ &\geq  \Ex^u \big[\11_{T_\star < t, \tau_{\fregion^c} > T_\star} \cdot  c \lambda_\fregion^{m-T_\star}\big]\,\,.
\end{split}
\end{equation}
 Since $m-T_\star \leq m-t + C_9 \rad^2$ and $\lambda_\fregion > \lambda_* \geq 1 - \mb \rad^{-2} - C_* \rad^{-3}$ (see Lemma \ref{lambda*}), this is further bounded from below by
\begin{equation}
\label{eq:atspppp=2}
	c  \lambda_\fregion^{m-t + C_9\rad^2} \Pr^u(T_\star < t, \tau_{\fregion^c} > t) \geq c  \lambda_\fregion^{m-t}\Pr^u (\tau_{\fregion^c} >t)\,,
\end{equation}
where in the last inequality, we used \eqref{eq:endpoint-step2}. This gives \eqref{eq:prblb-u}.

Finally, we prove \eqref{eq:u-x-t-step3}. First note that by the Markov property at time $t$ and \eqref{eq:t-in-U-ub},
\begin{equation}
\label{eq:1286}
		\Pr^u(\tau_{\fregion^c} > m,S_m \in \hbn)\leq C \lambda_\fregion^{m-t}\Pr^u (\tau_{\fregion^c} >t)\,.
\end{equation}
Then by \eqref{eq:atspppp=1} and \eqref{eq:atspppp=2}, this is less than
$$ C\Pr^u(S_{[t - C_9 \rad^2,t]} \cap B(\ct{\fregion},b_2\rad) \not = \varnothing, \tau_{\fregion^c} > m)\,.$$
Combining this and Lemma \ref{pt-loc} yields \eqref{eq:u-x-t-step3}.
\end{proof}

\begin{proof}[\bf Proof of Lemma \ref{hit ball}]
Set $t = \lceil \rad^{C_4} \rceil$. By the Markov property at time $t$ and \eqref{eq:t-in-U-ub},
\begin{align*}
	\Pr^u( \tau_{B(\ct{\fregion},b_2\rad)} > t, \tau_{\fregion^c} > m,S_m \in \hbn) &\leq C\Pr^u( \tau_{B(\ct{\fregion},b_2\rad)} > t, \tau_{\fregion^c} > t)\lambda_\fregion^{m-t} \\
	&= C e^{-c t\rad^{-2}} \cdot \Pr^u(\tau_{\fregion^c} > t \big)\lambda_\fregion^{m-t}\,,
\end{align*}
where in the last step, we used Lemma \ref{hit tOmega-as}. Combined with the lower bound of $\Pr^u( \tau_{\fregion^c} > m)$ given by \eqref{eq:prblb-u}, it yields
$$ \Pr^u( \tau_{B(\ct{\fregion},b_2\rad)} > t,S_m \in \hbn \mid \tau_{\fregion^c} > m) \leq C e^{-c \rad^{C_4-2}}\,.$$
Combining it with Lemma \ref{pt-loc} gives \eqref{eq:hitball}.
\end{proof}

\begin{proof}[\bf Proof of Lemma \ref{loclemma2}]By adjusting the constant factor $c$ in \eqref{eq:loclemma2-1}, we may assume $\epsilon < 1- b_2$.
Let $u\in B(\ct{\fregion},b_2\rad)$. We first prove that for $t \geq \rad^2$,
\begin{equation}
\label{eq:416}
	\min_{\substack{x \in B(\ct{\fregion},(1 - \epsilon)\rad)\\ |x-u|_1 + t\ is \ even}}\Pr^u(\tau_{\fregion^c} > t, S_t = x) \geq c \epsilon \max_{y \in \fregion} \Pr^u(\tau_{\fregion^c} > t, S_t = y)\,.
\end{equation}
To this end, we define stopping time $T_\star = \inf \{j \geq t - (C_9 + 1) \rad^2: S_j \in B(\ct{\fregion},b_2\rad) \}$(with $T_\star = 0$ for $t \leq (C_9 + 1) \rad^2$). Then by \eqref{eq:u-x-t-step3},
\begin{equation}
\label{eq:5.24}
\Pr^u(T_\star \leq t- \rad^2 \mid \tau_{\fregion^c} > t - \rad^2) \geq c\,.	
\end{equation}
Then for all $x \in  B(\ct{\fregion},(1 -\epsilon)\rad)$ such that $|x-u|_1 + t$ is even,
\begin{equation}
\label{eq:5.23}
	\Pr^u(\tau_{\fregion^c} > t, S_t = x) \geq \Ex^u \big[\11_{\tau_{\fregion^c}> T_\star,T_\star \leq t- \rad^2} \Pr^{S_{T_\star}}(\tau_{\fregion^c} > t - T_\star, S_{t - T_\star} = x) \big]
\end{equation}
Since $B(\ct{\fregion},(1 - \rad^{-\kappa})\rad) \subset \fregion$ with $\widehat \P$-probability tending to one as $n\to\infty$ by Theorem \ref{thm:Ball confinement}, it follows from \cite[Proposition 6.9.4]{LL10} that uniformly in $x \in  B(\ct{\fregion},(1 -\epsilon)\rad), y \in B(\ct{\fregion},b_2 \rad)$ and $\rad^2 \leq k \leq (C_9 + 1) \rad^2$ such that $|x-u|_1 + k$ is even, we have
\begin{equation}
	\Pr^y(\tau_{\fregion^c} > k, S_k = x) \geq c \epsilon \rad^{-d}\,.
\end{equation}
Substituting this bounds into \eqref{eq:5.23} yields
\begin{align*}
	\Pr^u(\tau_{\fregion^c} > t, S_t = x)& \geq c \epsilon\rad^{-d} \Pr^u(\tau_{\fregion^c}> T_\star,T_\star \leq t- \rad^2) \\
	& \geq c \epsilon\rad^{-d} \Pr^u(\tau_{\fregion^c}> t- \rad^2,T_\star \leq t- \rad^2)\\
	&\geq c \epsilon\rad^{-d} \Pr^u(\tau_{\fregion^c}> t - \rad^2)\,,
\end{align*}
where we used \eqref{eq:5.24}.
On the other hand, for all $y \in \fregion$,
\begin{equation}
\begin{split}
 \Pr^u(\tau_{\fregion^c} > t, S_t = y) &= \Ex^u \big[\11_{\tau_{\fregion^c}> t - \rad^2}\Pr^{S_{ t - \rad^2}}(\tau_{\fregion^c} > \rad^2,S_{\rad^2} = y)\big]\\ &\leq C \rad^{-d} \Pr^u(\tau_{\fregion^c}> t - \rad^2)\,.
\end{split}
\end{equation}
Combining the two preceding bounds give \eqref{eq:416}.

We now prove \eqref{eq:loclemma2-2} and \eqref{eq:loclemma2-1}. Combining Lemmas \ref{stayinU} and \ref{phi-dist-bd} gives that for $m-t \geq 0$,
\begin{equation}
\label{eq:4.17}
	\min_{x \in B(\ct{\fregion},(1-\cec)\rad)}\Pr^x(\tau_{\fregion^c} > {m-t}) \geq c \epsilon \max_{y \in \fregion} \Pr^y(\tau_{\fregion^c} > {m-t}, S_{m-t} \in \hbn)\,.
\end{equation}
Multiplying each side of \eqref{eq:4.17} with that of \eqref{eq:416} and using the Markov property at time $m-t$, we obtain\begin{equation*}
	\min_{\substack{x \in B(\ct{\fregion},(1-\cec)\rad),\\ |x-u|_1 + t\text{ is even}}}\Pr^{u}(S_t = x ,  \tau_{\fregion^c} > m) \geq c\epsilon^2 \max_{y \in \fregion}  \Pr^{u}(S_t = y , S_m \in \hbn, \tau_{\fregion^c} > m)\,.
\end{equation*}
Lemma \ref{pt-loc} implies
$$\Pr^u( S_m \in \hbn \mid \tau_{\fregion^c} > m) \geq  1 - \exp({-\rad^c}) \quad\text{and}\quad \max_{y \in \fregion}  \Pr^{u}(S_t = y \mid \tau_{\fregion^c} > m)  \geq c \rad^{-d}\,.$$ Therefore,
\begin{equation}
\label{eq:54831}
	\min_{\substack{x \in B(\ct{\fregion},(1-\cec)\rad),\\ |x-u|_1 + t\text{ is even}}}\Pr^{u}(S_t = x \mid \tau_{\fregion^c} > m) \geq c\epsilon^2 \max_{y \in \fregion}  \Pr^{u}(S_t = y \mid \tau_{\fregion^c} > m) -\exp({-\rad^c})\,,
\end{equation}
then \eqref{eq:loclemma2-2} follows. In addition, \eqref{eq:54831} implies
\begin{equation}
\begin{split}
  1 &\geq \Pr^{u}(S_t \in B(\ct{\fregion}, \rad/2 )\mid \tau_{\fregion^c} > m)\\ & \geq c \rad^{d}\max_{y \in \fregion}  \Pr^{u}(S_t = y \mid \tau_{\fregion^c} > m) -C \rad^d\exp({-\rad^c})\,,
\end{split}
\end{equation}
which yields \eqref{eq:loclemma2-1}.
\end{proof}

\begin{proof}[\bf Proof of Corollary \ref{loclemma}]
We first consider the case when $t \geq \rad^2$. Since  $| \hbn \setminus B(\ct{\fregion},(1 - 2\rad^{-\kappa}) \rad))| \leq \rad^{d-c}$ for some constant $c \in (0,1)$, by \eqref{eq:loclemma2-2},
\begin{equation}
\Pr^{u}(S_t \in \hbn \setminus B(\ct{\fregion},(1 - 2\rad^{-\kappa}) \rad) \mid \tau_{\fregion^c} > m)  \leq C\rad^{-c}\,.
\end{equation}
Combined with Lemma \ref{pt-loc}, it yields \eqref{eq:99999}.

Now, we consider the case $t \leq \rad^2$. For $u \in B(\ct{\fregion},b_2\rad)$ , since $$\mathrm{dist}(u, \hbn \setminus B(\ct{\fregion},(1 -  2\rad^{-\kappa}) \rad)) \geq (1 - b_2) \rad/2\,,$$ by a union bound and the local limit theorem, we have that for any $t \geq 0$,
\begin{equation*}
	\Pr^u(S_t \in \hbn \setminus B(\ct{\fregion},(1 - 2\rad^{-\kappa}) \rad)) \leq C|\hbn \setminus B(\ct{\fregion},(1 - 2\rad^{-\kappa})\rad)| \rad^{-d} \leq \rad^{-c}\,.
\end{equation*}
Then by the Markov property at time $t$ and \eqref{eq:t-in-U-ub},
\begin{equation}
\label{eq:187235-1}
	\Pr^u(S_t \in \hbn \setminus B(\ct{\fregion},(1 - 2\rad^{-\kappa}) \rad), S_m \in \hbn, \tau_{\fregion^c} > m) \leq \rad^{-c} \cdot \lambda_\fregion^{m-t}\,.
\end{equation}
On the other hand, combining Lemma \ref{eq:t-in-U-lb} and Lemma \ref{phi-dist-bd} gives
\begin{equation}
\label{eq:187235-2}
\Pr^u(\tau_{\fregion^c} > m) \geq c \lambda_\fregion^{m}\,.
\end{equation}
Since Lemma \ref{lambda*} yields $\lambda_\fregion^{-t} \leq C$ for $t\le \rad^2$, combining \eqref{eq:187235-1}, \eqref{eq:187235-2}, and  Lemma \ref{pt-loc} gives \eqref{eq:99999}.
\end{proof}

\subsection{Distribution of the random walk}
\label{sec:distribution}
This section is devoted to the proof of Lemma \ref{distribution}. We will prove Lemma \ref{distribution} by the eigenfunction expansion of $P|_\qbn$. Loosely speaking, if we know that the spectral gap is larger than $c\rad^{-2}$, then after time much longer than $\rad^2$, the principal eigenfunction term should dominate all the other terms. However, there is an issue caused by the periodicity of the random walk, that is, there is a negative eigenvalue with the same modulus as the principal eigenvalue. In order to circumvent this issue, we will deal with even and odd times and sites separately. This corresponds to dealing with $(P|_\qbn)^2$, instead of $P|_\qbn$, and we will prove necessary estimates for the corresponding eigenvalues and eigenfunctions in Appendix~\ref{app:eigen}.

Let $\eva{i}{M}$ denote the $i$-th largest eigenvalue of the matrix $M$ and let $\eve{i}{M}$ denote the corresponding $\ell^1$-normalized eigenvector. For any vector $\eta$ indexed by $\Z^d$, we let $\eta_{\be}$ and $\eta_{\bo}$ be $\eta$ restricted to even and odd sites in $\Z^d$, respectively. Also, for any matrix $M$ indexed by $\Z^d \times \Z^d$, we let $M_{\be}$ and $M_{\bo}$ be $M$ with both coordinates restricted to even and odd sites, respectively.

\begin{proof}[\bf Proof of Lemma \ref{distribution}]
Denote $Q = P|_\qbn$ and $\eta = \eve{1}{Q}$ to simplify the notation.  Since $B(\ct{\fregion}, \rad - \rad^{1-\kappa})$ is open with $\widehat \P$-probability tending to one as $n\to\infty$ by \eqref{eq:thm-ballopen}, we have
 $$ B(\ct{\fregion}, \rad - \rad^{1-\kappa}) \subset \qbn \subset B(\ct{\fregion}, \rad + \rad^{1-c_1})\,.$$
Hence the assumption \eqref{eq:B-assmp} in Lemma \ref{qbnproperty} holds. By the eigendecomposition of $Q^2_{\be}$ and \eqref{eq:parity-2}, we have that for any even site $ v \in \Z^d$ and $m \geq 0$,
\begin{equation}
\label{eq:eign-even}
\begin{split}
		\left|\11_{v}^\tran (Q^2_{\be})^m - \eva{1}{Q^2_{\be}}^m  \frac{\eta_{\be}(v)}{|\eta_{\be}|_2^2} \eta_{\be} \right|_1& \leq  \sum_{i \geq 2}\eva{i}{Q^2_{\be}}^m {\frac{\langle \Phi_i(Q)_{\be},\11_v\rangle}{|\Phi_i(Q)_{\be}|_2^2}|\Phi_i(Q)_{\be}|_1}
\end{split}
\end{equation}
Since the dimension of the matrix $Q$ is at most $|\hbn|$, and \eqref{eq:qbn-1} implies $$|\Phi_i(Q)_{\be}|^2_2 = \frac{1}{2} |\Phi_i(Q)|^2_2 \geq \frac{|\Phi_i(Q)|^2_1}{2 \,\mathrm{dim}(Q)}\,,$$ we can further bound the right hand side of \eqref{eq:eign-even} from above by
\begin{equation}
	 2\sum_{i \geq 2}\eva{i}{Q^2_{\be}}^m |\qbn|\leq 2\eva{1}{Q^2_{\be}}^m |\qbn|^2e^{-c \rad^{-2}m} = \lambda_\qbn^{2m} |\qbn|^2e^{-c \rad^{-2}m}\,,
\end{equation}
where we used \eqref{eq:qbn-2} and $\lambda_\qbn = \eva{1}{\qbn}$ as defined before Lemma \ref{lambda*}.
Then since $|x^\tran Q|_1 \leq |x|_1$ for all $x$, $\11_{v}^\tran (Q^2_{\be})^t = \11_{v}^\tran Q^{2t}$, $Q\eta_{\be}= \lambda_\qbn\eta_{\bo}$, we have
\begin{equation}
\label{eq:eign-odd}
	\left|\11_{v}^\tran Q^{2m+1} - \lambda_\qbn^{2m+1}  \frac{\eta_{\be}(v)}{|\eta_{\be}|_2^2} \eta_{\bo} \right|_1\leq \lambda_\qbn^{2m+1} |\qbn|^2e^{-c \rad^{-2}m}\,.
\end{equation}
Fix $C'$ to be some large constant to be determined. Combining \eqref{eq:eign-even} and \eqref{eq:eign-odd} with \eqref{eq:qbn-1}, we get that for any even site $v \in \Z^d$, $m \geq C' \rad^{2}\log \log n$ and $x$ such that $|x-v|_1 + m$ is even,
\begin{equation}
\label{eq:qbnprob-1}
\left| \Pr^v(S_{m} = x,\tau_{\qbn^c}>m) - 2|\eta|_2^{-2}\lambda_\qbn^{m}  \eta(v)\eta(x)\right| \leq \lambda_\qbn^{m} (\log n)^{-c C'}\,.
\end{equation}
Similarly, this also holds for odd site $v \in \Z^d$.
Summing \eqref{eq:qbnprob-1} over $x$ and using \eqref{eq:qbn-1}, we get
\begin{equation}
\label{eq:qbnprob-2}
	\Pr^v(\tau_{\qbn^c}>m) = |\eta|_2^{-2}\lambda_\qbn^{m} \eta(v)(1  + O(\rad^{-2}))\,.
\end{equation}
Hence
\begin{equation}
\left| \Pr^v(S_{m} = x \mid \tau_{\qbn^c}>m) - 2\eta(x)\right| \leq C|\eta|_2^2\eta(v)^{-1}(\log n)^{-c C'} + C\eta(x) \rad^{-2}\,.
\end{equation}
Since $v \in B(\ct{\fregion},(1 - 2\rad^{-\kappa}) \rad)$, \eqref{eq:qbn-3} yields $\eta(v) \geq c\rad^{-d-\kappa}$. Also \eqref{eq:qbn-0} yields $|\eta|^2_2 \leq C\rad^{-d}$.  Now, choosing a sufficiently large $C'$ %, summing the preceding equation over $x$ such that $|x-v|_1 + m$ is even,
and applying \eqref{eq:A2-3} to replace $\eta(x)$ by $\rad^{-d}\phi_1(\frac{x - \ct{\fregion}}{\rad})$, we get \eqref{eq:endpt-m-t}.

On the other hand, define for $m \geq 0$, $u,v \in \Z^d$, $$q_m(u,v): = 2|\eta|_2^{-2}\lambda_\qbn^{m}  \eta(u)\eta(v)\,.$$ Then combining \eqref{eq:qbnprob-1}, \eqref{eq:qbnprob-2} and \eqref{eq:qbn-0} yields that for any $v,x,y\in \Z^d$ and $m,t \geq C' \rad^{2}\log \log n$ such that both $|x-v|_1 + m$ and $|y-v|_1 + m+t$ are even,
\begin{align*}
&\left| \Pr^v(S_{m} = x,S_{m+t} =y,  \tau_{\qbn^c}>m + t) - q_m(v,x)q_t(x,y)\right|\\
 = &	\left| \Pr^v(S_m = x,  \tau_{\qbn^c}>m) \Pr^x(S_t =y,  \tau_{\qbn^c}>t)- q_m(v,x)q_t(x,y)\right|	\\
 \leq & \lambda_\qbn^{m+t} (\log n)^{-2c C'} + q_m(v,x)\lambda_\qbn^{t} (\log n)^{-c C'} + q_t(x,y)\lambda_\qbn^{m} (\log n)^{-c C'}\\
 \leq & 5\lambda_\qbn^{m+t} (\log n)^{-c C'} \,.
\end{align*}
Combined with \eqref{eq:qbnprob-1}, this yields
\begin{equation*}
	\left| \Pr^v(S_{m} = x \mid S_{m+t} =y,  \tau_{\qbn^c}>m + t) - 2 |\eta|_2^{-2}\eta(x)^2\right| \leq(\log n)^{-cC'+C} \frac{|\eta|_2^{2}}{\eta(v)\eta(y)}.
\end{equation*}
Since $v,y \in B(\ct{\fregion},(1 - 2\rad^{-\kappa}) \rad)$, \eqref{eq:qbn-3} yields $\eta(v),\eta(y) \geq c\rad^{-d-\kappa}$. Also \eqref{eq:qbn-0} yields $|\eta|^2_2 \leq C\rad^{-d}$.  Now, choosing a sufficiently large $C'$ %, summing the preceding equation over $x$ such that $|x-v|_1 + m$ is even,
and applying \eqref{eq:A2-2} to replace $|\eta|_2^{-2}\eta(x)$ by $\rad^{-d/2}\phi_2(\frac{x - \ct{\fregion}}{\rad})$, we get \eqref{eq:bulk-m-t}.
\end{proof}

\appendix
\section{Estimates for eigenvalues and eigenfunctions}
\label{app:eigen}
This section collects some basic estimates for eigenfunctions and eigenvalues used in the proof. For finite $A \subset \Z^d$, we let $\lambda_{A}$ denote the principal (largest) eigenvalue of $P|_{A}$, which is the transition matrix of the simple symmetric random walk on $\Z^d$ killed upon exiting $A$, and let $\Phi_A$ be the $\ell^1$-normalized principal eigenfunction of $P|_{A}$. The following lemma bounds the $\ell_\infty$-norm of the eigenfunction $\Phi_D$ in a finite domain $D \subset \Z^d$ in terms of the eigenvalue $\lambda_D$.
\begin{lemma}
\label{Philinfbd}
	There exists a constant $C>0$ such that $ |\Phi_D|_\infty \leq C(1 - \lambda_D)^{d/2}$ for all finite $D \subset \Z^d$.
\end{lemma}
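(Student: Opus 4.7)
The plan is to combine the eigenvalue equation iterated over many steps with the standard on-diagonal heat kernel bound for simple random walk on $\Z^d$. First I would note that since $P|_D$ is a symmetric nonnegative matrix, Perron--Frobenius ensures that $\Phi_D\ge 0$ (and hence $|\Phi_D|_1=1$, by the normalization). Iterating $(P|_D)\Phi_D=\lambda_D\Phi_D$ and using $(P|_D)^t(x,y)=\Pr^x(S_t=y,\,\tau_{D^c}>t)\le \Pr^x(S_t=y)$ together with the local CLT bound $\sup_{y}\Pr^x(S_t=y)\le C t^{-d/2}$ for all $t\ge 1$, I would obtain
\begin{equation*}
\lambda_D^{t}\,\Phi_D(x) \;=\; \sum_{y\in D} (P|_D)^t(x,y)\,\Phi_D(y) \;\le\; \max_{y}(P|_D)^t(x,y)\cdot |\Phi_D|_1 \;\le\; C\,t^{-d/2}.
\end{equation*}
Rearranging yields the family of inequalities $\Phi_D(x)\le C\lambda_D^{-t}t^{-d/2}$ valid for every $t\in\N$, and it remains to choose $t$ in terms of $\lambda_D$.

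Next I would optimize the free parameter. If $1-\lambda_D\ge 1/2$, the trivial bound $|\Phi_D|_\infty\le|\Phi_D|_1=1\le 2^{d/2}(1-\lambda_D)^{d/2}$ already suffices. If $1-\lambda_D<1/2$, I would choose $t=\lceil 1/(1-\lambda_D)\rceil$. Using $\log(1/\lambda_D)\le 2(1-\lambda_D)$ for $\lambda_D\in[1/2,1]$ gives $\lambda_D^{-t}\le e^{2t(1-\lambda_D)}\le e^{3}$, while $t\ge 1/(1-\lambda_D)$ gives $t^{-d/2}\le (1-\lambda_D)^{d/2}$. Combining the two factors yields $\Phi_D(x)\le Ce^{3}(1-\lambda_D)^{d/2}$, which is the desired estimate.

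I do not anticipate a substantial obstacle: the statement follows directly from the two elementary ingredients above (Perron--Frobenius positivity, and the on-diagonal heat kernel bound). The only mild wrinkle is the period-$2$ parity of SRW on $\Z^d$, but this is harmless because the LCLT bound $\sup_{y}\Pr^x(S_t=y)\le Ct^{-d/2}$ holds unconditionally, being trivially $0$ on parity-mismatched pairs and the usual LCLT asymptotic on matched pairs. If one preferred to avoid the parity discussion altogether, replacing $t$ by $2t$ throughout would have both endpoints of $(P|_D)^{2t}(x,x)$ of matching parity, at the cost of only changing constants.
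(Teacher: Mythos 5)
Your proposal is correct and follows essentially the same route as the paper: the paper phrases the iteration of the eigenvalue equation as the martingale $\lambda_D^{-t\wedge\tau_{D^c}}\Phi_D(S_{t\wedge\tau_{D^c}})$ plus optional stopping at time $\lfloor l^2\rfloor$ with $l=(1-\lambda_D)^{-1/2}$, but after unraveling this is exactly your identity $\lambda_D^t\Phi_D(x)=\sum_y (P|_D)^t(x,y)\Phi_D(y)$ followed by the on-diagonal LCLT bound and the choice $t\approx 1/(1-\lambda_D)$. Your explicit split into the cases $1-\lambda_D\ge 1/2$ and $1-\lambda_D<1/2$ makes the boundedness of $\lambda_D^{-t}$ cleaner than the paper leaves it, and your parity remark is a harmless extra.
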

\begin{proof}
By $P|_{D}\Phi_D = \lambda_{D} \Phi_D$, we have $\sum_{u:u \sim v}(2d)^{-1}\Phi_D(u) = \lambda_{D} \Phi_D(v)$. Then it follows from the Markov property that $(\lambda_{D}^{-t \wedge \tau_{D^c}}\Phi_D(S_{t\wedge \tau_{D^c}}))_{t\geq 0}$ is a martingale.

 Let $l = (1 - \lambda_D)^{-1/2} $. By the optional sampling theorem and the local limit theorem,
\begin{equation*}
\begin{split}
 	\Phi_D(v) = \Ex^v[\lambda_{D}^{-{\lfloor l^2 \wedge \tau_{D^c}}\rfloor}\Phi_D(S_{{\lfloor l^2 \rfloor}\wedge\tau_{D^c}})] &\leq  \lambda_{D}^{-l^2}  \sum_{u} \Pr^v(S_{\lfloor l^2 \rfloor} = u) \Phi_D(u) + \Pr^v(\tau_{D^c} \leq \lfloor l^2 \rfloor) \cdot 0\\
 	&\leq C l^{-d} \quad \text{uniformly in $v \in D$.} \qedhere
\end{split}
 \end{equation*}
 \end{proof}

 Let $\eva{i}{M}$ denote the $i$-th largest eigenvalue of matrix $M$ and $\eve{i}{M}$ denote the corresponding $\ell^1$-normalized eigenvector. The following lemma says that if a large domain in $\Z^d$ is close to a ball, then the first eigenvalue and eigenfunction of this domain are also close to that of the ball.
\begin{lemma}
\label{eigenfunction}
	Suppose $B(0,(1 - \epsilon) t) \subset \mathcal B \subset B(0,(1 + \epsilon)t)$ where $\epsilon$ is smaller than some constant depending only on $d$. Then there exist constants $C,c>0$ such that
	\begin{align}
		&\label{eq:A2-1}\eva{1}{P|_{\mathcal B}} - \eva{2}{P|_{\mathcal B}} \geq c t^{-2}\,,\\
		&\label{eq:A2-2}\Big|\frac{\eve{1}{\mathcal B}^2}{|\Phi_1(\mathcal B)|_2^2} - \phi^2_{2,t}\Big|_1  \leq  C(\sqrt{\epsilon} + t^{-1/2})\,,\\
		&\label{eq:A2-3}|\eve{1}{\mathcal B} - \phi_{1,t}|_1  \leq  C(\sqrt{\epsilon} + t^{-1/2})\,,
	\end{align}
where $\phi_1$ and $\phi_2$ are respectively the $L^1$ and $L^2$-normalized first eigenfunction of the Dirichlet-Laplacian of the unit ball in $\R^d$, and $\phi_{2,t}(\cdot) = t^{-d/2}\phi_2(\cdot/t)$, $\phi_{1,t}(\cdot) = t^{-d}\phi_1(\cdot/t)$.
\end{lemma}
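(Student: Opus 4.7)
\medskip

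\noindent\textbf{Proof proposal for Lemma \ref{eigenfunction}.}
The plan is to view the discrete eigenvalue problem on $\mathcal B$ as an approximation of the continuous Dirichlet eigenvalue problem for $-\tfrac{1}{2d}\Delta$ on the unit ball, after rescaling by $t$. I will handle the spectral gap \eqref{eq:A2-1} first, and then bootstrap from it to the eigenfunction estimates \eqref{eq:A2-2}--\eqref{eq:A2-3}.

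For \eqref{eq:A2-1}, I would combine domain monotonicity with explicit asymptotics on balls. By min--max, $\lambda_i(P|_{B(0,(1+\epsilon)t)}) \geq \lambda_i(P|_{\mathcal B}) \geq \lambda_i(P|_{B(0,(1-\epsilon)t)})$ for $i = 1,2$. The discrete-to-continuous asymptotics on balls (exactly the type of estimate already established in Lemma~\ref{lambda*}, see \eqref{eq:lambda*lb}) give $1 - \lambda_i(P|_{B(0,r)}) = \mu_i r^{-2}(1+O(r^{-1/2}))$, where $\mu_1 = \mu_B$ and $\mu_2$ is the second Dirichlet eigenvalue of $-\tfrac{1}{2d}\Delta$ on the unit ball; crucially $\mu_2 > \mu_B$. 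Taking $\epsilon$ smaller than a dimensional constant and $t$ large, one obtains $\lambda_1(P|_{\mathcal B}) - \lambda_2(P|_{\mathcal B}) \geq \tfrac{1}{2}(\mu_2 - \mu_B)t^{-2}$, which is \eqref{eq:A2-1}.

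For \eqref{eq:A2-2} and \eqref{eq:A2-3} I plan an almost-minimizer argument in $L^2$, upgraded to $L^1$ at the end. Define a discrete test function $\tilde\phi_t$ by restricting $\phi_{2,t}$ to lattice points in $\mathcal B$ and $L^2$-renormalizing (so that $|\tilde\phi_t|_2=1$). Using smoothness of $\phi_2$ in the interior, the fact that $\phi_2$ vanishes linearly at $\partial B(0,1)$ (a quantitative Hopf-type fact for smooth domains), and Taylor expansion to compare the discrete operator $P|_{\mathcal B}$ with $I + \tfrac{1}{2d}\Delta$, the Rayleigh quotient satisfies $\langle \tilde\phi_t, P|_{\mathcal B}\tilde\phi_t\rangle = \lambda_{B(0,t)} + O(\epsilon + t^{-1}) = \lambda_1(P|_{\mathcal B}) + O(\epsilon + t^{-1})$. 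Decomposing $\tilde\phi_t = \alpha\,\eve{1}{\mathcal B}/|\Phi_1(\mathcal B)|_2 + R$ orthogonally in $\ell^2(\mathcal B)$, the spectral gap from \eqref{eq:A2-1} forces
\[
  |R|_2^{\,2} \;\leq\; \frac{C(\epsilon + t^{-1})}{c\,t^{-2}} \cdot t^{-2} \;\leq\; C(\epsilon + t^{-1}),
\]
where the extra $t^{-2}$ comes from the fact that the Rayleigh quotient is being compared at the scale of the eigenvalue gap. Taking square roots yields $\bigl\|\tilde\phi_t - \eve{1}{\mathcal B}/|\Phi_1(\mathcal B)|_2\bigr\|_2 \leq C(\sqrt{\epsilon}+t^{-1/2})$; squaring (pointwise) and applying Cauchy--Schwarz together with the fact that both functions are supported on $O(t^d)$ lattice sites and have $L^\infty$-norm $O(t^{-d/2})$ converts this into the $\ell^1$-bound \eqref{eq:A2-2}. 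The bound \eqref{eq:A2-3} is obtained by the same argument, except normalized in $\ell^1$ from the outset; since $\Phi_1(\mathcal B)>0$ and $\phi_{1,t}>0$, there is no sign ambiguity in the decomposition.

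The main obstacle is matching the two error contributions $\sqrt{\epsilon}$ and $t^{-1/2}$ to the ones claimed. The $\sqrt{\epsilon}$ is the generic square-root loss in eigenvector perturbation (Davis--Kahan flavor), produced by taking the square root after the Rayleigh-quotient comparison. The $t^{-1/2}$ comes from two sources that must be tracked: the discretization error in replacing $-\tfrac{1}{2d}\Delta$ by $I - P$ (a Taylor-expansion error of size $O(t^{-2})$ pointwise, hence $O(t^{-1})$ when integrated against $\phi_{2,t}^2$), and the boundary truncation error, which is also $O(t^{-1})$ because $\phi_2$ vanishes linearly at $\partial B(0,1)$ and the set of lattice sites within distance $1$ of $\partial \mathcal B$ has volume $O(t^{d-1})$. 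Once these two errors are combined and the square root is taken in the almost-minimizer argument, the stated rate $\sqrt{\epsilon}+t^{-1/2}$ drops out. Everything else is bookkeeping.
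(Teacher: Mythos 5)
Your proposal follows essentially the same route as the paper: for \eqref{eq:A2-1} you use domain monotonicity (min--max) sandwiched between two balls together with the asymptotics $1-\lambda_i(P|_{B(0,r)}) = \mu_i r^{-2}(1+o(1))$, and for \eqref{eq:A2-2}--\eqref{eq:A2-3} you use a Rayleigh-quotient/spectral-gap perturbation argument followed by an $\ell^2$-to-$\ell^1$ conversion via the $L^\infty$ bound and the $O(t^d)$ support size. The paper does exactly this, except it imports both the ball asymptotics and the eigenvector perturbation bound from Weinberger (1958, 1960) as black boxes (and uses a cell-averaged test function $\tilde\phi_{2,t}$ supported in $B(0,(1-\epsilon)t)$ rather than a pointwise restriction), whereas you re-derive the perturbation step by hand. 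One small point worth tightening: for \eqref{eq:A2-3} you cannot simply run "the same argument normalized in $\ell^1$ from the outset," since the orthogonal decomposition needs the $\ell^2$ structure; what actually works (and is what the paper does) is to first prove the $\ell^2$ estimate for the $\ell^2$-normalized eigenvector, then pass to the $\ell^1$-normalized version using the proportionality $\phi_1 = \phi_2/\|\phi_2\|_{L^1}$ together with the $L^\infty$ and support-size bounds to control the change of normalization.
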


\begin{proof}
First, we see that for $i = 1,2$, by \cite[(3.27) and (6.11)]{Weinberger58}
\begin{equation}
\label{eq:eigen-ball-ref}
	\eva{i}{P|_{B(0,t)}} = 1 - t^{-2}\mu_i(\mathbf B) + O(t^{-3})\,,
\end{equation}
where $\mu_i(\mathbf B)$ is the $i$-th eigenvalue of the Dirichlet-Laplacian of the unite ball $\mathbf B \subset \R^d$.
The min-max theorem implies that $\eva{i}{B(0,(1 - \epsilon) t)} \leq \eva{i}{\mathcal B} \leq \eva{i}{B(0,(1 + \epsilon)t)}$. Hence for $i = 1,2$
\begin{equation*}
	\eva{i}{P|_{\mathcal B}} = 1 - t^{-2}\mu_i(\mathbf B)  + O(\epsilon t^{-2} + t^{-3})\,.
\end{equation*}
This implies the first assertion.

Let $\phi_{2,\mathcal B}$ be the $\ell^2$-normalized first eigenvector of $P|_{\mathcal B}$, let $\phi_2$ be the $L^2$-normalized first eigenfunction of the Dirichlet-Laplacian of the unit ball in $\R^d$, and define
\begin{equation*}
	\tilde \phi_{2,t}(x) := t^{d/2}\int_{ x/t+ [0,1/t]^d} \phi_2(y) \dif y, ~x \in \Z^d\,.
\end{equation*}
Then by \cite[(6.11)]{Weinberger58} and \cite[(1.5)]{Weinberger60},
 \begin{equation*}
	\Big|\phi_{2,\mathcal B} - \frac{\tilde \phi_{2,(1 - \epsilon) t}}{|\tilde \phi_{2,(1 - \epsilon) t}|_2}\Big|^2_2 \leq C \frac{\eva{1}{P|_{\mathcal B}} -  \eva{1}{P|_{B(0,(1 - \epsilon) t)}}}{\eva{1}{P|_{\mathcal B}} - \eva{2}{P|_{\mathcal B}}} = O(\epsilon + t^{-1})\,.
\end{equation*}
Since $\phi_2$ is continuously differentiable (see, for example,~\cite[Corollary 8.11]{GT01}),
\begin{equation*}
	|\tilde \phi_{2,(1 - \epsilon) t} - \phi_{2,t}|_\infty = O(t^{-d/2-1})\quad {and} \quad|\tilde \phi_{2,(1 - \epsilon)t}|^2_2 = 1 + O(t^{-1})\,.
\end{equation*}
Altogether, we have $\sum_{x \in \Z^d} (\phi_{2,\mathcal B}(x) -  t^{-d/2}\phi_2(x/t))^2  = O(\epsilon + t^{-1})$. The second and third assertion follow by combining this with the boundedness of $\phi_2$ and Lemma \ref{Philinfbd}.
\end{proof}
The following two lemmas are needed to deal with the periodicity of the simple random walk. In what follows, for any vector $\eta$ indexed by sites in $\Z^d$, we let $\eta_{\be}$ and $\eta_{\bo}$ be $\eta$ restricted to even and odd sites in $\Z^d$, respectively.
\begin{lemma}
\label{parity}
Let $Q = P|_A$ for some finite $A \subset \Z^d$. We denote by $Q^2_{\be}$ and $Q^2_{\bo}$ the transition matrix $Q^2$ restricted to even and odd sites in $\Z^d$,respectively. Then  $$\mathrm{rank}~Q^2_{\be}=\mathrm{rank}~Q^2_{\bo} = \mathrm{rank}~Q/2\,.$$
For $1 \leq i \leq \mathrm{rank}~Q/2$, we have
	\begin{equation}
	\label{eq:parity-1}
\eva{i}{Q}^2= \eva{i}{Q^2_{\bo}} = \eva{i}{Q^2_{\be}}\,,
	\end{equation}
and
\begin{equation}
	\label{eq:parity-2}
	\frac{\eve{i}{Q}_{\be}}{|\eve{i}{Q}_{\be}|_1}= \eve{i}{Q^2_{\be}}, \quad 	\frac{\eve{i}{Q}_{\bo}}{|\eve{i}{Q}_{\bo}|_1}= \eve{i}{Q^2_{\bo}}\,.
\end{equation}
Furthermore,
\begin{equation}
\label{eq:g-123}
\begin{split}
		&|\eve{i}{Q}_{\be}|_2=|\eve{i}{Q}_{\bo}|_2\,,\\
	&|\eva{i}{Q}| \leq \frac{|\eve{i}{Q}_{\be}|_1}{|\eve{i}{Q}_{\bo}|_1} \leq |\eva{i}{Q}|^{-1}\,.
\end{split}
\end{equation}
\end{lemma}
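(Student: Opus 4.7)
The plan is to exploit the bipartite parity structure of the simple random walk. Since the walk moves in one step from even to odd sites and vice versa, writing $Q = P|_A$ with rows and columns ordered by parity gives the block form
\begin{equation*}
Q = \begin{pmatrix} 0 & R \\ R^{\tran} & 0 \end{pmatrix},
\end{equation*}
where $R$ consists of the entries $Q(x,y)$ with $x$ even and $y$ odd, and the appearance of $R^{\tran}$ in the lower-left block uses the symmetry of $Q$. Squaring yields a block-diagonal matrix with $Q^2_{\be} = R R^{\tran}$ and $Q^2_{\bo} = R^{\tran}R$. The elementary facts that $\mathrm{rank}\,R = \mathrm{rank}\,R R^{\tran} = \mathrm{rank}\,R^{\tran}R$ and that $R R^{\tran}$ and $R^{\tran}R$ share the same non-zero spectrum (with multiplicity) immediately give $\mathrm{rank}\,Q = 2\,\mathrm{rank}\,R = 2\,\mathrm{rank}\,Q^2_{\be} = 2\,\mathrm{rank}\,Q^2_{\bo}$.

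To pair the eigenvectors, I will set up a bijection between eigenvectors of $Q^2_{\be}$ and pairs of eigenvectors of $Q$. Given an eigenvector $\xi$ of $Q^2_{\be}$ with eigenvalue $\mu^2>0$, the vectors $\eta_{\pm}$ defined by $(\eta_{\pm})_{\be} = \xi$ and $(\eta_{\pm})_{\bo} = \pm\mu^{-1}R^{\tran}\xi$ satisfy $Q\eta_{\pm} = \pm\mu\,\eta_{\pm}$ by direct block computation. Conversely, from $Q\eta = \mu\eta$ with $\mu\neq 0$ one reads off $R\eta_{\bo} = \mu\,\eta_{\be}$ and $R^{\tran}\eta_{\be} = \mu\,\eta_{\bo}$, so $\eta_{\be}$ and $\eta_{\bo}$ are eigenvectors of $Q^2_{\be}$ and $Q^2_{\bo}$ with eigenvalue $\mu^2$. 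Ordering eigenvalues in decreasing order, the positive eigenvalues of $Q$ are therefore $\sqrt{\eva{i}{Q^2_{\be}}}$ for $i=1,\ldots,\mathrm{rank}\,Q/2$, which is \eqref{eq:parity-1}. The eigenvector correspondence then gives \eqref{eq:parity-2} after $\ell^1$-normalization with signs chosen consistently.

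For the final display \eqref{eq:g-123}, I will use the symmetry of $Q$ to compute
\begin{equation*}
|\eve{i}{Q}_{\be}|_2^2 = \mu^{-1}\langle \eve{i}{Q}_{\be}, R\,\eve{i}{Q}_{\bo}\rangle = \mu^{-1}\langle R^{\tran}\eve{i}{Q}_{\be}, \eve{i}{Q}_{\bo}\rangle = |\eve{i}{Q}_{\bo}|_2^2,
\end{equation*}
using $R\,\eve{i}{Q}_{\bo} = \mu\,\eve{i}{Q}_{\be}$ and $R^{\tran}\eve{i}{Q}_{\be} = \mu\,\eve{i}{Q}_{\bo}$ with $\mu=\eva{i}{Q}$. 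For the $\ell^1$ ratio bound, I will use that each column of $R$ sums to at most $1$ (since $Q$ is sub-stochastic and symmetric), so the triangle inequality gives $|R\eta_{\bo}|_1 \leq |\eta_{\bo}|_1$ and $|R^{\tran}\eta_{\be}|_1 \leq |\eta_{\be}|_1$; combined with the identities $|\mu|\,|\eta_{\be}|_1 = |R\eta_{\bo}|_1$ and $|\mu|\,|\eta_{\bo}|_1 = |R^{\tran}\eta_{\be}|_1$, this yields the asserted two-sided bound on the ratio $|\eve{i}{Q}_{\be}|_1/|\eve{i}{Q}_{\bo}|_1$.

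The argument is essentially linear algebra and there is no serious obstacle. The only mild subtlety is the sign convention in \eqref{eq:parity-2}: the $\ell^1$-normalized eigenvectors $\eve{i}{Q^2_{\be}}$ and $\eve{i}{Q^2_{\bo}}$ are defined only up to an overall sign, and the equalities are to be read with the signs chosen so that they match the restrictions of $\eve{i}{Q}$ to even and odd sites respectively.
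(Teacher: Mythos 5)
Your proof is correct and follows essentially the same approach as the paper: both exploit the bipartite parity structure, using the relations $Q\eta_{\bo}=\lambda\eta_{\be}$ and $Q\eta_{\be}=\lambda\eta_{\bo}$ (which your block form $Q^2_{\be}=RR^\tran$, $Q^2_{\bo}=R^\tran R$ restates explicitly). Your version is a bit more detailed than the paper's, which leaves the rank count, the eigenvalue/eigenvector matching, and the verification of \eqref{eq:g-123} implicit, but the underlying argument is the same.
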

\begin{proof}
 Note that if $\lambda$ is an eigenvalue of $Q$ with eigenvector $\eta$,  then
\begin{equation}
\label{eq:333123}
	Q \eta_{\bo}= \lambda \eta_{\be}, \quad  Q \eta_{\be}= \lambda \eta_{\bo}\,,
\end{equation}
and hence $-\lambda$ is an eigenvalue of $Q$ with eigenvector $\eta_{\be} - \eta_{\bo}$. Furthermore, $\eta_{\bo}$ is in the null space of the matrix $Q^2_{\be}$, hence $\eta_{\be}$ is an eigenvector of $Q^2_{\be}$ associated with eigenvalue $\lambda$. Similarly, $\eta_{\bo}$ is an eigenvector of $Q^2_{\bo}$ associated with eigenvalue $\lambda$. Therefore, we conclude that the nonzero eigenvalues of $Q^2_{\be}$ (or $Q^2_{\bo}$) are exactly the square of positive eigenvalues of $Q$. The corresponding eigenfunctions can be found by restricting eigenfunctions of $Q$ to odd (or even) sites. Hence \eqref{eq:parity-1} and \eqref{eq:parity-2} follow. \eqref{eq:g-123} follows directly from \eqref{eq:333123}.
\end{proof}
\begin{lemma}
\label{qbnproperty}
Let $a \in (0,1)$ and $Q = P|_B$ where $B$ is a subset of $\Z^d$ that satisfies \begin{equation}
\label{eq:B-assmp}
	B(0, n - n^{1-a}) \subset B \subset B(0, n + n^{1-a})\,.
\end{equation}
Then there exist constants $C,c>0$ depending only on $(a,d)$ such that for sufficiently large $n$,
 \begin{align}
  \label{eq:qbn-0}
  &\eva{1}{Q} \geq 1 - C n^{-2}\,, \quad \quad \quad |\eve{1}{Q}|_\infty \leq C n^{-d}\\
 \label{eq:qbn-1}
 	&|\eve{1}{Q}_{\be}|_2 = |\eve{1}{Q}_{\bo}|_2\,, \quad \quad|\eve{1}{Q}_{\be}|_1, |\eve{1}{Q}_{\bo}|_1 = 1/2 + O(n^{-2})\\
 \label{eq:qbn-2}	&	\eva{1}{Q^2_{\be}} - \eva{2}{Q^2_{\be}} = \eva{1}{Q^2_{\bo}}- \eva{2}{Q^2_{\bo}}  \geq c n^{-2}\,.
	\end{align}
	For $x \in B(0,(1 - 2 n^{-c})n)$, we have
\begin{equation}
\label{eq:qbn-3}
	\eve{1}{Q}(x) \geq c n^{-d-1} \cdot \mathrm{dist}(x,B(0,n)^c)\,.
\end{equation}
\end{lemma}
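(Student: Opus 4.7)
The plan is to derive each of the four assertions from the tools already established: the continuum approximation of the eigenvalues and eigenfunction of $P|_B$ (Lemma~\ref{eigenfunction}), the bipartite spectral decomposition (Lemma~\ref{parity}), and the $\ell^\infty$ bound (Lemma~\ref{Philinfbd}).

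First I would apply Lemma~\ref{eigenfunction} with $t=n$ and $\epsilon = n^{-a}$. Combined with Weinberger's expansion quoted in the proof of that lemma, this gives $\eva{1}{Q} = 1 - \mu_1(\mathbf B)n^{-2} + O(n^{-2-a}+n^{-3}) \geq 1-Cn^{-2}$ and also shows that $\eva{2}{Q} = 1 - \mu_2(\mathbf B)n^{-2} + O(n^{-2-a}+n^{-3})$ is positive for large $n$. Combined with Lemma~\ref{Philinfbd}, this immediately yields \eqref{eq:qbn-0}. For \eqref{eq:qbn-1}, the identity $|\eve{1}{Q}_{\be}|_2 = |\eve{1}{Q}_{\bo}|_2$ is the first line in~\eqref{eq:g-123}. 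The second line in~\eqref{eq:g-123} sandwiches the ratio $|\eve{1}{Q}_{\be}|_1/|\eve{1}{Q}_{\bo}|_1$ between $\eva{1}{Q}$ and $\eva{1}{Q}^{-1}$; since both are $1+O(n^{-2})$ by \eqref{eq:qbn-0}, and $|\eve{1}{Q}_{\be}|_1 + |\eve{1}{Q}_{\bo}|_1 = |\eve{1}{Q}|_1 = 1$, each equals $\tfrac{1}{2}+O(n^{-2})$.

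For \eqref{eq:qbn-2}, Lemma~\ref{parity} identifies the nonzero eigenvalues of $Q^2_{\be}$ with the squares of the positive eigenvalues of $Q$. Since $Q$ is bipartite the spectrum of $Q$ is symmetric about $0$, and since I already showed $\eva{2}{Q}>0$ for large $n$, the second largest eigenvalue of $Q^2_{\be}$ is $\eva{2}{Q}^2$. Hence
\[
\eva{1}{Q^2_{\be}} - \eva{2}{Q^2_{\be}} = \eva{1}{Q}^2 - \eva{2}{Q}^2 = \bigl(\eva{1}{Q}-\eva{2}{Q}\bigr)\bigl(\eva{1}{Q}+\eva{2}{Q}\bigr) \geq cn^{-2},
\]
using \eqref{eq:A2-1} for the first factor and \eqref{eq:qbn-0} for the second. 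The identical argument with $\bo$ in place of $\be$ gives the equality in \eqref{eq:qbn-2}.

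The main obstacle is \eqref{eq:qbn-3}, which is a genuine random-walk estimate analogous to (and a bit stronger than) Lemma~\ref{phi-dist-bd}. Writing $\eve{1}{Q}(x) = \eva{1}{Q}^{-t}\sum_y \eve{1}{Q}(y)\Pr^y(S_t = x, \tau_{B^c}>t)$ and averaging over $t \in \{n^2, n^2+1\}$ to handle the parity, I would keep only those $y$ in a fixed interior ball $B(0,b_1n)\subset B(0,n-n^{1-a})\subset B$, where $b_1\in(0,1)$ will be chosen close to $1$. For such $y$ and any $x \in B(0,(1-2n^{-c})n)$, the local CLT for walks killed outside $B(0,n-n^{1-a})$ (cf.\ \cite[Proposition~6.9.4]{LL10}) gives a transition probability of order at least $\mathrm{dist}(x,B(0,n)^c)\,n^{-d-1}$. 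To close the argument I would check that $\sum_{y\in B(0,b_1n)} \eve{1}{Q}(y) \geq \tfrac{1}{2}$, which follows from \eqref{eq:A2-3} (yielding $|\eve{1}{Q} - \phi_{1,n}|_1 = o(1)$ since both $n^{-a/2}$ and $n^{-1/2}$ tend to $0$) together with the elementary observation that the continuum eigenfunction $\phi_{1,n}$ concentrates in $B(0,b_1n)$ once $b_1$ is taken close enough to $1$. Since $\eva{1}{Q}^{-t} = 1+O(1)$ for $t=O(n^2)$ by \eqref{eq:qbn-0}, combining these estimates yields the desired linear-in-distance lower bound.
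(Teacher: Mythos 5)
Your proposal is correct and follows essentially the same route as the paper: Weinberger's expansion combined with eigenvalue monotonicity and Lemma~\ref{Philinfbd} for \eqref{eq:qbn-0}, the parity identities of Lemma~\ref{parity} together with the spectral gap \eqref{eq:A2-1} for \eqref{eq:qbn-1}--\eqref{eq:qbn-2}, and a Green's-function/local-CLT argument in the spirit of Lemma~\ref{phi-dist-bd} for \eqref{eq:qbn-3}. The one small divergence is the interior mass bound $\sum_{y\in B(0,b_1 n)}\eve{1}{Q}(y)\geq 1/2$ needed for the last point: the paper obtains it in one line from the $\ell^\infty$ bound $|\eve{1}{Q}|_\infty\leq Cn^{-d}$ and a volume count of the annulus $B(0,n+n^{1-a})\setminus B(0,(1-c')n)$, whereas you invoke the heavier $L^1$-comparison \eqref{eq:A2-3} with the continuum eigenfunction; both are available and correct, the paper's being a little more elementary.
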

\begin{proof}
First, \eqref{eq:eigen-ball-ref} gives $\eva{1}{Q} \geq 1 - C n^{-2}$. Combining with Lemma \ref{Philinfbd}, we get \eqref{eq:qbn-0}. Also, by \eqref{eq:A2-1}, we know that $
	\eva{1}{Q} - \eva{2}{Q} \geq c n^{-2}\,.$ Hence \eqref{eq:qbn-1} follows from \eqref{eq:g-123} and \eqref{eq:qbn-2} follows from \eqref{eq:parity-1}.

Next, we verify \eqref{eq:qbn-3}. We first see that $|\eve{1}{Q}|_\infty \leq C n^{-d}$ yields that for some constant $c' >0$,
$$ \sum_{x \in B(0,(1 - c')n)}\eve{1}{Q}(x) \geq 1/2. $$
Then the proof of Lemma \ref{phi-dist-bd} also works here.
\end{proof}

\section{Comparison of eigenvalues on nested domains}
In this section, we derive upper and lower bounds on the eigenvalue decrement after we remove a subset from the domain in Lemmas \ref{eigenvalue-difference} and \ref{RemoveB}, respectively. In particular, we are interested in the case when the subset being removed is very close to the boundary as in Lemma \ref{RemoveB}. More precisely, we show the following, where 
$$
\partial A := \{x\in A^c: |x-y|_1 = 1 \mbox{ for some } y\in A\}\,.
$$
We will follow the same notation as in Appendix \ref{app:eigen}.
\begin{lemma}
\label{eigenvalue-difference}
	Let $D_2 \subset D_1 \subset \Z^d$ be finite and $q = \sum_{x \in ( D_2 \cup \partial D_2)} \Phi_{D_1}^2(x)/|\Phi_{D_1}|_2^2$. Then
	\begin{equation}
		\lambda_{D_1} - \lambda_{D_1 \setminus D_2} \leq \frac{2 q}{1 - q}\,.
	\end{equation}
\end{lemma}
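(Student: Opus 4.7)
The plan is to control $\lambda_{D_1\setminus D_2}$ from below using the variational characterization
\[
\lambda_{D_1\setminus D_2}\;=\;\sup_{\psi:\,\mathrm{supp}(\psi)\subset D_1\setminus D_2}\frac{\langle \psi,P\psi\rangle}{\langle \psi,\psi\rangle},
\]
applied to the natural test function $\psi:=\Phi_{D_1}\mathbf{1}_{D_1\setminus D_2}$ obtained by restricting the principal eigenfunction of $D_1$ to the smaller domain. The two ingredients are a lower bound on $\langle \psi,\psi\rangle$ and an expression for $\langle \psi,P\psi\rangle$ in terms of $\lambda_{D_1}$ plus a small boundary error.

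First I would compute $\langle \psi,P\psi\rangle$. Using $P\Phi_{D_1}=\lambda_{D_1}\Phi_{D_1}$ on $D_1$ and the vanishing of $\Phi_{D_1}$ outside $D_1$, one gets
\[
\langle \psi,P\psi\rangle \;=\; \lambda_{D_1}\sum_{x\in D_1\setminus D_2}\Phi_{D_1}^2(x)\;-\;T,
\qquad T:=\sum_{\substack{x\in D_1\setminus D_2\\ y\in D_2,\,x\sim y}}\tfrac{1}{2d}\Phi_{D_1}(x)\Phi_{D_1}(y).
\]
The $T$-term is a boundary cross sum supported on pairs with $x\in\partial D_2\cap D_1$ and $y\in D_2$. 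Applying $\Phi_{D_1}(x)\Phi_{D_1}(y)\le \tfrac12(\Phi_{D_1}^2(x)+\Phi_{D_1}^2(y))$ and using that each vertex has at most $2d$ neighbours, I would obtain
\[
T \;\le\; \tfrac{1}{2}\Bigl[\sum_{x\in\partial D_2}\Phi_{D_1}^2(x)+\sum_{y\in D_2}\Phi_{D_1}^2(y)\Bigr]\;=\;\tfrac{q}{2}\,|\Phi_{D_1}|_2^2,
\]
since $\partial D_2\cap D_2=\varnothing$ and these two sums together are bounded by the defining quantity $q|\Phi_{D_1}|_2^2$.

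For the denominator, the bound $\sum_{x\in D_2}\Phi_{D_1}^2(x)\le q|\Phi_{D_1}|_2^2$ gives directly
\[
\langle \psi,\psi\rangle \;=\; |\Phi_{D_1}|_2^2-\sum_{x\in D_2}\Phi_{D_1}^2(x)\;\ge\;(1-q)\,|\Phi_{D_1}|_2^2.
\]
Combining the two estimates,
\[
\lambda_{D_1\setminus D_2}\;\ge\;\frac{\lambda_{D_1}\langle \psi,\psi\rangle-\tfrac{q}{2}|\Phi_{D_1}|_2^2}{\langle \psi,\psi\rangle}\;\ge\;\lambda_{D_1}-\frac{q}{2(1-q)},
\]
which is stronger than $\lambda_{D_1}-2q/(1-q)$ and hence yields the lemma.

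The argument is essentially a one-step perturbation computation, so there is no serious obstacle; the only delicate point is bookkeeping on the boundary term $T$, in particular ensuring that both summations over $\partial D_2$ and $D_2$ are absorbed into the single quantity $q$, which is exactly why $D_2\cup\partial D_2$ (rather than $D_2$ alone) appears in the definition of $q$.
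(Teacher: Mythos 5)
Your proof is correct and uses the same variational idea as the paper, with the same test function $\psi=\Phi_{D_1}\mathbf 1_{D_1\setminus D_2}$ inserted into the Rayleigh quotient; the paper phrases the computation as an edge-by-edge comparison of Dirichlet forms $\langle g,(I-P)g\rangle$, while you apply the eigenvalue equation $P\Phi_{D_1}=\lambda_{D_1}\Phi_{D_1}$ directly inside $\langle\psi,P\psi\rangle$, which is an equivalent route. Your bookkeeping is in fact slightly tighter, yielding $\lambda_{D_1}-\lambda_{D_1\setminus D_2}\leq q/(2(1-q))$, which implies the stated bound.
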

\begin{proof}
Let $\tilde \Phi_{D_1}(v) = \Phi_{D_1}(v)\11_{v \not \in D_2}$, then $\tilde \Phi_{D_1}$ is supported on $D_1 \setminus D_2$ and
$$ |\tilde \Phi_{D_1}|_2^2 \geq |\Phi_{D_1}|_2^2 (1 - q)\,.$$
For any adjacent $x, y \in \Z^d$ such that $(x,y) \not \in (\partial D_2^c \times \partial D_2) \cup (\partial D_2 \times \partial D_2^c)$, we have $$( \tilde \Phi_{D_1}(x) -  \tilde \Phi_{D_1}(y))^2 \leq (  \Phi_{D_1}(x) -   \Phi_{D_1}(y))^2\,.$$
Hence,
$$ \frac{1}{4d}\sum_{(x,y):|x-y|_1 = 1} \big [( \tilde \Phi_{D_1}(x) -  \tilde \Phi_{D_1}(y))^2 -  (  \Phi_{D_1}(x) -   \Phi_{D_1}(y))^2 \big] \leq \sum_{x \in \partial D_2} \Phi_{D_1}^2(x) \leq q |\Phi_{D_1}|_2^2\,. $$
Recall that
\begin{equation*}
  1 - \lambda_A = \min \Big\{\frac{1}{4d}\sum_{x \sim y} ( g(x) -  g(y))^2 :  |g|^2_2 = 1,g(x) = 0 \  \forall x \not \in A \Big\} \quad \forall A \subseteq \Z^d\,,
\end{equation*}
where $\Phi_A/|\Phi_A|_2$ is the minimizer.
Therefore, we have
\begin{align*}
	1 - \lambda_{D_1 \setminus D_2} \leq &\frac{\frac{1}{4d}\sum_{x \sim y}( \tilde \Phi_{D_1}(x) -  \tilde \Phi_{D_1}(y))^2 }{|\tilde \Phi_{D_1}|_2^2} \leq \frac{\frac{1}{4d}\sum_{x \sim y} (  \Phi_{D_1}(x) -   \Phi_{D_1}(y))^2 + q |\Phi_{D_1}|_2^2 }{|\Phi_{D_1}|_2^2 (1 - q)}\\
	& =\frac{1 - \lambda_{D_1} + q}{1-q} =  1- \lambda_{D_1} + \frac{2q - q\lambda_{D_1}}{1-q}\,.
\end{align*}
Since $\lambda_{D_1} \geq 0$, this yields the desired result.
\end{proof}

\begin{lemma}
\label{RemoveB}	Let $B_{R_1},B_{R_2},B_{R_3}$ be three concentric balls whose radii $R_1 >R_2 >R_3$ are sufficiently large, and let $\tp$, $\tcp$ be finite subsets of $\Z^d$. Suppose $B_{R_1} \subset \tp$; and suppose that $\tcp$ can be obtained from $\tp$ by removing some points in $B_{R_2}$, that is,
	$$ \tcp \subset \tp\quad and \quad \tp \setminus \tcp \subset B_{R_2}\,.$$
In addition, we assume that $b_1,b_2>0$ satisfy\begin{align}
	\label{eq:eigenball}
		\sum_{x \in B_{R_3}}\Phi_{\tp}(x), \sum_{x \in B_{R_3}}\Phi_{\tcp}(x) \geq \consa_1&\,,\\
	\label{eq:eigenbd}
	  and \quad \lambda_{\tcp} \geq 1 - \consa_2 R_1^{-2}\,.\quad \quad&
	 \end{align}
	 Then there exist constants $c_\consa  = c_\consa(\consa_1,\consa_2,d)>0$ and $C_d = C_d(d)>0$ such that
	 \begin{equation}
	 \label{eq:removeB}
	 	\lambda_{\tp} - \lambda_{\tcp} \geq \frac{c_\consa}{R_1^d(\log R_1)^{\11_{d=2}}}\Big(1 - \frac{R_2}{R_1} \Big)^{C_d} |\tp \setminus \tcp|^{(d-2)/d}\,.
	 \end{equation}
\end{lemma}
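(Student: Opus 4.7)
The plan is to turn the eigenvalue gap into a weighted Rayleigh-quotient identity via a ground-state transformation, and then lower-bound it with a Harnack estimate on $\Phi_\tp$ plus a lattice capacity bound on $A := \tp \setminus \tcp$.

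Set $\Phi := \Phi_\tp$ and use $P\Phi = \lambda_\tp \Phi$ on $\tp$ (with $\Phi \equiv 0$ off $\tp$). A direct computation, analogous to the one implicit in the proof of Lemma~\ref{eigenvalue-difference}, yields the exact identity
\begin{equation*}
\tfrac{1}{4d}\sum_{x\sim y}\bigl((\Phi u)(x) - (\Phi u)(y)\bigr)^2 \;=\; (1-\lambda_\tp)\sum_x \Phi(x)^2 u(x)^2 \;+\; \tfrac{1}{4d}\sum_{x\sim y}\Phi(x)\Phi(y)(u(x)-u(y))^2
\end{equation*}
for any function $u$ on $\Z^d$. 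Specializing to $u^* := \Phi_\tcp/\Phi$ (which vanishes on $A$, so that $\Phi u^* = \Phi_\tcp$), the left-hand side becomes $(1-\lambda_\tcp)|\Phi_\tcp|_2^2$; dividing by $|\Phi_\tcp|_2^2 = \sum_x \Phi^2(u^*)^2$ and rearranging gives the exact formula
\begin{equation*}
\lambda_\tp - \lambda_\tcp \;=\; \frac{\tfrac{1}{4d}\sum_{x\sim y}\Phi(x)\Phi(y)(u^*(x)-u^*(y))^2}{|\Phi_\tcp|_2^2}\,.
\end{equation*}

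It now suffices to lower-bound the numerator by $c R_1^{-d}\,\capa_{B_\rho}(A)\cdot|\Phi_\tcp|_2^2$ for $\rho := \tfrac{1}{2}(R_1+R_2)$. By Lemma~\ref{Philinfbd} together with \eqref{eq:eigenbd}, $|\Phi|_\infty,|\Phi_\tcp|_\infty \leq CR_1^{-d}$, hence $|\Phi_\tcp|_2^2 \leq CR_1^{-d}$. A quantitative Harnack-type lower bound on the nonnegative almost-harmonic function $\Phi$ (using $1-\lambda_\tp \leq b_2 R_1^{-2}$ and anchored by the $\ell^1$-mass condition \eqref{eq:eigenball} on $B_{R_3}$), obtained via the iterated eigenvalue equation $\Phi = \lambda_\tp^{-\lfloor R_1^2\rfloor}P^{\lfloor R_1^2\rfloor}\Phi$ and the local limit theorem---just as in the proof of Lemma~\ref{phi-dist-bd}---then gives $\Phi(x) \geq c R_1^{-d}$ uniformly on $B_\rho$, and similarly $\Phi_\tcp(x)\geq cR_1^{-d}$ on a subset of $B_\rho\setminus A$, whence $u^*(x) \geq c$ there. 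Restricting the numerator to edges in $B_\rho$ (where $\Phi(x)\Phi(y) \geq cR_1^{-2d}$) and applying a Maz'ya-type capacitary/level-set inequality to $u^*|_{B_\rho}$ yields $\tfrac{1}{4d}\sum_{x\sim y}\Phi(x)\Phi(y)(u^*(x)-u^*(y))^2 \geq c R_1^{-2d}\,\capa_{B_\rho}(A)$. Standard lattice capacity estimates---radial monotonicity (comparison with a concentric ball of volume $|A|$) and explicit Green's-function bounds on $B_\rho$---then give $\capa_{B_\rho}(A) \geq c(\log R_1)^{-\mathbf{1}_{d=2}}(1-R_2/R_1)^{C_d}|A|^{(d-2)/d}$, the boundary factor $(1-R_2/R_1)^{C_d}$ absorbing the reduction in capacity when $A$ sits near $\partial B_\rho$.

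The chief technical obstacle is the uniform Harnack-type lower bound on $\Phi$ and $\Phi_\tcp$, which must be independent of the (possibly large and irregular) geometry of $\tp$ outside $B_{R_1}$; this is the step that actually consumes the joint assumption \eqref{eq:eigenball}. A secondary difficulty is the careful handling of $d=2$, where the capacity carries the logarithmic factor and the Green's function is no longer integrable at infinity, so the capacity estimate must be tailored to the planar case.
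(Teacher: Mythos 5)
Your ground-state transformation identity is correct and is a genuinely different starting point from the paper's. (The paper begins from the skew identity $(\lambda_{\tp}-\lambda_{\tcp})\langle\Phi_{\tp},\Phi_{\tcp}\rangle = \tfrac{1}{2d}\sum_{x\sim y,\,x\in\tp\setminus\tcp}\Phi_{\tp}(x)\Phi_{\tcp}(y)$, reducing the task to a lower bound on the boundary sum $\sum_{y\in\partial(\tp\setminus\tcp)}\Phi_{\tcp}(y)$.) Your approach elegantly recasts the gap as a weighted Dirichlet energy of $u^*=\Phi_{\tcp}/\Phi_{\tp}$, which makes the connection to capacity transparent. The auxiliary ingredients you invoke (the $R_1^{-d}$ sup-bounds via Lemma~\ref{Philinfbd}, the Harnack/iterated-eigenvalue lower bound on $\Phi_{\tp}$ over $B_{R_2}$, the discrete capacity isoperimetry for $d\ge 3$ and the logarithmic correction for $d=2$) are the same ones the paper uses.

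There is, however, a genuine gap at the step where you assert that $u^*\geq c$ on a barrier separating $A:=\tp\setminus\tcp$ from $\partial B_\rho$. The capacitary argument hinges on this: you form $w=\max(0,\,1-u^*/M)$, observe $w\equiv 1$ on $A$, and would need $w\equiv 0$ outside a neighbourhood of $A$ contained in $B_\rho$; only then is $w$ admissible for $\capa_{B_\rho}(A)$ and $\sum_{x\sim y\in B_\rho}(u^*(x)-u^*(y))^2\gtrsim M^2\capa_{B_\rho}(A)$ follows. But a positive lower bound for $\Phi_{\tcp}$ on $B_\rho\setminus B_{R_2}$ does \emph{not} follow from the iterated eigenvalue equation and \eqref{eq:eigenball}, because a walk started in $B_{R_3}$ may be unable to reach $B_\rho\setminus B_{R_2}$ without first hitting $A$. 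The hypotheses permit, for instance, $\tp=B_{R_1}$ and $A=B_{R_2}\setminus B_{R_3}$ with $R_3$ close to $R_1$: then $\tcp$ is disconnected, $\Phi_{\tcp}$ is the Perron eigenvector of $P|_{B_{R_3}}$, which satisfies \eqref{eq:eigenball}--\eqref{eq:eigenbd} and yet \emph{vanishes identically} on $B_\rho\setminus B_{R_2}$. Your $w$ is then $\equiv 1$ on the whole annulus $B_\rho\setminus B_{R_3}$, so it is not a competitor for $\capa_{B_\rho}(A)$, and the asserted inequality $\tfrac{1}{4d}\sum_{B_\rho}(u^*(x)-u^*(y))^2\geq c\,\capa_{B_\rho}(A)$ breaks down (the energy of $u^*$ concentrates near $\partial B_{R_3}$, whereas $\capa_{B_\rho}(A)$ reflects the outer transition from $\partial B_{R_2}$ to $\partial B_\rho$ --- genuinely different quantities).

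The paper circumvents exactly this obstruction by a dichotomy on $\sum_{x}\Phi_{\tcp}(x)\Pr^x\big(S_{\tau_{\tcp^c}}\in A\big)$. When this is large (which covers the scenario above), the eigenvalue identity $\sum_x\Phi_{\tcp}(x)\Pr^x(S_{\tau_{\tcp^c}}\in A)\leq (1-\lambda_{\tcp})^{-1}\sum_{y\in\partial A}\Phi_{\tcp}(y)$ combined with $1-\lambda_{\tcp}\gtrsim R_1^{-2}$ already yields the bound with the trivial estimate $|A|^{(d-2)/d}\lesssim R_1^{d-2}$, and no Harnack estimate on $\Phi_{\tcp}$ outside $B_{R_3}$ is needed. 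It is only when that quantity is small that the Green's-function/Harnack/capacity chain (the analogue of your argument) is applicable. Your proof, as written, silently assumes we are in the second regime; to close the gap you need to add a case split along these lines, after which the ground-state transformation route should go through.
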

\begin{proof}
%We denote $\tp = \plr \cup B_{\delta,\lj-1}$ and $\tcp = \plr \cup (B_{\delta,\lj-1} \setminus B_{\delta,\lj})$.
 We first see that by $\tcp \subset \tp$,
 \begin{align*}
 	(\lambda_{\tp} - \lambda_{\tcp}) \langle \Phi_{\tp},\Phi_{\tcp}\rangle&=\langle\Phi_{\tp}, (P|_{\tp} - P|_{\tcp}) \Phi_{\tcp}\rangle\\
 	& =\frac{1}{2d} \sum_{x \sim y, x\in \tp \setminus \tcp}\Phi_{\tp}(x)\Phi_{\tcp}(y)\,.
 \end{align*}
Since $\tp \setminus \tcp \subset B_{R_2}$, it follows that
\begin{equation}
\label{eq:Flu95}
	\lambda_{\tp} - \lambda_{\tcp} \geq  \frac{\min_{x \in B_{R_2}}\Phi_{\tp}(x)}{2d\,|\Phi_{\tp}|_2|\Phi_{\tcp}|_2} \sum_{y \in \partial (\tp \setminus \tcp)}\Phi_{\tcp}(y)\,.
\end{equation}
First we give a lower bound on $\Phi_\tp$ on $B_{R_2}$. Note that for all $x \in B_{R_2}$ and $y \in B_{R_3}$, by \cite[Proposition 6.9.4]{LL10} and taking into account the periodicity of the random walk, we have $$p^{B_{R_1}}_{R_1^2}(y,x) + p^{B_{R_1}}_{R_1^2+1}(y,x) \geq \frac{c}{R_1^d} \Big(1 - \frac{R_2}{R_1}\Big) \Big(1 - \frac{R_3}{R_1}\Big) \,.$$
 Combined with \eqref{eq:eigenball} and $\lambda_\tp \leq 1$, it yields that for all $x \in B_{R_2}$
\begin{equation}
\label{eq:Flu95+1}
\begin{split}
\Phi_{\tp}(x) &=\frac{1}{2} \sum_{i = R_1^2,R_1^2 + 1} \lambda_{\tp}^{-i}\sum_{y \in \tp}\Phi_\tp(y)\Pr^{y}(S_{i} = x, \tau_{\tp^c} >i)\\
	& \geq \frac{1}{2}\sum_{y \in B_{R_3}}\Phi_\tp(y) \cdot \frac{c}{R_1^d} \Big(1 - \frac{R_2}{R_1}\Big) \Big(1 - \frac{R_3}{R_1}\Big)\\
	&\geq \frac{cb_1}{2R_1^d} \Big(1 - \frac{R_2}{R_1}\Big)^2 \,.
\end{split}
\end{equation}
On the other hand, combining \eqref{eq:eigenbd} and Lemma \ref{Philinfbd} yields
\begin{equation}
\label{eq:Flu95+2}
	|\Phi_{\tp}|_2^{2} \leq Cb_2^{d/2}R_1^{-d}, \quad |\Phi_{\tcp}|_2^{2}  \leq Cb_2^{d/2}R_1^{-d}\,.
\end{equation}
Combining \eqref{eq:Flu95}, \eqref{eq:Flu95+1}, and \eqref{eq:Flu95+2}, we see that to prove \eqref{eq:removeB}, it suffices to prove
\begin{equation}
 	 \label{eq:hp-step1}
	\sum_{x \in \partial(\tp \setminus \tcp)} \Phi_{\tcp}(x) \geq \frac{c_b}{R_1^{d}(\log R_1)^{\11_{d=2}}}\Big(1 - \frac{R_2}{R_1}\Big)^{C_d} |\tp \setminus \tcp|^{(d-2)/d}\,,
\end{equation}
where $c_b  = c_b(\consa_1,\consa_2,d)$ and $C_d = C_d(d)$ are positive constants, with $C_d$ to be chosen later in \eqref{eq:chaining}.

To verify \eqref{eq:hp-step1}, we first consider the case
\begin{equation}
\label{eq:ll2}
\sum_{x \in \tcp}\Phi_{\tcp}(x)\Pr^{x}(S_{\tau_{\tcp^c}} \in \tp \setminus \tcp)  > {\frac{b_1}{2}} \Big(1 - \frac{R_2}{R_1}\Big)^{C_d} \,.
\end{equation}
Note that
\begin{equation}
\label{eq:73111}
\begin{split}
		\sum_{x \in \tcp}\Phi_{\tcp}(x)\Pr^{x}(S_{\tau_{\tcp^c}} \in \tp \setminus \tcp) &\leq \sum_{i \geq 0} \sum_{x \in \tcp}\Phi_{\tcp}(x)\Pr^{x}(\tau_{\tcp^c} >i, S_i \in \partial (\tp \setminus \tcp))\\
	&=\sum_{i \geq 0} \lambda_{\tcp}^i \sum_{x \in \partial (\tp \setminus \tcp)}\Phi_{\tcp}(x)\\
	& = \frac{1}{1 - \lambda_{\tcp}} \sum_{x \in \partial (\tp \setminus \tcp)}\Phi_{\tcp}(x)\,.
\end{split}
\end{equation}
On the other hand, \eqref{eq:eigenball} implies $|\Phi_{\tcp}|_\infty \geq c b_1R_1^{-d}$. Then Lemma \ref{Philinfbd} implies $$\lambda_{\tcp} \leq 1- c b_1^{2/d}R_1^{-2}\,.$$
Substituting this into \eqref{eq:73111} and using the assumption that \eqref{eq:ll2} and the fact that $|\tp \setminus \tp^{\circ}| \leq (2 R_1)^d$, we obtain \eqref{eq:hp-step1}.

Now we consider the other case
\begin{equation}
\label{eq:ll1}
\sum_{x \in \tcp}\Phi_{\tcp}(x)\Pr^{x}(S_{\tau_{\tcp^c}} \in \tp \setminus \tcp)  \leq \frac{b_1}{2}\Big(1 - \frac{R_2}{R_1}\Big)^{C_d}\,,
\end{equation}
in which case the probability of exiting $\tcp$ via $\tp \setminus \tcp \subset B_{R_2}$ is small. Heuristically, this allows us to approximate the random walk in $\tcp$ by the walk in $\tp$, and then we are able to get good control on the Green's function which relates to the eigenfunction $\Phi_{\tcp}$ as follows. For any $x \in \tcp$, by the eigenvalue equation for the resolvent, we have
\begin{equation}
\label{eq:phi-decom}
	 \Phi_{\tcp}(x) = (1 - \lambda_{\tcp})\sum_v\Phi_{\tcp}(v)G_{\tcp}(v,x)\,,
\end{equation}
where
\begin{equation}
	G_{\tcp}(v,x): = \sum_{t = 0}^\infty \P^v(S_t = x, \tau_{\tcp^c}\geq t)\,.
\end{equation}
We will get a lower bound on $\Phi_{\tcp}(x)$ by restricting the sum over $v$ in \eqref{eq:phi-decom} to the annulus  $$\mathcal A : = B_{R_1 - (R_1-R_2)/3} \setminus B_{R_1 - 2(R_1-R_2)/3} \subset \tcp\,.$$
For all $v \in \mathcal A$ and $u \in B_{R_3}$,
\begin{equation}
\label{eq:123345rfd}
 	G_{\tcp}(u,v) \geq G_\tp(u,v)- \Pr^u(S_{\tau_{\tcp^c}} \in\tp \setminus \tp^{\circ}) \cdot \max_{y \in \tp \setminus \tcp}G_\tp(y,v)\,.
 \end{equation}
Since the function $x \mapsto G_\tp(x,v)$ is harmonic on $B_{R_1-2(R_1-R_2)/3}$, we can use the Harnack inequality and a standard chaining argument as in~\cite[(4.62)]{DFSX18} to obtain
that for a constant $C_d$ depending only on $d$,
\begin{equation}
\label{eq:chaining}
	\max_{y \in B_{R_2}}\frac{G_\tp(y,v)}{G_\tp(u,v)} \leq C_d \Big(1 - \frac{R_2}{R_1}\Big)^{-C_d}\,.
\end{equation}
By the strong Markov property at time $\tau_v$ and \cite[Proposition 6.9.4]{LL10}, we get
\begin{equation}
\label{eq:rw293r}
\begin{split}
	G_\tp(u,v)  \geq G_{B_{R_1}}(u,v)\geq c \Big(1 - \frac{R_2}{R_1}\Big)\Big(1 - \frac{R_3}{R_1}\Big) R_1^{-d+2}\,.
\end{split}
\end{equation}
Combining \eqref{eq:123345rfd}, \eqref{eq:chaining}, and \eqref{eq:rw293r} yields
 $$ G_{\tcp}(u,v) \geq c  \Big(1 - \frac{R_2}{R_1}\Big)^2R_1^{-d+2}\Big[1 -    \Big(1 - \frac{R_2}{R_1}\Big)^{-C_d}\Pr^u(\tau_{\tcp^c} \in\tp \setminus \tp^{\circ}) \Big]\,.$$
Combining with \eqref{eq:eigenball} and \eqref{eq:ll1}, we get for all $v \in \mathcal A$ and $u \in B_{R_3}$,
\begin{equation*}
	\begin{split}
		&\sum_{u \in B_{R_3}}\Phi_{\tcp}(u)G_{\tcp}(u,v)  \\
	\geq& c \Big(1 - \frac{R_2}{R_1}\Big)^2R_1^{-d+2}\Big(\sum_{u \in B_{R_3}}\Phi_{\tcp}(u) -    \Big(1 - \frac{R_2}{R_1}\Big)^{-C_d}\sum_{u \in B_{R_3}}\Phi_{\tcp}(u)\Pr^u(\tau_{\tcp^c} \in\tp \setminus \tp^{\circ}) \Big)\\
		\geq& c b_1\Big(1 - \frac{R_2}{R_1}\Big)^2R_1^{-d+2}\,.
	\end{split}
\end{equation*}
Therefore, by \eqref{eq:phi-decom} and \eqref{eq:eigenbd} we get for all $v \in \mathcal A$,
\begin{equation*}
	\begin{split}
	\Phi_{\tcp}(v) &\geq 	(1 - \lambda_{\tcp})\sum_{u \in B_{R_3}}\Phi_{\tcp}(u)G_{\tcp}(u,v) \geq cb_1b_2\Big(1 - \frac{R_2}{R_1}\Big)^2R_1^{-d}\,.
	\end{split}
\end{equation*}
 Then by \eqref{eq:phi-decom} and \eqref{eq:eigenbd} again (summing over $v \in \mathcal A$), we get for $x \in  \tcp$,
\begin{equation*}
\begin{split}
\Phi_{\tcp}(x) &\geq c(1-\lambda_{\tcp})\cdot b_1b_2\Big(1 - \frac{R_2}{R_1}\Big)^2R_1^{-d} \sum_{v \in \mathcal A}
 G_{\tcp}(v,x)\\
   &\geq cb_1b_2^2\Big(1 - \frac{R_2}{R_1}\Big)^2 R_1^{-d-2}\Ex^x\big[|\{1 \leq i \leq \tau_{\tcp^c}: S_i \in \mathcal A \}|;{\tau_{\tp \setminus \tcp} >\tau_{B_{R_1}^c}}\big]\,.	
\end{split}
\end{equation*}
Conditioned on $\tau_{\tp \setminus \tcp} >\tau_{B_{R_1}^c}$, the random walk must cross $\mathcal A$. Uniformly in starting and ending points, the first crossing of $\mathcal A$ has length at least $c(R_1 - R_2)^2$ with positive probability. Hence, we get
\begin{equation}
\label{eq:phi-tcp-x}
	\Phi_{\tcp}(x) \geq cb_1b_2^2\Big(1 - \frac{R_2}{R_1}\Big)^4  R_1^{-d}\Pr^x(\tau_{\tp \setminus \tcp} >\tau_{B_{R_1}^c})\,.
\end{equation}

For $d \geq 3$, summing over $x \in \partial(\tp \setminus \tcp)$ in \eqref{eq:phi-tcp-x} gives
\begin{equation}
	\sum_{x \in \partial(\tp \setminus \tcp)} \Phi_{\tcp}(x) \geq c\Big(1 - \frac{R_2}{R_1}\Big)^4  R_1^{-d}\capa(\tp \setminus \tcp)\,,
\end{equation}
where for $d \geq 3$,
\begin{equation}
\capa(\tp \setminus \tcp): = \sum_{x \in \tp \setminus \tcp} \Pr^{x}(S_t \not \in \tp \setminus \tcp, \text{for all }t \geq 1	)\,.
\end{equation}
Combining with $\capa(\tp \setminus \tcp) \geq c |\tp \setminus \tcp|^{(d-2)/d}$ (see the proof of \cite[Proposition 2.5.1]{L13}) yields \eqref{eq:hp-step1}.

For $d = 2$, fix an arbitrary $z \in \tp \setminus \tcp$. By decomposing a random walk path that starts from $z$ and exists $B_{R_1}$ before returning to $z$ according to its last exit time from $\tp\setminus\tcp$, we have
\begin{align*}
	\Pr^{z}(\tau_z^+ > \tau_{B^c_{R_1}}) &= \sum_{x \in \partial(\tp \setminus \tcp)}\Pr^{x}(S_1 \in \tp \setminus \tcp, \tau_{z} <\tau_{B^c_{R_1}})\Pr^{x}(\tau_{\tp \setminus \tcp} >\tau_{B_{R_1}^c}) \\
	&\leq \sum_{x \in \partial(\tp \setminus \tcp)}\Pr^{x}(\tau_{\tp \setminus \tcp} >\tau_{B_{R_1}^c}) \,,
\end{align*}
where $\tau_z^+ := \inf\{ t \geq 1: S_t = z\}$.
Combining with \eqref{eq:phi-tcp-x} and \cite[Proposition 6.4.3]{LL10}, which implies that for $R_1$ sufficiently large,
\begin{equation}
\label{eq:A.21}
	\Pr^{z}(\tau_z^+ > \tau_{B^c_{R_1}})\geq c(\log R_1)^{-1}\,,
\end{equation}
we get \eqref{eq:hp-step1}. We thus complete the proof of \eqref{eq:removeB}.
\end{proof}

\section{An isoperimetric inequality}
The following isoperimetric inequality is needed in the proof of Lemma \ref{iter}. It says that if we partition a ball in $\Z^d$ into two parts, then the area of the interface between the two parts can be bounded from below as a function of the volume of the smaller part.
\begin{lemma}
\label{isolemma}
Fix an arbitrary $R\geq 1$. Let $\mathbf{B}(0,R) = \{x \in \R^d: |x|_2 \leq R\}$, and let $B := \mathbf{B}(0,R) \cap \Z^d$. Suppose $B = A_1 \cup A_2$ is a partition of $B$. Then
\begin{equation}
\label{eq:isolemma}
	\min\{ |\partial A_1 \cap A_2|,| \partial A_2 \cap A_1|\} \geq c \min(|A_1|, |A_2|)^{1-1/d}\,,
\end{equation}
where $c>0$ is a constant depending only on $d$, and recall that $\partial A_i := \{x\in A_i^c: |x-y|_1 = 1 \mbox{ for some } y\in A_i\}$ for $i=1,2$.
\end{lemma}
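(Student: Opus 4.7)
Without loss of generality assume $|A_1| \leq |A_2|$. Let $\mathcal E(A_1, A_2)$ denote the set of nearest-neighbor edges of $\Z^d$ with one endpoint in $A_1$ and the other in $A_2$. Each such edge contributes exactly one endpoint to $\partial A_1 \cap A_2$ (the one in $A_2$) and one to $\partial A_2 \cap A_1$ (the one in $A_1$); conversely, any vertex of $\Z^d$ has at most $2d$ incident edges. Hence
\begin{equation*}
|\partial A_i \cap A_j| \geq (2d)^{-1} |\mathcal E(A_1, A_2)| \qquad \text{for } \{i,j\} = \{1,2\}.
\end{equation*}
It therefore suffices to prove the relative edge isoperimetric inequality $|\mathcal E(A_1, A_2)| \geq c |A_1|^{1-1/d}$ for subsets of the discrete ball $B$.

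I would establish this by passing to the continuum. Associate to each $x \in \Z^d$ the closed unit cube $C_x := x + [-\tfrac{1}{2}, \tfrac{1}{2}]^d$ and set $A_i^* := \bigcup_{x \in A_i} C_x$ and $B^* := A_1^* \cup A_2^* = \bigcup_{x \in B} C_x$. Then $|A_i^*| = |A_i|$ in Lebesgue measure, and the interface $\partial A_1^* \cap \partial A_2^*$ consists of exactly one $(d-1)$-dimensional unit face per edge in $\mathcal E(A_1, A_2)$; in particular its $(d-1)$-Hausdorff measure (equivalently, the relative perimeter of $A_1^*$ inside $B^*$) equals $|\mathcal E(A_1, A_2)|$. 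Moreover $B^*$ lies between the Euclidean balls $\mathbf{B}(0, R - \tfrac{\sqrt d}{2})$ and $\mathbf{B}(0, R + \tfrac{\sqrt d}{2})$, so once $R$ exceeds a dimensional constant, the rescaled domain $R^{-1} B^*$ is bi-Lipschitz equivalent to the unit ball with constants depending only on $d$; the finitely many small-$R$ cases are verified directly.

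Applying the classical relative isoperimetric inequality on a Lipschitz domain equivalent to a ball, combined with its scale invariance, yields
\begin{equation*}
|\mathcal E(A_1, A_2)| \;=\; \mathrm{Per}_{B^*}(A_1^*) \;\geq\; c \min(|A_1^*|, |B^*| - |A_1^*|)^{1-1/d} \;=\; c |A_1|^{1-1/d},
\end{equation*}
with $c$ depending only on $d$. Combined with the first paragraph this proves \eqref{eq:isolemma}. The only substantive input is the continuum relative isoperimetric inequality for a ball, which is classical; the main point one must verify is that the constant remains uniform in the radius $R$, and this is immediate from scaling once the uniform bi-Lipschitz equivalence of $R^{-1}B^*$ to the unit ball is noted. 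An alternative self-contained route is to use a Loomis--Whitney projection bound to obtain $|\mathcal E(A_1, \Z^d \setminus A_1)| \geq c|A_1|^{(d-1)/d}$ and subtract off the contribution from $A_1$ to $B^c$, which is at most the size of the boundary layer of $B$; this works cleanly in the regime $|A_1| \leq c'|B|$ that we are in.
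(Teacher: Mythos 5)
Your main argument reaches the same continuum picture as the paper --- replace lattice points by unit cubes and compare against a Euclidean ball --- but takes a different route through it. The paper does not invoke the continuum relative isoperimetric inequality: instead it truncates each cube, setting $x^* := [x-1/2,x+1/2]^d\cap\mathbf{B}(0,R)$ so that $B^*\subset\mathbf{B}(0,R)$ (a normalization that sidesteps your uniform-in-$R$ bi-Lipschitz step), applies the \emph{standard} isoperimetric inequality in $\R^d$ separately to $A_1^*$ and to $A_2^*$, and then extracts a lower bound on the common interface from the strict concavity of $t\mapsto t^{(d-1)/d}$, namely $x^{1-1/d}+y^{1-1/d}\geq(x+y)^{1-1/d}+(2-2^{1-1/d})\min(x,y)^{1-1/d}$, combined with the identity $\mathrm{suf}(A_1^*)+\mathrm{suf}(A_2^*)-\mathrm{suf}(B^*)=2\cdot(\text{interface area})$. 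In effect the paper gives a short, self-contained derivation of precisely the relative isoperimetric bound you cite, whereas your version relies on it as a black box, together with two further unstated ingredients: stability of the relative inequality under bi-Lipschitz maps, and the uniform-in-$R$ bi-Lipschitz equivalence of $R^{-1}B^*$ to the unit ball. Your route is sound in principle, but spelling out those ingredients would not be shorter than the paper's direct argument, which buys self-containedness at essentially no cost.

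The Loomis--Whitney fallback you mention in passing has a real gap, though. Subtracting the boundary-layer contribution $|\mathcal{E}(A_1,\Z^d\setminus B)|$, which can be of order $R^{d-1}$, from the Loomis--Whitney bound $c_{\mathrm{LW}}|A_1|^{(d-1)/d}$ only leaves something positive of the right order if the Loomis--Whitney constant strictly dominates the boundary-layer constant; since $|A_1|^{(d-1)/d}\leq CR^{d-1}$ always, this is not automatic and fails, e.g., when $A_1$ is roughly a half-ball. You remark that ``this works cleanly in the regime $|A_1|\leq c'|B|$ that we are in,'' but the lemma is stated for arbitrary partitions and is in fact invoked (in the proof of Lemma~\ref{iter}) with $A_1=\tos\cap B(z,C_6\rad)$ and $A_2=B(z,C_6\rad)\setminus\tos$, both of order $\rad^d$ and neither guaranteed to be a small fraction of $|B(z,C_6\rad)|$. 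So the fallback does not prove the lemma as stated; your primary route is the one to keep.
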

\begin{proof}
For any $x \in \Z^d$, let $x^* := [x-1/2,x+1/2]^d\cap \mathbf{B}(0,R)$. Then there exist constants $c,C>0$ depending only on $d$ such that for all $x \in B$,
$$ \mathrm{vol}(x^*) \geq c \quad \text{and} \quad \mathrm{suf}(x^*) \leq C\,,$$
where $\mathrm{vol}(x^*)$ and $\mathrm{suf}(x^*)$ are volume and surface area of $x^*$, respectively.
For any set $U \subset \Z^d$, we denote $U^* := \bigcup_{x \in U}x^*$. If we denote
$$ q: = \min\{\mathrm{vol}(A_1),\mathrm{vol}(A_2)\}\,,$$
then
$$ q^*:=\min\{\mathrm{vol}(A_1^*),\mathrm{vol}(A_2^*)\} \geq cq\,.$$
By the isoperimetric inequality in $\R^d$,
$$\mathrm{suf}(A_1^*) +  \mathrm{suf}(A_2^*) \geq d \mathrm{vol}(\mathbf{B}(0,1))^{1/d}  \cdot \big[\mathrm{vol}(A_1^*)^{1- 1/d}+\mathrm{vol}(A_2^*)^{1- 1/d}\big]\,.$$
Since for any $\alpha \in (0,1)$, the function $x^\alpha - (x+1)^\alpha$ is increasing in $x \in (0,+\infty)$, we have
$$x^{1-1/d} + y^{1-1/d}\geq (x+y)^{1-1/d} + (2 - 2^{1-1/d})y^{1-1/d}\text{ for all }0 \leq y \leq x\,.$$ Therefore,
\begin{align*}
	\mathrm{suf}(A_1^*) +  \mathrm{suf}(A_2^*) &\geq d \mathrm{vol}(\mathbf{B}(0,1))^{1/d}  \big[\mathrm{vol}(B^*)^{1-1/d} + (2 - 2^{1-1/d}){q^*}^{1-1/d}\big]\\
	&\geq  \mathrm{suf}(B^*)+  d \mathrm{vol}(\mathbf{B}(0,1))^{1/d}  (2 - 2^{1-1/d})(cq)^{1-1/d}\,.
\end{align*}
Note that the interface between $A_1^*$ and $A_2^*$ is contained in both the surface of $(\partial A_1 \cap B)^*$ and $(\partial A_2 \cap B)^*$, and it is counted exactly twice in $\mathrm{suf}(A_1^*) +  \mathrm{suf}(A_2^*)  - \mathrm{suf}(B^*) $. It follows that
\begin{align*}
	\mathrm{suf}(A_1^*) +  \mathrm{suf}(A_2^*)  - \mathrm{suf}(B^*) &\leq 2\min\Big\{\mathrm{suf}\big((\partial A_1 \cap B)^*\big),\mathrm{suf}\big((\partial A_2 \cap B)^*\big)\Big\}\\
	& \leq C\min\big(|\partial A_1 \cap B|,|\partial A_2 \cap B| \big) \\
	& = C\min\big(|\partial A_1 \cap A_2|,|\partial A_2 \cap A_1| \big) \,.
\end{align*}
Combining the previous two inequalities completes the proof of \eqref{eq:isolemma}.
\end{proof}

\section*{Acknowledgements}
J.~Ding is supported by NSF grant DMS-1757479 and an Alfred Sloan fellowship.
R.~Fukushima is supported by JSPS KAKENHI Grant Number 16K05200 and ISHIZUE 2019 of Kyoto University Research Development Program.
R.~Sun is supported by NUS Tier 1 grant R-146-000-288-114.

% \small
% \bibliography{Qnh}

\begin{thebibliography}{10}

\bibitem{AW15}
M.~Aizenman and S.~Warzel,
\newblock \emph{Random operators: Disorder effects on quantumspectra and dynamics}.
\newblock Graduate Studies in Mathematics,vol. 168,
\newblock American Mathematical Society, Providence, RI, 2015.

\bibitem{BC18}
N.~Berestycki and R.~Cerf.
\newblock The random walk penalised by its range in dimensions $d \geq 3$.
\newblock arXiv:1807.1811.04700.

\bibitem{BK01}
M.~Biskup and W.~K{\"o}nig.
\newblock Long-time tails in the parabolic {A}nderson model with bounded
  potential.
\newblock {\em Ann. Probab.}, 29(2):636--682, 2001.

\bibitem{BKdS18}
M.~Biskup, W.~K\"{o}nig, and R.~S.~dos Santos.
\newblock Mass concentration and aging in the parabolic {A}nderson model with
  doubly-exponential tails.
\newblock {\em Probab. Theory Related Fields}, 171(1-2):251--331, 2018.

\bibitem{Bolthausen94}
E.~Bolthausen.
\newblock Localization of a two-dimensional random walk with an attractive path
  interaction.
\newblock {\em Ann. Probab.}, 22(2):875--918, 1994.

\bibitem{DFSX18}
J.~Ding, R.~Fukushima, R.~Sun, and C.~Xu.
\newblock Geometry of the random walk range conditioned on survival among
  {B}ernoulli obstacles.
\newblock accepted by \textit{Probab. Theory Related Fields},
  arXiv:1806.08319.

\bibitem{DX17}
J.~Ding and C.~Xu.
\newblock Poly-logarithmic localization for random walks among random
  obstacles.
\newblock {\em Ann. Probab.}, 47(4):2011--2048, 2019.

\bibitem{DX18}
J.~Ding and C.~Xu.
\newblock Localization for random walks among random obstacles in a single
  euclidean ball.
\newblock 2018.
\newblock arXiv:1807.08168.

\bibitem{FM14}
A.~Fiodorov and S.~Muirhead.
\newblock Complete localisation and exponential shape of the parabolic
  {A}nderson model with {W}eibull potential field.
\newblock {\em Electron. J. Probab.}, 19:no. 58, 27, 2014.

\bibitem{GT01}
D.~Gilbarg and N.~S. Trudinger.
\newblock {\em Elliptic partial differential equations of second order}.
\newblock Classics in Mathematics. Springer-Verlag, Berlin, 2001.
\newblock Reprint of the 1998 edition.

\bibitem{HKM06}
R.~van~der Hofstad, W.~K{\"o}nig, and P.~M{\"o}rters.
\newblock The universality classes in the parabolic {A}nderson model.
\newblock {\em Comm. Math. Phys.}, 267(2):307--353, 2006.

\bibitem{Wolfgang16}
W.~K\"onig.
\newblock {\em The parabolic {A}nderson model}.
\newblock Pathways in Mathematics. Birkh\"auser/Springer, [Cham], 2016.
\newblock Random walk in random potential.

\bibitem{KLMS09}
W.~K\"onig, H.~Lacoin, P.~M\"orters, and N.~Sidorova.
\newblock A two cities theorem for the parabolic {A}nderson model.
\newblock {\em Ann. Probab.}, 37(1):347--392, 2009.

\bibitem{LM12}
H.~Lacoin and P.~M\"orters.
\newblock A scaling limit theorem for the parabolic {A}nderson model with
  exponential potential.
\newblock In {\em Probability in complex physical systems}, volume~11 of {\em
  Springer Proc. Math.}, pages 247--272. Springer, Heidelberg, 2012.

\bibitem{L13}
G.~F.~Lawler.
\newblock {\em Intersections of random walks}.
\newblock Springer Science \& Business Media, 2013.

\bibitem{LL10}
G.~F.~Lawler and V.~Limic.
\newblock {\em Random walk: a modern introduction}, volume 123 of {\em
  Cambridge Studies in Advanced Mathematics}.
\newblock Cambridge University Press, Cambridge, 2010.

\bibitem{Pov99}
T.~Povel.
Confinement of Brownian motion among Poissonian obstacles in $\R^d, d\geq 3$.
{\em Probab. Theory Related Fields} 114, 177--205, 1999.

\bibitem{ST14}
N.~Sidorova and A.~Twarowski.
\newblock Localisation and ageing in the parabolic {A}nderson model with
  {W}eibull potential.
\newblock {\em Ann. Probab.}, 42(4):1666--1698, 2014.

\bibitem{S91}
A.-S.~Sznitman.
\newblock On the confinement property of two-dimensional Brownian motion among Poissonian obstacles.
\newblock {\em Comm. Pure Appl. Math.} 44, 1137--1170, 1991.

\bibitem{Sznitman93b}
A.-S.~Sznitman.
\newblock Brownian asymptotics in a {P}oissonian environment.
\newblock {\em Probab. Theory Related Fields}, 95(2):155--174, 1993.

\bibitem{Sznitman96}
A.-S.~Sznitman.
\newblock Brownian confinement and pinning in a {P}oissonian potential. {I},
  {II}.
\newblock {\em Probab. Theory Related Fields}, 105(1):1--29, 31--56, 1996.

\bibitem{Szn97b}
A.-S.~Sznitman.
\newblock Fluctuations of principal eigenvalues and random scales.
\newblock {\em Comm. Math. Phys.}, 189(2):337--363, 1997.

\bibitem{Sznitman98}
A.-S.~Sznitman.
\newblock {\em Brownian motion, obstacles and random media}.
\newblock Springer Monographs in Mathematics. Springer-Verlag, Berlin, 1998.

\bibitem{Weinberger58}
H.~F.~Weinberger.
\newblock Lower bounds for higher eigenvalues by finite difference methods.
\newblock {\em Pacific J. Math.}, 8:339--368; erratum, 941, 1958.

\bibitem{Weinberger60}
H.~F.~Weinberger.
\newblock Error bounds in the {R}ayleigh-{R}itz approximation of eigenvectors.
\newblock {\em J. Res. Nat. Bur. Standards Sect. B}, 64B:217--225, 1960.

\end{thebibliography}
% \bibliographystyle{abbrv}

\end{document}